\documentclass[english,12pt]{article}
\usepackage{xcolor}
\definecolor{note_fontcolor}{rgb}{0.800781, 0.800781, 0.800781}
\usepackage{babel}
\usepackage{mathtools}
\usepackage{amsmath}
\usepackage{amssymb}
\usepackage{amsthm}
\usepackage%[pdfusetitle, bookmarks=true,bookmarksnumbered=false,bookmarksopen=false, breaklinks=false,pdfborder={0 0 0},pdfborderstyle={},backref=false,colorlinks=false]
        {hyperref}

\usepackage{tikz-cd}
\usepackage{tikz}
                      \usepackage{tikzit}
\tikzstyle{nc}=[fill=white, draw=black, shape=circle, minimum size=0.40cm]
\tikzstyle{nr}=[fill=white, draw=black, shape=rectangle, minimum width=0.60cm, minimum height=0.60cm]
\tikzstyle{nrg}=[fill={rgb,255: red,199; green,199; blue,199}, draw=black, shape=rectangle]
\tikzstyle{sr}=[fill=white, draw=black, shape=rectangle, minimum width=0.60cm, minimum height=0.40cm, inner sep=0pt]
\tikzstyle{lsr}=[fill=white, draw=black, shape=rectangle, minimum width=0.60cm, minimum height=0.40cm, inner sep=0pt, rotate=30]
\tikzstyle{rsr}=[fill=white, draw=black, shape=rectangle, minimum width=0.60cm, minimum height=0.40cm, inner sep=0pt, rotate=-30]
\tikzstyle{rrrsr}=[fill=white, draw=black, shape=rectangle, minimum width=0.60cm, minimum height=0.40cm, inner sep=0pt, rotate=-120]
\tikzstyle{ivsr}=[fill=none, draw=none, shape=rectangle, minimum width=0.60cm, minimum height=0.40cm, inner sep=0pt]
\tikzstyle{ivlsr}=[fill=none, draw=none, shape=rectangle, minimum width=0.60cm, minimum height=0.40cm, inner sep=0pt, rotate=30]
\tikzstyle{ivrsr}=[fill=none, draw=none, shape=rectangle, minimum width=0.60cm, minimum height=0.40cm, inner sep=0pt, rotate=-30]
\tikzstyle{ivrrrsr}=[fill=none, draw=none, shape=rectangle, minimum width=0.60cm, minimum height=0.40cm, inner sep=0pt, rotate=-120]
\tikzstyle{mr}=[fill=white, draw=black, shape=rectangle, minimum width=1.00cm, minimum height=0.50cm, inner sep=0pt]
\tikzstyle{product}=[fill={rgb,255: red,255; green,5; blue,80}, draw=black, shape=circle, inner sep=0pt, minimum size=0.15cm]
\tikzstyle{unit}=[fill={rgb,255: red,255; green,166; blue,217}, draw=black, shape=circle, inner sep=0pt, minimum size=0.15cm]
\tikzstyle{coproduct}=[fill={rgb,255: red,2; green,145; blue,255}, draw=black, shape=circle, inner sep=0pt, minimum size=0.15cm]
\tikzstyle{counit}=[fill={rgb,255: red,165; green,219; blue,255}, draw=black, shape=circle, inner sep=0pt, minimum size=0.15cm]
\tikzstyle{antipode}=[fill=white, draw=black, shape=circle, inner sep=0pt, minimum size=0.15cm]
\tikzstyle{region}=[fill={rgb,255: red,191; green,191; blue,191}, draw={rgb,255: red,191; green,191; blue,191}, shape=circle, minimum size=0.80cm]
\tikzstyle{boundarydisc}=[fill={rgb,255: red,128; green,128; blue,128}, draw=black, shape=circle, minimum size=0.80cm]
\tikzstyle{identity}=[fill=black, draw=black, shape=circle, inner sep=0pt, minimum size=0.15cm]
\tikzstyle{S}=[fill=white, draw=black, shape=rectangle, minimum width=0.15cm, minimum height=0.15cm]
\tikzstyle{S-}=[fill={rgb,255: red,199; green,199; blue,199}, draw=black, shape=rectangle, minimum width=0.15cm, minimum height=0.15cm]
\tikzstyle{dot}=[fill=white, draw=black, shape=circle, inner sep=0pt, minimum size=0.07cm]
\tikzstyle{bdot}=[fill=black, draw=black, shape=circle, inner sep=0pt, minimum size=0.07cm]
\tikzstyle{m}=[fill={rgb,255: red,255; green,5; blue,80}, draw={rgb,255: red,255; green,5; blue,80}, shape=circle, inner sep=0pt, minimum size=0.14cm, tikzit shape=circle]
\tikzstyle{n}=[fill={rgb,255: red,30; green,127; blue,255}, draw={rgb,255: red,30; green,127; blue,255}, shape=circle, inner sep=0pt, minimum size=0.14cm, tikzit shape=circle]
\tikzstyle{mod}=[fill={rgb,255: red,172; green,229; blue,232}, draw={rgb,255: red,14; green,115; blue,177}, shape=circle]
\tikzstyle{smod}=[fill={rgb,255: red,172; green,229; blue,232}, draw={rgb,255: red,14; green,115; blue,177}, shape=circle, inner sep=0pt, minimum size=0.15cm]
\tikzstyle{smod1}=[fill={rgb,255: red,150; green,255; blue,138}, draw={rgb,255: red,5; green,130; blue,3}, shape=circle, inner sep=0pt, minimum size=0.15cm]
\tikzstyle{smod2}=[fill={rgb,255: red,255; green,202; blue,239}, draw={rgb,255: red,126; green,0; blue,124}, shape=circle, inner sep=0pt, minimum size=0.15cm]
\tikzstyle{smod3}=[fill={rgb,255: red,255; green,213; blue,164}, draw={rgb,255: red,121; green,57; blue,2}, shape=circle, inner sep=0pt, minimum size=0.15cm]
\tikzstyle{mid-nc}=[fill=white, draw=black, shape=circle, inner sep=0pt, minimum size=0.25cm]
\tikzstyle{l70-none}=[fill=none, draw=none, shape=circle, rotate=70]
\tikzstyle{r45}=[fill=none, draw=none, shape=circle, rotate=-45]

\tikzstyle{di}=[draw=black, ->]
\tikzstyle{da}=[-, dashed]
\tikzstyle{da-di}=[dashed, ->]
\tikzstyle{th}=[-, thick]
\tikzstyle{th-di}=[->, thick]
\tikzstyle{th-da}=[-, dashed, thick]
\tikzstyle{gr}=[-, draw={rgb,255: red,0; green,186; blue,143}, thick]
\tikzstyle{gr-di}=[draw={rgb,255: red,0; green,186; blue,143}, ->, thick]
\tikzstyle{gr-da}=[-, dashed, draw={rgb,255: red,0; green,186; blue,143}, thick]
\tikzstyle{pu}=[-, draw={rgb,255: red,124; green,95; blue,239}, thick]
\tikzstyle{pu-da}=[-, dashed, draw={rgb,255: red,124; green,95; blue,239}, thick]
\tikzstyle{re}=[-, draw={rgb,255: red,255; green,5; blue,80}, thick]
\tikzstyle{re-di}=[->, draw={rgb,255: red,255; green,5; blue,80}, thick]
\tikzstyle{re-da}=[-, draw={rgb,255: red,255; green,5; blue,80}, dashed, thick]
\tikzstyle{bl}=[-, draw={rgb,255: red,14; green,115; blue,177}, thick]
\tikzstyle{bl-di}=[draw={rgb,255: red,14; green,115; blue,177}, ->, thick]
\tikzstyle{bl-da}=[-, draw={rgb,255: red,14; green,115; blue,177}, dashed, thick]
\tikzstyle{lgr}=[-, draw={rgb,255: red,188; green,220; blue,156}, thick]
\tikzstyle{lgr-da}=[-, draw={rgb,255: red,188; green,220; blue,156}, thick, dashed]
\tikzstyle{lbl}=[-, draw={rgb,255: red,168; green,207; blue,245}, thick]
\tikzstyle{lbl-da}=[-, draw={rgb,255: red,168; green,207; blue,245}, thick, dashed]
\tikzstyle{pi}=[-, draw={rgb,255: red,208; green,160; blue,160}, thick]
\tikzstyle{pi-da}=[-, draw={rgb,255: red,208; green,160; blue,160}, thick, dashed]
\tikzstyle{sh}=[-, draw={rgb,255: red,171; green,171; blue,171}]
\tikzstyle{sh-da}=[-, draw={rgb,255: red,171; green,171; blue,171}, dashed]
\tikzstyle{fi-gr}=[-, fill={rgb,255: red,194; green,228; blue,162}, draw={rgb,255: red,194; green,228; blue,162}]
\tikzstyle{fi-bl}=[-, fill={rgb,255: red,175; green,215; blue,255}, draw={rgb,255: red,175; green,215; blue,255}]
\tikzstyle{fi-pi}=[-, fill={rgb,255: red,246; green,208; blue,208}, draw={rgb,255: red,246; green,208; blue,208}]
\tikzstyle{fi-pu}=[-, fill={rgb,255: red,230; green,203; blue,246}, draw={rgb,255: red,230; green,203; blue,246}]
\tikzstyle{fi-ye}=[-, fill={rgb,255: red,255; green,255; blue,155}, draw={rgb,255: red,255; green,255; blue,155}]
\tikzstyle{fi-or}=[-, fill={rgb,255: red,255; green,197; blue,51}, draw={rgb,255: red,255; green,197; blue,51}]
\tikzstyle{fi-dg}=[-, fill={rgb,255: red,152; green,170; blue,139}, draw={rgb,255: red,152; green,170; blue,139}]
\tikzstyle{fi-db}=[-, fill={rgb,255: red,111; green,147; blue,183}, draw={rgb,255: red,111; green,147; blue,183}]
\tikzstyle{fi-sh}=[-, fill={rgb,255: red,232; green,232; blue,232}, draw={rgb,255: red,232; green,232; blue,232}]
\tikzstyle{fi-dsh}=[-, fill={rgb,255: red,204; green,204; blue,204}, draw={rgb,255: red,204; green,204; blue,204}]
\tikzstyle{fi-lbl}=[-, fill={rgb,255: red,215; green,235; blue,255}, draw={rgb,255: red,215; green,235; blue,255}]
\tikzstyle{fi-lgr}=[-, fill={rgb,255: red,232; green,247; blue,216}, draw={rgb,255: red,232; green,247; blue,216}]
\tikzstyle{fi-lpu}=[-, fill={rgb,255: red,245; green,230; blue,255}, draw={rgb,255: red,245; green,230; blue,255}]
\tikzstyle{fi-lpi}=[-, fill={rgb,255: red,249; green,226; blue,226}, draw={rgb,255: red,249; green,226; blue,226}]
\tikzstyle{fi-vlbl}=[-, fill={rgb,255: red,229; green,251; blue,255}, draw={rgb,255: red,229; green,251; blue,255}]
\tikzstyle{vt}=[-, very thick]
\tikzstyle{vt-di}=[->, very thick]
\tikzstyle{vt-da}=[-, dashed, very thick]
\tikzstyle{vt-gr}=[-, draw={rgb,255: red,0; green,186; blue,143}, very thick]
\tikzstyle{vt-gr-di}=[draw={rgb,255: red,0; green,186; blue,143}, ->, very thick]
\tikzstyle{vt-gr-da}=[-, dashed, draw={rgb,255: red,0; green,186; blue,143}, very thick]
\tikzstyle{vt-pu}=[-, draw={rgb,255: red,124; green,95; blue,239}, very thick]
\tikzstyle{vt-pu-da}=[-, dashed, draw={rgb,255: red,124; green,95; blue,239}, very thick]
\tikzstyle{vt-re}=[-, draw={rgb,255: red,255; green,5; blue,80}, very thick]
\tikzstyle{vt-re-di}=[->, draw={rgb,255: red,255; green,5; blue,80}, very thick]
\tikzstyle{vt-re-da}=[-, draw={rgb,255: red,255; green,5; blue,80}, dashed, very thick]
\tikzstyle{vt-bl}=[-, draw={rgb,255: red,14; green,115; blue,177}, very thick]
\tikzstyle{vt-bl-di}=[draw={rgb,255: red,14; green,115; blue,177}, ->, very thick]
\tikzstyle{vt-bl-da}=[-, draw={rgb,255: red,14; green,115; blue,177}, dashed, very thick]
\tikzstyle{vt-bl-da-di}=[draw={rgb,255: red,14; green,115; blue,177}, ->, very thick, dashed]
\tikzstyle{vt-lgr}=[-, draw={rgb,255: red,188; green,220; blue,156}, very thick]
\tikzstyle{vt-lgr-di}=[draw={rgb,255: red,188; green,220; blue,156}, ->, very thick]
\tikzstyle{vt-lgr-da}=[-, draw={rgb,255: red,188; green,220; blue,156}, very thick, dashed]
\tikzstyle{vt-lbl}=[-, draw={rgb,255: red,168; green,207; blue,245}, very thick]
\tikzstyle{vt-lbl-di}=[->, draw={rgb,255: red,168; green,207; blue,245}, very thick]
\tikzstyle{vt-lbl-da}=[-, draw={rgb,255: red,168; green,207; blue,245}, very thick, dashed]
\tikzstyle{vt-lpp}=[-, draw={rgb,255: red,223; green,199; blue,240}, very thick]
\tikzstyle{vt-vlbl}=[-, draw={rgb,255: red,226; green,236; blue,245}, very thick]
\tikzstyle{vt-vlpp}=[-, draw={rgb,255: red,237; green,231; blue,240}, very thick]
\tikzstyle{vt-vlgr}=[-, draw={rgb,255: red,240; green,255; blue,225}, very thick]
\tikzstyle{vt-pi}=[-, draw={rgb,255: red,236; green,151; blue,151}, very thick]
\tikzstyle{vt-pi-da}=[-, draw={rgb,255: red,236; green,151; blue,151}, very thick, dashed]
\tikzstyle{vt-ye}=[-, draw={rgb,255: red,241; green,241; blue,115}, very thick]
\tikzstyle{vt-or}=[-, draw={rgb,255: red,245; green,150; blue,32}, very thick]
\tikzstyle{vt-sh}=[-, fill=none, draw={rgb,255: red,171; green,171; blue,171}, very thick]
\tikzstyle{vt-sh-di}=[draw={rgb,255: red,171; green,171; blue,171}, very thick, ->]
\tikzstyle{vt-sh-da}=[-, fill=none, draw={rgb,255: red,171; green,171; blue,171}, very thick, dashed]
\tikzstyle{fi-bx}=[-, fill={rgb,255: red,172; green,229; blue,232}, draw={rgb,255: red,14; green,115; blue,177}, thick]
\tikzstyle{ut-gr}=[-, draw={rgb,255: red,0; green,186; blue,143}, line width=3pt]
\tikzstyle{ut-gr-di}=[draw={rgb,255: red,0; green,186; blue,143}, ->, line width=3pt]
\tikzstyle{ut-ig}=[-, draw={rgb,255: red,255; green,5; blue,80}, line width=3pt]
\tikzstyle{ut-ig-di}=[draw={rgb,255: red,255; green,5; blue,80}, line width=3pt, ->]
\tikzstyle{op-sh}=[-, draw=none, fill={rgb,255: red,113; green,113; blue,113}, opacity=0.5]
\tikzstyle{op-lsh}=[-, fill={rgb,255: red,173; green,173; blue,173}, draw=none, opacity=0.5]
\tikzstyle{op-pi}=[-, draw=none, fill={rgb,255: red,246; green,188; blue,188}, opacity=0.7]
\tikzstyle{yt-fill}=[-, fill=white]

\usepackage{quiver}
\everymath{\displaystyle}
\usepackage{lmodern}
\usepackage{fancyhdr}
\usepackage{bbm}
\usepackage{graphicx}

	     \newcommand\Cite[2] {\cite[#1]{#2}}

\makeatletter

\numberwithin{equation}{section}
\numberwithin{figure}{section}

\theoremstyle{definition}
\newtheorem{thm}{\protect\theoremname}[section]
\newtheorem*{thmI}{\protect\theoremname}
\newtheorem{rem}[thm]{\protect\remarkname}
\newtheorem{cor}[thm]{\protect\corollaryname}
\newtheorem{example}[thm]{\protect\examplename}
\newtheorem{defn}[thm]{\protect\definitionname}
\newtheorem{lem}[thm]{\protect\lemmaname}
\newtheorem{prop}[thm]{\protect\propositionname}

\@ifundefined{date}{}{\date{}}
\makeatletter
\newcommand{\pgets}{}
\DeclareRobustCommand{\pgets}{\mathrel{\mathpalette\p@to@gets\gets}}
\newcommand{\p@to@gets}[2]{%
  \ooalign{\hidewidth$\m@th#1\mapstochar\mkern5mu$\hidewidth\cr$\m@th#1\to$\cr}%
}
\newcommand{\Pgets}{}
\DeclareRobustCommand{\Pgets}{\mathrel{\mathpalette\P@to@gets\gets}}
\newcommand{\P@to@gets}[2]{%
\ooalign{\hidewidth$\m@th#1\mapstochar\mkern3mu$\hidewidth\cr$\m@th#1\longrightarrow$\cr}%
}
\makeatother

\usetikzlibrary{decorations,arrows}
\usetikzlibrary{decorations.markings}
\usetikzlibrary{decorations.pathmorphing}
\usetikzlibrary{matrix,arrows}

\usepackage{microtype}
\DeclareOldFontCommand{\it}{\normalfont\itshape}{\mathit}

\AtBeginDocument{%
   \def\MR#1{}
}

\makeatother

\providecommand{\corollaryname}{Corollary}
\providecommand{\definitionname}{Definition}
\providecommand{\examplename}{Example}
\providecommand{\lemmaname}{Lemma}
\providecommand{\propositionname}{Proposition}
\providecommand{\remarkname}{Remark}
\providecommand{\theoremname}{Theorem}

\newcommand\void[1]  {}

\def\bbord         {\mathcal B\mathrm{ord}_{2}^{\mathrm{or,o/c}}}

\def\be            {\begin{equation}}
\def\bimod         {\text{-}\mathrm{mod}\text{-}}
\def\bl            {\mathrm{Bl}}
\def\bprof         {\mathcal{P}\mathrm{rof}_{\mathbbm{k}}}

\def\cc            {{\mathcal C}}
\def\cir           {\,{\circ}\,}
\def\ca            {\mathcal{A}}

\def\cat           {\mathrm{Cat}}
\def\cb            {\mathcal{B}}
\def\cbc           {\mathcal{BC}}
\def\ccat          {\mathcal{C}\mathrm{at}}
\def\ch            {\mathcal{H}}
\def\cfrc          {\mathcal{F}r(\mathcal{C})}

\def\colax         {colax} % rather than oplax
\def\Colon         {:\quad}
\def\Coloneqq      {\,{\coloneqq}\,}

\def\cv            {\mathcal{V}}
\def\czc           {\mathcal{Z}(\mathcal{C})}
\def\da            {\mathbb{A}}

\def\Dashv         {\,{\dashv}\,}
\def\db            {\mathbb{B}}
\def\dbl           {\mathcal{D}\mathrm{bl}}
\def\dbord         {\mathbb{OC}\mathrm{Bord}_{2}^{\mathrm{or}}}
\def\dd            {\mathbb{D}}
\def\df            {\mathrm{F}}
\def\di            {\mathbb{I}}   %% interval as object \ell in bord
\def\Di            {I}       %% interval in \ell \times I

\def\disto         {\ast}    %% distinguished object 
\def\dprof         {\mathbb{P}\mathrm{rof}_{\mathbbm{k}}}
\def\ds            {\mathbb{S}}
\def\dSN           {\mathbb{S}\mathrm{N}}
\def\dSNb          {\mathbb{S}\mathrm{N}_{\cb}}

\def\dSNc          {\mathbb{S}\mathrm{N}_{\cc}}

\def\dSNfrc        {\ds\mathrm{N}_{\cfrc}}
\def\ducorc        {\mathbb{U}\mathrm{Cor}_{\cc}}
\def\ee            {\end{equation}}
\def\eq            {\,{=}\,}
\def\eqv           {\Rarr{\,\simeq\,}}

\def\grph          {\varGamma}
\def\hfrc          {1\text-\Hom_{\cfrc}}
\def\Hom           {\mathrm{Hom}}
\def\id            {\mathrm{id}}
\def\iN            {\,{\in}\,}
\def\iso           {\rarr{\,\cong\,}}

\def\Itemize       {\def\leftmargini{1.71em}~\\[-1.55em]\begin{itemize}\setlength{\itemsep}{-0.25ex}}
\def\kar           {\mathrm{Kar}}
\def\ko            {{\ensuremath{\mathbbm k}}}
\def\mcg           {\mathrm{Map}}
\def\nat           {\mathrm{Nat}}

\def\op            {\mathbin{\mathrm{op}}}

\def\otA           {\,{\otimes_{\!A}}\,}
\def\otB           {\,{\otimes_{\!B}}\,}
\def\oti           {\,{\otimes}\,}

\def\pto           {\pgets}
\def\Pto           {\Pgets}
\def\ptO           {\,{\pto}\,}
\def\PtO           {\,{\Pto}\,}
\newcommand\rarr[1]{\xrightarrow{~#1~}}
\newcommand\Rarr[1]{\,{\xrightarrow{\,#1\,}}\,}

\newcommand\RarR[1]{\,{\xRightarrow{\,#1\,}}\,}

\def\SN            {\mathrm{SN}}

\def\SNb           {\mathrm{SN}_{\cb}}
\def\snb           {\mathrm{sn}_{\cb}^{}}
\def\SNc           {\SN_{\cc}}
\def\snc           {\mathrm{sn}_{\cc}}
\def\SNfrc         {\mathrm{SN}_{\cfrc}}
\def\snsphmod      {\mathrm{sn}_{\mathcal M\hspace*{-0.6pt}od\text-\cc}}
\def\snfrc         {\mathrm{sn}_{\cfrc}^{}}
\def\sphmod        {\mathcal M\hspace*{-1.1pt}od\text-\cc}
\def\surf          {\varSigma}
\def\surfhat       {\widehat\varSigma}
\def\Times         {\,{\times}\,}

\def\twodim        {two-di\-men\-si\-onal}
\def\tu            {\mathbbm{1}}
\def\ucorc         {\mathrm{UCor}_{\cc}}
\def\vct           {\mathrm{Vect}_\ko}

\def\worldsheet    {world sheet}

\makeatletter
\newcommand{\Rrightarrowfill@}{\arrowfill@\equiv\equiv\Rrightarrow}
\newcommand\xRrightarrow[1][]{\ext@arrow 0359\Rrightarrowfill@{#1}{}}
\makeatother

\definecolor{DarkViolet} {rgb}{0.580392,0.000000,0.827450}
\definecolor{ForestGreen}{rgb}{0.133333,0.545098,0.133333}
\definecolor{red3}       {rgb}{0.803921,0.000000,0.000000}

\begin{document}

\begin{center}

\vskip 11mm

{\Large \textbf{Modular functors and CFT correlators \\[8pt] via double categories}}

\vskip 18mm

{\large 
J\"urgen Fuchs~\textsuperscript{$a$},$\quad$
Christoph Schweigert~\textsuperscript{$b$}$\quad$and$\quad$
Yang Yang~\textsuperscript{$c$}
}

\vskip 10mm

\it$^a$  Teoretisk fysik, \ Karlstads Universitet
\\  
Universitetsgatan 21, \ S\,--\,651\,88\, Karlstad  
\\[9pt]
\it$^b$  Fachbereich Mathematik, \ Universit\"at Hamburg
\\  
Bereich Algebra und Zahlentheorie
\\  
Bundesstra\ss e 55, \ D\,--\,20\,146\, Hamburg  
\\[9pt] 
\it$^c$ Zentrum Mathematik, \ Technische Universit\"at M\"unchen
\\  
Boltzmannstra\ss e 3, \ D\,--\,85\,748\, Garching 

\end{center}

   \vspace*{2.9cm}

\noindent{\sc Abstract}\\[3pt] 
We point out that double categories provide a natural setting for modular functors 
obtained by a bicategorical string-net construction: The source of the modular
functor -- which is now a double functor -- is a symmetric monoidal double category
of bordisms, with bordisms as horizontal morphisms and smooth embeddings of manifolds
as vertical morphisms. The target of the modular functor is a double category with
profunctors and functors as horizontal and vertical morphisms.
 \\
The correlators and field functors for a conformal field theory based on a pivotal 
monoidal category $\cc$ can then be understood in the unified setting of a vertical
transformation between the modular functors for two pointed pivotal bicategories,
the delooping of $\cc$ and the bicategory of $\Delta$-separable 
symmetric Frobenius algebras in $\cc$.
Using skein theoretic methods, we show that this vertical transformation
is an equivalence, which implies that field functors are equivalences of categories 
and that universal correlators are isomorphisms of vector spaces.

\newpage
\tableofcontents
\newpage

%%%%%%%%%%%%%%%%%%%%%%%%%%%%%%%%%%%%%%%%%%%%%%%%%%%%%%%%%%%%%%%%%%%%%%%%

\section{Introduction}

String-net models have recently been found to provide an efficient handle on the
study of correlators in two-dimensional conformal field theories 
\cite{fusY,yangYa3,fusY2}. Concretely, the following picture has emerged. Let $\cc$
be a pivotal linear tensor category. (Readers directly interested in conformal field
theory may wish to imagine $\cc$ as a category of representations of a suitably nice 
vertex operator algebra, ignoring the braiding.)
The monoidal category $\cc$ gives rise to two bicategories: On
the one hand, $\cbc$, a bicategory with a single object whose 1-morphisms are the
objects in $\cc$ and whose 2-morphisms are the morphisms in $\cc$. And on the other
hand, the bicategory $\cfrc$ of $\Delta$-se\-pa\-rable symmetric Frobenius algebras in
$\cc$, with bimodules as 1-morphisms. There is a forgetful functor
$u\colon \cfrc\Rarr~ \cbc$, which has the structure of a separable Frobenius functor.
Both $\cbc$ and $\cfrc$ are pivotal, and both
are pointed, i.e.\ endowed with a distinguished object. For $\cbc$ the latter is,
obviously, the single object, while for $\cfrc$ the distinguished object is the 
Frobenius algebra defined on the monoidal unit. 

In \cite{fusY2} we developed a bicategorical string-net construction, which
in particular yields a modular functor for each of the two bicategories $\cbc$ and
$\cfrc$, i.e.\ two symmetric monoidal functors
  \be
  \SNfrc, \SNc\Colon \bbord \rarr~ \bprof 
  \label{ref:twomod}
  \ee
from a symmetric monoidal bicategory of open-closed bordisms with values in a 
symmetric monoidal bicategory of profunctors. The string nets based on the bicategory
$\cbc$ describe 
conformal blocks and their monodromies. More specifically, they provide conformal
blocks for the Drinfeld center $\czc$ of $\cc$ which, for $\cc$ a modular tensor 
category, is equivalent, as a braided monoidal category, to the Deligne product of 
$\cc$ with $\overline{\cc}$, i.e.\ $\cc$ endowed with the opposite braiding.
The string nets for $\cfrc$ describe classes of decorations for \emph{world sheets},
that is, for surfaces that allow for the definition of conformal field theory 
correlators. The data of such a decoration include a stratification of the surface
and the specification of a full conformal field theory on 
each 2-cell of the surface, which amounts to assigning a $\Delta$-separable symmetric
Frobenius algebra, an object of $\cfrc$, to each 2-cell. There is more to the 
decoration of the surface: boundary conditions, in the form of modules over Frobenius
algebras, are assigned to free boundary segments, as well as defect conditions,
realized as bimodules over Frobenius algebras, on embedded line defects. The data 
for the world sheet also determine which fields -- boundary fields, bulk fields and,
more generally, defect fields -- are inserted on the surface. The corresponding
`insertion points' come with local coordinates and are thus really intervals (for
boundary fields) or circles (for bulk and defect fields).

 \medskip

Given the two modular functors \eqref{ref:twomod}, two pieces of data were 
constructed in \cite{fusY2}:
 \Itemize

 \item
Field objects: A modular functor associates a linear category
$\mathrm{SN}(\ell)$, called
cylinder category, to an oriented 1-manifold $\ell$. In our context we get two such
categories, $\SNfrc(\ell)$ and $\SNc(\ell) \,{\equiv}\, \mathrm{SN}_{\cbc}(\ell)$. 
The forgetful functor $u\colon \cfrc\Rarr~ \cbc$ provides us with a family of functors
  \be
  \df_{\!\ell}^{} \equiv \df_{\!\cc}(\ell) \Colon \SNfrc(\ell) \rarr~ \SNc(\ell)
  \label{eqI:fifu}
  \ee
parametrized by the objects $\ell$ in the bordism category $\bbord$, called the
\emph{field functors}.

A decorated world sheet assigns to each boundary interval or circle $\ell$
an object in the $\cfrc$-colored cylinder category $\SNfrc(\ell)$. The image of this
object under the field functor $\df_{\!\ell}$ is an object in $\cc$ if $\ell$ is
an interval (boundary fields), respectively in the Drinfeld center $Z(\cc)$ if 
$\ell$ is a circle (bulk and defect fields). This \emph{field object} describes the
corresponding field insertion as a representation of the chiral symmetry algebra
of the conformal field theory.
	
 \item
Universal correlators: The string-net constructions assign a vector space 
$\SNfrc(\surf;-,{\backsim})$ and $ \SNc(\surf;\df_{\!\ell}-,\df_{\!\ell'}{\backsim})$,
respectively, to a surface $\surf\colon\ell\PtO\ell'$ with boundary conditions for 
the incoming boundary $\ell$ and outgoing boundary $\ell'$. The forgetful functor 
$u\colon \cfrc\Rarr~ \cbc$ provides us with a family of natural transformations
  \be
  \ucorc(\surf) \Colon \SNfrc(\surf;-,{\backsim}) \xRightarrow{\phantom{ww}}
  \SNc(\surf;\df_{\!\ell}-,\df_{\!\ell'}{\backsim})
  \label{eqI:ucorc(surf)}
  \ee
parametrized by the 1-morphisms $\surf\colon\ell\PtO\ell'$ in $\bbord$. The natural
transformations \eqref{eqI:ucorc(surf)} have been called \emph{universal correlators}
in \cite{fusY2}, because they specify conformal field theory correlators, in a way
that will be recalled in some detail in Section \ref{sec:cft1}.
	
\end{itemize}

In the present paper we address two questions that arise in this context.

{\def\leftmargini{9.9em}
\begin{itemize}
\item[{\bf Question 1:}] Is there a natural way to package the field functors and the 
 \\
universal correlators into a single mathematical quantity?
\end{itemize}

\noindent
The answer to this question is affirmative. It combines in a novel way field content
and correlators of a quantum field theory in a single mathematical quantity.
To arrive at this conclusion requires a 
further refinement of the setup. Our starting point are the modular functors given 
by string-net constructions. In our context, a modular functor is a functor from a
geometric two-dimensional category to an algebraic two-dimensional category. We 
observe in this article 
that it is very natural to work with two-dimensional categorical structures 
that are double categories rather than, as in \cite{fusY2}, bicategories. A double 
category has two types of 1-morphisms, known as horizontal and vertical 1-morphisms.
For each type there is a composition, being strictly associative for vertical
1-morphisms, but associative only up to associators for horizontal 1-morphisms.
 
On the geometric side we deal with the double category $\dbord$ of open-closed
two-dimensi\-o\-nal bordisms,
which combines two obvious types of morphisms between manifolds with boundary:
A horizontal 1-morphism in $\dbord$ is, as usual for a bordism category, 
a cospan $\overline\ell \,{\to}\, \surf \,{\leftarrow}\, \ell'$ of manifolds,
with $\ell$ and $\ell'$ closed oriented manifolds and $\surf$ a compact oriented
manifold with boundaries. The vertical 1-morphisms in $\dbord$ are smooth embeddings
of manifolds. Hereby the double categorical setting combines two different notions
of locality in field theory, which is highly satisfactory.
On the algebraic side, another double category $\dprof$ imposes itself. $\dprof$
has profunctors (also known as bimodules) as horizontal 1-morphisms and
ordinary functors as vertical 1-morphisms. Indeed, a string-net construction
naturally associates a profunctor to a bordism and a functor to
a smooth embedding. Our first main result (Theorem \ref{thm:dfunsn}) is:

\begin{thmI} 
Let $\cb$ be a pointed pivotal bicategory.
There is a symmetric monoidal double functor 
  \be
  \dSNb\Colon \dbord \rarr~ \dprof \,.
  \ee
\end{thmI}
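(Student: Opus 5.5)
The plan is to build $\dSNb$ on top of the bicategorical string-net modular functor $\SNb\colon \bbord\to\bprof$ from \cite{fusY2}. That functor already supplies everything in the horizontal direction: cylinder categories, profunctors assigned to bordisms, coherence data for gluing, and the symmetric monoidal structure. So the only new ingredient is the vertical direction, together with its compatibility with what is already there.

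Concretely I would proceed in five steps.

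\begin{enumerate}
\item \textbf{Objects and horizontal 1-morphisms.} On objects, a closed oriented 1-manifold $\ell$ goes to the linear category $\SNb(\ell)$. Its objects are finite sets of marked points on $\ell$ labeled by 1-morphisms of $\cb$, with 1-cells labeled by objects of $\cb$, and its morphisms are string nets on the cylinder $\ell\times\Di$. A horizontal 1-morphism $\surf\colon\ell\PtO\ell'$ goes to the profunctor $\SNb(\surf;-,\backsim)$ of string-net spaces on $\surf$ with prescribed boundary values.

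\item \textbf{Vertical 1-morphisms.} A smooth embedding $f\colon\ell\to\ell''$ must be sent to a functor $\SNb(\ell)\to\SNb(\ell'')$. I would define it by pushing forward marked points and labels along $f$. The complement $\ell''\setminus f(\ell)$ consists of closed segments. I would fill each segment with the identity 1-morphism of the adjacent object label, and with the distinguished object $\disto$ of the pointed bicategory where a whole component is not hit. This is exactly where pointedness is used. On morphisms, a string net on $\ell\times\Di$ is pushed forward along $f\times\id_\Di$ and extended by trivial strands.

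\item \textbf{2-cells.} A square consists of bordisms $\surf\colon\ell\PtO\ell'$ and $\surfhat\colon\hat\ell\PtO\hat\ell'$ together with a compatible embedding $F\colon\surf\to\surfhat$ restricting to vertical morphisms on the boundaries. It must give a natural transformation $\SNb(\surf;-,\backsim)\Rightarrow\SNb(\surfhat;F_*-,F_*\backsim)$, defined again by pushforward of string nets and extension by the identity/distinguished labels on $\surfhat\setminus F(\surf)$. Naturality holds because pushforward commutes with stacking cylinders.

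\item \textbf{Well-definedness.} Pushforward must respect the local string-net relations. This holds because embeddings carry discs to discs and the extended region carries only identity decorations.

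\item \textbf{Functoriality and monoidality.} Vertical functoriality is strict, since pushforwards compose strictly. Compatibility of the 2-cell assignment with horizontal gluing follows by computing both sides on representatives. The horizontal compositor and unitor are inherited from \cite{fusY2}, so the double functor axioms reduce to checking that these coherence isomorphisms are natural with respect to the pushforward squares. Symmetric monoidality is inherited from disjoint union, and the monoidal structure on $\dprof$ is the Deligne-type product of linear categories.
\end{enumerate}

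The main obstacle is the naturality of the horizontal compositor with respect to vertical embeddings. The gluing isomorphism $\SNb(\surf';-,\backsim)\diamond\SNb(\surf;-,\backsim)\cong\SNb(\surf'\cup_{\ell'}\surf;-,\backsim)$ is a coend, realised by cutting string nets transversally to $\ell'$. A pushforward may introduce new identity strands crossing the gluing circle, or components labeled by $\disto$.

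To handle this, I would first show that inserting identity strands is an isomorphism in the coend. The reason is that an identity 1-morphism is isomorphic to the empty labeling via the pivotal unit constraints. Granting this, the compositor square commutes by a skein argument on each disc-like neighbourhood of the inserted region.

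A secondary subtlety is that embeddings of surfaces need not be collared in a compatible way. I would fix collars in the definition of the vertical 2-cells of $\dbord$ so that the boundary restrictions are genuine vertical morphisms.
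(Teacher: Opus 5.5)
This is essentially the paper's argument: its proof only says that $\SNb$ extends via the functoriality of string-net spaces under oriented embeddings, with the symmetric monoidal structure induced by $\SNb(\surf\sqcup\surf')\simeq\SNb(\surf)\otimes\SNb(\surf')$, and your pushforward construction spells this out in more detail. A few details need adjusting:
\begin{itemize}
\item The main obstacle you identify does not arise. The complement of an embedded one-manifold or surface only borders $\disto$-labelled regions, so it is simply coloured by $\disto$ and no strands are inserted. Compatibility with gluing therefore holds directly on representatives.
\item You should add that isotopic embeddings induce the same transformation, because 2-cells of $\dbord$ are isotopy classes.
\item The monoidal product on $\dprof$ is the naive enriched tensor product, not a Deligne-type one.
\end{itemize}
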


By applying this theorem to the two pointed pivotal bicategories $\cbc$ and $\cfrc$,
we identify the natural setting that makes it possible to
describe field functors and universal correlators in a single structure.
We find (Theorem \ref{thm:vtransUCor}):

\begin{thmI} 
Let $\cc$ be a \ko-linear pivotal tensor category. There is a canonical monoidal 
vertical transformation 
  \be
  \ducorc \Colon \dSNfrc \xRightarrow{\phantom{xx}} \dSNc \,.
  \label{eq:intro.vert}
  \ee
The component of $\ducorc$ at an object $\ell$ in $\dbord$ is given by the field 
functor $\df_{\!\ell}$ in \eqref{eqI:fifu}, while its component at a horizontal 
1-morphism $\surf$ in $\dbord$ is the universal cor\-re\-lator $\ucorc(\surf)$
\eqref{eqI:ucorc(surf)}.
\end{thmI}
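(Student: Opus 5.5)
The plan is to build $\ducorc$ directly from the string-net description of $\dSNfrc$ and $\dSNc$ provided by the first theorem (Theorem \ref{thm:dfunsn}). For each pointed pivotal bicategory, the double functor sends an object $\ell$ to the cylinder category, a horizontal 1-morphism $\surf$ to a string-net profunctor, and a vertical embedding to the functor obtained by pushing string nets forward along it. The forgetful functor $u\colon \cfrc\Rarr~\cbc$ is a pointed pivotal pseudofunctor, separable Frobenius. Applying $u$ to the labels of a $\cfrc$-string net gives a $\cbc$-string net.

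The first step is the object components. On $\ell$, the functor $\df_{\!\ell}$ on generating boundary data simply applies $u$. To make it respect the Karoubian or idempotent-completed structure of the cylinder categories, I would insert the Frobenius structure maps of $u$, i.e.\ the separability idempotents, at vertices. This recovers the field functors \eqref{eqI:fifu}.

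The second step is the horizontal components. For $\surf\colon\ell\PtO\ell'$, the component is the universal correlator $\ucorc(\surf)$ \eqref{eqI:ucorc(surf)}, which relabels a $\cfrc$-string net via $u$ and again inserts the structure morphisms of $u$. A vertical transformation must supply a globular 2-cell, i.e.\ a natural transformation between the profunctors $\SNfrc(\surf;-,\backsim)$ and $\SNc(\surf;\df_{\!\ell}-,\df_{\!\ell'}\backsim)$. Naturality in the boundary arguments follows because $u$ commutes with the composition of labels. Well-definedness on string-net equivalence classes follows because $u$ preserves local relations: it is a pivotal functor, so evaluation of disks commutes with $u$ up to the Frobenius structure. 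Here $\Delta$-separability makes the inserted idempotents compatible.

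The third step is to check the axioms of a vertical transformation.
\begin{itemize}
\item \textbf{Compatibility with vertical 1-morphisms.} For an embedding $\varphi$, pushforward along $\varphi$ and relabeling by $u$ commute strictly, since both are local operations. This gives the naturality squares.
\item \textbf{Compatibility with horizontal composition.} Given $\surf'\circ\surf$ glued along $\ell'$, the composite profunctor is a coend over $\SNfrc(\ell')$, and gluing string nets along $\ell'$ must intertwine the two universal correlators with the comparison maps of the double functors. This amounts to showing that the coend over $\SNfrc(\ell')$, mapped to the coend over $\SNc(\ell')$ via $\df_{\!\ell'}$, agrees with the correlator of the glued surface.
\item \textbf{Compatibility with horizontal units.}
\item \textbf{Monoidality.} This is immediate from disjoint union.
\end{itemize}

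I expect the horizontal composition axiom to be the main obstacle. Gluing a $\cfrc$-net yields, after relabeling, idempotent insertions on both sides of the glued boundary. I would use the Frobenius relations of $u$ ($\Delta$-separability, so $m\circ\Delta=\id$ on the algebra) to fuse the two inserted idempotents into a single one. I would then invoke the Frobenius-functor compatibility to slide them across the seam, reducing to the correlator of the glued surface.
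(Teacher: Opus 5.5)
Your plan is the paper's proof. Lemma \ref{lem:ducorc=verttrans} and the monoidality remark after it (equivalently Theorem \ref{thm:vtransRSF} applied to $u$, which rests on Theorem 4.10 of \cite{fusY2}) identify the following, just as you do:
horizontal functoriality with compatibility of $\ucorc$ with sewing;
vertical naturality with compatibility with embeddings;
monoidality with disjoint union.

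One step fails as written, however.

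\emph{The description of the components is wrong.} $u$ is not a pseudofunctor. Its lax and colax structure maps are the projection $u(X)\oti u(Y)\to u(X\otA Y)$ and its section, which are not mutually inverse; $u$ is only a rigid separable Frobenius functor. Consequently $\df_{\!\ell}$ and $\ucorc(\surf)$ are not obtained by inserting structure maps at vertices alone. They $u$-conjugate the vertices \emph{and} fill every 2-cell labeled by an algebra $A$ with a full Frobenius graph.

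Your vertex-only prescription is not even well defined. Two parallel lines $X,Y$ bounding an $A$-labeled strip are equal, by evaluating a disk, to a single vertex labeled $\id_{X\otA Y}$. Under $u$-conjugation that vertex becomes the separability idempotent on $u(X)\oti u(Y)$, whereas the two parallel lines become the identity. The prescription would also send an $A$-labeled circle without marked points to the empty decoration, so it does not recover \eqref{eqI:fifu}. Use the actual definitions, which the statement fixes anyway. Your $\Delta$-separability argument for fusing idempotents at the seam is then really a statement about the Frobenius graphs, and the sewing axiom goes through.

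\emph{Horizontal unitality is left unargued.} It holds by definition. $\dSNfrc$ and $\dSNc$ send $U_\ell$ to the hom-profunctors $\SNfrc(\ell\Times\Di)$ and $\SNc(\ell\Times\Di)$, and $\df_{\!\ell}$ acts on morphisms precisely by $\ucorc(\ell\Times\Di)$.
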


At this point it is reasonable to ask how closely the two pivotal pointed
bicategories $\cfrc$ and $\cbc$ are related. As it turns out, $\cfrc$ can be seen as 
being obtained from $\cbc$ by a suitable completion -- 
an orbifold completion in the sense of \cite{caRun3}, which is a 
variant of higher Karoubification, or condensation \cite{gaJo2}. This leads us to 

{\def\leftmargini{9.9em} 
\begin{itemize}
\item[{\bf Question 2:}] 
How close are the bicategorical string-net \\ modular functors for $\cfrc$ and $\cbc$?
\end{itemize}

\noindent
In Theorem \ref{thm:eqvmf} we give a concise answer in the language of double
categories: The monoidal vertical transformation $\ducorc$ \eqref{eq:intro.vert} is a
monoidal equivalence. With the help of constructions \cite{wesSh} based on the double 
categorical notions of conjoints and companions, $\ducorc$ lifts to an equivalence 
  \be
  \widehat{\ducorc}\Colon\SNfrc \xRightarrow{~\simeq\,} \SNc
  \ee
in the world of monoidal bicategories, Theorem \ref{thm:eqvmf} condenses the
following two results about field functors and universal correlators, which
are both obtained by geometric constructions using string nets:
 \Itemize

 \item 
Theorem \ref{thm:Feq} states that each of the field functors $\df_{\!\ell}$
is part of an adjoint equivalence of categories: 	

\begin{thmI} 
Let $\cc$ be a \ko-linear pivotal tensor category and $\ell$ a compact oriented
one-ma\-ni\-fold with possibly non-empty boundary. There is an adjoint equivalence 
  \be
  \df{\!_\ell}^{} ~\colon~ \SNfrc(\ell)
  \begin{tikzcd}[column sep=2.2em]
  \hspace*{-0.4em} \ar[yshift=4.5pt]{r} & \hspace*{-0.4em}
  \ar[yshift=-3.4pt]{l}[swap,yshift=-0.9pt]{\simeq}
  \end{tikzcd}
  \SNc(\ell) ~\colon\, \imath_\ell^{} \,.
  \ee
\end{thmI}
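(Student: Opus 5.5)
The field functor $\df_{\!\ell}$ forgets Frobenius-algebra labels, and I propose to build an inverse $\imath_\ell$ that \emph{inserts} the trivial Frobenius algebra $\tu$. Concretely, $\SNfrc(\ell)$ has as objects finite sequences of marked points on $\ell$, each labeled by a bimodule between the algebras labeling adjacent segments. Its morphisms are $\cfrc$-colored string nets on the cylinder $\ell\Times\Di$ modulo local relations. $\SNc(\ell)$ is the same with all 2-cells labeled by the single object of $\cbc$ and marked points labeled by objects of $\cc$.

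Define $\imath_\ell$ as follows. A $\cc$-labeled sequence goes to the same sequence with every segment (or 2-cell) labeled by the distinguished object $\tu\iN\cfrc$. Each object $X\iN\cc$ is then viewed as a $\tu$-$\tu$-bimodule. A $\cc$-string net is read verbatim as a $\cfrc$-string net with all regions colored $\tu$. This is well defined because the local relations for $\tu$-colored regions coincide with those in $\cc$, since $\tu$-bimodules over $\tu$ are just $\cc$. The composite $\df_{\!\ell}\circ\imath_\ell$ is then the identity on the nose, or up to the canonical isomorphism $X\otimes_\tu Y\cong X\otimes Y$. This gives the counit of the adjunction.

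The substantive step is a natural isomorphism $\eta\colon\id\Rightarrow\imath_\ell\circ\df_{\!\ell}$. Take an object: points $p_i$ with bimodules ${}_{A_{i}}M_{i\,A_{i+1}}$, with segments labeled $A_i$. Its image under $\imath_\ell\df_{\!\ell}$ has the same points labeled by the underlying objects $u(M_i)$, and all regions labeled $\tu$. I would construct $\eta$ as a string net on the cylinder $\ell\Times\Di$. In the region between the two boundary circles, insert a horizontal defect line labeled by the bimodule ${}_{A_i}(A_i)_{\tu}$, i.e.\ $A_i$ regarded as an $A_i$-$\tu$-bimodule, running across each $A_i$-colored strip. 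Near each $p_i$, place the canonical bimodule isomorphism $A_i\otimes_{A_i}M_i\cong u(M_i)\otimes_\tu A_{i+1}$, suitably realized by the action morphisms. The candidate inverse uses the reversed defect, labeled $A_i$ as a $\tu$-$A_i$-bimodule, together with the dual coactions.

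Checking that the two composites are identities reduces to two local string-net moves. The first is $A\otimes_\tu A\to A$ followed by projection, which is handled by $\Delta$-separability: the bubble $m\circ\Delta=\id_A$ removes the doubled defect. The second is $A\otimes_A A\cong A$, which is handled by the Frobenius and symmetry properties that allow the defect line to be contracted. Naturality in morphisms requires sliding a $\cfrc$-string net through the defect line; this follows from the local relations, because bimodule morphisms commute with the action maps. For a 1-manifold with boundary, the boundary components of $\ell$ require only free-boundary versions of the same moves. The triangle identities for the adjoint equivalence can then be arranged by the standard adjustment of $\eta$, given that both maps are isomorphisms.

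I expect the main obstacle to be showing that $\eta$ and its proposed inverse are genuinely inverse in the string-net quotient, and not merely locally. Around a circle component the segments close up, and one must verify that the defect line labeled by the $A_i$ composes consistently all the way around. I would first try to reduce this to the statement that ${}_{A}A_{\tu}$ and ${}_{\tu}A_{A}$ exhibit $A$ and $\tu$ as Morita-related in the orbifold-completion sense. That is, their composite over $\tu$ is $A$ as a bimodule (idempotent splitting), and over $A$ it is a retract of $\tu$ that is an isomorphism on the relevant string-net spaces, which is where $\Delta$-separability enters. Once this 2-categorical equivalence in $\cfrc$ is established, it should transport to the cylinder categories by functoriality of string nets under local insertion of defects.
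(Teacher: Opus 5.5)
Your architecture is the paper's: the same $\imath_\ell$ induced by $\cbc\hookrightarrow\cfrc$, the identity $\df_{\!\ell}\circ\imath_\ell=\id$, a comparison isomorphism given by a string net on $\ell\times I$ in which an $A$-colored part meets a $\tu$-colored part along $A$ viewed as a one-sided module, the reflected string net as inverse, and naturality by sliding. The gap is in your description of $\imath_\ell\df_{\!\ell}(a)$. It is \emph{not} the bare $\tu$-colored datum with points labeled $u(M_i)$. By \eqref{eq:pic:fl}, $\df_{\!\ell}(a)$ is the object of the Karoubi envelope cut out by the idempotent $P$ obtained by inserting a full Frobenius graph, i.e.\ a ring of $A_i$-arcs attached to the $u(M_i)$-lines by the actions. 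With your target, $\eta_a$ is at best a split monomorphism. When you contract the $A$-strip in $\eta_a\circ\eta_a^{-1}$ via $A\otimes_A A\cong A$, the defect does not disappear: it leaves exactly this $A$-ring behind, so the composite is $\imath_\ell(P)$, not the identity. Concretely, take $\cc=\vct$, $A=\mathrm{Mat}_n(\ko)$ with counit $n\,\mathrm{tr}$, and a single point labeled $M=A$. Then $P(x)=\frac1n\mathrm{tr}(x)\,1$ has rank one, so $a\not\cong\imath_\ell(u(a))$ for $n\geq2$. The argument works only after taking the target to be $\imath_\ell\df_{\!\ell}(a)$, whose identity \emph{is} $\imath_\ell(P)$. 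This is why the paper's $(\varepsilon_\ell)_a$ in \eqref{eq:counit_a} carries a Frobenius graph in its $\tu$-colored half. That graph also cuts the $\tu$-region into discs, which is what makes local absorption moves such as \eqref{eq:absorbA} applicable at all.

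There are two further problems. First, on a circle component without marked points your $\eta_a$ has no crossing vertices. The $\tu$-strip in $\eta_a^{-1}\circ\eta_a$ is then a genuine annulus, and you get a non-contractible loop labeled by the free bimodule $A\otimes A\cong A\oplus C$. That loop equals $\id_a$ plus the loop of $C$, which does not vanish in general; already for $A=\ko\oplus\ko$ in $\vct$, $C$ is the sum of the two off-diagonal simple bimodules. You must first pass to an isomorphic object with a marked point. The paper does this, and simultaneously handles general objects of the Karoubi envelope, which your description omits, by writing every idempotent as $\grph_{X,X,Z;\varphi}$ and using the non-contractible $Z$-circle as the interface. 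Second, your proposed reduction to a Morita equivalence is false. ${}_AA\otimes_\tu A_A$ is the free bimodule $A\otimes A$, of which $A$ is only a retract by $\Delta$-separability, so these bimodules do not give an equivalence $A\simeq\tu$ in $\cfrc$. Indeed $A$ need not be Morita equivalent to $\tu$ at all (e.g.\ $A=\ko\oplus\ko$). $A$ is only a condensate of $\tu$, and the equivalence of cylinder categories appears only after idempotent completion.
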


\item
By theorem \ref{thm:UcorIso},
universal correlators give {\em isomorphisms} of string-net spaces:

\begin{thmI} 
Let $\cc$ be a \ko-linear pivotal tensor category
and $\surf\colon\ell\ptO\ell'$ an oriented open-clo\-sed bordism. Then
for every boundary datum $(a,a')\iN\SNfrc(\overline\ell\,{\sqcup}\,\ell')$,
the universal correlator provides an isomorphism 
  \be 
  \ucorc(\surf)_{a,a'} \Colon
  \SNfrc(\surf;a,a')\iso\SNc(\surf;\df_{\!\ell}^{}\,a,\df_{\!\ell'}^{}\,a')
  \ee of string-net spaces.
\end{thmI}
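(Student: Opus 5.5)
We construct an explicit inverse to $\ucorc(\surf)_{a,a'}$ using the adjoint equivalence of Theorem~\ref{thm:Feq}, and check both composites by local skein moves. Recall how $\ucorc(\surf)$ acts on a $\cfrc$-colored string net on $\surf$. Each 2-cell carries a $\Delta$-separable symmetric Frobenius algebra $A$, each 1-cell a bimodule, each vertex a bimodule morphism. Applying the forgetful functor $u$ turns this into a $\cc$-colored graph. We then \emph{fill} every 2-cell labeled $A$ with a network of $A$-lines, for instance the dual of a triangulation, or a single $A$-loop network built from product and coproduct. Because $A$ is $\Delta$-separable and symmetric, the resulting class in $\SNc(\surf;\df_{\!\ell}a,\df_{\!\ell'}a')$ does not depend on the chosen filling. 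This independence is what makes $\ucorc$ well defined, and I take it as established earlier in the paper.

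The candidate inverse $\Phi$ goes the other way. Take a $\cc$-colored string net $\gamma$ on $\surf$ with boundary values $\df_{\!\ell}a$ and $\df_{\!\ell'}a'$. Each 2-cell of the world sheet carries its algebra $A$, and we regard $\gamma$ inside the cell as a configuration of $\cc$-lines. We convert it into a $\cfrc$-colored net by inducing every $\cc$-colored line $x$ to the free bimodule $A\otimes x\otimes A$ (or to the appropriate module near defect lines and boundaries). Each coupon $f$ becomes the morphism $A\otimes f\otimes A$, composed with the Frobenius structure maps so that the result is a bimodule morphism.

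Near the boundary we use the unit and counit of the adjoint equivalence $\df_{\!\ell}\dashv\imath_\ell$ from Theorem~\ref{thm:Feq}. These identify $a$ with $\imath_\ell\df_{\!\ell}a$, so $\Phi$ lands in $\SNfrc(\surf;a,a')$. Two things then need checking.

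\textbf{Well-definedness.} $\Phi$ must respect the local relations of $\SNc$. These relations hold inside disks, and on a disk the induced net is computed by the functor $A\otimes-\otimes A$ applied to a $\cc$-morphism. That functor is linear and monoidal up to the Frobenius structure maps, so evaluation in the disk is preserved.

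\textbf{Composites.}
\begin{itemize}
\item $\ucorc\circ\Phi=\id$. After forgetting and filling, every induced line $A\otimes x\otimes A$ contributes $A$-loops that merge with the filling network. By $\Delta$-separability ($m\circ\Delta=\id_A$) and the unit and counit axioms, these loops reduce to the original $\gamma$, up to the boundary isomorphisms coming from the adjunction.
\item $\Phi\circ\ucorc=\id$. Here we must show that a $\cfrc$-net equals the net obtained by forgetting to $\cc$ and re-inducing. For each bimodule $M$ over $A|B$, the action and coaction maps exhibit $M$ as a retract of $A\otimes M\otimes B$. Local moves in $\SNfrc$ then let us contract the filling $A$-lines onto the bimodule lines through these retractions.
\end{itemize}

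The main obstacle is the second composite on surfaces of nontrivial topology. There the filling network wraps around handles and around circle boundaries, and the contraction is not local. My plan is to reduce to elementary bordisms. Since $\ucorc$ is the horizontal component of a vertical transformation (Theorem~\ref{thm:vtransUCor}), it is compatible with horizontal composition (gluing) of bordisms. Isomorphisms are also preserved under disjoint union. Every open-closed bordism decomposes into disks, cylinders and pairs of pants with open and closed boundary. The two composite checks are then needed only for these pieces, provided gluing is compatible with the boundary identifications. That compatibility follows from the string-net space of a glued surface being the coend over the cylinder category. The coend is preserved by the equivalences $\df_{\!\ell}$ of Theorem~\ref{thm:Feq}, so isomorphisms on the pieces glue to an isomorphism on $\surf$.
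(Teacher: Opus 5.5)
\textbf{The gap: the interior part of your inverse $\Phi$ is not well defined.}

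Your handling of the boundary via the counits of Theorem~\ref{thm:Feq} is the same as the paper's. The problem is in the interior.

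An element of $\SNc(\surf;\df_{\!\ell}a,\df_{\!\ell'}a')$ is a linear combination of $\cc$-colored graphs on $\surf$. Only the boundary data carry Frobenius algebras. So there is no ``algebra $A$ of each 2-cell of the world sheet'' to induce over. You are implicitly using the world sheet you started from, but $\Phi$ must be defined on the whole space, independently of any world sheet.

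Even after choosing some $A$, the induction $x\mapsto A\otimes x\otimes A$ is not a functor of bicategories $\cbc\to\cfrc$. Composition in $\cfrc$ is $\otA$, and $(A\otimes x\otimes A)\otA(A\otimes y\otimes A)\cong A\otimes x\otimes A\otimes y\otimes A$, not $A\otimes x\otimes y\otimes A$. Likewise the unit $\tu$ goes to $A\otimes A$, which is not the identity 1-morphism $A$ of $A$.

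Consequently, a coupon $f\colon x\otimes y\to z$ has no canonical image. Local relations of $\SNc$ are also not respected: a $\tu$-labeled line, which may be erased in $\SNc$, would become an $A\otimes A$-line that cannot be erased. Your well-definedness step (``monoidal up to the Frobenius structure maps, so evaluation in the disk is preserved'') is therefore false, and both composite checks rest on it. The second check (``contract the filling $A$-lines onto the bimodule lines'') is in any case only a slogan.

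\textbf{The fix, as in the paper.} Use the trivial algebra in the interior, i.e.\ the strict inclusion $\cbc\hookrightarrow\cfrc$, $*\mapsto\tu$ (the map $\imath_\surf$), and set
$\Phi(\grph)=(\varepsilon_\ell)_a^{-1}\cdot\imath_\surf(\grph)\cdot(\varepsilon_{\ell'})_{a'}$.
Then:
\begin{itemize}
\item Well-definedness is automatic.
\item $\ucorc\circ\imath_\surf=\id$ on the nose, since $u$ composed with this inclusion is the identity.
\item Together with $\ucorc((\varepsilon_\ell)_a)=\df_{\!\ell}a$ and compatibility with sewing, this gives $\ucorc\circ\Phi=\id$.
\end{itemize}

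For $\Phi\circ\ucorc=\id$, the missing idea is a \emph{local} absorption move. The Frobenius structure maps descend to bimodule maps between relative tensor products (e.g.\ $A\otA A\cong A$). Hence, by $\Delta$-separability, a $\tu$-labeled cell bounded by $A$-lines of a Frobenius graph can be relabeled by $A$: remove the bubble and reinsert it with $A$ inside. Doing this cell by cell turns $(\varepsilon_\ell)_a^{-1}\cdot\imath_\surf(\ucorc(\grph'))\cdot(\varepsilon_{\ell'})_{a'}$ back into $\grph'$.

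Because the move is local, the topology of $\surf$ never enters, so the obstacle you anticipate (fillings wrapping around handles) does not arise. Your reduction to elementary bordisms via coends and the equivalences $\df_{\!\ell}$ is sound in itself, given horizontal functoriality of $\ucorc$. But it only relocates the problem: you would still need a correct inverse on caps, pairs of pants and the open-closed generators, and your $\Phi$ has the same defects there.
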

\end{itemize}

As we explain in Subsection \ref{sec:cft1}, this shows that for the class of conformal
field theories to which our results apply, CFT correlators exhaust the complete space
of conformal blocks, in the sense that every conformal block can be expressed as a 
linear combination of correlators.

 \medskip

The rest of this paper is organized as follows. Rather than presenting the final
picture in terms of double categories right away, we start in Section \Ref{sec:fresh}
with presenting a few of the results of \cite{fusY,fusY2} in a way adapted to our
present perspective. We then employ, in Section \ref{sec:fifu+ucor:equiv}, string-net 
techniques to prove that field functors are part of an adjoint equivalence and that
universal correlators are isomorphisms. In Section \ref{sec:dbl-1} we review 
pertinent double-ca\-te\-gorical structures. In the final Section \ref{sec:dbl-2}
we then use the double-categorical notions to state the results of Section
\ref{sec:fifu+ucor:equiv} concisely in a conceptual manner.

\bigskip

\emph{Acknowledgements}:
J.F.\ is supported by VR under project no.\ 2022-02931.
C.S.\ acknowledges support by the Deutsche Forschungsgemeinschaft (DFG, German
Research Foundation) under Germany's Excellence Strategy - EXC 2121 `
`Quantum Universe'' - 390833306 and the Collaborative Research Center - SFB 1624 `
`Higher structures, moduli spaces and integrability'' - 506632645.
Y.Y. acknowledges support by the Deutsche Forschungsgemeinschaft (DFG, 
German Research Foundation) under the Walter Benjamin Programme - 694132.

%%%%%%%%%%%%%%%%%%%%%%%%%%%%%%%%%%%%%%%%%%%%%%%%%%%%%%%%%%%%%%%%%%%%%%%%

\section{A reminder about string-net constructions} \label{sec:fresh}

In this paper, we use a \emph{string-net construction} to obtain a modular functor.
In the present section and in Section \ref{sec:fifu+ucor:equiv}
we work with the following variant of the notion of a modular functor: An 
\emph{open-closed modular functor} -- or \emph{modular functor}, for short -- 
is a symmetric monoidal pseudofunctor
  \be
  \bl \Colon \bbord \rarr~ \bprof
  \label{eq:def:bl}
  \ee 
between the following symmetric monoidal bicategories: $\bbord$ is the bicategory
having compact oriented one-manifolds $\ell$ with possibly non-empty boundary as objects, \twodim\ oriented open-closed bordisms $\ell\PtO\ell'$ as 1-morphims,
and isotopy classes of diffeomorphisms as 2-morphisms; $\bprof$ -- for \ko\ a field,
which throughout the paper we take to be algebraically closed -- is the bicategory
of \ko-linear essentially small categories, profunctors and natural transformations. 
As we will see, profunctors are indispensable in our context. 
A 1-morphism between two open-closed modular functors $\bl$ and $\bl'$
is a pseudonatural transformation $\bl \Rarr~ \bl'$ of modular functors. 
This provides a natural notion of equivalence of open-closed modular functors.

Any string-net construction requires a categorical input datum. The simplest and 
and most common such datum is a pivotal, or even spherical, fusion category $\cc$ 
\cite{leWe,kirI24}. An object of $\bbord$ is a disjoint union of oriented intervals
$\di$ and oriented circles $\ds^1$. Accordingly, the categories that the modular
functor $\bl$ assigns to the one-manifolds $\di$ and $\ds^1$ are important
characteristics of the modular functor. In the case of a pivotal fusion category
as input, we expect equivalences
  \be
  \bl_\cc(\di) \simeq\cc \qquad\text{and}\qquad \bl_\cc(\ds^1) \simeq \czc
  \label{eq:blc-I-S1}
  \ee
of \ko-linear categories, where $\czc$ is the Drinfeld center of $\cc$. 

For the construction in the present paper, we work with a more general input datum
\Cite{Thm.\,3.36}{fusY2}: a bicategory $\cb$ that is \emph{pointed}, i.e.\ is endowed 
with a distinguished object $\disto\iN\cb$, and 
strictly \emph{pivotal}, i.e.\ rigid with a strict equality between
the identity and double dual pseudofunctors. This -- mathematically natural --
setting is also the appropriate one for the applying the string-net construction to 
the construction of correlators of two-dimensional conformal field theories.

Let us describe the basic idea of this bicategorical string-net construction. A 
$\cb$-\emph{decoration} $a$ on a compact oriented one-manifold $\ell$, possibly with
boundary, consists of a pair of data: first, a collection of finitely many oriented
points in the interior of $\ell$, whose complement partitions $\ell$ into finitely 
many intervals; second, an assignment of an object in $\cb$ to each of the intervals
in this partition, and of a compatible 1-morphism in $\cb$ to each of the oriented
points. If $\ell$ has a non-empty boundary, then the object that is assigned to any
intervals that contains a boundary point is required to be the distinguished object
$\disto\iN\cb$.
We allow for $\cb$-decorations for which the collection of points on a connected 
component of $\ell$ is empty; if such a connected component is a circle, then we can 
assign to it any object of $\cb$. 
The pivotal structure on $\cb$ is, by definition, the identity on objects
and induces an involution on 1-morphisms. As a consequence, any
$\cb$-decoration $a$ on a one-manifold $\ell$ induces a $\cb$-decoration
$\overline a$ on the one-manifold $\overline\ell$ with opposite orientation.

Now let $\surf\colon \ell\PtO\ell'$ be a two-dimensional bordism, and let $a$ and 
$a'$ be $\cb$-decorations on the one-manifolds $\ell$ and $\ell'$, respectively. 
(For our purposes, a bordim is a two-manifold with boundary; we do not consider
them up to diffeomorphism.) The basic building block of the string-net construction
is the assignment of a vector space $\snb(\surf;a,a')$ to these data. An element
of $\snb(\surf;a,a')$ is a linear combination of finite $\cb$-colored string diagrams
on $\surf$ that are compatible with the boundary datum $a'$ on $\ell$ and
$\overline{a}$ on $\overline\ell$, modulo the local relations that are captured by 
the graphical calculus for the pivotal bicategory $\cb$. The mapping class group
$\mcg(\surf)$ of $\surf$ acts naturally on the set of $\cb$-colored string diagrams; 
this induces a (genuine, non-projective) action of $\mcg(\surf)$ on
the vector space $\snb(\surf;a,a')$.

The first step in the construction of a modular functor consists in assigning to any
one-ma\-nifold $\ell$ a \ko-linear category $\snb(\ell)$, called the \emph{cylinder 
category} of $\ell$. By definition, $\snb(\ell)$ is the category that 
has $\cb$-de\-co\-ra\-tions on the one-manifold $\ell$ as objects and the vector 
spaces $\snb(\ell \Times \Di;a,a')$, with $\Di$ is the standard interval, as morphism
spaces. Morphisms of $\snb(\ell)$ can thus be represented by $\cb$-colored 
string diagrams on the cylinder $\ell\Times\Di$; composition in the cylinder category
is then induced by concatenating (or, in other words, stacking) string diagrams. 

We note a few properties of cylinder categories: For the one-manifold $\overline\ell$
with reversed orientation, the cylinder category $\snb(\overline\ell)$ is the
opposite category of $\snb(\ell)$. For a disjoint union $\ell\,{\sqcup}\, \ell'$, 
the cyclinder category comes with a canonical equivalence to the tensor product
of the cylinder categories of $\ell$ and $\ell'$,
  \be
  \snb(\ell\sqcup\ell') \simeq \snb(\ell) \otimes \snb(\ell') \,.
  \ee
Here the tensor product is the Cartesian product of $\vct$-enriched categories, 
i.e.\ objects are ordered pairs and morphism spaces are tensor products over $\ko$.

Invoking the concatenation of string nets, one sees that for every open-clo\-sed 
bordism $\surf\colon\ell\PtO\ell'$, the vector spaces $\snb(\surf;a,a')$ combine 
into a \ko-linear functor
  \be
  \snb(\surf) \Colon \snb(\ell)^{\op} \oti \snb(\ell') \rarr~ \vct \,.
  \label{eq:SNB(surfneu)}
  \ee
Again, vector spaces come with actions of the mapping class group $\mcg(\surf)$.
Put differently, we deal with a profunctor from $\snb(\ell)$ to $\snb(\ell')$;
we denote this profunctor by the same symbol:
  \be
  \snb(\surf) \Colon \snb(\ell)\Pto \snb(\ell') \,.
  \label{eq:SNB(surfneu)neu}
  \ee
It is worth stressing that here profunctors, rather then functors, arise naturally.
Since a profunctor is equivalent to a colimit-preserving functor between presheaf
categories, one could avoid working with profunctors.
These presheaf categories are, however, rather large. 

There is no reason to expect a cylinder category to be idempotent-complete.
The cylinder categories $\snb(\ell)$ indeed do not provide all the objects we need
for our constructions. In contrast, their Cauchy completions do.
Recall that a category is Cauchy complete iff it has 
all small absolute colimits. In the $\vct$-enriched setting, this translates to 
having all finite direct sums and all idempotent-splittings.
Hence we set up an open-closed modular functor 
  \be
  \SNb \Colon \bbord \rarr~ \bprof
  \label{eq:def:SNb}
  \ee
as follows: $\SNb$ sends an oriented one-manifold $\ell$ to the Cauchy completion
  \be
  \SNb(\ell) \coloneqq \mathrm{Cau(\snb(\ell))}
  \label{eq:SNb=(snb)}
  \ee
of the cylinder category $\snb(\ell)$. When $\snb(\ell)$ is complete
with respect to finite direct sums (which is e.g.\ the case if all spaces of 
2-morphisms of the bicategory $\cb$ are fi\-ni\-te-di\-rect-sum complete), then
$\SNb(\ell)$ coincides with the Karoubi envelope $\kar(\snb(\ell))$ of the cylinder 
category. In the sequel we will assume that this is indeed the case. Accordingly,
the objects of $\SNb(\ell)$, to which we also refer as the \emph{boundary data} on 
$\ell$, are idempotents in $\snb(\ell)$. 

The categories $\snb(\ell)$ and $\SNb(\ell)$ are not equivalent in the bicategory
$\mathcal{C}at_\ko$ of $\ko$-linear categories, linear functors and natural
transformations. They are, however, canonically equivalent in the bicategory
$\bprof$ (which is one of the advantages of working with profunctors). This follows 
from the fact that the composition of profunctors is via a coend (which exists
because the category $\vct$ of \emph{all} vector spaces is cocomplete), and that the 
coend over a cylinder category coincides with the coend over its Cauchy completion.
As a consequence, for any open-clo\-sed bordism $\surf\colon\ell\PtO\ell'$ we have
a canonical profunctor 
  \be
  \SNb(\surf) \Colon \SNb(\ell) \Pto \SNb(\ell') \,.
  \ee
More concretely, to any boundary datum 
  \be
  (a,a') \in \SNb(\ell)^{\op}\oti\SNb(\ell')
  \simeq \SNb(\overline\ell\,{\sqcup}\,\ell')
  \ee
there is associated a vector space $\SNb(\surf;a,a')$ whose elements are represented
by linear combinations of $\cb$-colored string diagrams on $\surf$ that are 
compatible with the boundary datum (in which appropriate idempotents have to be 
inserted). And again these vector spaces come with an action of the mapping class
group $\mcg(\surf)$. Note that a priori the vector space $\SNb(\surf;a,a')$
is not necessarily finite-dimensional; to attain finite-dimensionality,
further properties of $\cb$ are required.

\medskip

There are two particularly appealing classes of pointed pivotal bicategories $\cb$.
The first is the delooping $\cbc$ of a pivotal fusion  category $\cc$; in this case
$\SNc \,{\equiv}\, \SN_{\cbc}$ coincides (see \Cite{Rem.\,3.6.2}{fusY2}) with the 
modular functor $\bl_\cc$ that we alluded to in \eqref{eq:blc-I-S1}.
The other one is the pivotal bicategory $\cfrc$ of \emph{$\Delta$-separable
symmetric Frobenius algebras} in a pivotal fusion category $\cc$,\,%
 \footnote{~The bicategory $\cfrc$ we are considering here differs from the one
 considered in \cite{fusY,fusY2}, which it contains as a full sub-bicategory:
 in \cite{fusY,fusY2} the Frobenius algebras are in addition required to be simple
 and to satisfy a more restrictive separability condition.}
with the monoidal unit $\tu$, with its trivial structure of a Frobenius algebra, as 
distinguished object.

Both of these types of bicategories are based on an underlying pivotal fusion
category $\cc$. Moreover, there is an obvious functor between them, the forgetful 
functor
  \be
  u \Colon \cfrc \rarr~ \cbc
  \label{eq:ufr}
  \ee
that sends every Frobenius algebra in $\cc$ to the single 
object in the delooping $\cbc$ and takes every bimodule and bimodule morphism to
their underlying object and morphism, respectively, in $\cc$. It is therefore 
natural to wonder whether the string-net constructions $\SNfrc$ and
$\SNc$ for the same pivotal fusion category $\cc$ are related by a 1-morphism, and
if so, whether that 1-morphism is an equivalence. As we will now explain, such a
1-morphism can indeed be obtained is a natural manner. In subsequent sections we
will show that it is an equivalence.
 
A pseudonatural transformation from $\SNfrc$ to $\SNc$ has two layers of structure: 
first, a family of linear functors 
  \be
  \df_{\!\ell}^{} \equiv \df_{\!\cc}(\ell) 
  \Colon \SNfrc(\ell) \rarr~ \SNc(\ell)
  \label{eq:def(ell)}
  \ee
parametrized by the objects $\ell\iN\bbord$, and second, a family of
natural transformations
  \be
  \ucorc(\surf) \Colon \SNfrc(\surf;-,{\backsim}) \xRightarrow{\phantom{ww}}
  \SNc(\surf;\df_{\!\ell}-,\df_{\!\ell'}{\backsim})
  \label{eq:ucorc(surf)}
  \ee
parametrized by the 1-morphisms $\surf\colon\ell\PtO\ell'$ in $\bbord$.
 
We refer to the functors \eqref{eq:def(ell)} as the family of \emph{field functors},
and to the natural transformations \eqref{eq:ucorc(surf)} as the family of 
\emph{universal correlators}.
This terminology is borrowed from an application to conformal field theory, which we
will recall in Section \ref{sec:cft1} below, and which was the main theme of
\cite{fusY} and of Section 4 of \cite{fusY2}. The reader unfamiliar with conformal
field theory can safely regard the terms `field functor' and `universal correlator'
as mere vocabulary for the two structural layers of a pseudonatural transformation.

We define the field functors \eqref{eq:def(ell)} and universal correlators 
\eqref{eq:ucorc(surf)} as follows.
The functor $\df_{\!\ell}$ is the extension via Cauchy completion (which by the
universal property of Cauchy completion is unique) of a functor
  \be
  \mathrm{f}_{\ell}^{} \Colon \snfrc(\ell) \rarr~ \SNc(\ell) 
  \ee
whose domain is the cylinder category. The functor $\mathrm{f}_{\ell}^{}$ is,
in turn, given by the following prescription:
$\mathrm{f}_{\ell}^{}$ maps objects of $\snfrc(\ell)$ according to
  \be
{\tikzstyle{every picture}=[tikzfig]
  {\begin{tikzpicture}
	\begin{pgfonlayer}{nodelayer}
		\node [style=none] (113) at (0, 1) {};
		\node [style=none] (114) at (-1, 0) {};
		\node [style=none] (115) at (1, 0) {};
		\node [style=none] (116) at (0, -1) {};
		\node [style=bdot] (121) at (0, 1) {};
		\node [style=bdot] (122) at (0, -1) {};
		\node [style=none] (125) at (0, 1.5) {\scriptsize $X$};
		\node [style=none] (126) at (1.5, 0) {\scriptsize $Y$};
		\node [style=bdot] (127) at (1, 0) {};
		\node [style=none] (128) at (0, -1.5) {\scriptsize $Z$};
	\end{pgfonlayer}
	\begin{pgfonlayer}{edgelayer}
		\draw [style=vt-lgr, bend right=45] (114.center) to (116.center);
		\draw [style=vt-pi, bend right=45] (116.center) to (115.center);
		\draw [style=vt-lbl, bend right=45] (115.center) to (113.center);
		\draw [style=vt-lgr, bend right=45] (113.center) to (114.center);
	\end{pgfonlayer}
\end{tikzpicture}}
  {%
  }}%
 \quad\xmapsto{\phantom{xx}}\quad %
{\tikzstyle{every picture}=[tikzfig]
  {\begin{tikzpicture}
	\begin{pgfonlayer}{nodelayer}
		\node [style=none] (137) at (0, 1.625) {};
		\node [style=none] (138) at (-1.625, 0) {};
		\node [style=none] (139) at (1.625, 0) {};
		\node [style=none] (140) at (0, -1.625) {};
		\node [style=none] (125) at (0, 2.75) {\scriptsize $X$};
		\node [style=none] (126) at (2.75, 0) {\scriptsize $Y$};
		\node [style=none] (113) at (0, 1) {};
		\node [style=none] (114) at (-1, 0) {};
		\node [style=none] (115) at (1, 0) {};
		\node [style=none] (116) at (0, -1) {};
		\node [style=none] (129) at (0, 2.25) {};
		\node [style=none] (130) at (-2.25, 0) {};
		\node [style=none] (131) at (2.25, 0) {};
		\node [style=none] (132) at (0, -2.25) {};
		\node [style=none] (141) at (0, -2.75) {\scriptsize $Z$};
	\end{pgfonlayer}
	\begin{pgfonlayer}{edgelayer}
		\draw [style=vt-pi, bend right=45] (140.center) to (139.center);
		\draw [style=vt-lbl, bend right=45] (139.center) to (137.center);
		\draw [style=vt-lgr] (137.center)
			 to [bend right=45] (138.center)
			 to [bend right=45] (140.center);
		\draw [style=vt-bl-di] (116.center) to (132.center);
		\draw [style=vt-bl-di] (113.center) to (129.center);
		\draw [style=vt] (115.center)
			 to [bend right=45] (113.center)
			 to [bend right=45] (114.center)
			 to [bend right=45] (116.center)
			 to [bend right=45] cycle;
		\draw [style=vt] (131.center)
			 to [bend right=45] (129.center)
			 to [bend right=45] (130.center)
			 to [bend right=45] (132.center)
			 to [bend right=45] cycle;
		\draw [style=vt-bl-di] (115.center) to (131.center);
	\end{pgfonlayer}
\end{tikzpicture}}
  {%
  }}%

  \label{eq:pic:fl}
  \ee
to objects in the Cauchy-completed category $\SNc(\ell)$. Here the right hand side
shows the idempotent that underlies the object in the Cauchy completion $\SNc(\ell)$,
and is therefore depicted by a $\cc$-colored string net on the cylinder 
$\ell\Times\Di$. This string net is obtained as follows. First the boundary datum in
$\snfrc(\ell)$ is thickened to an annulus. Then the Frobenius algebra that colors a
2-cell of the so obtained annulus is replaced by a network of \emph{Frobenius lines},
i.e.\ lines that are colored with the object in $\cc$ that underlies the Frobenius
algebra. Technically, the network is required to furnish a \emph{full Frobenius graph}
in the sense of Definition 3.20 of \cite{fusY}; the vertices of the network are 
colored by structure morphisms of the Frobenius algebra. Further, bimodules over the
Frobenius algebras are replaced by the underlying objects in $\cc$ as well; left and 
right actions are used to connect Frobenius lines to bimodule lines. In order for
the so obtained string net on $\ell\Times\Di$ to actually constitute an idempotent,
the requirement of $\Delta$-separability on the Frobenius algebras must be imposed.
 
To see what happens on morphisms, we use the forgetful functor 
$u$ from \eqref{eq:ufr}. The functor $u$
is a Frobenius monoidal functor and therefore gives rise to the operation of
$u$-\emph{conjugation}, in the sense of Definition 2.26 of \cite{fusY2}; this 
allows us to define the map on the 0-cells of the string diagrams.

We have already seen that the functor $\mathrm{f}_{\ell}^{}$ does
not simply map a boundary datum in $\snfrc$ to a boundary datum in $\SNc^{}$ by
using the forgetful functor $u$, but rather it also adds a network of full 
Frobenius graphs. Here we consider a map between string nets that is obtained by 
employing the same logic: we apply $u$ to objects, $u$-conjugation to vertices,
and then add full Frobenius graphs. The following picture illustrates the resulting 
prescription:
  \be
  \hspace*{-1.5em}
  \scalebox{0.75}{%
{\tikzstyle{every picture}=[tikzfig]
  {\input{WS2.tikz}}
  {%
  }}%
} \hspace*{0.7em} \xmapsto{~~~~~~} \hspace*{-0.2em}
  \scalebox{0.75}{%
{\tikzstyle{every picture}=[tikzfig]
  {\input{WS3.tikz}}
  {%
  }}%
}
  \ee
It remains to check that this prescription indeed defines a linear map 
  \be
  \ucorc(\surf)_{a,a'} \Colon 
  \SNfrc(\surf;a,a') \rarr~ \SNc(\surf;\df_{\!\ell}\,a,\df_{\!\ell'}\,a') \,.
  \label{eq:ucorc-surf-aa'}
  \ee
This is a consequence of the functoriality of the string-net construction under rigid
separable Frobenius functors, i.e.\ \Cite{Def.\,2.28}{fusY2} 2-functors endowed
with a lax and a \colax\ structure\,%
 \footnote{~In \Cite{Def.\,2.28}{fusY2} the term `oplax' instead of `colax' was used.}
that fulfill special-Frobenius-like compatibility conditions which strictly preserve
adjoints. The pertinent rigid separable Frobenius functor is the forgetful functor
\eqref{eq:ufr}. Lastly, $\ucorc(-)$ is compatible with sewing in the obvious sense. 
As it turns out, a concise analysis of sewing is facilitated by using the language 
of double categories; we defer this issue to Section \ref{sec:dbl-2}.

\begin{rem} \label{rem:rigidF}
Every Frobenius functor preserves adjoints. For the case of monoidal (not necessarily
rigid) Frobenius functors this statement can be found in \cite{daPas}. The arguments
presented there can easily be generalized to the bicategorical setting. In the
present paper we continue to use the unframed version of graphical calculus developed
in Section 2 of \cite{fusY2} to define string nets; accordingly we work with strictly
pivotal bicategories and with rigid separable Frobenius functors between them, rather
than with the more general class of pivotal separable Frobenius functors.
\end{rem}

\begin{rem}
The closed sector of the modular functor $\bl_\cc$ is expected to be isomorphic to
the Reshetikhin-Turaev modular functor for the Drinfeld center $\czc$. When combined 
with the classification result for modular functors that was obtained in \cite{brWo},
this isomorphism implies in particular that, for $\cc$ pivotal fusion, the
equivalence $\bl_\cc(\ds^1)\,{\simeq}\,\czc$ in \eqref{eq:blc-I-S1} is even an 
equivalence of ribbon categories. 
Also, as shown in \cite{mswy}, for $\cc$ any pivotal finite tensor category a
string-net modular functor can be obtained by applying a two-dimensional admissible
skein module construction \cite{cogp4} to the tensor ideal of projective objects in
$\cc$ and performing a finite cocompletion, and this modular functor is equivalent
to the one built in \cite{lyub6} from the Drinfeld center of $\cc$.
\end{rem}

%%%%%%%%%%%%%%%%%%%%%%%%%%%%%%%%%%%%%%%%%%%%%%%%%%%%%%%%%%%%%%%%%%%%%%%%

\section{Field functors and universal correlators as equivalences}
\label{sec:fifu+ucor:equiv}

\subsection{Field functors are equivalences of cylinder categories}

We are now going to show that for every fixed compact oriented one-manifold $\ell$,
with possibly non-empty boundary, the field functor $\df_{\!\ell}$
is an equivalence of categories. To this end we construct a functor
  \be
  \imath_\ell^{} \Colon \SNc(\ell) \rarr~ \SNfrc(\ell)
  \label{eq:def:imath}
  \ee
which will be shown to be an essential inverse of the field functor 
$\df_{\!\ell} \colon \SNfrc(\ell) \Rarr~ \SNc(\ell)$.
We specify the functor $\imath_\ell^{}$ as follows. $\imath_\ell^{}$ interprets 
$\cc$-colored boundary data and string nets as $\cfrc$-colored ones, by using the 
inclusion $\cbc \,{\hookrightarrow}\, \cfrc$ of
pointed bicategories that sends the single object $*$ of $\cbc$ to the trivial 
Frobenius algebra $\tu \iN \cc$. The prescription on objects determines $\imath$
completely, because $\tu$-(bi)modules do not carry any information apart from their
underlying objects.
It is straightforward to see that $\imath_\ell$ provides a section for the field 
functor, i.e.
  \be
  \df_{\!\ell}^{}\circ\imath_\ell^{} = \id_{\SNc(\ell)} \,. 
  \label{eq:F.imath=1}
  \ee
We claim that the functors $\df_{\!\ell}$ and $\imath_{\ell}$
in fact furnish an adjoint equivalence $\SNfrc(\ell) \,{\simeq}\, \SNc(\ell)$ of
linear categories. To show this, we construct the counit 
  \be
  \varepsilon_\ell^{} \Colon
  \imath_\ell^{}\circ\df_{\!\ell}^{} \rarr~ \id_{\SNfrc(\ell)}
  \label{eq:counit}
  \ee
of the adjunction and show that it is an isomorphism.

For keeping the discussion uncluttered, in the pictures below we illustrate our
arguments by taking the one-manifold $\ell$ to be the circle $\ds^1$; the 
generalization to any compact oriented one-manifold is easy. Thus consider the
$\cfrc$-colored string net $[\grph]$ that is represented by the graph 
  \be
  \grph \equiv \grph_{\!X,Y,Z;\varphi} ~= \quad%
{\tikzstyle{every picture}=[tikzfig]
  {\begin{tikzpicture}
	\begin{pgfonlayer}{nodelayer}
		\node [style=none] (25) at (0, -0.5) {};
		\node [style=none] (26) at (-1.75, 0.25) {};
		\node [style=none] (27) at (1.75, 0.25) {};
		\node [style=none] (28) at (0, 3.75) {};
		\node [style=none] (29) at (-1.75, 3) {};
		\node [style=none] (30) at (1.75, 3) {};
		\node [style=none] (45) at (0, 3.75) {};
		\node [style=none] (46) at (-1.75, 3) {};
		\node [style=none] (47) at (1.75, 3) {};
		\node [style=none] (48) at (0, 2.25) {};
		\node [style=none] (31) at (0, -0.5) {};
		\node [style=none] (32) at (-1.75, 0.25) {};
		\node [style=none] (35) at (1.75, 0.25) {};
		\node [style=none] (39) at (-1.75, -3) {};
		\node [style=none] (40) at (1.75, -3) {};
		\node [style=none] (41) at (0, -3.75) {};
		\node [style=none] (10) at (-1.75, 0.25) {};
		\node [style=none] (11) at (1.75, 0.25) {};
		\node [style=mid-nc] (12) at (0, -0.5) {$\varphi$};
		\node [style=none] (14) at (-0.25, 1) {};
		\node [style=none] (15) at (-1.75, 0.25) {};
		\node [style=none] (16) at (0.25, 1) {};
		\node [style=none] (17) at (1.75, 0.25) {};
		\node [style=none] (0) at (0, 3.75) {};
		\node [style=none] (1) at (-1.75, 3) {};
		\node [style=none] (2) at (1.75, 3) {};
		\node [style=none] (3) at (0, 2.25) {};
		\node [style=none] (5) at (-1.75, -3) {};
		\node [style=none] (6) at (1.75, -3) {};
		\node [style=none] (7) at (0, -3.75) {};
		\node [style=none] (18) at (1.3, -0.25) {};
		\node [style=none] (19) at (1.425, -0.225) {};
		\node [style=none] (20) at (-0.51, -2.92) {$X$};
		\node [style=none] (21) at (0.43, 1.53) {$Y$};
		\node [style=none] (22) at (-0.95,-0.88) {$Z$};
	\end{pgfonlayer}
	\begin{pgfonlayer}{edgelayer}
		\draw [style=fi-pu] (29.center)
			 to (26.center)
			 to [in=-180, out=-90, looseness=0.75] (25.center)
			 to [in=-90, out=0, looseness=0.75] (27.center)
			 to (30.center)
			 to [in=0, out=90, looseness=0.75] (28.center)
			 to [in=90, out=180, looseness=0.75] cycle;
		\draw [style=op-lsh] (46.center)
			 to [in=-180, out=-90, looseness=0.75] (48.center)
			 to [in=-90, out=0, looseness=0.75] (47.center)
			 to [in=0, out=90, looseness=0.75] (45.center)
			 to [in=90, out=180, looseness=0.75] cycle;
		\draw [style=fi-bl] (35.center)
			 to (40.center)
			 to [in=0, out=-90, looseness=0.75] (41.center)
			 to [in=-90, out=-180, looseness=0.75] (39.center)
			 to (32.center)
			 to [in=-180, out=-90, looseness=0.75] (31.center)
			 to [in=-90, out=0, looseness=0.75] cycle;
		\draw [style=vt-bl-di] (7.center) to (12);
		\draw [style=vt-bl] (10.center)
			 to [in=-180, out=-90, looseness=0.75] (12.center)
			 to [in=-90, out=0, looseness=0.75] (11.center);
		\draw [style=vt-bl-di] (12) to (3.center);
		\draw [style=vt-bl-da, in=90, out=180, looseness=0.75] (14.center) to (15.center);
		\draw [style=vt-bl-da, in=90, out=0, looseness=0.75] (16.center) to (17.center);
		\draw [style=vt] (1.center)
			 to [in=-180, out=-90, looseness=0.75] (3.center)
			 to [in=-90, out=0, looseness=0.75] (2.center)
			 to [in=0, out=90, looseness=0.75] (0.center)
			 to [in=90, out=180, looseness=0.75] cycle;
		\draw [style=vt] (5.center)
			 to [in=-180, out=-90, looseness=0.75] (7.center)
			 to [in=-90, out=0, looseness=0.75] (6.center);
		\draw [style=vt] (1.center) to (5.center);
		\draw [style=vt] (2.center) to (6.center);
		\draw [style=vt-bl-di] (18.center) to (19.center);
	\end{pgfonlayer}
\end{tikzpicture}}
  {%
  }}%

  \ee
on the cylinder over $\ell$.
Here the lower and upper half of the cylinder are labeled by Frobenius algebras
$A$ and $B$ (which we indicate by using two different colors),
$X\iN\hfrc(A,A)$, $Y\iN\hfrc(B,B)$ and $Z\iN\hfrc(B,A)$ are bimodules, and 
$\varphi\iN \Hom_{B\bimod A}(Z\otA X,
         $\linebreak[0]$
Y\otB Z)$
is a bimodule morphism. The graph $\grph$ realizes the general form of
a morphism in the cylinder category $\SNfrc(\ell)$, in the sense that any object in
the arrow category for $\SNfrc(\ell)$ is isomorphic to $\grph_{\!X,Y,Z;\varphi}$
for a suitable choice of 1-morphisms $X,Y,Z$ and bimodule morphism $\varphi$. 

In case $A \eq B$, $\grph$ represents an object in the Cauchy-completed cylinder 
category $\SNfrc(\ell)$ iff the graph $\grph$ and the concatenation 
$\grph\cir\grph$ of $\grph$ with itself represent the same string net. Let
  \be
  a ~=~~ \scalebox{0.5}{%
{\tikzstyle{every picture}=[tikzfig]
  {\begin{tikzpicture}
	\begin{pgfonlayer}{nodelayer}
		\node [style=none] (25) at (0, -0.5) {};
		\node [style=none] (26) at (-1.75, 0.25) {};
		\node [style=none] (27) at (1.75, 0.25) {};
		\node [style=none] (28) at (0, 3.75) {};
		\node [style=none] (29) at (-1.75, 3) {};
		\node [style=none] (30) at (1.75, 3) {};
		\node [style=none] (45) at (0, 3.75) {};
		\node [style=none] (46) at (-1.75, 3) {};
		\node [style=none] (47) at (1.75, 3) {};
		\node [style=none] (48) at (0, 2.25) {};
		\node [style=none] (31) at (0, -0.5) {};
		\node [style=none] (32) at (-1.75, 0.25) {};
		\node [style=none] (35) at (1.75, 0.25) {};
		\node [style=none] (39) at (-1.75, -3) {};
		\node [style=none] (40) at (1.75, -3) {};
		\node [style=none] (41) at (0, -3.75) {};
		\node [style=none] (10) at (-1.75, 0.25) {};
		\node [style=none] (11) at (1.75, 0.25) {};
		\node [style=mid-nc] (12) at (0, -0.5) {};
		\node [style=none] (14) at (-0.25, 1) {};
		\node [style=none] (15) at (-1.75, 0.25) {};
		\node [style=none] (16) at (0.25, 1) {};
		\node [style=none] (17) at (1.75, 0.25) {};
		\node [style=none] (0) at (0, 3.75) {};
		\node [style=none] (1) at (-1.75, 3) {};
		\node [style=none] (2) at (1.75, 3) {};
		\node [style=none] (3) at (0, 2.25) {};
		\node [style=none] (5) at (-1.75, -3) {};
		\node [style=none] (6) at (1.75, -3) {};
		\node [style=none] (7) at (0, -3.75) {};
		\node [style=none] (18) at (1.3, -0.25) {};
		\node [style=none] (19) at (1.425, -0.225) {};
	\end{pgfonlayer}
	\begin{pgfonlayer}{edgelayer}
		\draw [style=fi-bl] (29.center)
			 to (26.center)
			 to [in=-180, out=-90, looseness=0.75] (25.center)
			 to [in=-90, out=0, looseness=0.75] (27.center)
			 to (30.center)
			 to [in=0, out=90, looseness=0.75] (28.center)
			 to [in=90, out=180, looseness=0.75] cycle;
		\draw [style=op-lsh] (46.center)
			 to [in=-180, out=-90, looseness=0.75] (48.center)
			 to [in=-90, out=0, looseness=0.75] (47.center)
			 to [in=0, out=90, looseness=0.75] (45.center)
			 to [in=90, out=180, looseness=0.75] cycle;
		\draw [style=fi-bl] (35.center)
			 to (40.center)
			 to [in=0, out=-90, looseness=0.75] (41.center)
			 to [in=-90, out=-180, looseness=0.75] (39.center)
			 to (32.center)
			 to [in=-180, out=-90, looseness=0.75] (31.center)
			 to [in=-90, out=0, looseness=0.75] cycle;
		\draw [style=vt-bl-di] (7.center) to (12);
		\draw [style=vt-bl] (10.center)
			 to [in=-180, out=-90, looseness=0.75] (12.center)
			 to [in=-90, out=0, looseness=0.75] (11.center);
		\draw [style=vt-bl-di] (12) to (3.center);
		\draw [style=vt-bl-da, in=90, out=180, looseness=0.75] (14.center) to (15.center);
		\draw [style=vt-bl-da, in=90, out=0, looseness=0.75] (16.center) to (17.center);
		\draw [style=vt] (1.center)
			 to [in=-180, out=-90, looseness=0.75] (3.center)
			 to [in=-90, out=0, looseness=0.75] (2.center)
			 to [in=0, out=90, looseness=0.75] (0.center)
			 to [in=90, out=180, looseness=0.75] cycle;
		\draw [style=vt] (5.center)
			 to [in=-180, out=-90, looseness=0.75] (7.center)
			 to [in=-90, out=0, looseness=0.75] (6.center);
		\draw [style=vt] (1.center) to (5.center);
		\draw [style=vt] (2.center) to (6.center);
		\draw [style=vt-bl-di] (18.center) to (19.center);
	\end{pgfonlayer}
\end{tikzpicture}}
  {%
  }}%
} ~~ \in\SNfrc(\ell)
  \label{eq:pic:a}
  \ee
represent an object in $\SNfrc(\ell)$, where 
the picture shows the underlying idempotent of $a$, and where we suppress all labels.
The composite functor $\imath_\ell\cir\df_{\!\ell}$ acts on objects of $\SNfrc(\ell)$
by mapping a $\cfrc$-colored string net to another $\cfrc$-co\-lo\-red string net
according to
  \be
  \imath_\ell^{}\cir\df_{\!\ell}^{} \Colon a ~=~ \scalebox{0.6}{%
{\tikzstyle{every picture}=[tikzfig]
  {\begin{tikzpicture}
	\begin{pgfonlayer}{nodelayer}
		\node [style=none] (25) at (0, -0.5) {};
		\node [style=none] (26) at (-1.75, 0.25) {};
		\node [style=none] (27) at (1.75, 0.25) {};
		\node [style=none] (28) at (0, 3.75) {};
		\node [style=none] (29) at (-1.75, 3) {};
		\node [style=none] (30) at (1.75, 3) {};
		\node [style=none] (45) at (0, 3.75) {};
		\node [style=none] (46) at (-1.75, 3) {};
		\node [style=none] (47) at (1.75, 3) {};
		\node [style=none] (48) at (0, 2.25) {};
		\node [style=none] (31) at (0, -0.5) {};
		\node [style=none] (32) at (-1.75, 0.25) {};
		\node [style=none] (35) at (1.75, 0.25) {};
		\node [style=none] (39) at (-1.75, -3) {};
		\node [style=none] (40) at (1.75, -3) {};
		\node [style=none] (41) at (0, -3.75) {};
		\node [style=none] (10) at (-1.75, 0.25) {};
		\node [style=none] (11) at (1.75, 0.25) {};
		\node [style=mid-nc] (12) at (0, -0.5) {};
		\node [style=none] (14) at (-0.25, 1) {};
		\node [style=none] (15) at (-1.75, 0.25) {};
		\node [style=none] (16) at (0.25, 1) {};
		\node [style=none] (17) at (1.75, 0.25) {};
		\node [style=none] (0) at (0, 3.75) {};
		\node [style=none] (1) at (-1.75, 3) {};
		\node [style=none] (2) at (1.75, 3) {};
		\node [style=none] (3) at (0, 2.25) {};
		\node [style=none] (5) at (-1.75, -3) {};
		\node [style=none] (6) at (1.75, -3) {};
		\node [style=none] (7) at (0, -3.75) {};
		\node [style=none] (18) at (1.3, -0.25) {};
		\node [style=none] (19) at (1.425, -0.225) {};
	\end{pgfonlayer}
	\begin{pgfonlayer}{edgelayer}
		\draw [style=fi-bl] (29.center)
			 to (26.center)
			 to [in=-180, out=-90, looseness=0.75] (25.center)
			 to [in=-90, out=0, looseness=0.75] (27.center)
			 to (30.center)
			 to [in=0, out=90, looseness=0.75] (28.center)
			 to [in=90, out=180, looseness=0.75] cycle;
		\draw [style=op-lsh] (46.center)
			 to [in=-180, out=-90, looseness=0.75] (48.center)
			 to [in=-90, out=0, looseness=0.75] (47.center)
			 to [in=0, out=90, looseness=0.75] (45.center)
			 to [in=90, out=180, looseness=0.75] cycle;
		\draw [style=fi-bl] (35.center)
			 to (40.center)
			 to [in=0, out=-90, looseness=0.75] (41.center)
			 to [in=-90, out=-180, looseness=0.75] (39.center)
			 to (32.center)
			 to [in=-180, out=-90, looseness=0.75] (31.center)
			 to [in=-90, out=0, looseness=0.75] cycle;
		\draw [style=vt-bl-di] (7.center) to (12);
		\draw [style=vt-bl] (10.center)
			 to [in=-180, out=-90, looseness=0.75] (12.center)
			 to [in=-90, out=0, looseness=0.75] (11.center);
		\draw [style=vt-bl-di] (12) to (3.center);
		\draw [style=vt-bl-da, in=90, out=180, looseness=0.75] (14.center) to (15.center);
		\draw [style=vt-bl-da, in=90, out=0, looseness=0.75] (16.center) to (17.center);
		\draw [style=vt] (1.center)
			 to [in=-180, out=-90, looseness=0.75] (3.center)
			 to [in=-90, out=0, looseness=0.75] (2.center)
			 to [in=0, out=90, looseness=0.75] (0.center)
			 to [in=90, out=180, looseness=0.75] cycle;
		\draw [style=vt] (5.center)
			 to [in=-180, out=-90, looseness=0.75] (7.center)
			 to [in=-90, out=0, looseness=0.75] (6.center);
		\draw [style=vt] (1.center) to (5.center);
		\draw [style=vt] (2.center) to (6.center);
		\draw [style=vt-bl-di] (18.center) to (19.center);
	\end{pgfonlayer}
\end{tikzpicture}}
  {%
  }}%
} 
  ~~\xmapsto{~~\df{\!_\ell}^{}~~}~~ \scalebox{0.6}{%
{\tikzstyle{every picture}=[tikzfig]
  {\begin{tikzpicture}
	\begin{pgfonlayer}{nodelayer}
		\node [style=none] (25) at (0, -0.5) {};
		\node [style=none] (26) at (-1.75, 0.25) {};
		\node [style=none] (29) at (-1.75, 3) {};
		\node [style=none] (30) at (1.75, 3) {};
		\node [style=none] (43) at (-1.75, 3) {};
		\node [style=none] (44) at (1.75, 3) {};
		\node [style=none] (14) at (-0.25, 1) {};
		\node [style=none] (15) at (-1.75, 0.25) {};
		\node [style=none] (16) at (0.25, 1) {};
		\node [style=none] (17) at (1.75, 0.25) {};
		\node [style=none] (18) at (1.3, -0.25) {};
		\node [style=none] (19) at (1.425, -0.225) {};
		\node [style=none] (46) at (-1.5, 2.575) {};
		\node [style=none] (47) at (-1, 2.45) {};
		\node [style=none] (48) at (-0.5, 2.275) {};
		\node [style=none] (49) at (0.5, 2.25) {};
		\node [style=none] (50) at (1.25, 2.45) {};
		\node [style=none] (52) at (-1.25, 1.75) {};
		\node [style=none] (53) at (-1, 1.25) {};
		\node [style=none] (54) at (0, 1.75) {};
		\node [style=none] (55) at (0, 1.5) {};
		\node [style=none] (56) at (-0.75, 0.5) {};
		\node [style=none] (57) at (-1.75, 1) {};
		\node [style=none] (58) at (-1.2, -0.3) {};
		\node [style=none] (59) at (0, 0.5) {};
		\node [style=none] (60) at (-0.5, 1.25) {};
		\node [style=none] (61) at (1.75, 2.25) {};
		\node [style=none] (62) at (1.025, 1.85) {};
		\node [style=none] (63) at (0.75, 1) {};
		\node [style=none] (64) at (0.5, 0.5) {};
		\node [style=none] (65) at (-1.25, 0.25) {};
		\node [style=none] (66) at (-0.5, -0.5) {};
		\node [style=none] (67) at (0, 0) {};
		\node [style=none] (68) at (-0.625, 0) {};
		\node [style=none] (69) at (0.5, 1.5) {};
		\node [style=none] (70) at (0, 0.25) {};
		\node [style=none] (71) at (0.65, -0.425) {};
		\node [style=none] (72) at (1.75, 0.75) {};
		\node [style=none] (73) at (1.75, 1.25) {};
		\node [style=none] (74) at (0.9, 1.45) {};
		\node [style=none] (75) at (-0.25, 1.5) {};
		\node [style=none] (76) at (0.8, 0.55) {};
		\node [style=none] (77) at (-0.975, -0.375) {};
		\node [style=none] (78) at (-1.15, -2.95) {};
		\node [style=none] (79) at (-0.5, -1.25) {};
		\node [style=none] (80) at (-1, -1.5) {};
		\node [style=none] (81) at (-1.75, -1.75) {};
		\node [style=none] (82) at (-0.575, -2.9) {};
		\node [style=none] (83) at (0, -2.5) {};
		\node [style=none] (84) at (-1.025, -3.6) {};
		\node [style=none] (85) at (0.75, -2) {};
		\node [style=none] (86) at (0.75, -1) {};
		\node [style=none] (87) at (1.75, -2.475) {};
		\node [style=none] (88) at (1, -2.5) {};
		\node [style=none] (89) at (0, -1.75) {};
		\node [style=none] (90) at (0, -3.25) {};
		\node [style=none] (91) at (1, -3.6) {};
		\node [style=none] (92) at (1.75, -1.25) {};
		\node [style=none] (93) at (-1.75, -0.5) {};
		\node [style=none] (94) at (-1.25, -1) {};
		\node [style=none] (95) at (-1.75, -2.75) {};
		\node [style=none] (96) at (-1.125, -2) {};
		\node [style=none] (97) at (-0.825, -2.475) {};
		\node [style=none] (98) at (0, -1.25) {};
		\node [style=none] (99) at (1.25, -1.15) {};
		\node [style=none] (100) at (0, -2) {};
		\node [style=none] (101) at (0, -2.75) {};
		\node [style=none] (102) at (0.75, -3) {};
		\node [style=none] (0) at (0, 3.75) {};
		\node [style=none] (1) at (-1.75, 3) {};
		\node [style=none] (2) at (1.75, 3) {};
		\node [style=none] (3) at (0, 2.25) {};
		\node [style=none] (5) at (-1.75, -3) {};
		\node [style=none] (6) at (1.75, -3) {};
		\node [style=none] (7) at (0, -3.75) {};
		\node [style=none] (103) at (1, -0.35) {};
		\node [style=none] (10) at (-1.75, 0.25) {};
		\node [style=none] (11) at (1.75, 0.25) {};
		\node [style=mid-nc] (12) at (0, -0.5) {};
	\end{pgfonlayer}
	\begin{pgfonlayer}{edgelayer}
		\draw [style=vt-bl-da, in=90, out=180, looseness=0.75] (14.center) to (15.center);
		\draw [style=vt-bl-da, in=90, out=0, looseness=0.75] (16.center) to (17.center);
		\draw [style=vt-bl-di] (18.center) to (19.center);
		\draw [style=vt-lbl] (46.center) to (52.center);
		\draw [style=vt-lbl] (52.center) to (47.center);
		\draw [style=vt-lbl] (52.center) to (57.center);
		\draw [style=vt-lbl] (52.center) to (53.center);
		\draw [style=vt-lbl] (53.center) to (60.center);
		\draw [style=vt-lbl] (60.center) to (54.center);
		\draw [style=vt-lbl] (60.center) to (59.center);
		\draw [style=vt-lbl] (53.center) to (56.center);
		\draw [style=vt-lbl] (57.center) to (65.center);
		\draw [style=vt-lbl] (65.center) to (56.center);
		\draw [style=vt-lbl] (65.center) to (58.center);
		\draw [style=vt-lbl] (56.center) to (66.center);
		\draw [style=vt-lbl] (68.center) to (67.center);
		\draw [style=vt-lbl] (55.center) to (69.center);
		\draw [style=vt-lbl] (69.center) to (49.center);
		\draw [style=vt-lbl] (69.center) to (63.center);
		\draw [style=vt-lbl] (63.center) to (64.center);
		\draw [style=vt-lbl] (63.center) to (62.center);
		\draw [style=vt-lbl] (62.center) to (50.center);
		\draw [style=vt-lbl] (62.center) to (61.center);
		\draw [style=vt-lbl] (70.center) to (64.center);
		\draw [style=vt-lbl] (64.center) to (72.center);
		\draw [style=vt-lbl] (74.center) to (73.center);
		\draw [style=vt-bl-di] (12) to (3.center);
		\draw [style=vt-lbl] (48.center) to (75.center);
		\draw [style=vt-lbl] (76.center) to (71.center);
		\draw [style=vt-lbl] (93.center) to (94.center);
		\draw [style=vt-lbl] (94.center) to (77.center);
		\draw [style=vt-lbl] (94.center) to (80.center);
		\draw [style=vt-lbl] (80.center) to (79.center);
		\draw [style=vt-lbl] (79.center) to (89.center);
		\draw [style=vt-lbl] (79.center) to (66.center);
		\draw [style=vt-lbl] (95.center) to (78.center);
		\draw [style=vt-lbl] (78.center) to (84.center);
		\draw [style=vt-lbl] (78.center) to (82.center);
		\draw [style=vt-lbl] (96.center) to (82.center);
		\draw [style=vt-lbl] (82.center) to (90.center);
		\draw [style=vt-lbl] (97.center) to (83.center);
		\draw [style=vt-lbl] (98.center) to (86.center);
		\draw [style=vt-lbl] (86.center) to (92.center);
		\draw [style=vt-lbl] (99.center) to (85.center);
		\draw [style=vt-lbl] (100.center) to (85.center);
		\draw [style=vt-lbl] (85.center) to (88.center);
		\draw [style=vt-lbl] (88.center) to (87.center);
		\draw [style=vt-lbl] (101.center) to (102.center);
		\draw [style=vt-lbl] (102.center) to (88.center);
		\draw [style=vt-lbl] (102.center) to (91.center);
		\draw [style=vt-bl-di] (7.center) to (12);
		\draw [style=vt] (1.center)
			 to [in=-180, out=-90, looseness=0.75] (3.center)
			 to [in=-90, out=0, looseness=0.75] (2.center)
			 to [in=0, out=90, looseness=0.75] (0.center)
			 to [in=90, out=180, looseness=0.75] cycle;
		\draw [style=vt] (5.center)
			 to [in=-180, out=-90, looseness=0.75] (7.center)
			 to [in=-90, out=0, looseness=0.75] (6.center);
		\draw [style=vt-lbl] (103.center) to (86.center);
		\draw [style=vt-bl] (10.center)
			 to [in=-180, out=-90, looseness=0.75] (12.center)
			 to [in=-90, out=0, looseness=0.75] (11.center);
		\draw [style=vt-lbl] (81.center) to (96.center);
		\draw [style=vt-lbl] (96.center) to (80.center);
		\draw [style=vt] (1.center) to (5.center);
		\draw [style=vt] (2.center) to (6.center);
	\end{pgfonlayer}
\end{tikzpicture}}
  {%
  }}%
}
  ~~\xmapsto{~~\imath_\ell^{}~~}~~ \scalebox{0.6}{%
{\tikzstyle{every picture}=[tikzfig]
  {\begin{tikzpicture}
	\begin{pgfonlayer}{nodelayer}
		\node [style=none] (25) at (0, -0.5) {};
		\node [style=none] (26) at (-1.75, 0.25) {};
		\node [style=none] (27) at (1.75, 0.25) {};
		\node [style=none] (28) at (0, 3.75) {};
		\node [style=none] (29) at (-1.75, 3) {};
		\node [style=none] (30) at (1.75, 3) {};
		\node [style=none] (42) at (0, 3.75) {};
		\node [style=none] (43) at (-1.75, 3) {};
		\node [style=none] (44) at (1.75, 3) {};
		\node [style=none] (45) at (0, 2.25) {};
		\node [style=none] (31) at (0, -0.5) {};
		\node [style=none] (32) at (-1.75, 0.25) {};
		\node [style=none] (35) at (1.75, 0.25) {};
		\node [style=none] (39) at (-1.75, -3) {};
		\node [style=none] (40) at (1.75, -3) {};
		\node [style=none] (41) at (0, -3.75) {};
		\node [style=none] (14) at (-0.25, 1) {};
		\node [style=none] (15) at (-1.75, 0.25) {};
		\node [style=none] (16) at (0.25, 1) {};
		\node [style=none] (17) at (1.75, 0.25) {};
		\node [style=none] (18) at (1.3, -0.25) {};
		\node [style=none] (19) at (1.425, -0.225) {};
		\node [style=none] (46) at (-1.5, 2.575) {};
		\node [style=none] (47) at (-1, 2.45) {};
		\node [style=none] (48) at (-0.5, 2.275) {};
		\node [style=none] (49) at (0.5, 2.25) {};
		\node [style=none] (50) at (1.25, 2.45) {};
		\node [style=none] (52) at (-1.25, 1.75) {};
		\node [style=none] (53) at (-1, 1.25) {};
		\node [style=none] (54) at (0, 1.75) {};
		\node [style=none] (55) at (0, 1.5) {};
		\node [style=none] (56) at (-0.75, 0.5) {};
		\node [style=none] (57) at (-1.75, 1) {};
		\node [style=none] (58) at (-1.2, -0.3) {};
		\node [style=none] (59) at (0, 0.5) {};
		\node [style=none] (60) at (-0.5, 1.25) {};
		\node [style=none] (61) at (1.75, 2.25) {};
		\node [style=none] (62) at (1.025, 1.85) {};
		\node [style=none] (63) at (0.75, 1) {};
		\node [style=none] (64) at (0.5, 0.5) {};
		\node [style=none] (65) at (-1.25, 0.25) {};
		\node [style=none] (66) at (-0.5, -0.5) {};
		\node [style=none] (67) at (0, 0) {};
		\node [style=none] (68) at (-0.625, 0) {};
		\node [style=none] (69) at (0.5, 1.5) {};
		\node [style=none] (70) at (0, 0.25) {};
		\node [style=none] (71) at (0.65, -0.425) {};
		\node [style=none] (72) at (1.75, 0.75) {};
		\node [style=none] (73) at (1.75, 1.25) {};
		\node [style=none] (74) at (0.9, 1.45) {};
		\node [style=none] (75) at (-0.25, 1.5) {};
		\node [style=none] (76) at (0.8, 0.55) {};
		\node [style=none] (77) at (-0.975, -0.375) {};
		\node [style=none] (78) at (-1.15, -2.95) {};
		\node [style=none] (79) at (-0.5, -1.25) {};
		\node [style=none] (80) at (-1, -1.5) {};
		\node [style=none] (81) at (-1.75, -1.75) {};
		\node [style=none] (82) at (-0.575, -2.9) {};
		\node [style=none] (83) at (0, -2.5) {};
		\node [style=none] (84) at (-1.025, -3.6) {};
		\node [style=none] (85) at (0.75, -2) {};
		\node [style=none] (86) at (0.75, -1) {};
		\node [style=none] (87) at (1.75, -2.475) {};
		\node [style=none] (88) at (1, -2.5) {};
		\node [style=none] (89) at (0, -1.75) {};
		\node [style=none] (90) at (0, -3.25) {};
		\node [style=none] (91) at (1, -3.6) {};
		\node [style=none] (92) at (1.75, -1.25) {};
		\node [style=none] (93) at (-1.75, -0.5) {};
		\node [style=none] (94) at (-1.25, -1) {};
		\node [style=none] (95) at (-1.75, -2.75) {};
		\node [style=none] (96) at (-1.125, -2) {};
		\node [style=none] (97) at (-0.825, -2.475) {};
		\node [style=none] (98) at (0, -1.25) {};
		\node [style=none] (99) at (1.25, -1.15) {};
		\node [style=none] (100) at (0, -2) {};
		\node [style=none] (101) at (0, -2.75) {};
		\node [style=none] (102) at (0.75, -3) {};
		\node [style=none] (0) at (0, 3.75) {};
		\node [style=none] (1) at (-1.75, 3) {};
		\node [style=none] (2) at (1.75, 3) {};
		\node [style=none] (3) at (0, 2.25) {};
		\node [style=none] (5) at (-1.75, -3) {};
		\node [style=none] (6) at (1.75, -3) {};
		\node [style=none] (7) at (0, -3.75) {};
		\node [style=none] (103) at (1, -0.35) {};
		\node [style=none] (10) at (-1.75, 0.25) {};
		\node [style=none] (11) at (1.75, 0.25) {};
		\node [style=mid-nc] (12) at (0, -0.5) {};
	\end{pgfonlayer}
	\begin{pgfonlayer}{edgelayer}
		\draw [style=fi-sh] (29.center)
			 to (26.center)
			 to [in=-180, out=-90, looseness=0.75] (25.center)
			 to [in=-90, out=0, looseness=0.75] (27.center)
			 to (30.center)
			 to [in=0, out=90, looseness=0.75] (28.center)
			 to [in=90, out=180, looseness=0.75] cycle;
		\draw [style=op-lsh] (43.center)
			 to [in=-180, out=-90, looseness=0.75] (45.center)
			 to [in=-90, out=0, looseness=0.75] (44.center)
			 to [in=0, out=90, looseness=0.75] (42.center)
			 to [in=90, out=180, looseness=0.75] cycle;
		\draw [style=fi-sh] (35.center)
			 to (40.center)
			 to [in=0, out=-90, looseness=0.75] (41.center)
			 to [in=-90, out=-180, looseness=0.75] (39.center)
			 to (32.center)
			 to [in=-180, out=-90, looseness=0.75] (31.center)
			 to [in=-90, out=0, looseness=0.75] cycle;
		\draw [style=vt-bl-da, in=90, out=180, looseness=0.75] (14.center) to (15.center);
		\draw [style=vt-bl-da, in=90, out=0, looseness=0.75] (16.center) to (17.center);
		\draw [style=vt-bl-di] (18.center) to (19.center);
		\draw [style=vt-lbl] (46.center) to (52.center);
		\draw [style=vt-lbl] (52.center) to (47.center);
		\draw [style=vt-lbl] (52.center) to (57.center);
		\draw [style=vt-lbl] (52.center) to (53.center);
		\draw [style=vt-lbl] (53.center) to (60.center);
		\draw [style=vt-lbl] (60.center) to (54.center);
		\draw [style=vt-lbl] (60.center) to (59.center);
		\draw [style=vt-lbl] (53.center) to (56.center);
		\draw [style=vt-lbl] (57.center) to (65.center);
		\draw [style=vt-lbl] (65.center) to (56.center);
		\draw [style=vt-lbl] (65.center) to (58.center);
		\draw [style=vt-lbl] (56.center) to (66.center);
		\draw [style=vt-lbl] (68.center) to (67.center);
		\draw [style=vt-lbl] (55.center) to (69.center);
		\draw [style=vt-lbl] (69.center) to (49.center);
		\draw [style=vt-lbl] (69.center) to (63.center);
		\draw [style=vt-lbl] (63.center) to (64.center);
		\draw [style=vt-lbl] (63.center) to (62.center);
		\draw [style=vt-lbl] (62.center) to (50.center);
		\draw [style=vt-lbl] (62.center) to (61.center);
		\draw [style=vt-lbl] (70.center) to (64.center);
		\draw [style=vt-lbl] (64.center) to (72.center);
		\draw [style=vt-lbl] (74.center) to (73.center);
		\draw [style=vt-bl-di] (12) to (3.center);
		\draw [style=vt-lbl] (48.center) to (75.center);
		\draw [style=vt-lbl] (76.center) to (71.center);
		\draw [style=vt-lbl] (93.center) to (94.center);
		\draw [style=vt-lbl] (94.center) to (77.center);
		\draw [style=vt-lbl] (94.center) to (80.center);
		\draw [style=vt-lbl] (80.center) to (79.center);
		\draw [style=vt-lbl] (79.center) to (89.center);
		\draw [style=vt-lbl] (79.center) to (66.center);
		\draw [style=vt-lbl] (95.center) to (78.center);
		\draw [style=vt-lbl] (78.center) to (84.center);
		\draw [style=vt-lbl] (78.center) to (82.center);
		\draw [style=vt-lbl] (96.center) to (82.center);
		\draw [style=vt-lbl] (82.center) to (90.center);
		\draw [style=vt-lbl] (97.center) to (83.center);
		\draw [style=vt-lbl] (98.center) to (86.center);
		\draw [style=vt-lbl] (86.center) to (92.center);
		\draw [style=vt-lbl] (99.center) to (85.center);
		\draw [style=vt-lbl] (100.center) to (85.center);
		\draw [style=vt-lbl] (85.center) to (88.center);
		\draw [style=vt-lbl] (88.center) to (87.center);
		\draw [style=vt-lbl] (101.center) to (102.center);
		\draw [style=vt-lbl] (102.center) to (88.center);
		\draw [style=vt-lbl] (102.center) to (91.center);
		\draw [style=vt-bl-di] (7.center) to (12);
		\draw [style=vt] (1.center)
			 to [in=-180, out=-90, looseness=0.75] (3.center)
			 to [in=-90, out=0, looseness=0.75] (2.center)
			 to [in=0, out=90, looseness=0.75] (0.center)
			 to [in=90, out=180, looseness=0.75] cycle;
		\draw [style=vt] (5.center)
			 to [in=-180, out=-90, looseness=0.75] (7.center)
			 to [in=-90, out=0, looseness=0.75] (6.center);
		\draw [style=vt-lbl] (103.center) to (86.center);
		\draw [style=vt-bl] (10.center)
			 to [in=-180, out=-90, looseness=0.75] (12.center)
			 to [in=-90, out=0, looseness=0.75] (11.center);
		\draw [style=vt-lbl] (81.center) to (96.center);
		\draw [style=vt-lbl] (96.center) to (80.center);
		\draw [style=vt] (1.center) to (5.center);
		\draw [style=vt] (2.center) to (6.center);
	\end{pgfonlayer}
\end{tikzpicture}}
  {%
  }}%
} 
  \label{eq:imath_.F}
  \ee
To emphasize that the morphism $\imath\circ\df_{\!\ell}(a)$ is an element in
$\SNfrc$ rather than in $\SNc$, the
2-cells in the picture on the right hand side of \eqref{eq:imath_.F} are colored
gray -- this is the color we use to represent the trivial Frobenius algebra
$\tu\iN\cfrc$. It directly follows from the properties of full Frobenius graphs
that the string net on the right hand side is an idempotent.

The counit in \eqref{eq:counit} is a morphism in the cylinder category $\SNfrc$ 
and can thus be defined in terms of a string net in $\SNfrc$ with appropriate
boundary values:
  \be
  (\varepsilon_\ell^{})_{a}^{} \Colon \imath_\ell^{}\circ\df{\!_\ell}^{}(a)
  ~=~~ \scalebox{0.6}{%
{\tikzstyle{every picture}=[tikzfig]
  {\begin{tikzpicture}
	\begin{pgfonlayer}{nodelayer}
		\node [style=none] (25) at (0, -0.5) {};
		\node [style=none] (26) at (-1.75, 0.25) {};
		\node [style=none] (27) at (1.75, 0.25) {};
		\node [style=none] (28) at (0, 3.75) {};
		\node [style=none] (29) at (-1.75, 3) {};
		\node [style=none] (30) at (1.75, 3) {};
		\node [style=none] (42) at (0, 3.75) {};
		\node [style=none] (43) at (-1.75, 3) {};
		\node [style=none] (44) at (1.75, 3) {};
		\node [style=none] (45) at (0, 2.25) {};
		\node [style=none] (31) at (0, -0.5) {};
		\node [style=none] (32) at (-1.75, 0.25) {};
		\node [style=none] (35) at (1.75, 0.25) {};
		\node [style=none] (39) at (-1.75, -3) {};
		\node [style=none] (40) at (1.75, -3) {};
		\node [style=none] (41) at (0, -3.75) {};
		\node [style=none] (14) at (-0.25, 1) {};
		\node [style=none] (15) at (-1.75, 0.25) {};
		\node [style=none] (16) at (0.25, 1) {};
		\node [style=none] (17) at (1.75, 0.25) {};
		\node [style=none] (18) at (1.3, -0.25) {};
		\node [style=none] (19) at (1.425, -0.225) {};
		\node [style=none] (46) at (-1.5, 2.575) {};
		\node [style=none] (47) at (-1, 2.45) {};
		\node [style=none] (48) at (-0.5, 2.275) {};
		\node [style=none] (49) at (0.5, 2.25) {};
		\node [style=none] (50) at (1.25, 2.45) {};
		\node [style=none] (52) at (-1.25, 1.75) {};
		\node [style=none] (53) at (-1, 1.25) {};
		\node [style=none] (54) at (0, 1.75) {};
		\node [style=none] (55) at (0, 1.5) {};
		\node [style=none] (56) at (-0.75, 0.5) {};
		\node [style=none] (57) at (-1.75, 1) {};
		\node [style=none] (58) at (-1.2, -0.3) {};
		\node [style=none] (59) at (0, 0.5) {};
		\node [style=none] (60) at (-0.5, 1.25) {};
		\node [style=none] (61) at (1.75, 2.25) {};
		\node [style=none] (62) at (1.025, 1.85) {};
		\node [style=none] (63) at (0.75, 1) {};
		\node [style=none] (64) at (0.5, 0.5) {};
		\node [style=none] (65) at (-1.25, 0.25) {};
		\node [style=none] (66) at (-0.5, -0.5) {};
		\node [style=none] (67) at (0, 0) {};
		\node [style=none] (68) at (-0.625, 0) {};
		\node [style=none] (69) at (0.5, 1.5) {};
		\node [style=none] (70) at (0, 0.25) {};
		\node [style=none] (71) at (0.65, -0.425) {};
		\node [style=none] (72) at (1.75, 0.75) {};
		\node [style=none] (73) at (1.75, 1.25) {};
		\node [style=none] (74) at (0.9, 1.45) {};
		\node [style=none] (75) at (-0.25, 1.5) {};
		\node [style=none] (76) at (0.8, 0.55) {};
		\node [style=none] (77) at (-0.975, -0.375) {};
		\node [style=none] (78) at (-1.15, -2.95) {};
		\node [style=none] (79) at (-0.5, -1.25) {};
		\node [style=none] (80) at (-1, -1.5) {};
		\node [style=none] (81) at (-1.75, -1.75) {};
		\node [style=none] (82) at (-0.575, -2.9) {};
		\node [style=none] (83) at (0, -2.5) {};
		\node [style=none] (84) at (-1.025, -3.6) {};
		\node [style=none] (85) at (0.75, -2) {};
		\node [style=none] (86) at (0.75, -1) {};
		\node [style=none] (87) at (1.75, -2.475) {};
		\node [style=none] (88) at (1, -2.5) {};
		\node [style=none] (89) at (0, -1.75) {};
		\node [style=none] (90) at (0, -3.25) {};
		\node [style=none] (91) at (1, -3.6) {};
		\node [style=none] (92) at (1.75, -1.25) {};
		\node [style=none] (93) at (-1.75, -0.5) {};
		\node [style=none] (94) at (-1.25, -1) {};
		\node [style=none] (95) at (-1.75, -2.75) {};
		\node [style=none] (96) at (-1.125, -2) {};
		\node [style=none] (97) at (-0.825, -2.475) {};
		\node [style=none] (98) at (0, -1.25) {};
		\node [style=none] (99) at (1.25, -1.15) {};
		\node [style=none] (100) at (0, -2) {};
		\node [style=none] (101) at (0, -2.75) {};
		\node [style=none] (102) at (0.75, -3) {};
		\node [style=none] (0) at (0, 3.75) {};
		\node [style=none] (1) at (-1.75, 3) {};
		\node [style=none] (2) at (1.75, 3) {};
		\node [style=none] (3) at (0, 2.25) {};
		\node [style=none] (5) at (-1.75, -3) {};
		\node [style=none] (6) at (1.75, -3) {};
		\node [style=none] (7) at (0, -3.75) {};
		\node [style=none] (103) at (1, -0.35) {};
		\node [style=none] (10) at (-1.75, 0.25) {};
		\node [style=none] (11) at (1.75, 0.25) {};
		\node [style=mid-nc] (12) at (0, -0.5) {};
	\end{pgfonlayer}
	\begin{pgfonlayer}{edgelayer}
		\draw [style=fi-sh] (29.center)
			 to (26.center)
			 to [in=-180, out=-90, looseness=0.75] (25.center)
			 to [in=-90, out=0, looseness=0.75] (27.center)
			 to (30.center)
			 to [in=0, out=90, looseness=0.75] (28.center)
			 to [in=90, out=180, looseness=0.75] cycle;
		\draw [style=op-lsh] (43.center)
			 to [in=-180, out=-90, looseness=0.75] (45.center)
			 to [in=-90, out=0, looseness=0.75] (44.center)
			 to [in=0, out=90, looseness=0.75] (42.center)
			 to [in=90, out=180, looseness=0.75] cycle;
		\draw [style=fi-sh] (35.center)
			 to (40.center)
			 to [in=0, out=-90, looseness=0.75] (41.center)
			 to [in=-90, out=-180, looseness=0.75] (39.center)
			 to (32.center)
			 to [in=-180, out=-90, looseness=0.75] (31.center)
			 to [in=-90, out=0, looseness=0.75] cycle;
		\draw [style=vt-bl-da, in=90, out=180, looseness=0.75] (14.center) to (15.center);
		\draw [style=vt-bl-da, in=90, out=0, looseness=0.75] (16.center) to (17.center);
		\draw [style=vt-bl-di] (18.center) to (19.center);
		\draw [style=vt-lbl] (46.center) to (52.center);
		\draw [style=vt-lbl] (52.center) to (47.center);
		\draw [style=vt-lbl] (52.center) to (57.center);
		\draw [style=vt-lbl] (52.center) to (53.center);
		\draw [style=vt-lbl] (53.center) to (60.center);
		\draw [style=vt-lbl] (60.center) to (54.center);
		\draw [style=vt-lbl] (60.center) to (59.center);
		\draw [style=vt-lbl] (53.center) to (56.center);
		\draw [style=vt-lbl] (57.center) to (65.center);
		\draw [style=vt-lbl] (65.center) to (56.center);
		\draw [style=vt-lbl] (65.center) to (58.center);
		\draw [style=vt-lbl] (56.center) to (66.center);
		\draw [style=vt-lbl] (68.center) to (67.center);
		\draw [style=vt-lbl] (55.center) to (69.center);
		\draw [style=vt-lbl] (69.center) to (49.center);
		\draw [style=vt-lbl] (69.center) to (63.center);
		\draw [style=vt-lbl] (63.center) to (64.center);
		\draw [style=vt-lbl] (63.center) to (62.center);
		\draw [style=vt-lbl] (62.center) to (50.center);
		\draw [style=vt-lbl] (62.center) to (61.center);
		\draw [style=vt-lbl] (70.center) to (64.center);
		\draw [style=vt-lbl] (64.center) to (72.center);
		\draw [style=vt-lbl] (74.center) to (73.center);
		\draw [style=vt-bl-di] (12) to (3.center);
		\draw [style=vt-lbl] (48.center) to (75.center);
		\draw [style=vt-lbl] (76.center) to (71.center);
		\draw [style=vt-lbl] (93.center) to (94.center);
		\draw [style=vt-lbl] (94.center) to (77.center);
		\draw [style=vt-lbl] (94.center) to (80.center);
		\draw [style=vt-lbl] (80.center) to (79.center);
		\draw [style=vt-lbl] (79.center) to (89.center);
		\draw [style=vt-lbl] (79.center) to (66.center);
		\draw [style=vt-lbl] (95.center) to (78.center);
		\draw [style=vt-lbl] (78.center) to (84.center);
		\draw [style=vt-lbl] (78.center) to (82.center);
		\draw [style=vt-lbl] (96.center) to (82.center);
		\draw [style=vt-lbl] (82.center) to (90.center);
		\draw [style=vt-lbl] (97.center) to (83.center);
		\draw [style=vt-lbl] (98.center) to (86.center);
		\draw [style=vt-lbl] (86.center) to (92.center);
		\draw [style=vt-lbl] (99.center) to (85.center);
		\draw [style=vt-lbl] (100.center) to (85.center);
		\draw [style=vt-lbl] (85.center) to (88.center);
		\draw [style=vt-lbl] (88.center) to (87.center);
		\draw [style=vt-lbl] (101.center) to (102.center);
		\draw [style=vt-lbl] (102.center) to (88.center);
		\draw [style=vt-lbl] (102.center) to (91.center);
		\draw [style=vt-bl-di] (7.center) to (12);
		\draw [style=vt] (1.center)
			 to [in=-180, out=-90, looseness=0.75] (3.center)
			 to [in=-90, out=0, looseness=0.75] (2.center)
			 to [in=0, out=90, looseness=0.75] (0.center)
			 to [in=90, out=180, looseness=0.75] cycle;
		\draw [style=vt] (5.center)
			 to [in=-180, out=-90, looseness=0.75] (7.center)
			 to [in=-90, out=0, looseness=0.75] (6.center);
		\draw [style=vt-lbl] (103.center) to (86.center);
		\draw [style=vt-bl] (10.center)
			 to [in=-180, out=-90, looseness=0.75] (12.center)
			 to [in=-90, out=0, looseness=0.75] (11.center);
		\draw [style=vt-lbl] (81.center) to (96.center);
		\draw [style=vt-lbl] (96.center) to (80.center);
		\draw [style=vt] (1.center) to (5.center);
		\draw [style=vt] (2.center) to (6.center);
	\end{pgfonlayer}
\end{tikzpicture}}
  {%
  }}%
}~
  \xrightarrow[\cong]{\qquad{\scalebox{0.7}{%
{\tikzstyle{every picture}=[tikzfig]
  {\begin{tikzpicture}
	\begin{pgfonlayer}{nodelayer}
		\node [style=none] (25) at (0.5, 0) {};
		\node [style=none] (26) at (-0.25, 1.75) {};
		\node [style=none] (27) at (-0.25, -1.75) {};
		\node [style=none] (28) at (-3.75, 0) {};
		\node [style=none] (29) at (-3, 1.75) {};
		\node [style=none] (30) at (-3, -1.75) {};
		\node [style=none] (42) at (-3.75, 0) {};
		\node [style=none] (43) at (-3, 1.75) {};
		\node [style=none] (44) at (-3, -1.75) {};
		\node [style=none] (45) at (-2.25, 0) {};
		\node [style=none] (31) at (0.5, 0) {};
		\node [style=none] (32) at (-0.25, 1.75) {};
		\node [style=none] (35) at (-0.25, -1.75) {};
		\node [style=none] (39) at (3, 1.75) {};
		\node [style=none] (40) at (3, -1.75) {};
		\node [style=none] (41) at (3.75, 0) {};
		\node [style=none] (14) at (-1, 0.25) {};
		\node [style=none] (15) at (-0.25, 1.75) {};
		\node [style=none] (16) at (-1, -0.25) {};
		\node [style=none] (17) at (-0.25, -1.75) {};
		\node [style=none] (18) at (0.25, -1.3) {};
		\node [style=none] (19) at (0.225, -1.425) {};
		\node [style=none] (46) at (-2.575, 1.5) {};
		\node [style=none] (47) at (-2.45, 1) {};
		\node [style=none] (48) at (-2.275, 0.5) {};
		\node [style=none] (49) at (-2.25, -0.5) {};
		\node [style=none] (50) at (-2.45, -1.25) {};
		\node [style=none] (52) at (-1.75, 1.25) {};
		\node [style=none] (53) at (-1.25, 1) {};
		\node [style=none] (54) at (-1.75, 0) {};
		\node [style=none] (55) at (-1.5, 0) {};
		\node [style=none] (56) at (-0.5, 0.75) {};
		\node [style=none] (57) at (-1, 1.75) {};
		\node [style=none] (58) at (0.3, 1.2) {};
		\node [style=none] (59) at (-0.5, 0) {};
		\node [style=none] (60) at (-1.25, 0.5) {};
		\node [style=none] (61) at (-2.25, -1.75) {};
		\node [style=none] (62) at (-1.85, -1.025) {};
		\node [style=none] (63) at (-1, -0.75) {};
		\node [style=none] (64) at (-0.5, -0.5) {};
		\node [style=none] (65) at (-0.25, 1.25) {};
		\node [style=none] (66) at (0.5, 0.5) {};
		\node [style=none] (67) at (0, 0) {};
		\node [style=none] (68) at (0, 0.625) {};
		\node [style=none] (69) at (-1.5, -0.5) {};
		\node [style=none] (70) at (-0.25, 0) {};
		\node [style=none] (71) at (0.425, -0.65) {};
		\node [style=none] (72) at (-0.75, -1.75) {};
		\node [style=none] (73) at (-1.25, -1.75) {};
		\node [style=none] (74) at (-1.45, -0.9) {};
		\node [style=none] (75) at (-1.5, 0.25) {};
		\node [style=none] (76) at (-0.55, -0.8) {};
		\node [style=none] (0) at (-3.75, 0) {};
		\node [style=none] (1) at (-3, 1.75) {};
		\node [style=none] (2) at (-3, -1.75) {};
		\node [style=none] (3) at (-2.25, 0) {};
		\node [style=none] (5) at (3, 1.75) {};
		\node [style=none] (6) at (3, -1.75) {};
		\node [style=none] (7) at (3.75, 0) {};
		\node [style=none] (10) at (-0.25, 1.75) {};
		\node [style=none] (11) at (-0.25, -1.75) {};
		\node [style=mid-nc] (12) at (0.5, 0) {};
	\end{pgfonlayer}
	\begin{pgfonlayer}{edgelayer}
		\draw [style=fi-sh] (29.center)
			 to (26.center)
			 to [in=90, out=0, looseness=0.75] (25.center)
			 to [in=0, out=-90, looseness=0.75] (27.center)
			 to (30.center)
			 to [in=-90, out=-180, looseness=0.75] (28.center)
			 to [in=-180, out=90, looseness=0.75] cycle;
		\draw [style=op-lsh] (43.center)
			 to [in=90, out=0, looseness=0.75] (45.center)
			 to [in=0, out=-90, looseness=0.75] (44.center)
			 to [in=-90, out=-180, looseness=0.75] (42.center)
			 to [in=-180, out=90, looseness=0.75] cycle;
		\draw [style=fi-bl] (35.center)
			 to (40.center)
			 to [in=-90, out=0, looseness=0.75] (41.center)
			 to [in=0, out=90, looseness=0.75] (39.center)
			 to (32.center)
			 to [in=90, out=0, looseness=0.75] (31.center)
			 to [in=0, out=-90, looseness=0.75] cycle;
		\draw [style=vt-bl-da, in=-180, out=90, looseness=0.75] (14.center) to (15.center);
		\draw [style=vt-bl-da, in=-180, out=-90, looseness=0.75] (16.center) to (17.center);
		\draw [style=vt-bl-di] (18.center) to (19.center);
		\draw [style=vt-lbl] (46.center) to (52.center);
		\draw [style=vt-lbl] (52.center) to (47.center);
		\draw [style=vt-lbl] (52.center) to (57.center);
		\draw [style=vt-lbl] (52.center) to (53.center);
		\draw [style=vt-lbl] (53.center) to (60.center);
		\draw [style=vt-lbl] (60.center) to (54.center);
		\draw [style=vt-lbl] (60.center) to (59.center);
		\draw [style=vt-lbl] (53.center) to (56.center);
		\draw [style=vt-lbl] (57.center) to (65.center);
		\draw [style=vt-lbl] (65.center) to (56.center);
		\draw [style=vt-lbl] (65.center) to (58.center);
		\draw [style=vt-lbl] (56.center) to (66.center);
		\draw [style=vt-lbl] (68.center) to (67.center);
		\draw [style=vt-lbl] (55.center) to (69.center);
		\draw [style=vt-lbl] (69.center) to (49.center);
		\draw [style=vt-lbl] (69.center) to (63.center);
		\draw [style=vt-lbl] (63.center) to (64.center);
		\draw [style=vt-lbl] (63.center) to (62.center);
		\draw [style=vt-lbl] (62.center) to (50.center);
		\draw [style=vt-lbl] (62.center) to (61.center);
		\draw [style=vt-lbl] (70.center) to (64.center);
		\draw [style=vt-lbl] (64.center) to (72.center);
		\draw [style=vt-lbl] (74.center) to (73.center);
		\draw [style=vt-bl-di] (3.center) to (12);
		\draw [style=vt-lbl] (48.center) to (75.center);
		\draw [style=vt-lbl] (76.center) to (71.center);
		\draw [style=vt-bl-di] (12) to (7.center);
		\draw [style=vt] (1.center)
			 to [in=90, out=0, looseness=0.75] (3.center)
			 to [in=0, out=-90, looseness=0.75] (2.center)
			 to [in=-90, out=-180, looseness=0.75] (0.center)
			 to [in=-180, out=90, looseness=0.75] cycle;
		\draw [style=vt] (5.center)
			 to [in=90, out=0, looseness=0.75] (7.center)
			 to [in=0, out=-90, looseness=0.75] (6.center);
		\draw [style=vt-bl] (10.center)
			 to [in=90, out=0, looseness=0.75] (12.center)
			 to [in=0, out=-90, looseness=0.75] (11.center);
		\draw [style=vt] (1.center) to (5.center);
		\draw [style=vt] (2.center) to (6.center);
	\end{pgfonlayer}
\end{tikzpicture}}
  {%
  }}%
}}_{\phantom|}\qquad}~
  \scalebox{0.6}{%
{\tikzstyle{every picture}=[tikzfig]
  {\begin{tikzpicture}
	\begin{pgfonlayer}{nodelayer}
		\node [style=none] (25) at (0, -0.5) {};
		\node [style=none] (26) at (-1.75, 0.25) {};
		\node [style=none] (27) at (1.75, 0.25) {};
		\node [style=none] (28) at (0, 3.75) {};
		\node [style=none] (29) at (-1.75, 3) {};
		\node [style=none] (30) at (1.75, 3) {};
		\node [style=none] (45) at (0, 3.75) {};
		\node [style=none] (46) at (-1.75, 3) {};
		\node [style=none] (47) at (1.75, 3) {};
		\node [style=none] (48) at (0, 2.25) {};
		\node [style=none] (31) at (0, -0.5) {};
		\node [style=none] (32) at (-1.75, 0.25) {};
		\node [style=none] (35) at (1.75, 0.25) {};
		\node [style=none] (39) at (-1.75, -3) {};
		\node [style=none] (40) at (1.75, -3) {};
		\node [style=none] (41) at (0, -3.75) {};
		\node [style=none] (10) at (-1.75, 0.25) {};
		\node [style=none] (11) at (1.75, 0.25) {};
		\node [style=mid-nc] (12) at (0, -0.5) {};
		\node [style=none] (14) at (-0.25, 1) {};
		\node [style=none] (15) at (-1.75, 0.25) {};
		\node [style=none] (16) at (0.25, 1) {};
		\node [style=none] (17) at (1.75, 0.25) {};
		\node [style=none] (0) at (0, 3.75) {};
		\node [style=none] (1) at (-1.75, 3) {};
		\node [style=none] (2) at (1.75, 3) {};
		\node [style=none] (3) at (0, 2.25) {};
		\node [style=none] (5) at (-1.75, -3) {};
		\node [style=none] (6) at (1.75, -3) {};
		\node [style=none] (7) at (0, -3.75) {};
		\node [style=none] (18) at (1.3, -0.25) {};
		\node [style=none] (19) at (1.425, -0.225) {};
	\end{pgfonlayer}
	\begin{pgfonlayer}{edgelayer}
		\draw [style=fi-bl] (29.center)
			 to (26.center)
			 to [in=-180, out=-90, looseness=0.75] (25.center)
			 to [in=-90, out=0, looseness=0.75] (27.center)
			 to (30.center)
			 to [in=0, out=90, looseness=0.75] (28.center)
			 to [in=90, out=180, looseness=0.75] cycle;
		\draw [style=op-lsh] (46.center)
			 to [in=-180, out=-90, looseness=0.75] (48.center)
			 to [in=-90, out=0, looseness=0.75] (47.center)
			 to [in=0, out=90, looseness=0.75] (45.center)
			 to [in=90, out=180, looseness=0.75] cycle;
		\draw [style=fi-bl] (35.center)
			 to (40.center)
			 to [in=0, out=-90, looseness=0.75] (41.center)
			 to [in=-90, out=-180, looseness=0.75] (39.center)
			 to (32.center)
			 to [in=-180, out=-90, looseness=0.75] (31.center)
			 to [in=-90, out=0, looseness=0.75] cycle;
		\draw [style=vt-bl-di] (7.center) to (12);
		\draw [style=vt-bl] (10.center)
			 to [in=-180, out=-90, looseness=0.75] (12.center)
			 to [in=-90, out=0, looseness=0.75] (11.center);
		\draw [style=vt-bl-di] (12) to (3.center);
		\draw [style=vt-bl-da, in=90, out=180, looseness=0.75] (14.center) to (15.center);
		\draw [style=vt-bl-da, in=90, out=0, looseness=0.75] (16.center) to (17.center);
		\draw [style=vt] (1.center)
			 to [in=-180, out=-90, looseness=0.75] (3.center)
			 to [in=-90, out=0, looseness=0.75] (2.center)
			 to [in=0, out=90, looseness=0.75] (0.center)
			 to [in=90, out=180, looseness=0.75] cycle;
		\draw [style=vt] (5.center)
			 to [in=-180, out=-90, looseness=0.75] (7.center)
			 to [in=-90, out=0, looseness=0.75] (6.center);
		\draw [style=vt] (1.center) to (5.center);
		\draw [style=vt] (2.center) to (6.center);
		\draw [style=vt-bl-di] (18.center) to (19.center);
	\end{pgfonlayer}
\end{tikzpicture}}
  {%
  }}%
} ~~=~a \,.
  \label{eq:counit_a}
  \ee
Let us explain this prescription in more detail: The idempotents that specify the 
objects $\imath_\ell^{}\circ\df_{\!\ell}(a)$ and $a$, respectively, are drawn 
vertically, while in order to facilitate the visualization, the string net that
represents the morphism $(\varepsilon_\ell^{})_{a}^{}$ is drawn horizontally, 
aligned along the arrow.  The latter string net is obtained by modifying the 
string net representing the idempotent for the object $a$ in a similar way as in 
the prescription \eqref{eq:imath_.F}
for $\imath_\ell^{}\circ\df_{\!\ell}(a)$, but now instead of turning the entirety of
$a$ into a string-net with 2-cells labeled by the trivial Frobenius algebra, the
procedure is only applied to the half of $a$ that starts at its domain. (It is also
understood that the label for the vertex is only partially $u$-conjugated, with the 
label of the non-contractible circle being the same as the one in the idempotent 
for $a$.)

Using the fact that the forgetful functor \eqref{eq:ufr} from $\cfrc$ to $\cbc$ 
preserves the graphical calculus up to the insertion of Frobenius graphs 
\Cite{Ex.\ 2.29}{fusY2} and an argument similar to showing that $a$ (and likewise,
$\imath_\ell^{}\circ\df{\!_\ell}^{}(a)$, is an idempotent, one sees that 
the morphism $(\varepsilon_\ell)_{a}^{}$ displayed in \eqref{eq:counit_a}
that is obtained in this way is well-defined and has the correct domain and codomain.
For the very same reason, $(\varepsilon_\ell)_{a}^{}$ is an \emph{iso}morphism: the 
inverse of $a$ is given by its `transpose', i.e.\ by the string net that 
results from applying the modification procedure 
to the other half of $a$, i.e.\ to the one that ends at its codomain.

Next we establish naturality of the family $\{ (\varepsilon_\ell^{})_{a}^{} \}$ of
morphisms. To this end we consider generic morphisms in $\SNfrc(\ell)$ between 
any two objects $a$ and $b$ of the form \eqref{eq:pic:a}; such a morphism looks like
(again suppressing all labels)
  \be
  f \Colon a ~=~ \scalebox{0.6}{%
{\tikzstyle{every picture}=[tikzfig]
  {\begin{tikzpicture}
	\begin{pgfonlayer}{nodelayer}
		\node [style=none] (25) at (0, -0.5) {};
		\node [style=none] (26) at (-1.75, 0.25) {};
		\node [style=none] (27) at (1.75, 0.25) {};
		\node [style=none] (28) at (0, 3.75) {};
		\node [style=none] (29) at (-1.75, 3) {};
		\node [style=none] (30) at (1.75, 3) {};
		\node [style=none] (45) at (0, 3.75) {};
		\node [style=none] (46) at (-1.75, 3) {};
		\node [style=none] (47) at (1.75, 3) {};
		\node [style=none] (48) at (0, 2.25) {};
		\node [style=none] (31) at (0, -0.5) {};
		\node [style=none] (32) at (-1.75, 0.25) {};
		\node [style=none] (35) at (1.75, 0.25) {};
		\node [style=none] (39) at (-1.75, -3) {};
		\node [style=none] (40) at (1.75, -3) {};
		\node [style=none] (41) at (0, -3.75) {};
		\node [style=none] (10) at (-1.75, 0.25) {};
		\node [style=none] (11) at (1.75, 0.25) {};
		\node [style=mid-nc] (12) at (0, -0.5) {};
		\node [style=none] (14) at (-0.25, 1) {};
		\node [style=none] (15) at (-1.75, 0.25) {};
		\node [style=none] (16) at (0.25, 1) {};
		\node [style=none] (17) at (1.75, 0.25) {};
		\node [style=none] (0) at (0, 3.75) {};
		\node [style=none] (1) at (-1.75, 3) {};
		\node [style=none] (2) at (1.75, 3) {};
		\node [style=none] (3) at (0, 2.25) {};
		\node [style=none] (5) at (-1.75, -3) {};
		\node [style=none] (6) at (1.75, -3) {};
		\node [style=none] (7) at (0, -3.75) {};
		\node [style=none] (18) at (1.3, -0.25) {};
		\node [style=none] (19) at (1.425, -0.225) {};
	\end{pgfonlayer}
	\begin{pgfonlayer}{edgelayer}
		\draw [style=fi-bl] (29.center)
			 to (26.center)
			 to [in=-180, out=-90, looseness=0.75] (25.center)
			 to [in=-90, out=0, looseness=0.75] (27.center)
			 to (30.center)
			 to [in=0, out=90, looseness=0.75] (28.center)
			 to [in=90, out=180, looseness=0.75] cycle;
		\draw [style=op-lsh] (46.center)
			 to [in=-180, out=-90, looseness=0.75] (48.center)
			 to [in=-90, out=0, looseness=0.75] (47.center)
			 to [in=0, out=90, looseness=0.75] (45.center)
			 to [in=90, out=180, looseness=0.75] cycle;
		\draw [style=fi-bl] (35.center)
			 to (40.center)
			 to [in=0, out=-90, looseness=0.75] (41.center)
			 to [in=-90, out=-180, looseness=0.75] (39.center)
			 to (32.center)
			 to [in=-180, out=-90, looseness=0.75] (31.center)
			 to [in=-90, out=0, looseness=0.75] cycle;
		\draw [style=vt-bl-di] (7.center) to (12);
		\draw [style=vt-bl] (10.center)
			 to [in=-180, out=-90, looseness=0.75] (12.center)
			 to [in=-90, out=0, looseness=0.75] (11.center);
		\draw [style=vt-bl-di] (12) to (3.center);
		\draw [style=vt-bl-da, in=90, out=180, looseness=0.75] (14.center) to (15.center);
		\draw [style=vt-bl-da, in=90, out=0, looseness=0.75] (16.center) to (17.center);
		\draw [style=vt] (1.center)
			 to [in=-180, out=-90, looseness=0.75] (3.center)
			 to [in=-90, out=0, looseness=0.75] (2.center)
			 to [in=0, out=90, looseness=0.75] (0.center)
			 to [in=90, out=180, looseness=0.75] cycle;
		\draw [style=vt] (5.center)
			 to [in=-180, out=-90, looseness=0.75] (7.center)
			 to [in=-90, out=0, looseness=0.75] (6.center);
		\draw [style=vt] (1.center) to (5.center);
		\draw [style=vt] (2.center) to (6.center);
		\draw [style=vt-bl-di] (18.center) to (19.center);
	\end{pgfonlayer}
\end{tikzpicture}}
  {%
  }}%
}
  ~\xrightarrow{\qquad{\scalebox{0.7}{%
{\tikzstyle{every picture}=[tikzfig]
  {\begin{tikzpicture}
	\begin{pgfonlayer}{nodelayer}
		\node [style=none] (25) at (0.5, 0) {};
		\node [style=none] (26) at (-0.25, 1.75) {};
		\node [style=none] (27) at (-0.25, -1.75) {};
		\node [style=none] (28) at (-3.75, 0) {};
		\node [style=none] (29) at (-3, 1.75) {};
		\node [style=none] (30) at (-3, -1.75) {};
		\node [style=none] (45) at (-3.75, 0) {};
		\node [style=none] (46) at (-3, 1.75) {};
		\node [style=none] (47) at (-3, -1.75) {};
		\node [style=none] (48) at (-2.25, 0) {};
		\node [style=none] (31) at (0.5, 0) {};
		\node [style=none] (32) at (-0.25, 1.75) {};
		\node [style=none] (35) at (-0.25, -1.75) {};
		\node [style=none] (39) at (3, 1.75) {};
		\node [style=none] (40) at (3, -1.75) {};
		\node [style=none] (41) at (3.75, 0) {};
		\node [style=none] (10) at (-0.25, 1.75) {};
		\node [style=none] (11) at (-0.25, -1.75) {};
		\node [style=mid-nc] (12) at (0.5, 0) {};
		\node [style=none] (14) at (-1, 0.25) {};
		\node [style=none] (15) at (-0.25, 1.75) {};
		\node [style=none] (16) at (-1, -0.25) {};
		\node [style=none] (17) at (-0.25, -1.75) {};
		\node [style=none] (0) at (-3.75, 0) {};
		\node [style=none] (1) at (-3, 1.75) {};
		\node [style=none] (2) at (-3, -1.75) {};
		\node [style=none] (3) at (-2.25, 0) {};
		\node [style=none] (5) at (3, 1.75) {};
		\node [style=none] (6) at (3, -1.75) {};
		\node [style=none] (7) at (3.75, 0) {};
		\node [style=none] (18) at (0.25, -1.3) {};
		\node [style=none] (19) at (0.225, -1.425) {};
	\end{pgfonlayer}
	\begin{pgfonlayer}{edgelayer}
		\draw [style=fi-bl] (29.center)
			 to (26.center)
			 to [in=90, out=0, looseness=0.75] (25.center)
			 to [in=0, out=-90, looseness=0.75] (27.center)
			 to (30.center)
			 to [in=-90, out=-180, looseness=0.75] (28.center)
			 to [in=-180, out=90, looseness=0.75] cycle;
		\draw [style=op-lsh] (46.center)
			 to [in=90, out=0, looseness=0.75] (48.center)
			 to [in=0, out=-90, looseness=0.75] (47.center)
			 to [in=-90, out=-180, looseness=0.75] (45.center)
			 to [in=-180, out=90, looseness=0.75] cycle;
		\draw [style=fi-gr] (35.center)
			 to (40.center)
			 to [in=-90, out=0, looseness=0.75] (41.center)
			 to [in=0, out=90, looseness=0.75] (39.center)
			 to (32.center)
			 to [in=90, out=0, looseness=0.75] (31.center)
			 to [in=0, out=-90, looseness=0.75] cycle;
		\draw [style=vt-bl-di] (12) to (7.center);
		\draw [style=vt-bl] (10.center)
			 to [in=90, out=0, looseness=0.75] (12.center)
			 to [in=0, out=-90, looseness=0.75] (11.center);
		\draw [style=vt-bl-di] (3.center) to (12);
		\draw [style=vt-bl-da, in=-180, out=90, looseness=0.75] (14.center) to (15.center);
		\draw [style=vt-bl-da, in=-180, out=-90, looseness=0.75] (16.center) to (17.center);
		\draw [style=vt] (1.center)
			 to [in=90, out=0, looseness=0.75] (3.center)
			 to [in=0, out=-90, looseness=0.75] (2.center)
			 to [in=-90, out=-180, looseness=0.75] (0.center)
			 to [in=-180, out=90, looseness=0.75] cycle;
		\draw [style=vt] (5.center)
			 to [in=90, out=0, looseness=0.75] (7.center)
			 to [in=0, out=-90, looseness=0.75] (6.center);
		\draw [style=vt] (1.center) to (5.center);
		\draw [style=vt] (2.center) to (6.center);
		\draw [style=vt-bl-di] (18.center) to (19.center);
	\end{pgfonlayer}
\end{tikzpicture}}
  {%
  }}%
}}_{\phantom|}\qquad}~
  \scalebox{0.6}{%
{\tikzstyle{every picture}=[tikzfig]
  {\begin{tikzpicture}
	\begin{pgfonlayer}{nodelayer}
		\node [style=none] (25) at (0, -0.5) {};
		\node [style=none] (26) at (-1.75, 0.25) {};
		\node [style=none] (27) at (1.75, 0.25) {};
		\node [style=none] (28) at (0, 3.75) {};
		\node [style=none] (29) at (-1.75, 3) {};
		\node [style=none] (30) at (1.75, 3) {};
		\node [style=none] (45) at (0, 3.75) {};
		\node [style=none] (46) at (-1.75, 3) {};
		\node [style=none] (47) at (1.75, 3) {};
		\node [style=none] (48) at (0, 2.25) {};
		\node [style=none] (31) at (0, -0.5) {};
		\node [style=none] (32) at (-1.75, 0.25) {};
		\node [style=none] (35) at (1.75, 0.25) {};
		\node [style=none] (39) at (-1.75, -3) {};
		\node [style=none] (40) at (1.75, -3) {};
		\node [style=none] (41) at (0, -3.75) {};
		\node [style=none] (10) at (-1.75, 0.25) {};
		\node [style=none] (11) at (1.75, 0.25) {};
		\node [style=mid-nc] (12) at (0, -0.5) {};
		\node [style=none] (14) at (-0.25, 1) {};
		\node [style=none] (15) at (-1.75, 0.25) {};
		\node [style=none] (16) at (0.25, 1) {};
		\node [style=none] (17) at (1.75, 0.25) {};
		\node [style=none] (0) at (0, 3.75) {};
		\node [style=none] (1) at (-1.75, 3) {};
		\node [style=none] (2) at (1.75, 3) {};
		\node [style=none] (3) at (0, 2.25) {};
		\node [style=none] (5) at (-1.75, -3) {};
		\node [style=none] (6) at (1.75, -3) {};
		\node [style=none] (7) at (0, -3.75) {};
		\node [style=none] (18) at (1.3, -0.25) {};
		\node [style=none] (19) at (1.425, -0.225) {};
	\end{pgfonlayer}
	\begin{pgfonlayer}{edgelayer}
		\draw [style=fi-gr] (29.center)
			 to (26.center)
			 to [in=-180, out=-90, looseness=0.75] (25.center)
			 to [in=-90, out=0, looseness=0.75] (27.center)
			 to (30.center)
			 to [in=0, out=90, looseness=0.75] (28.center)
			 to [in=90, out=180, looseness=0.75] cycle;
		\draw [style=op-lsh] (46.center)
			 to [in=-180, out=-90, looseness=0.75] (48.center)
			 to [in=-90, out=0, looseness=0.75] (47.center)
			 to [in=0, out=90, looseness=0.75] (45.center)
			 to [in=90, out=180, looseness=0.75] cycle;
		\draw [style=fi-gr] (35.center)
			 to (40.center)
			 to [in=0, out=-90, looseness=0.75] (41.center)
			 to [in=-90, out=-180, looseness=0.75] (39.center)
			 to (32.center)
			 to [in=-180, out=-90, looseness=0.75] (31.center)
			 to [in=-90, out=0, looseness=0.75] cycle;
		\draw [style=vt-bl-di] (7.center) to (12);
		\draw [style=vt-bl] (10.center)
			 to [in=-180, out=-90, looseness=0.75] (12.center)
			 to [in=-90, out=0, looseness=0.75] (11.center);
		\draw [style=vt-bl-di] (12) to (3.center);
		\draw [style=vt-bl-da, in=90, out=180, looseness=0.75] (14.center) to (15.center);
		\draw [style=vt-bl-da, in=90, out=0, looseness=0.75] (16.center) to (17.center);
		\draw [style=vt] (1.center)
			 to [in=-180, out=-90, looseness=0.75] (3.center)
			 to [in=-90, out=0, looseness=0.75] (2.center)
			 to [in=0, out=90, looseness=0.75] (0.center)
			 to [in=90, out=180, looseness=0.75] cycle;
		\draw [style=vt] (5.center)
			 to [in=-180, out=-90, looseness=0.75] (7.center)
			 to [in=-90, out=0, looseness=0.75] (6.center);
		\draw [style=vt] (1.center) to (5.center);
		\draw [style=vt] (2.center) to (6.center);
		\draw [style=vt-bl-di] (18.center) to (19.center);
	\end{pgfonlayer}
\end{tikzpicture}}
  {%
  }}%
} ~=~ b
  \ee
Note that this is indeed a morphism in $\SNfrc(\ell)$, since, by definition of the 
idempotent completion, we have
  \be
  f\cir a =  f = b\cir f \,.
  \label{eq:fa=f=bf}
  \ee
The naturality of the counit at the morphism $f$ is now seen as follows:
  \be
  (\varepsilon_\ell^{})_{b}^{} \circ (\imath_\ell^{}\cir \df_{\!\ell}^{}(f)) 
  ~=~ \scalebox{0.5}{%
{\tikzstyle{every picture}=[tikzfig]
  {\begin{tikzpicture}
	\begin{pgfonlayer}{nodelayer}
		\node [style=none] (25) at (0, -0.5) {};
		\node [style=none] (26) at (-1.75, 0.25) {};
		\node [style=none] (27) at (1.75, 0.25) {};
		\node [style=none] (28) at (0, 3.75) {};
		\node [style=none] (29) at (-1.75, 3) {};
		\node [style=none] (30) at (1.75, 3) {};
		\node [style=none] (42) at (0, 3.75) {};
		\node [style=none] (43) at (-1.75, 3) {};
		\node [style=none] (44) at (1.75, 3) {};
		\node [style=none] (45) at (0, 2.25) {};
		\node [style=none] (31) at (0, -0.5) {};
		\node [style=none] (32) at (-1.75, 0.25) {};
		\node [style=none] (35) at (1.75, 0.25) {};
		\node [style=none] (39) at (-1.75, -3) {};
		\node [style=none] (40) at (1.75, -3) {};
		\node [style=none] (41) at (0, -3.75) {};
		\node [style=none] (14) at (-0.25, 1) {};
		\node [style=none] (15) at (-1.75, 0.25) {};
		\node [style=none] (16) at (0.25, 1) {};
		\node [style=none] (17) at (1.75, 0.25) {};
		\node [style=none] (18) at (1.3, -0.25) {};
		\node [style=none] (19) at (1.425, -0.225) {};
		\node [style=none] (66) at (-0.5, -0.5) {};
		\node [style=none] (77) at (-0.975, -0.375) {};
		\node [style=none] (78) at (-1.15, -2.95) {};
		\node [style=none] (79) at (-0.5, -1.25) {};
		\node [style=none] (80) at (-1, -1.5) {};
		\node [style=none] (81) at (-1.75, -1.75) {};
		\node [style=none] (82) at (-0.575, -2.9) {};
		\node [style=none] (83) at (0, -2.5) {};
		\node [style=none] (84) at (-1.025, -3.6) {};
		\node [style=none] (85) at (0.75, -2) {};
		\node [style=none] (86) at (0.75, -1) {};
		\node [style=none] (87) at (1.75, -2.475) {};
		\node [style=none] (88) at (1, -2.5) {};
		\node [style=none] (89) at (0, -1.75) {};
		\node [style=none] (90) at (0, -3.25) {};
		\node [style=none] (91) at (1, -3.6) {};
		\node [style=none] (92) at (1.75, -1.25) {};
		\node [style=none] (93) at (-1.75, -0.5) {};
		\node [style=none] (94) at (-1.25, -1) {};
		\node [style=none] (95) at (-1.75, -2.75) {};
		\node [style=none] (96) at (-1.125, -2) {};
		\node [style=none] (97) at (-0.825, -2.475) {};
		\node [style=none] (98) at (0, -1.25) {};
		\node [style=none] (99) at (1.25, -1.15) {};
		\node [style=none] (100) at (0, -2) {};
		\node [style=none] (101) at (0, -2.75) {};
		\node [style=none] (102) at (0.75, -3) {};
		\node [style=none] (0) at (0, 3.75) {};
		\node [style=none] (1) at (-1.75, 3) {};
		\node [style=none] (2) at (1.75, 3) {};
		\node [style=none] (3) at (0, 2.25) {};
		\node [style=none] (5) at (-1.75, -3) {};
		\node [style=none] (6) at (1.75, -3) {};
		\node [style=none] (7) at (0, -3.75) {};
		\node [style=none] (103) at (1, -0.35) {};
		\node [style=none] (10) at (-1.75, 0.25) {};
		\node [style=none] (11) at (1.75, 0.25) {};
		\node [style=mid-nc] (12) at (0, -0.5) {};
	\end{pgfonlayer}
	\begin{pgfonlayer}{edgelayer}
		\draw [style=fi-gr] (29.center)
			 to (26.center)
			 to [in=-180, out=-90, looseness=0.75] (25.center)
			 to [in=-90, out=0, looseness=0.75] (27.center)
			 to (30.center)
			 to [in=0, out=90, looseness=0.75] (28.center)
			 to [in=90, out=180, looseness=0.75] cycle;
		\draw [style=op-lsh] (43.center)
			 to [in=-180, out=-90, looseness=0.75] (45.center)
			 to [in=-90, out=0, looseness=0.75] (44.center)
			 to [in=0, out=90, looseness=0.75] (42.center)
			 to [in=90, out=180, looseness=0.75] cycle;
		\draw [style=fi-sh] (35.center)
			 to (40.center)
			 to [in=0, out=-90, looseness=0.75] (41.center)
			 to [in=-90, out=-180, looseness=0.75] (39.center)
			 to (32.center)
			 to [in=-180, out=-90, looseness=0.75] (31.center)
			 to [in=-90, out=0, looseness=0.75] cycle;
		\draw [style=vt-bl-da, in=90, out=180, looseness=0.75] (14.center) to (15.center);
		\draw [style=vt-bl-da, in=90, out=0, looseness=0.75] (16.center) to (17.center);
		\draw [style=vt-bl-di] (18.center) to (19.center);
		\draw [style=vt-bl-di] (12) to (3.center);
		\draw [style=vt-lgr] (93.center) to (94.center);
		\draw [style=vt-lgr] (94.center) to (77.center);
		\draw [style=vt-lgr] (94.center) to (80.center);
		\draw [style=vt-lgr] (80.center) to (79.center);
		\draw [style=vt-lgr] (79.center) to (89.center);
		\draw [style=vt-lgr] (79.center) to (66.center);
		\draw [style=vt-lgr] (95.center) to (78.center);
		\draw [style=vt-lgr] (78.center) to (84.center);
		\draw [style=vt-lgr] (78.center) to (82.center);
		\draw [style=vt-lgr] (96.center) to (82.center);
		\draw [style=vt-lgr] (82.center) to (90.center);
		\draw [style=vt-lgr] (97.center) to (83.center);
		\draw [style=vt-lgr] (98.center) to (86.center);
		\draw [style=vt-lgr] (86.center) to (92.center);
		\draw [style=vt-lgr] (99.center) to (85.center);
		\draw [style=vt-lgr] (100.center) to (85.center);
		\draw [style=vt-lgr] (85.center) to (88.center);
		\draw [style=vt-lgr] (88.center) to (87.center);
		\draw [style=vt-lgr] (101.center) to (102.center);
		\draw [style=vt-lgr] (102.center) to (88.center);
		\draw [style=vt-lgr] (102.center) to (91.center);
		\draw [style=vt-bl-di] (7.center) to (12);
		\draw [style=vt] (1.center)
			 to [in=-180, out=-90, looseness=0.75] (3.center)
			 to [in=-90, out=0, looseness=0.75] (2.center)
			 to [in=0, out=90, looseness=0.75] (0.center)
			 to [in=90, out=180, looseness=0.75] cycle;
		\draw [style=vt] (5.center)
			 to [in=-180, out=-90, looseness=0.75] (7.center)
			 to [in=-90, out=0, looseness=0.75] (6.center);
		\draw [style=vt-lgr] (103.center) to (86.center);
		\draw [style=vt-bl] (10.center)
			 to [in=-180, out=-90, looseness=0.75] (12.center)
			 to [in=-90, out=0, looseness=0.75] (11.center);
		\draw [style=vt-lgr] (81.center) to (96.center);
		\draw [style=vt-lgr] (96.center) to (80.center);
		\draw [style=vt] (1.center) to (5.center);
		\draw [style=vt] (2.center) to (6.center);
	\end{pgfonlayer}
\end{tikzpicture}}
  {%
  }}%
} ~\circ~ \scalebox{0.5}{%
{\tikzstyle{every picture}=[tikzfig]
  {\begin{tikzpicture}
	\begin{pgfonlayer}{nodelayer}
		\node [style=none] (25) at (0, -0.5) {};
		\node [style=none] (26) at (-1.75, 0.25) {};
		\node [style=none] (27) at (1.75, 0.25) {};
		\node [style=none] (28) at (0, 3.75) {};
		\node [style=none] (29) at (-1.75, 3) {};
		\node [style=none] (30) at (1.75, 3) {};
		\node [style=none] (42) at (0, 3.75) {};
		\node [style=none] (43) at (-1.75, 3) {};
		\node [style=none] (44) at (1.75, 3) {};
		\node [style=none] (45) at (0, 2.25) {};
		\node [style=none] (31) at (0, -0.5) {};
		\node [style=none] (32) at (-1.75, 0.25) {};
		\node [style=none] (35) at (1.75, 0.25) {};
		\node [style=none] (39) at (-1.75, -3) {};
		\node [style=none] (40) at (1.75, -3) {};
		\node [style=none] (41) at (0, -3.75) {};
		\node [style=none] (14) at (-0.25, 1) {};
		\node [style=none] (15) at (-1.75, 0.25) {};
		\node [style=none] (16) at (0.25, 1) {};
		\node [style=none] (17) at (1.75, 0.25) {};
		\node [style=none] (18) at (1.3, -0.25) {};
		\node [style=none] (19) at (1.425, -0.225) {};
		\node [style=none] (46) at (-1.5, 2.575) {};
		\node [style=none] (47) at (-1, 2.45) {};
		\node [style=none] (48) at (-0.5, 2.275) {};
		\node [style=none] (49) at (0.5, 2.25) {};
		\node [style=none] (50) at (1.25, 2.45) {};
		\node [style=none] (52) at (-1.25, 1.75) {};
		\node [style=none] (53) at (-1, 1.25) {};
		\node [style=none] (54) at (0, 1.75) {};
		\node [style=none] (55) at (0, 1.5) {};
		\node [style=none] (56) at (-0.75, 0.5) {};
		\node [style=none] (57) at (-1.75, 1) {};
		\node [style=none] (58) at (-1.2, -0.3) {};
		\node [style=none] (59) at (0, 0.5) {};
		\node [style=none] (60) at (-0.5, 1.25) {};
		\node [style=none] (61) at (1.75, 2.25) {};
		\node [style=none] (62) at (1.025, 1.85) {};
		\node [style=none] (63) at (0.75, 1) {};
		\node [style=none] (64) at (0.5, 0.5) {};
		\node [style=none] (65) at (-1.25, 0.25) {};
		\node [style=none] (66) at (-0.5, -0.5) {};
		\node [style=none] (67) at (0, 0) {};
		\node [style=none] (68) at (-0.625, 0) {};
		\node [style=none] (69) at (0.5, 1.5) {};
		\node [style=none] (70) at (0, 0.25) {};
		\node [style=none] (71) at (0.65, -0.425) {};
		\node [style=none] (72) at (1.75, 0.75) {};
		\node [style=none] (73) at (1.75, 1.25) {};
		\node [style=none] (74) at (0.9, 1.45) {};
		\node [style=none] (75) at (-0.25, 1.5) {};
		\node [style=none] (76) at (0.8, 0.55) {};
		\node [style=none] (77) at (-0.975, -0.375) {};
		\node [style=none] (78) at (-1.15, -2.95) {};
		\node [style=none] (79) at (-0.5, -1.25) {};
		\node [style=none] (80) at (-1, -1.5) {};
		\node [style=none] (81) at (-1.75, -1.75) {};
		\node [style=none] (82) at (-0.575, -2.9) {};
		\node [style=none] (83) at (0, -2.5) {};
		\node [style=none] (84) at (-1.025, -3.6) {};
		\node [style=none] (85) at (0.75, -2) {};
		\node [style=none] (86) at (0.75, -1) {};
		\node [style=none] (87) at (1.75, -2.475) {};
		\node [style=none] (88) at (1, -2.5) {};
		\node [style=none] (89) at (0, -1.75) {};
		\node [style=none] (90) at (0, -3.25) {};
		\node [style=none] (91) at (1, -3.6) {};
		\node [style=none] (92) at (1.75, -1.25) {};
		\node [style=none] (93) at (-1.75, -0.5) {};
		\node [style=none] (94) at (-1.25, -1) {};
		\node [style=none] (95) at (-1.75, -2.75) {};
		\node [style=none] (96) at (-1.125, -2) {};
		\node [style=none] (97) at (-0.825, -2.475) {};
		\node [style=none] (98) at (0, -1.25) {};
		\node [style=none] (99) at (1.25, -1.15) {};
		\node [style=none] (100) at (0, -2) {};
		\node [style=none] (101) at (0, -2.75) {};
		\node [style=none] (102) at (0.75, -3) {};
		\node [style=none] (0) at (0, 3.75) {};
		\node [style=none] (1) at (-1.75, 3) {};
		\node [style=none] (2) at (1.75, 3) {};
		\node [style=none] (3) at (0, 2.25) {};
		\node [style=none] (5) at (-1.75, -3) {};
		\node [style=none] (6) at (1.75, -3) {};
		\node [style=none] (7) at (0, -3.75) {};
		\node [style=none] (103) at (1, -0.35) {};
		\node [style=none] (10) at (-1.75, 0.25) {};
		\node [style=none] (11) at (1.75, 0.25) {};
		\node [style=mid-nc] (12) at (0, -0.5) {};
	\end{pgfonlayer}
	\begin{pgfonlayer}{edgelayer}
		\draw [style=fi-sh] (29.center)
			 to (26.center)
			 to [in=-180, out=-90, looseness=0.75] (25.center)
			 to [in=-90, out=0, looseness=0.75] (27.center)
			 to (30.center)
			 to [in=0, out=90, looseness=0.75] (28.center)
			 to [in=90, out=180, looseness=0.75] cycle;
		\draw [style=op-lsh] (43.center)
			 to [in=-180, out=-90, looseness=0.75] (45.center)
			 to [in=-90, out=0, looseness=0.75] (44.center)
			 to [in=0, out=90, looseness=0.75] (42.center)
			 to [in=90, out=180, looseness=0.75] cycle;
		\draw [style=fi-sh] (35.center)
			 to (40.center)
			 to [in=0, out=-90, looseness=0.75] (41.center)
			 to [in=-90, out=-180, looseness=0.75] (39.center)
			 to (32.center)
			 to [in=-180, out=-90, looseness=0.75] (31.center)
			 to [in=-90, out=0, looseness=0.75] cycle;
		\draw [style=vt-bl-da, in=90, out=180, looseness=0.75] (14.center) to (15.center);
		\draw [style=vt-bl-da, in=90, out=0, looseness=0.75] (16.center) to (17.center);
		\draw [style=vt-bl-di] (18.center) to (19.center);
		\draw [style=vt-lgr] (46.center) to (52.center);
		\draw [style=vt-lgr] (52.center) to (47.center);
		\draw [style=vt-lgr] (52.center) to (57.center);
		\draw [style=vt-lgr] (52.center) to (53.center);
		\draw [style=vt-lgr] (53.center) to (60.center);
		\draw [style=vt-lgr] (60.center) to (54.center);
		\draw [style=vt-lgr] (60.center) to (59.center);
		\draw [style=vt-lgr] (53.center) to (56.center);
		\draw [style=vt-lgr] (57.center) to (65.center);
		\draw [style=vt-lgr] (65.center) to (56.center);
		\draw [style=vt-lgr] (65.center) to (58.center);
		\draw [style=vt-lgr] (56.center) to (66.center);
		\draw [style=vt-lgr] (68.center) to (67.center);
		\draw [style=vt-lgr] (55.center) to (69.center);
		\draw [style=vt-lgr] (69.center) to (49.center);
		\draw [style=vt-lgr] (69.center) to (63.center);
		\draw [style=vt-lgr] (63.center) to (64.center);
		\draw [style=vt-lgr] (63.center) to (62.center);
		\draw [style=vt-lgr] (62.center) to (50.center);
		\draw [style=vt-lgr] (62.center) to (61.center);
		\draw [style=vt-lgr] (70.center) to (64.center);
		\draw [style=vt-lgr] (64.center) to (72.center);
		\draw [style=vt-lgr] (74.center) to (73.center);
		\draw [style=vt-bl-di] (12) to (3.center);
		\draw [style=vt-lgr] (48.center) to (75.center);
		\draw [style=vt-lgr] (76.center) to (71.center);
		\draw [style=vt-lbl] (93.center) to (94.center);
		\draw [style=vt-lbl] (94.center) to (77.center);
		\draw [style=vt-lbl] (94.center) to (80.center);
		\draw [style=vt-lbl] (80.center) to (79.center);
		\draw [style=vt-lbl] (79.center) to (89.center);
		\draw [style=vt-lbl] (79.center) to (66.center);
		\draw [style=vt-lbl] (95.center) to (78.center);
		\draw [style=vt-lbl] (78.center) to (84.center);
		\draw [style=vt-lbl] (78.center) to (82.center);
		\draw [style=vt-lbl] (96.center) to (82.center);
		\draw [style=vt-lbl] (82.center) to (90.center);
		\draw [style=vt-lbl] (97.center) to (83.center);
		\draw [style=vt-lbl] (98.center) to (86.center);
		\draw [style=vt-lbl] (86.center) to (92.center);
		\draw [style=vt-lbl] (99.center) to (85.center);
		\draw [style=vt-lbl] (100.center) to (85.center);
		\draw [style=vt-lbl] (85.center) to (88.center);
		\draw [style=vt-lbl] (88.center) to (87.center);
		\draw [style=vt-lbl] (101.center) to (102.center);
		\draw [style=vt-lbl] (102.center) to (88.center);
		\draw [style=vt-lbl] (102.center) to (91.center);
		\draw [style=vt-bl-di] (7.center) to (12);
		\draw [style=vt] (1.center)
			 to [in=-180, out=-90, looseness=0.75] (3.center)
			 to [in=-90, out=0, looseness=0.75] (2.center)
			 to [in=0, out=90, looseness=0.75] (0.center)
			 to [in=90, out=180, looseness=0.75] cycle;
		\draw [style=vt] (5.center)
			 to [in=-180, out=-90, looseness=0.75] (7.center)
			 to [in=-90, out=0, looseness=0.75] (6.center);
		\draw [style=vt-lbl] (103.center) to (86.center);
		\draw [style=vt-bl] (10.center)
			 to [in=-180, out=-90, looseness=0.75] (12.center)
			 to [in=-90, out=0, looseness=0.75] (11.center);
		\draw [style=vt-lbl] (81.center) to (96.center);
		\draw [style=vt-lbl] (96.center) to (80.center);
		\draw [style=vt] (1.center) to (5.center);
		\draw [style=vt] (2.center) to (6.center);
	\end{pgfonlayer}
\end{tikzpicture}}
  {%
  }}%
} 
  ~=~ \scalebox{0.5}{%
{\tikzstyle{every picture}=[tikzfig]
  {\begin{tikzpicture}
	\begin{pgfonlayer}{nodelayer}
		\node [style=none] (25) at (0, -0.5) {};
		\node [style=none] (26) at (-1.75, 0.25) {};
		\node [style=none] (27) at (1.75, 0.25) {};
		\node [style=none] (28) at (0, 3.75) {};
		\node [style=none] (29) at (-1.75, 3) {};
		\node [style=none] (30) at (1.75, 3) {};
		\node [style=none] (42) at (0, 3.75) {};
		\node [style=none] (43) at (-1.75, 3) {};
		\node [style=none] (44) at (1.75, 3) {};
		\node [style=none] (45) at (0, 2.25) {};
		\node [style=none] (31) at (0, -0.5) {};
		\node [style=none] (32) at (-1.75, 0.25) {};
		\node [style=none] (35) at (1.75, 0.25) {};
		\node [style=none] (39) at (-1.75, -3) {};
		\node [style=none] (40) at (1.75, -3) {};
		\node [style=none] (41) at (0, -3.75) {};
		\node [style=none] (14) at (-0.25, 1) {};
		\node [style=none] (15) at (-1.75, 0.25) {};
		\node [style=none] (16) at (0.25, 1) {};
		\node [style=none] (17) at (1.75, 0.25) {};
		\node [style=none] (18) at (1.3, -0.25) {};
		\node [style=none] (19) at (1.425, -0.225) {};
		\node [style=none] (66) at (-0.5, -0.5) {};
		\node [style=none] (77) at (-0.975, -0.375) {};
		\node [style=none] (78) at (-1.15, -2.95) {};
		\node [style=none] (79) at (-0.5, -1.25) {};
		\node [style=none] (80) at (-1, -1.5) {};
		\node [style=none] (81) at (-1.75, -1.75) {};
		\node [style=none] (82) at (-0.575, -2.9) {};
		\node [style=none] (83) at (0, -2.5) {};
		\node [style=none] (84) at (-1.025, -3.6) {};
		\node [style=none] (85) at (0.75, -2) {};
		\node [style=none] (86) at (0.75, -1) {};
		\node [style=none] (87) at (1.75, -2.475) {};
		\node [style=none] (88) at (1, -2.5) {};
		\node [style=none] (89) at (0, -1.75) {};
		\node [style=none] (90) at (0, -3.25) {};
		\node [style=none] (91) at (1, -3.6) {};
		\node [style=none] (92) at (1.75, -1.25) {};
		\node [style=none] (93) at (-1.75, -0.5) {};
		\node [style=none] (94) at (-1.25, -1) {};
		\node [style=none] (95) at (-1.75, -2.75) {};
		\node [style=none] (96) at (-1.125, -2) {};
		\node [style=none] (97) at (-0.825, -2.475) {};
		\node [style=none] (98) at (0, -1.25) {};
		\node [style=none] (99) at (1.25, -1.15) {};
		\node [style=none] (100) at (0, -2) {};
		\node [style=none] (101) at (0, -2.75) {};
		\node [style=none] (102) at (0.75, -3) {};
		\node [style=none] (0) at (0, 3.75) {};
		\node [style=none] (1) at (-1.75, 3) {};
		\node [style=none] (2) at (1.75, 3) {};
		\node [style=none] (3) at (0, 2.25) {};
		\node [style=none] (5) at (-1.75, -3) {};
		\node [style=none] (6) at (1.75, -3) {};
		\node [style=none] (7) at (0, -3.75) {};
		\node [style=none] (103) at (1, -0.35) {};
		\node [style=none] (10) at (-1.75, 0.25) {};
		\node [style=none] (11) at (1.75, 0.25) {};
		\node [style=mid-nc] (12) at (0, -0.5) {};
	\end{pgfonlayer}
	\begin{pgfonlayer}{edgelayer}
		\draw [style=fi-gr] (29.center)
			 to (26.center)
			 to [in=-180, out=-90, looseness=0.75] (25.center)
			 to [in=-90, out=0, looseness=0.75] (27.center)
			 to (30.center)
			 to [in=0, out=90, looseness=0.75] (28.center)
			 to [in=90, out=180, looseness=0.75] cycle;
		\draw [style=op-lsh] (43.center)
			 to [in=-180, out=-90, looseness=0.75] (45.center)
			 to [in=-90, out=0, looseness=0.75] (44.center)
			 to [in=0, out=90, looseness=0.75] (42.center)
			 to [in=90, out=180, looseness=0.75] cycle;
		\draw [style=fi-sh] (35.center)
			 to (40.center)
			 to [in=0, out=-90, looseness=0.75] (41.center)
			 to [in=-90, out=-180, looseness=0.75] (39.center)
			 to (32.center)
			 to [in=-180, out=-90, looseness=0.75] (31.center)
			 to [in=-90, out=0, looseness=0.75] cycle;
		\draw [style=vt-bl-da, in=90, out=180, looseness=0.75] (14.center) to (15.center);
		\draw [style=vt-bl-da, in=90, out=0, looseness=0.75] (16.center) to (17.center);
		\draw [style=vt-bl-di] (18.center) to (19.center);
		\draw [style=vt-bl-di] (12) to (3.center);
		\draw [style=vt-lbl] (93.center) to (94.center);
		\draw [style=vt-lbl] (94.center) to (77.center);
		\draw [style=vt-lbl] (94.center) to (80.center);
		\draw [style=vt-lbl] (80.center) to (79.center);
		\draw [style=vt-lbl] (79.center) to (89.center);
		\draw [style=vt-lbl] (79.center) to (66.center);
		\draw [style=vt-lbl] (95.center) to (78.center);
		\draw [style=vt-lbl] (78.center) to (84.center);
		\draw [style=vt-lbl] (78.center) to (82.center);
		\draw [style=vt-lbl] (96.center) to (82.center);
		\draw [style=vt-lbl] (82.center) to (90.center);
		\draw [style=vt-lbl] (97.center) to (83.center);
		\draw [style=vt-lbl] (98.center) to (86.center);
		\draw [style=vt-lbl] (86.center) to (92.center);
		\draw [style=vt-lbl] (99.center) to (85.center);
		\draw [style=vt-lbl] (100.center) to (85.center);
		\draw [style=vt-lbl] (85.center) to (88.center);
		\draw [style=vt-lbl] (88.center) to (87.center);
		\draw [style=vt-lbl] (101.center) to (102.center);
		\draw [style=vt-lbl] (102.center) to (88.center);
		\draw [style=vt-lbl] (102.center) to (91.center);
		\draw [style=vt-bl-di] (7.center) to (12);
		\draw [style=vt] (1.center)
			 to [in=-180, out=-90, looseness=0.75] (3.center)
			 to [in=-90, out=0, looseness=0.75] (2.center)
			 to [in=0, out=90, looseness=0.75] (0.center)
			 to [in=90, out=180, looseness=0.75] cycle;
		\draw [style=vt] (5.center)
			 to [in=-180, out=-90, looseness=0.75] (7.center)
			 to [in=-90, out=0, looseness=0.75] (6.center);
		\draw [style=vt-lbl] (103.center) to (86.center);
		\draw [style=vt-bl] (10.center)
			 to [in=-180, out=-90, looseness=0.75] (12.center)
			 to [in=-90, out=0, looseness=0.75] (11.center);
		\draw [style=vt-lbl] (81.center) to (96.center);
		\draw [style=vt-lbl] (96.center) to (80.center);
		\draw [style=vt] (1.center) to (5.center);
		\draw [style=vt] (2.center) to (6.center);
	\end{pgfonlayer}
\end{tikzpicture}}
  {%
  }}%
}
  \stackrel{\eqref{eq:fa=f=bf}}=
  \scalebox{0.5}{%
{\tikzstyle{every picture}=[tikzfig]
  {\begin{tikzpicture}
	\begin{pgfonlayer}{nodelayer}
		\node [style=none] (25) at (0, -0.5) {};
		\node [style=none] (26) at (-1.75, 0.25) {};
		\node [style=none] (27) at (1.75, 0.25) {};
		\node [style=none] (28) at (0, 3.75) {};
		\node [style=none] (29) at (-1.75, 3) {};
		\node [style=none] (30) at (1.75, 3) {};
		\node [style=none] (45) at (0, 3.75) {};
		\node [style=none] (46) at (-1.75, 3) {};
		\node [style=none] (47) at (1.75, 3) {};
		\node [style=none] (48) at (0, 2.25) {};
		\node [style=none] (31) at (0, -0.5) {};
		\node [style=none] (32) at (-1.75, 0.25) {};
		\node [style=none] (35) at (1.75, 0.25) {};
		\node [style=none] (39) at (-1.75, -3) {};
		\node [style=none] (40) at (1.75, -3) {};
		\node [style=none] (41) at (0, -3.75) {};
		\node [style=none] (10) at (-1.75, 0.25) {};
		\node [style=none] (11) at (1.75, 0.25) {};
		\node [style=mid-nc] (12) at (0, -0.5) {};
		\node [style=none] (14) at (-0.25, 1) {};
		\node [style=none] (15) at (-1.75, 0.25) {};
		\node [style=none] (16) at (0.25, 1) {};
		\node [style=none] (17) at (1.75, 0.25) {};
		\node [style=none] (0) at (0, 3.75) {};
		\node [style=none] (1) at (-1.75, 3) {};
		\node [style=none] (2) at (1.75, 3) {};
		\node [style=none] (3) at (0, 2.25) {};
		\node [style=none] (5) at (-1.75, -3) {};
		\node [style=none] (6) at (1.75, -3) {};
		\node [style=none] (7) at (0, -3.75) {};
		\node [style=none] (18) at (1.3, -0.25) {};
		\node [style=none] (19) at (1.425, -0.225) {};
	\end{pgfonlayer}
	\begin{pgfonlayer}{edgelayer}
		\draw [style=fi-gr] (29.center)
			 to (26.center)
			 to [in=180, out=-90, looseness=0.75] (25.center)
			 to [in=-90, out=0, looseness=0.75] (27.center)
			 to (30.center)
			 to [in=0, out=90, looseness=0.75] (28.center)
			 to [in=90, out=180, looseness=0.75] cycle;
		\draw [style=op-lsh] (46.center)
			 to [in=180, out=-90, looseness=0.75] (48.center)
			 to [in=-90, out=0, looseness=0.75] (47.center)
			 to [in=0, out=90, looseness=0.75] (45.center)
			 to [in=90, out=180, looseness=0.75] cycle;
		\draw [style=fi-bl] (35.center)
			 to (40.center)
			 to [in=0, out=-90, looseness=0.75] (41.center)
			 to [in=-90, out=180, looseness=0.75] (39.center)
			 to (32.center)
			 to [in=180, out=-90, looseness=0.75] (31.center)
			 to [in=-90, out=0, looseness=0.75] cycle;
		\draw [style=vt-bl-di] (7.center) to (12);
		\draw [style=vt-bl] (10.center)
			 to [in=180, out=-90, looseness=0.75] (12.center)
			 to [in=-90, out=0, looseness=0.75] (11.center);
		\draw [style=vt-bl-di] (12) to (3.center);
		\draw [style=vt-bl-da, in=90, out=180, looseness=0.75] (14.center) to (15.center);
		\draw [style=vt-bl-da, in=90, out=0, looseness=0.75] (16.center) to (17.center);
		\draw [style=vt] (1.center)
			 to [in=180, out=-90, looseness=0.75] (3.center)
			 to [in=-90, out=0, looseness=0.75] (2.center)
			 to [in=0, out=90, looseness=0.75] (0.center)
			 to [in=90, out=180, looseness=0.75] cycle;
		\draw [style=vt] (5.center)
			 to [in=180, out=-90, looseness=0.75] (7.center)
			 to [in=-90, out=0, looseness=0.75] (6.center);
		\draw [style=vt] (1.center) to (5.center);
		\draw [style=vt] (2.center) to (6.center);
		\draw [style=vt-bl-di] (18.center) to (19.center);
	\end{pgfonlayer}
\end{tikzpicture}}
  {%
  }}%
} ~\circ~ \scalebox{0.5}{%
{\tikzstyle{every picture}=[tikzfig]
  {\begin{tikzpicture}
	\begin{pgfonlayer}{nodelayer}
		\node [style=none] (25) at (0, -0.5) {};
		\node [style=none] (26) at (-1.75, 0.25) {};
		\node [style=none] (27) at (1.75, 0.25) {};
		\node [style=none] (28) at (0, 3.75) {};
		\node [style=none] (29) at (-1.75, 3) {};
		\node [style=none] (30) at (1.75, 3) {};
		\node [style=none] (42) at (0, 3.75) {};
		\node [style=none] (43) at (-1.75, 3) {};
		\node [style=none] (44) at (1.75, 3) {};
		\node [style=none] (45) at (0, 2.25) {};
		\node [style=none] (31) at (0, -0.5) {};
		\node [style=none] (32) at (-1.75, 0.25) {};
		\node [style=none] (35) at (1.75, 0.25) {};
		\node [style=none] (39) at (-1.75, -3) {};
		\node [style=none] (40) at (1.75, -3) {};
		\node [style=none] (41) at (0, -3.75) {};
		\node [style=none] (14) at (-0.25, 1) {};
		\node [style=none] (15) at (-1.75, 0.25) {};
		\node [style=none] (16) at (0.25, 1) {};
		\node [style=none] (17) at (1.75, 0.25) {};
		\node [style=none] (18) at (1.3, -0.25) {};
		\node [style=none] (19) at (1.425, -0.225) {};
		\node [style=none] (66) at (-0.5, -0.5) {};
		\node [style=none] (77) at (-0.975, -0.375) {};
		\node [style=none] (78) at (-1.15, -2.95) {};
		\node [style=none] (79) at (-0.5, -1.25) {};
		\node [style=none] (80) at (-1, -1.5) {};
		\node [style=none] (81) at (-1.75, -1.75) {};
		\node [style=none] (82) at (-0.575, -2.9) {};
		\node [style=none] (83) at (0, -2.5) {};
		\node [style=none] (84) at (-1.025, -3.6) {};
		\node [style=none] (85) at (0.75, -2) {};
		\node [style=none] (86) at (0.75, -1) {};
		\node [style=none] (87) at (1.75, -2.475) {};
		\node [style=none] (88) at (1, -2.5) {};
		\node [style=none] (89) at (0, -1.75) {};
		\node [style=none] (90) at (0, -3.25) {};
		\node [style=none] (91) at (1, -3.6) {};
		\node [style=none] (92) at (1.75, -1.25) {};
		\node [style=none] (93) at (-1.75, -0.5) {};
		\node [style=none] (94) at (-1.25, -1) {};
		\node [style=none] (95) at (-1.75, -2.75) {};
		\node [style=none] (96) at (-1.125, -2) {};
		\node [style=none] (97) at (-0.825, -2.475) {};
		\node [style=none] (98) at (0, -1.25) {};
		\node [style=none] (99) at (1.25, -1.15) {};
		\node [style=none] (100) at (0, -2) {};
		\node [style=none] (101) at (0, -2.75) {};
		\node [style=none] (102) at (0.75, -3) {};
		\node [style=none] (0) at (0, 3.75) {};
		\node [style=none] (1) at (-1.75, 3) {};
		\node [style=none] (2) at (1.75, 3) {};
		\node [style=none] (3) at (0, 2.25) {};
		\node [style=none] (5) at (-1.75, -3) {};
		\node [style=none] (6) at (1.75, -3) {};
		\node [style=none] (7) at (0, -3.75) {};
		\node [style=none] (103) at (1, -0.35) {};
		\node [style=none] (10) at (-1.75, 0.25) {};
		\node [style=none] (11) at (1.75, 0.25) {};
		\node [style=mid-nc] (12) at (0, -0.5) {};
	\end{pgfonlayer}
	\begin{pgfonlayer}{edgelayer}
		\draw [style=fi-bl] (29.center)
			 to (26.center)
			 to [in=-180, out=-90, looseness=0.75] (25.center)
			 to [in=-90, out=0, looseness=0.75] (27.center)
			 to (30.center)
			 to [in=0, out=90, looseness=0.75] (28.center)
			 to [in=90, out=180, looseness=0.75] cycle;
		\draw [style=op-lsh] (43.center)
			 to [in=-180, out=-90, looseness=0.75] (45.center)
			 to [in=-90, out=0, looseness=0.75] (44.center)
			 to [in=0, out=90, looseness=0.75] (42.center)
			 to [in=90, out=180, looseness=0.75] cycle;
		\draw [style=fi-sh] (35.center)
			 to (40.center)
			 to [in=0, out=-90, looseness=0.75] (41.center)
			 to [in=-90, out=-180, looseness=0.75] (39.center)
			 to (32.center)
			 to [in=-180, out=-90, looseness=0.75] (31.center)
			 to [in=-90, out=0, looseness=0.75] cycle;
		\draw [style=vt-bl-da, in=90, out=180, looseness=0.75] (14.center) to (15.center);
		\draw [style=vt-bl-da, in=90, out=0, looseness=0.75] (16.center) to (17.center);
		\draw [style=vt-bl-di] (18.center) to (19.center);
		\draw [style=vt-bl-di] (12) to (3.center);
		\draw [style=vt-lbl] (93.center) to (94.center);
		\draw [style=vt-lbl] (94.center) to (77.center);
		\draw [style=vt-lbl] (94.center) to (80.center);
		\draw [style=vt-lbl] (80.center) to (79.center);
		\draw [style=vt-lbl] (79.center) to (89.center);
		\draw [style=vt-lbl] (79.center) to (66.center);
		\draw [style=vt-lbl] (95.center) to (78.center);
		\draw [style=vt-lbl] (78.center) to (84.center);
		\draw [style=vt-lbl] (78.center) to (82.center);
		\draw [style=vt-lbl] (96.center) to (82.center);
		\draw [style=vt-lbl] (82.center) to (90.center);
		\draw [style=vt-lbl] (97.center) to (83.center);
		\draw [style=vt-lbl] (98.center) to (86.center);
		\draw [style=vt-lbl] (86.center) to (92.center);
		\draw [style=vt-lbl] (99.center) to (85.center);
		\draw [style=vt-lbl] (100.center) to (85.center);
		\draw [style=vt-lbl] (85.center) to (88.center);
		\draw [style=vt-lbl] (88.center) to (87.center);
		\draw [style=vt-lbl] (101.center) to (102.center);
		\draw [style=vt-lbl] (102.center) to (88.center);
		\draw [style=vt-lbl] (102.center) to (91.center);
		\draw [style=vt-bl-di] (7.center) to (12);
		\draw [style=vt] (1.center)
			 to [in=-180, out=-90, looseness=0.75] (3.center)
			 to [in=-90, out=0, looseness=0.75] (2.center)
			 to [in=0, out=90, looseness=0.75] (0.center)
			 to [in=90, out=180, looseness=0.75] cycle;
		\draw [style=vt] (5.center)
			 to [in=-180, out=-90, looseness=0.75] (7.center)
			 to [in=-90, out=0, looseness=0.75] (6.center);
		\draw [style=vt-lbl] (103.center) to (86.center);
		\draw [style=vt-bl] (10.center)
			 to [in=-180, out=-90, looseness=0.75] (12.center)
			 to [in=-90, out=0, looseness=0.75] (11.center);
		\draw [style=vt-lbl] (81.center) to (96.center);
		\draw [style=vt-lbl] (96.center) to (80.center);
		\draw [style=vt] (1.center) to (5.center);
		\draw [style=vt] (2.center) to (6.center);
	\end{pgfonlayer}
\end{tikzpicture}}
  {%
  }}%
}
  ~=~ f \circ (\varepsilon_\ell^{})_{a}^{} \,.
  \ee

\medskip

Putting these findings together, we arrive at:

\begin{thm} \label{thm:Feq}
Let $\cc$ be a \ko-linear pivotal tensor category and $\ell$ a compact oriented
one-ma\-ni\-fold with possibly non-empty boundary. There is an adjoint equivalence 
  \be
  \df{\!_\ell}^{} ~\colon~ \SNfrc(\ell) 
  \begin{tikzcd}[column sep=2.2em]
  \hspace*{-0.4em} \ar[yshift=4.5pt]{r} & \hspace*{-0.4em}
  \ar[yshift=-3.4pt]{l}[swap,yshift=-0.9pt]{\simeq}
  \end{tikzcd}
  \SNc(\ell) ~\colon\, \imath_\ell^{} 
  \ee
of idempotent-completed cylinder categories, with $\df_{\!\ell}$ the field functor 
\eqref{eq:def(ell)} and $\imath$ the functor \eqref{eq:def:imath}.
\end{thm}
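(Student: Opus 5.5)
The plan is to assemble the adjoint equivalence from the data already built in this subsection. We have the section property \eqref{eq:F.imath=1}, $\df_{\!\ell}\circ\imath_\ell=\id_{\SNc(\ell)}$, and the natural isomorphism $\varepsilon_\ell\colon\imath_\ell\circ\df_{\!\ell}\Rightarrow\id_{\SNfrc(\ell)}$ of \eqref{eq:counit_a}. We take the unit to be the identity, $\eta_\ell\coloneqq\id\colon\id_{\SNc(\ell)}\Rightarrow\df_{\!\ell}\circ\imath_\ell$. Since both $\eta_\ell$ and $\varepsilon_\ell$ are invertible, $\df_{\!\ell}$ and $\imath_\ell$ are mutually quasi-inverse. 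By the standard fact that any equivalence can be promoted to an adjoint equivalence (after modifying one of the two transformations), this already proves the theorem. We would nevertheless try to show that the pair $(\eta_\ell,\varepsilon_\ell)$ as defined satisfies the triangle identities on the nose.

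First we would confirm that the verifications above extend from the circle to an arbitrary compact oriented one-manifold $\ell$. For intervals, the boundary-adjacent region carries the distinguished object $\tu$, so no Frobenius graph has to be inserted there. For a disjoint union, everything factors through $\SNfrc(\ell\sqcup\ell')\simeq\SNfrc(\ell)\otimes\SNfrc(\ell')$. Second, we would extend $\varepsilon_\ell$ and its naturality from the objects of the form \eqref{eq:pic:a} to all of $\SNfrc(\ell)$. Every object of the Karoubi envelope is a retract of a direct sum of such idempotents, and both functors are linear and compatible with the Cauchy completion. Naturality with respect to the morphisms $f$ with $f\circ a=f=b\circ f$ has already been checked above.

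Next come the triangle identities, which with $\eta_\ell=\id$ reduce to two equalities:
\be
\df_{\!\ell}(\varepsilon_a)=\id_{\df_{\!\ell}(a)} \qquad\text{and}\qquad \varepsilon_{\imath_\ell(c)}=\id_{\imath_\ell(c)} \,.
\ee
The second is nearly immediate. For $c\in\SNc(\ell)$, the object $\imath_\ell(c)$ has all $2$-cells labeled by the trivial Frobenius algebra $\tu$. The full Frobenius graph of $\tu$ consists of unit lines, which can be erased, so the half-modified string net defining $\varepsilon_{\imath_\ell(c)}$ coincides with the idempotent $\imath_\ell(c)$ itself. This idempotent is the identity morphism in the Karoubi envelope.

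The first equality is the main obstacle. Applying $\df_{\!\ell}$ to $\varepsilon_a$ fills the remaining $A$-colored half with a full Frobenius graph and $u$-conjugates the partially conjugated vertex. We have to check that the result equals $\df_{\!\ell}(a)$ as a string net on the cylinder. Two facts should give this. The first is the independence of the choice of full Frobenius graph, which rests on $\Delta$-separability and the Frobenius relations, cf.\ \Cite{Def.\,3.20}{fusY}. The second is that $u$-conjugation is compatible with composition of vertex labels \Cite{Ex.\ 2.29}{fusY2}, so that partially conjugating and then conjugating the rest amounts to conjugating once. If a discrepancy appeared, for instance from normalization factors of the separability idempotent, the fallback is to keep $\varepsilon_\ell$ and replace the unit by $\eta'=(\df_{\!\ell}\varepsilon\,\imath_\ell)^{-1}$, which yields an adjoint equivalence by the general argument.
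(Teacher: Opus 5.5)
Your proposal is correct and follows the paper's route: the paper also combines the section property \eqref{eq:F.imath=1} (so the unit is the identity) with the natural isomorphism $\varepsilon_\ell$ of \eqref{eq:counit_a}. The paper likewise never checks the triangle identities explicitly, so your appeal to the standard promotion of an equivalence to an adjoint equivalence, or alternatively your on-the-nose check of $\df_{\!\ell}(\varepsilon_a)=\id$ and $\varepsilon_{\imath_\ell(c)}=\id$, only spells out what the paper leaves implicit.
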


\begin{cor} \label{cor:Feq}
Let $\cc$ be a pivotal fusion category. The idempotent-completed cylinder categories
for the interval $\di$ and the circle $\ds^1$ obey the equivalences
  \be
  \SNfrc(\di) \simeq \SNc(\di)\simeq\cc \qquad \text{and} \qquad
  \SNfrc(\ds^{1}) \simeq \SNc(\ds^1)\simeq\czc
  \ee
of \ko-linear categories.
\end{cor}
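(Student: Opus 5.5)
We have to prove Corollary \ref{cor:Feq}. The plan is to combine Theorem \ref{thm:Feq} with the known identification of the cylinder categories for the delooping $\cbc$. The first equivalence in each pair, $\SNfrc(\di)\simeq\SNc(\di)$ and $\SNfrc(\ds^1)\simeq\SNc(\ds^1)$, is immediate: specialize Theorem \ref{thm:Feq} to $\ell=\di$ and $\ell=\ds^1$. A pivotal fusion category is in particular a \ko-linear pivotal tensor category, so the theorem applies. The field functor $\df_{\!\ell}$ then furnishes the equivalence, with quasi-inverse $\imath_\ell$.

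It remains to identify $\SNc(\di)\simeq\cc$ and $\SNc(\ds^1)\simeq\czc$, as anticipated in \eqref{eq:blc-I-S1}. Since $\SNc$ coincides with $\bl_\cc$ (cf.\ \Cite{Rem.\,3.6.2}{fusY2}), the cleanest route is to quote these identifications from \cite{fusY2}. If one wants the argument in place, one would proceed as follows.

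For the interval, both endpoints carry the distinguished object. Using the string-net relations on the square $\di\Times\Di$, every decoration with points labeled $X_1,\dots,X_n$ is isomorphic to the one with a single point labeled $X_1\oti\cdots\oti X_n$. The map $\Hom_\cc(X,Y)\to\snc(\di\Times\Di;X,Y)$ is an isomorphism, since a disk string net is evaluated by the graphical calculus. Hence $\snc(\di)$ is equivalent to the full subcategory of $\cc$ that it hits, which is all of $\cc$. Because $\cc$ is already Cauchy complete, the Karoubi envelope yields $\cc$ itself.

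For the circle, decorations again reduce to a single point labeled $X\iN\cc$. The morphism space on the annulus is computed by cutting along a radial arc, which gives
\be
\snc(\ds^1\Times\Di;X,Y)\cong\bigoplus_{i}\Hom_\cc(X,\,i\oti Y\oti i^\vee)
\ee
modulo sliding, where $i$ runs over the simple objects. This identifies $\snc(\ds^1)$ with the Kleisli category of the central monad $Z(X)=\int^{i} i\oti X\oti i^\vee$ on $\cc$. The idempotent given by the circle of the Kirby-type colour $\bigoplus_i \dim(i)\, i/\dim\cc$ encircling the puncture then splits precisely to half-braided objects. Its Karoubi envelope is the Eilenberg–Moore category of this separable monad, which is equivalent to $\czc$.

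The main obstacle is this circle case, namely checking that idempotents of the Kleisli category correspond exactly to $\czc$, i.e.\ that the monad is separable so that Kleisli $\to$ Eilenberg–Moore becomes an equivalence after Karoubi completion. That is why I would rather cite \cite{fusY2} for it.
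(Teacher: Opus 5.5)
Your proposal is correct and follows the paper's own route: the corollary comes from specializing Theorem~\ref{thm:Feq} to $\ell=\di$ and $\ell=\ds^1$, and combining it with the identifications $\SNc(\di)\simeq\cc$ and $\SNc(\ds^1)\simeq\czc$ of \eqref{eq:blc-I-S1}, which hold because $\SNc$ coincides with $\bl_\cc$ \Cite{Rem.\,3.6.2}{fusY2}. The paper also notes that the interval case follows directly from $\SNfrc(\di)\simeq\hfrc(\tu,\tu)\simeq\cc$; your in-place sketch of the circle case goes beyond what the paper spells out, and through the Kirby colour it tacitly requires $\dim\cc\neq0$.
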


Note that in Corollary \ref{cor:Feq} the statement for the interval $\di$ is trivial:
it is immediate that $\SNfrc(\di)\,{\simeq}\, \hfrc(\tu,\tu) \,{\simeq}~ \cc$.

\begin{rem}
Superficially, the horizontally drawn string net in \ref{eq:counit_a} looks like 
consisting of two halves that carry different types of string nets. This is not
correct -- we just deal with a particular string net based on the bicategory $\cfrc$.
On 
the other hand, at least for spherical fusion categories $\cc$, the modular functors 
that are constructed from $\cc$ by the string-net construction and by the Turaev-Viro 
state-sum construction are expected to be equivalent \cite{kirI24,bart8}. Moreover,
topological defects between Turaev-Viro theories are well understood. It is therefore
reasonable to believe that there also exists a satisfactory theory of topological 
defects between bicategorical string nets. We anticipate that such defects amount
to string nets with one side colored by a bicategory $\sphmod$, which should be some
variant of the spherical bicategory of spherical $\cc$-module categories
considered in \cite{fgjs}, and the other side by the delooping of $\cc$. There 
should then further exist forgetful functors generalizing the functor \eqref{eq:ufr}
that give rise to such topological defects. By combining these ingredients we expect 
to obtain a linear functor
  \be
  \snb(\ell\Times \Di) \Colon \snc(\ell)^{\op} \oti \snsphmod(\ell) \rarr~ \vct
  \ee
and thus a functor
  \be
  \snsphmod(\ell) \rarr~ \mathrm{PSh}(\snc(\ell))
  \ee
that naturally provides field objects as objects in the free cocompletion, the
presheaf category. These ideas are, however, beyond the scope of the present paper.
\end{rem}

%%%%%%%%%%%%%%%%%%%%%%%%%%%%%%%%%%%%%%%%%%%%%%%%%%%%%%%%%%%%%%%%%%%%%%%%

\subsection{Universal correlators are invertible linear maps}

As recalled in Section \ref{sec:fresh}, for any oriented open-clo\-sed bordism
$\surf\colon\ell\ptO\ell'$ with boundary datum 
$(a,a')\iN\SNfrc(\overline\ell\,{\sqcup}\,\ell')$ there is a linear map from the 
string-net space $\SNfrc(\surf;a,a')$ to $\SNc(\surf;\df_{\!\ell}\,a,\df_{\!\ell'}
\,a')$: the universal correlator $\ucorc(\surf)_{a,a'}$ \eqref{eq:ucorc-surf-aa'}.
For the case that $\surf$ is the cylinder $\ell\Times\Di$ over a one-manifold $\ell$,
Theorem \ref{thm:Feq} implies that the universal correlator in fact constitutes
a linear isomorphism
  \be
  \ucorc(\ell\Times\Di)_{a,a'} \Colon \SNfrc(\ell\Times\Di;a,a')
  \iso \SNc(\ell\Times\Di;\df_{\!\ell}^{}\,a,\df_{\!\ell'}^{}\,a')
  \label{eq:ucorc-aa'}
  \ee
between string-net spaces. The inverse of the morphism \eqref{eq:ucorc-aa'} is
obtained by combining the counit (\ref{eq:counit}), its inverse, and the 
quasi-inverse $\imath_\ell\colon\SNc(\ell)\eqv\SNfrc(\ell)$ of the functor
$\df_{\!\ell}$.
 
We will now establish an extension of this result from cylinders to general bordisms.
To this end we define for any open-closed bordism $\surf\colon\ell\ptO\ell'$
with given in- and outgoing boundary data $(a,a')$ a linear map
  \be
  \imath_\surf^{} \Colon \SNc(\surf;\df_{\!\ell}\,a,\df_{\!\ell'}\,a') \rarr~
  \SNfrc(\surf;\imath_{\ell}\cir\df_{\!\ell}\,a,\imath_{\ell'}\cir\df_{\!\ell'}\,a')
  \ee
between string-net spaces. As in the case of the functor $\imath_\ell$ 
\eqref{eq:def:imath}, $\imath_\surf^{}$ interprets a $\cc$-colored string net on 
$\surf$ as an $\cfrc$-colored one, making use of the inclusion 
$\cbc \,{\hookrightarrow}\, \cfrc$ of pointed bicategories. That is, the image
under $\imath_\surf^{}$ of a representative graph $\grph$ of a string net
$[\grph] \iN \SNc(\surf;\df_{\!\ell}\,a,\df_{\!\ell'}\,a')$ is the string net in
$\SNfrc(\surf;\imath_{\ell}\cir\df_{\!\ell}\,a,\imath_{\ell'}\cir\df_{\!\ell'}\,a')$
that is represented by the same graph $\grph$ supplemented by a coloring of each
2-cell, as well as of any adjacent 1-cell on the boundary, by the trivial Frobenius 
algebra $\tu$. Let us illustrate this map for one specific graph (omitting all labels):
  \be
{\tikzstyle{every picture}=[tikzfig]
  {\input{UCI3.tikz}}
  {%
  }}%
 
  ~~~~\xmapsto{~~\imath_\surf^{}~~}~~
{\tikzstyle{every picture}=[tikzfig]
  {\input{UCI2.tikz}}
  {%
  }}%

  \ee
~

\begin{thm} \label{thm:UcorIso}
Let $\cc$ be a \ko-linear pivotal tensor category
and $\surf\colon\ell\ptO\ell'$ an oriented open-clo\-sed bordism. Then
for every boundary datum $(a,a')\iN\SNfrc(\overline\ell\,{\sqcup}\,\ell')$,
the universal correlator  \ref{eq:ucorc-surf-aa'} provides an isomorphism 
  \be
  \ucorc(\surf)_{a,a'} \Colon
  \SNfrc(\surf;a,a')\iso\SNc(\surf;\df_{\!\ell}^{}\,a,\df_{\!\ell'}^{}\,a')  
  \label{eq:UCor-rhs}
  \ee
of string-net spaces.
\end{thm}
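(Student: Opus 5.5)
The plan is to write down an explicit two-sided inverse of $\ucorc(\surf)_{a,a'}$. It will be assembled from the map $\imath_\surf^{}$ just defined and the counit isomorphisms $(\varepsilon_\ell^{})_a$, $(\varepsilon_{\ell'}^{})_{a'}$ of Theorem \ref{thm:Feq}. Since $\SNfrc(\surf;-,{\backsim})$ is a profunctor, it is functorial in both boundary arguments. I therefore define
  \be
  \jmath_\surf^{} \,:=\, \SNfrc(\surf;(\varepsilon_\ell^{})_a^{-1},(\varepsilon_{\ell'}^{})_{a'}^{}) \circ \imath_\surf^{} \Colon
  \SNc(\surf;\df_{\!\ell}\,a,\df_{\!\ell'}\,a') \rarr~ \SNfrc(\surf;a,a') \,.
  \ee
Concretely, a $\cc$-colored string net on $\surf$ is regarded as a $\tu$-colored $\cfrc$-string net. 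Collars on the boundary then transport the $\tu$-colored boundary data $\imath\cir\df(a)$ back to $a$, using the ``half-modified'' string nets that represent the counit and its inverse, i.e.\ its transpose.

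First I check that $\jmath_\surf$ is well defined. Well-definedness of $\imath_\surf$ is immediate, because the inclusion $\cbc\hookrightarrow\cfrc$ is a strict pivotal 2-functor. It therefore respects all local relations of the graphical calculus, as well as the idempotents $\df_{\!\ell}\,a$ inserted at the boundary.

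Second, I show $\ucorc(\surf)\cir\jmath_\surf=\id$. Take a $\cc$-string net $\grph$ on $\surf$. After $\imath_\surf$, all 2-cells are colored by $\tu$ except the collars, which carry the counit pieces. Applying $\ucorc(\surf)$ means forgetting via $u$ and filling every 2-cell with full Frobenius graphs. On $\tu$-colored cells this inserts only trivial graphs. On the collars, the $u$-image of $(\varepsilon_\ell)_a^{-1}$ followed by the idempotent $\df_{\!\ell}(a)$ is, by \Cite{Ex.\ 2.29}{fusY2}, just the idempotent $\df_{\!\ell}(a)$ again. This mirrors the identity $\df_{\!\ell}\cir\imath_\ell=\id$ in \eqref{eq:F.imath=1} together with the fact that $\df_{\!\ell}((\varepsilon_\ell)_a)$ is the identity of $\df_{\!\ell}\,a$. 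Since $\grph$ already absorbs the boundary idempotents, we recover $[\grph]$.

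Third, and this is the main obstacle, I show $\jmath_\surf\cir\ucorc(\surf)=\id$ on $\SNfrc(\surf;a,a')$. Take an $\cfrc$-string net $\grph$ with 2-cells colored by Frobenius algebras $A_i$. Its image $\imath_\surf\ucorc(\surf)[\grph]$ is a $\tu$-colored net containing $A_i$-lines that form full Frobenius graphs inside each former $A_i$-cell. I need a local move in $\cfrc$ that converts a $\tu$-colored region filled with a full $A$-Frobenius graph back into an $A$-colored region. This move is exactly the one used to prove that $(\varepsilon_\ell)_a$ is invertible. An $A$-line network in a $\tu$-cell is an $A$-$A$-bimodule network. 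Using $\Delta$-separability and the Frobenius property, one can ``open'' an $A$-colored bubble inside a $\tu$-cell and then sweep it across the whole cell. The sweep can absorb the Frobenius graph because the graph is full, i.e.\ it connects to every bimodule line via the actions. The sweep ends at the boundary collars, where it meets $(\varepsilon)^{-1}$ and $\varepsilon$ and cancels them.

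To organize this step, I would decompose $\surf$ into disks along the 1-skeleton of $\grph$ and argue cell by cell. For each disk, independence of the choice of full Frobenius graph \cite{fusY} reduces the claim to a single standard graph, for which the bubble-expansion identity is the computation already performed for \eqref{eq:counit_a}. The delicate points are the compatibility of the $u$-conjugated vertex labels with the bubble moves at 0-cells, and the treatment of non-contractible 2-cells such as annuli. For annuli I would first cut along an arc using the cylinder case \eqref{eq:ucorc-aa'} and sewing-compatibility of $\ucorc$.
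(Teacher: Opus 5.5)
Your proposal is correct and follows the paper's proof essentially step for step. Your $\jmath_\surf$ is exactly the paper's $\Phi_{a,a'}=(\varepsilon_\ell)_a^{-1}\cdot\imath_\surf(-)\cdot(\varepsilon_{\ell'})_{a'}$, your right-inverse check uses the same facts ($\ucorc\circ\imath_\surf=\id$, $\ucorc((\varepsilon_\ell)_a)=\df_{\!\ell}\,a$, compatibility with sewing), and your left-inverse check rests on the same local $\Delta$-separability move \eqref{eq:absorbA}, which recolors a $\tu$-cell containing an $A$-Frobenius graph into an $A$-cell. The extra scaffolding you plan is unnecessary: the cell-by-cell reduction to standard graphs, and cutting annular cells along arcs via sewing, which for interior cells is not actually a bordism decomposition. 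That move is local and can be iterated across the $A$-lines of the Frobenius graph regardless of the topology of the cell.
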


\begin{proof}
We show that an inverse of the linear map $\ucorc(\surf)_{a,a'}$ is given by
  \be
  \begin{aligned}
  \Phi_{a,a'} \Colon \SNc(\surf;\df_{\!\ell}^{}\,a,\df_{\!\ell'}^{}\,a')
  & \rarr~ \SNfrc(\surf;a,a') \,,
  \\
  \grph & \xmapsto{\phantom{xw}} (\varepsilon_\ell)_{a}^{-1}
  \cdot \imath_\surf^{}(\grph) \cdot (\varepsilon_{\ell'})_{a'} \,.
  \end{aligned}
  \ee
Let us first show that the linear map $\Phi \,{\equiv}\, \Phi_{a,a'}$ is a right 
inverse of $\ucorc \,{\equiv}\, \ucorc(\surf)_{a,a'}$. Noting that
$\ucorc((\varepsilon_\ell)_{a}^{}) \eq \df_{\!\ell}\,a$
and that $\ucorc$ is compatible with sewing, we have 
  \be
  \begin{aligned}
  \ucorc\circ\Phi(\grph) & =\ucorc((\varepsilon_\ell)_{a}^{-1})
  \cdot \ucorc(\imath_\surf(\grph)) \cdot \ucorc((\varepsilon_{\ell'})_{a'})
  \\[2pt]
  & = (\df_{\!\ell}^{}\,a)^{\vee}_{} \cdot \grph \cdot\df_{\!\ell'}^{}\,a'
  = \grph .
  \end{aligned}
  \ee
Here the second equality also makes use of the fact that the composite 
$\ucorc\cir\imath_\surf$ acts as the identity, while the last equality uses that 
$\SNc(\surf,\df_{\!\ell}\,a,\df_{\!\ell'}\,a')$ is by definition 
invariant under sewing with the idempotents underlying the boundary data.
 \\
To show that $\Phi$ is also a left inverse of $\ucorc$, i.e.\ that
  \be
  (\varepsilon _\ell)_{a}^{-1} \cdot \imath_\surf (\ucorc(\grph')) \cdot
  (\varepsilon_{\ell'})_{a'} = \grph'
  \label{eq:PhiLI}
  \ee
for every string net $\grph'\iN\SNfrc(\surf;a,a')$, we argue pictorially:
For a specific choice of quadruple $(\surf,a,a',\grph')$, \eqref{eq:PhiLI}
reads (omitting again all labels)
  \be
{\tikzstyle{every picture}=[tikzfig]
  {\input{UCI0.tikz}}
  {%
  }}%
 ~\quad=\quad %
{\tikzstyle{every picture}=[tikzfig]
  {\begin{tikzpicture}
	\begin{pgfonlayer}{nodelayer}
		\node [style=none] (37) at (-3.25, -1) {};
		\node [style=none] (38) at (-4, -2.5) {};
		\node [style=none] (39) at (-2.5, -2.5) {};
		\node [style=none] (40) at (-3.25, -4) {};
		\node [style=none] (33) at (-3.25, 4) {};
		\node [style=none] (34) at (-4, 2.5) {};
		\node [style=none] (35) at (-2.5, 2.5) {};
		\node [style=none] (36) at (-3.25, 1) {};
		\node [style=none] (83) at (-0.775, -0.75) {};
		\node [style=none] (84) at (0.025, -0.975) {};
		\node [style=none] (68) at (-0.775, -0.75) {};
		\node [style=none] (69) at (-2.625, -1.725) {};
		\node [style=none] (71) at (-2.7, 1.5) {};
		\node [style=none] (72) at (-0.9, -0.1) {};
		\node [style=none] (73) at (1.25, -0.75) {};
		\node [style=none] (74) at (-0.275, -0.05) {};
		\node [style=none] (75) at (0.225, -0.575) {};
		\node [style=none] (76) at (-2.625, -3.25) {};
		\node [style=none] (78) at (-2.5, 2.5) {};
		\node [style=none] (79) at (-2.5, -2.5) {};
		\node [style=none] (80) at (-0.775, -0.75) {};
		\node [style=none] (81) at (-1.175, -1.1) {};
		\node [style=none] (82) at (0.025, -0.975) {};
		\node [style=none] (63) at (1.25, -0.75) {};
		\node [style=none] (64) at (-2.625, -3.25) {};
		\node [style=none] (65) at (-3.25, -4) {};
		\node [style=none] (66) at (4, 0) {};
		\node [style=none] (67) at (3.25, -1.5) {};
		\node [style=none] (50) at (0.025, 0.275) {};
		\node [style=none] (51) at (1.475, 0.275) {};
		\node [style=none] (52) at (1.25, -0.75) {};
		\node [style=none] (53) at (-0.275, -0.05) {};
		\node [style=none] (54) at (0.225, -0.575) {};
		\node [style=none] (55) at (-3.25, 4) {};
		\node [style=none] (56) at (-2.5, 2.5) {};
		\node [style=none] (58) at (4, 0) {};
		\node [style=none] (59) at (3.25, 1.5) {};
		\node [style=none] (60) at (4, 0) {};
		\node [style=none] (61) at (0.75, 0) {};
		\node [style=none] (62) at (1.475, 0.275) {};
		\node [style=none] (42) at (-0.775, -0.75) {};
		\node [style=none] (43) at (-2.625, -1.725) {};
		\node [style=none] (44) at (-1.175, -1.1) {};
		\node [style=none] (45) at (-2.7, 1.5) {};
		\node [style=none] (46) at (-0.9, -0.1) {};
		\node [style=none] (47) at (-3.25, 1) {};
		\node [style=none] (48) at (-3.25, -1) {};
		\node [style=none] (12) at (-0.25, 0.75) {};
		\node [style=none] (13) at (1.75, 0.75) {};
		\node [style=none] (14) at (0.75, 0) {};
		\node [style=none] (15) at (0.025, 0.275) {};
		\node [style=none] (16) at (1.475, 0.275) {};
		\node [style=antipode] (17) at (-0.775, -0.75) {};
		\node [style=none] (22) at (-2.625, -1.725) {};
		\node [style=none] (23) at (-1.175, -1.1) {};
		\node [style=none] (25) at (-2.7, 1.5) {};
		\node [style=none] (26) at (-0.9, -0.1) {};
		\node [style=antipode] (27) at (1.25, -0.75) {};
		\node [style=none] (28) at (-0.275, -0.05) {};
		\node [style=none] (29) at (0.225, -0.575) {};
		\node [style=none] (30) at (-2.625, -3.25) {};
		\node [style=none] (32) at (0.025, -0.975) {};
		\node [style=none] (8) at (3.25, 1.5) {};
		\node [style=none] (10) at (4, 0) {};
		\node [style=none] (11) at (3.25, -1.5) {};
		\node [style=none] (85) at (-2.25, -1.65) {};
		\node [style=none] (86) at (-2.175, -1.625) {};
		\node [style=none] (87) at (-2.25, -3.225) {};
		\node [style=none] (88) at (-2.175, -3.225) {};
		\node [style=none] (89) at (0, -1) {};
		\node [style=none] (90) at (0.05, -0.875) {};
		\node [style=none] (91) at (-3.25, 4) {};
		\node [style=none] (92) at (-4, 2.5) {};
		\node [style=none] (93) at (-2.5, 2.5) {};
		\node [style=none] (94) at (-3.25, 1) {};
		\node [style=none] (95) at (-3.25, -1) {};
		\node [style=none] (96) at (-4, -2.5) {};
		\node [style=none] (97) at (-2.5, -2.5) {};
		\node [style=none] (98) at (-3.25, -4) {};
		\node [style=none] (0) at (-3.25, 4) {};
		\node [style=none] (1) at (-4, 2.5) {};
		\node [style=none] (2) at (-2.5, 2.5) {};
		\node [style=none] (3) at (-3.25, 1) {};
		\node [style=none] (4) at (-3.25, -1) {};
		\node [style=none] (5) at (-4, -2.5) {};
		\node [style=none] (6) at (-2.5, -2.5) {};
		\node [style=none] (7) at (-3.25, -4) {};
	\end{pgfonlayer}
	\begin{pgfonlayer}{edgelayer}
		\draw [style=fi-bl] (37.center)
			 to [in=90, out=-180, looseness=0.75] (38.center)
			 to [in=180, out=-90, looseness=0.75] (40.center)
			 to [in=-90, out=0, looseness=0.75] (39.center)
			 to [in=0, out=90, looseness=0.75] cycle;
		\draw [style=fi-bl] (34.center)
			 to [in=180, out=-90, looseness=0.75] (36.center)
			 to [in=-90, out=0, looseness=0.75] (35.center)
			 to [in=0, out=90, looseness=0.75] (33.center)
			 to [in=90, out=-180, looseness=0.75] cycle;
		\draw [style=fi-gr] (84.center)
			 to [in=-60, out=-105, looseness=1.25] (83.center)
			 to [in=75, out=30, looseness=1.25] cycle;
		\draw [style=fi-pu] (82.center)
			 to [in=30, out=75, looseness=1.25] (68.center)
			 to (72.center)
			 to [in=-15, out=105] (71.center)
			 to [in=-90, out=60, looseness=0.75] (78.center)
			 to [in=120, out=-15] (74.center)
			 to [in=150, out=-60] (75.center)
			 to [in=-180, out=-30] (73.center)
			 to [in=0, out=-120, looseness=0.75] (76.center)
			 to [in=-90, out=75] (79.center)
			 to [in=-75, out=90] (69.center)
			 to [bend right=15, looseness=0.75] (81.center)
			 to [in=225, out=30, looseness=0.75] (80.center)
			 to [in=-105, out=-60, looseness=1.25] cycle;
		\draw [style=fi-bl] (67.center)
			 to [in=0, out=-180] (65.center)
			 to [in=-105, out=0] (64.center)
			 to [in=-120, out=0, looseness=0.75] (63.center)
			 to [bend left=15] (66.center)
			 to [in=0, out=-90, looseness=0.75] cycle;
		\draw [style=fi-bl] (61.center)
			 to [in=-45, out=-180] (50.center)
			 to [bend left=75, looseness=0.75] (51.center)
			 to (60.center)
			 to [in=0, out=90, looseness=0.75] (59.center)
			 to [in=0, out=180] (55.center)
			 to [in=90, out=0, looseness=0.75] (56.center)
			 to [in=120, out=-15] (53.center)
			 to [in=150, out=-60] (54.center)
			 to [in=-180, out=-30] (52.center)
			 to [bend left=15] (58.center)
			 to (62.center)
			 to [in=0, out=-135] cycle;
		\draw [style=fi-bl] (42.center)
			 to (46.center)
			 to [in=-15, out=105] (45.center)
			 to [in=15, out=-120] (47.center)
			 to [bend left=90, looseness=2.75] (48.center)
			 to [in=105, out=0, looseness=0.75] (43.center)
			 to [bend right=15, looseness=0.75] (44.center)
			 to [in=225, out=30, looseness=0.75] cycle;
		\draw [style=vt, bend left=90, looseness=2.75] (3.center) to (4.center);
		\draw [style=vt] (12.center)
			 to [in=-180, out=-75] (14.center)
			 to [in=255, out=0] (13.center);
		\draw [style=vt, bend left=75, looseness=0.75] (15.center) to (16.center);
		\draw [style=vt-bl, bend right=15, looseness=0.75] (22.center) to (23.center);
		\draw [style=vt-bl, in=225, out=30, looseness=0.75] (23.center) to (17);
		\draw [style=vt-bl-di, in=-15, out=105] (26.center) to (25.center);
		\draw [style=vt-bl] (26.center) to (17);
		\draw [style=vt-bl-di, in=-15, out=120] (28.center) to (2.center);
		\draw [style=vt-bl, in=150, out=-60] (28.center) to (29.center);
		\draw [style=vt-bl, in=-180, out=-30] (29.center) to (27);
		\draw [style=vt-bl, in=-120, out=0, looseness=0.75] (30.center) to (27);
		\draw [style=vt-bl-di, bend left=15] (27) to (10.center);
		\draw [style=vt-bl, in=75, out=30, looseness=1.25] (17) to (32.center);
		\draw [style=vt-bl, in=-105, out=-60, looseness=1.25] (17) to (32.center);
		\draw [style=vt] (7.center)
			 to [in=-180, out=0] (11.center)
			 to [in=-90, out=0, looseness=0.75] (10.center)
			 to [in=0, out=90, looseness=0.75] (8.center)
			 to [in=0, out=180] (0.center);
		\draw [style=vt-bl-di] (85.center) to (86.center);
		\draw [style=vt-bl-di] (87.center) to (88.center);
		\draw [style=vt-bl-di] (89.center) to (90.center);
		\draw [style=op-lsh] (92.center)
			 to [in=180, out=-90, looseness=0.75] (94.center)
			 to [in=-90, out=0, looseness=0.75] (93.center)
			 to [in=0, out=90, looseness=0.75] (91.center)
			 to [in=90, out=-180, looseness=0.75] cycle;
		\draw [style=op-lsh] (97.center)
			 to [in=0, out=90, looseness=0.75] (95.center)
			 to [in=90, out=-180, looseness=0.75] (96.center)
			 to [in=180, out=-90, looseness=0.75] (98.center)
			 to [in=-90, out=0, looseness=0.75] cycle;
		\draw [style=vt] (1.center)
			 to [in=180, out=-90, looseness=0.75] (3.center)
			 to [in=-90, out=0, looseness=0.75] (2.center)
			 to [in=0, out=90, looseness=0.75] (0.center)
			 to [in=90, out=-180, looseness=0.75] cycle;
		\draw [style=vt] (5.center)
			 to [in=180, out=-90, looseness=0.75] (7.center)
			 to [in=-90, out=0, looseness=0.75] (6.center)
			 to [in=0, out=90, looseness=0.75] (4.center)
			 to [in=90, out=-180, looseness=0.75] cycle;
	\end{pgfonlayer}
\end{tikzpicture}}
  {%
  }}%

  \ee
This equality holds because the structure morphisms (multiplication, 
comultiplication, module action etc.) of all Frobenius algebras and bimodules 
involved descend to bimodule morphisms between relative tensor products of bimodules.
For instance, the multiplication of a Frobenius algebra $A\iN\cfrc$ descends to 
an $A$-$A$-bimodule isomorphism $A\,{\otimes_{A}}\,A \,{\cong}\, A$. 
As a consequence, we can turn any  2-cell that is labeled by the trivial Frobenius
algebra $\tu$ and contains a Frobenius graph labeled by a Frobenius algebra
$A$ into an $A$-labeled 2-cell. As an illustrative example, consider a region that
locally looks like the left hand side of the following picture:
  \be
{\tikzstyle{every picture}=[tikzfig]
  {\begin{tikzpicture}
	\begin{pgfonlayer}{nodelayer}
		\node [style=none] (28) at (-2, 2) {};
		\node [style=none] (29) at (-2, -2) {};
		\node [style=none] (30) at (0, 1) {};
		\node [style=none] (31) at (0, -1) {};
		\node [style=none] (32) at (0, 2) {};
		\node [style=none] (33) at (0, -2) {};
		\node [style=none] (34) at (-1, 0) {};
		\node [style=none] (35) at (2, 2) {};
		\node [style=none] (36) at (2, -2) {};
		\node [style=none] (37) at (0, 1) {};
		\node [style=none] (38) at (0, -1) {};
		\node [style=none] (39) at (0, 2) {};
		\node [style=none] (40) at (0, -2) {};
		\node [style=none] (41) at (1, 0) {};
		\node [style=none] (42) at (0, 1) {};
		\node [style=none] (43) at (0, -1) {};
		\node [style=none] (44) at (-1, 0) {};
		\node [style=none] (45) at (1, 0) {};
		\node [style=none] (12) at (-2, 2) {};
		\node [style=none] (13) at (-2, -2) {};
		\node [style=none] (18) at (2, -2) {};
		\node [style=none] (19) at (2, 2) {};
		\node [style=none] (0) at (-2, 2) {};
		\node [style=none] (1) at (-2, -2) {};
		\node [style=none] (2) at (2, -2) {};
		\node [style=none] (3) at (2, 2) {};
		\node [style=none] (20) at (0, 1) {};
		\node [style=none] (21) at (0, -1) {};
		\node [style=none] (22) at (0, 2) {};
		\node [style=none] (23) at (0, -2) {};
		\node [style=none] (24) at (-1, 0) {};
		\node [style=none] (25) at (1, 0) {};
	\end{pgfonlayer}
	\begin{pgfonlayer}{edgelayer}
		\draw [style=fi-bl] (29.center)
			 to (28.center)
			 to (32.center)
			 to (30.center)
			 to [bend right=45] (34.center)
			 to [bend right=45] (31.center)
			 to (33.center)
			 to cycle;
		\draw [style=fi-sh] (36.center)
			 to (35.center)
			 to (39.center)
			 to (37.center)
			 to [bend left=45] (41.center)
			 to [bend left=45] (38.center)
			 to (40.center)
			 to cycle;
		\draw [style=fi-sh] (44.center)
			 to [bend right=45] (43.center)
			 to [bend right=45] (45.center)
			 to [bend right=45] (42.center)
			 to [bend right=45] cycle;
		\draw [style=vt-lbl, bend right=45] (20.center) to (24.center);
		\draw [style=vt-lbl, bend right=45] (24.center) to (21.center);
		\draw [style=vt-lbl, bend left=45] (20.center) to (25.center);
		\draw [style=vt-lbl, bend left=45] (25.center) to (21.center);
		\draw [style=vt-lbl] (21.center) to (23.center);
		\draw [style=vt-lbl] (20.center) to (22.center);
	\end{pgfonlayer}
\end{tikzpicture}}
  {%
  }}%
 ~~=~~ %
{\tikzstyle{every picture}=[tikzfig]
  {\begin{tikzpicture}
	\begin{pgfonlayer}{nodelayer}
		\node [style=none] (41) at (-2, 2) {};
		\node [style=none] (42) at (-2, -2) {};
		\node [style=none] (43) at (0, 2) {};
		\node [style=none] (44) at (0, -2) {};
		\node [style=none] (45) at (0, 2) {};
		\node [style=none] (46) at (0, -2) {};
		\node [style=none] (47) at (2, 2) {};
		\node [style=none] (48) at (2, -2) {};
	\end{pgfonlayer}
	\begin{pgfonlayer}{edgelayer}
		\draw [style=fi-bl] (43.center)
			 to (41.center)
			 to (42.center)
			 to (44.center)
			 to cycle;
		\draw [style=fi-sh] (47.center)
			 to (45.center)
			 to (46.center)
			 to (48.center)
			 to cycle;
		\draw [style=vt-lbl] (46.center) to (45.center);
	\end{pgfonlayer}
\end{tikzpicture}}
  {%
  }}%
 ~~=~~ %
{\tikzstyle{every picture}=[tikzfig]
  {\begin{tikzpicture}
	\begin{pgfonlayer}{nodelayer}
		\node [style=none] (28) at (-2, 2) {};
		\node [style=none] (29) at (-2, -2) {};
		\node [style=none] (30) at (0, 1) {};
		\node [style=none] (31) at (0, -1) {};
		\node [style=none] (32) at (0, 2) {};
		\node [style=none] (33) at (0, -2) {};
		\node [style=none] (34) at (-1, 0) {};
		\node [style=none] (35) at (2, 2) {};
		\node [style=none] (36) at (2, -2) {};
		\node [style=none] (37) at (0, 1) {};
		\node [style=none] (38) at (0, -1) {};
		\node [style=none] (39) at (0, 2) {};
		\node [style=none] (40) at (0, -2) {};
		\node [style=none] (41) at (1, 0) {};
		\node [style=none] (42) at (0, 1) {};
		\node [style=none] (43) at (0, -1) {};
		\node [style=none] (44) at (-1, 0) {};
		\node [style=none] (45) at (1, 0) {};
		\node [style=none] (12) at (-2, 2) {};
		\node [style=none] (13) at (-2, -2) {};
		\node [style=none] (18) at (2, -2) {};
		\node [style=none] (19) at (2, 2) {};
		\node [style=none] (0) at (-2, 2) {};
		\node [style=none] (1) at (-2, -2) {};
		\node [style=none] (2) at (2, -2) {};
		\node [style=none] (3) at (2, 2) {};
		\node [style=none] (20) at (0, 1) {};
		\node [style=none] (21) at (0, -1) {};
		\node [style=none] (22) at (0, 2) {};
		\node [style=none] (23) at (0, -2) {};
		\node [style=none] (24) at (-1, 0) {};
		\node [style=none] (25) at (1, 0) {};
	\end{pgfonlayer}
	\begin{pgfonlayer}{edgelayer}
		\draw [style=fi-bl] (29.center)
			 to (28.center)
			 to (32.center)
			 to (30.center)
			 to [bend right=45] (34.center)
			 to [bend right=45] (31.center)
			 to (33.center)
			 to cycle;
		\draw [style=fi-sh] (36.center)
			 to (35.center)
			 to (39.center)
			 to (37.center)
			 to [bend left=45] (41.center)
			 to [bend left=45] (38.center)
			 to (40.center)
			 to cycle;
		\draw [style=fi-bl] (44.center)
			 to [bend right=45] (43.center)
			 to [bend right=45] (45.center)
			 to [bend right=45] (42.center)
			 to [bend right=45] cycle;
		\draw [style=vt-lbl, bend right=45] (20.center) to (24.center);
		\draw [style=vt-lbl, bend right=45] (24.center) to (21.center);
		\draw [style=vt-lbl, bend left=45] (20.center) to (25.center);
		\draw [style=vt-lbl, bend left=45] (25.center) to (21.center);
		\draw [style=vt-lbl] (21.center) to (23.center);
		\draw [style=vt-lbl] (20.center) to (22.center);
	\end{pgfonlayer}
\end{tikzpicture}}
  {%
  }}%

  \label{eq:absorbA}
  \ee
Here the bent edge of the graph that lies in the $\tu$-labeled area is labeled by 
$A$ as an object in $\cc$, while the other edges are labeled by $A$ as an 
$A$-$\tu$-bimodule. Invoking $\Delta$-separability of $A$ then gives the first
equality in \eqref{eq:absorbA}. The second equality follows in the same way,
except that in the picture on the right hand side, the left-most bent edge
is labeled by $A$ as a bimodule over itself.
\end{proof}

\begin{rem}
Expressions of the form of the right hand side of \eqref{eq:UCor-rhs} have been 
studied in \cite[Def.\,3.4]{fuSc22} under the name \emph{pinned block functor}. In
\cite{fuSc22} a candidate field object for bulk fields is selected, and the block 
functor is then evaluated after inserting that specific object at all arguments.
\end{rem}

%%%%%%%%%%%%%%%%%%%%%%%%%%%%%%%%%%%%%%%%%%%%%%%%%%%%%%%%%%%%%%%%%%%%%%%%

\subsection{Conformal field theory correlators} \label{sec:cft1}

A motivating application of the string-net constructions outlined in Section
\ref{sec:fresh} is a well-struc\-tured approach to the construction of consistent
systems of correlators for rational conformal field theories. This application was
developed in \cite{yangYa3,fusY,fusY2}. Here we recall a few of its main aspects
and provide an interpretation of Theorem \ref{thm:UcorIso} in that context.

There are two distinct entities that are referred to as a
two-dimensional conformal field theory, or CFT, for short: chiral CFT and full CFT; 
see e.g.\ \cite{fswy} for a short review. A \emph{chiral} CFT is governed by a
vertex operator algebra $\mathfrak{V}$, which reflects conservation laws of the CFT.
The representations of $\mathfrak{V}$ form a linear monoidal category 
$\cc \eq \mathrm{Rep}(\mathfrak{V})$.
We focus our attention on \emph{rational} conformal field theories, in which case 
$\cc$ has the structure of a modular fusion category. The string-net construction 
described in Section \ref{sec:fresh} works in fact in the much more general setting
that $\cb$ is any \ko-linear pivotal bicategory, but at present its conformal field
theory ramifications are fully understood only in the rational case. (Considerable
efforts go, however, at the time of writing in getting control over larger classes of
two-dimensional conformal field theories see e.g.\ 
\cite{fuSc27,fssw2,woik7,hoRun2,yera2}.) 
A \emph{full} CFT combines a left and a right chiral CFT, which are controlled by
two modular fusion categories $\cc$ and $\cc'$. We restrict our attention to full 
CFTs for which $\cc'$ equals $\cc$ up to a reversal of the braiding and the ribbon
twist, i.e.\ $\cc' \eq \cc^{\rm rev} \eq \mathrm{Rep}(\mathfrak{V})^{\rm rev}$. 
In this case
we have, thanks to modularity, $\cc \,{\boxtimes}\, \cc^{\rm rev} \,{\simeq}\, \czc$
as modular fusion categories, so that the string-net construction produces the
appropriate cylinder category for the circle, see \eqref{eq:blc-I-S1}.

Having fixed the modular category $\cc$, we can relate string-net constructions
to further CFT terminology. In physics terms, a surface $\surf$ carrying a
$\cfrc$-labeled string-net is called a \emph{\worldsheet}. For example, the labels of
the 1-cells on the boundary of $\surf$ describe boundary conditions for the CFT on
the \worldsheet\ that are compatible with the chiral symmetry encoded in $\cc$, while
labels for 1-cells in the interior of $\surf$ describe types of topological defect 
lines. 

A central goal of any quantum field theory is to establish the existence of 
correlation functions and understand there properties. For full CFTs with 
$\cc' \eq \cc^{\rm rev}$ this has been achieved in the following setting: 
Starting from the vertex algebra $\mathfrak{V}$ whose representation category is 
$\cc$, one can construct \emph{conformal blocks}. These are vector
bundles, endowed with a projectively flat connection, over the moduli space of
complex structures and marked points on the \worldsheet\ $\surf$. Thanks to the
projectively flat connection, the conformal blocks are characterized, up to 
isomorphism, by a representation of the mapping class group of $\surf$. The monodromy
data provided by the conformal blocks are captured by an open-closed modular functor
$\SNb$ \eqref{eq:def:SNb}, with the bicategory $\cb$ being the delooping $\cbc$ of
the modular fusion category $\cc$.

The correlation functions of the full CFT are global horizontal sections in these
bundles of conformal blocks. In the description of conformal blocks via an
open-closed modular functor, the correlation functions are distinguished elements in
vector spaces of conformal blocks that are invariant under the action of a suitable
subgroup \Cite{Eq.\,(6.11)}{fusY} of the mapping class group and are compatible 
with sewing, i.e.\ with composition of 1-morphisms in the bordism category.
Such a specific vector is obtained by regarding the \worldsheet\ $\surf$ as a 
$\cfrc$-co\-lored string net with boundary data $a$ and $a'$ that encode the fields
whose correlator is considered, and thereby as an element of the string-net space
$\SNfrc(\surf;a,a')$. (This element was called a \emph{quantum \worldsheet} in 
\cite{fusY}. As seen in \Cite{Rem.\,4.9}{fusY2}, the modular functor $\SNfrc$ 
classifies decorated \worldsheet s up to a local graphical calculus of defects.)
By applying the field functor to the boundary values and the universal
correlator \eqref{eq:ucorc-surf-aa'} to the distinguished vector, one obtains an
element of the vector space $\SNc(\surf;\df_{\!\ell}\,a,\df_{\!\ell'}\,a')$ that
captures the monodromy data of the conformal blocks with insertions on the complex
double $\surfhat$, and this element is the correlator for the \worldsheet\ $\surf$.

\medskip

The resulting interpretation of Theorem \ref{thm:UcorIso} in the CFT setting 
is as follows. Via the vertex algebra $\mathfrak{V}$, the conservation laws
of the chiral CFT with $\cc \eq \mathrm{Rep}(\mathfrak{V})$ can be expressed
as linear differential equations, known as Ward 
identities, that are satisfied by the conformal blocks.
It is a priori by no means clear whether every solution to these 
equations plays a role in full conformal field theory. The surjectivity of the
universal correlator $\ucorc(\surf)_{a,a'}$ stated in Theorem \ref{thm:UcorIso}
means that this is in fact the case: \emph{any} $\cc$-labeled string net on a 
\worldsheet\ $\surf$ with boundary data $(a,a')$, can be expressed as a linear 
combination of CFT correlators, provided that
correlators involving arbitrary networks of defect lines, and thereby also arbitrary 
types of defect fields, are admitted. Thinking about spaces of $\cc$-labeled string 
nets as spaces of solutions to the Ward identities, this implies that every 
solution to these differential equations contributes to some CFT correlator --
there are no spurious solutions. In short, \emph{CFT correlators exhaust the 
complete space of conformal blocks.} This is a very strong statement: in contrast,
the correlators for ordinary bulk fields, to which attention has traditionally often
been confined in the past, typically make use of only a small subspace of the space
of all solutions. (In the early literature dealing with concrete solutions to the 
Ward identities, the latter feature was well known, but not picked out as an
important theme.)

To understand the bearing of injectivity of the universal correlator in the CFT 
context, consider two \worldsheet s that share the same surface $\surf$,
which give rise to two vectors in $\SNfrc(\surf;a,a')$. Suppose that 
the universal correlator maps these two vectors to one and the same
$\cc$-colored string net. This means that the two \worldsheet s have
the same correlator, and hence they cannot be distinguished by any `measurement'.
Injectivity of the universal correlator means that the quantum \worldsheet s
determined by the two \worldsheet s must be the same, and hence that they are
related by a move in the bicategorical graphical calculus of defects. 
It follows that all relations among the correlators of a full 
CFT are local field-the\-o\-retical relations that come from the skein calculus 
of topological defects -- there aren't any additional relations between correlators
beyond the local relations implied by the string-net construction.
In this sense, Theorem \ref{thm:UcorIso} leads to direct
physical consequences, in terms of correlators, of the calculus of defects.

%%%%%%%%%%%%%%%%%%%%%%%%%%%%%%%%%%%%%%%%%%%%%%%%%%%%%%%%%%%%%%%%%%%%%%%%

\section{Double categorical structures} \label{sec:dbl-1}

In this Section we summarize some known results from the theory of (monoidal) double
categories. Section \ref{sec:dbl-1.1} contains basic definitions, in particular the 
one of a vertical transformation between double functors, that will be crucial for the
combined conceptual understanding of the universal correlator and the field functor.
Section \ref{sec:dbl-1.2} collects facts about monoidal double categories and
introduces the two basic examples of symmetric monoidal double categories we need:
the double category $\dbord$ of open-closed two-dimensional bordisms in Example
\ref{exa:dbord}, and the double category $\dprof$ of profunctors in Example
\ref{exa:dprof}. Section \ref{sec:dbl-1.3} explains the notions of companions and
conjoints, which are used, following \cite{wesSh}, to obtain bicategories from
double categories.

%%%%%%%%%%%%%%%%%%%%%%%%%%%%%%%%%%%%%%%%%%%%%%%%%%%%%%%%%%%%%%%%%%%%%%%%

\subsection{Double categories} \label{sec:dbl-1.1}

A \emph{$($pseudo$)$ double category} is a pseudocategory object internal to the
2-category $\ccat$ of small categories, functors and natural
transformations; a \emph{strict double category} is a category object internal to 
the category $\cat$ of small categories and functors. 
Every double category can be (functorially) strictified \Cite{Thm.\,7.5}{grPar};
accordingly we will treat every double category, as well as every double functor,
as if they were strict. Partly unraveling the condensed definition, we have:

\begin{defn}
A double category $\da$ consists of the following data:
\Itemize
  \item 
a collection of \emph{objects} $a,b,c,...\iN\da$;
  \item 
a collection of \emph{vertical $1$-morphisms} $f\colon a\Rarr~ b$, 
$g\colon b\Rarr~ c ,...$
  \item 
a composition of vertical $1$-morphisms;
  \item 
a collection of \emph{horizontal $1$-morphisms} $P\colon a\ptO b$,
$Q\colon b\ptO c, ...$ 
  \item 
a composition of horizontal $1$-morphisms;
  \item 
a collection of \emph{$2$-morphisms}\,
 $\begin{tikzcd}[row sep=1.1em,column sep=1.3em]	
 a & b \\ {a'} & {b'} 	
 \arrow[""{name=0,anchor=center,inner sep=0}, "P", "\shortmid"{marking}, from=1-1, to=1-2] 
 \arrow[""{name=1,anchor=center,inner sep=0}, "Q"', "\shortmid"{marking}, from=2-1, to=2-2] 
 \arrow["f"', from=1-1, to=2-1] \arrow["g", from=1-2, to=2-2]
 \arrow["\displaystyle\alpha"{description}, draw=none, from=0, to=1]
 \end{tikzcd}$, ... 
  \item 
vertical and horizontal compositions of $2$-morphisms, e.g.\ (suppressing labels of
1-mor\-phisms)
  \be
  \begin{tikzcd}[row sep=2.1em]
  a & b \\ ~ & ~ \\ {a''} & {b''} 
  \arrow[""{name=0,anchor=center,inner sep=0}, "\shortmid"{marking}, from=1-1, to=1-2]
  \arrow[from=1-2, to=3-2] 
  \arrow[""{name=1,anchor=center,inner sep=0}, "\shortmid"{marking}, from=3-1, to=3-2]
  \arrow[from=1-1, to=3-1]
  \arrow["\displaystyle\alpha'\cir\alpha"{description}, draw=none, from=0, to=1]
  \end{tikzcd}
  \quad \raisebox{-4pt}{$=$} \quad
  \begin{tikzcd}
  a & b \\ {a'} & {b'} \\ {a''} & {b''} 
  \arrow[""{name=0,anchor=center,inner sep=0}, "\shortmid"{marking}, from=1-1, to=1-2]
  \arrow[from=1-2, to=2-2] \arrow[from=2-2, to=3-2]
  \arrow[""{name=1,anchor=center,inner sep=0}, "\shortmid"{marking}, from=3-1, to=3-2]
  \arrow[from=1-1, to=2-1] \arrow[from=2-1, to=3-1] 
  \arrow[""{name=6,anchor=center,inner sep=0}, "\shortmid"{marking}, from=2-1, to=2-2]
  \arrow["\displaystyle\alpha"{description}, draw=none, from=0, to=6]
  \arrow["\displaystyle{\alpha'}"{description}, draw=none, from=6, to=1] 
  \end{tikzcd}
  \ee
and
  \be
  \begin{tikzcd}
  a & ~ & c \\ {a'} & \phantom{b'} & {c'} 
  \arrow[from=1-1, to=2-1] \arrow[from=1-3, to=2-3]
  \arrow[""{name=2,anchor=center,inner sep=0}, "\shortmid"{marking}, from=2-1, to=2-3]
  \arrow[""{name=4,anchor=center,inner sep=0}, "\shortmid"{marking}, from=1-1, to=1-3]
  \arrow["\displaystyle{\alpha\,{\cdot}\,\beta}"{description},draw=none, from=4, to=2]
  \end{tikzcd}
  \quad \raisebox{-2pt}{$=$} \quad
  \begin{tikzcd}
  a & b & c \\ {a'} & {b'} & {c'} 
  \arrow[from=1-1, to=2-1]
  \arrow[""{name=2,anchor=center,inner sep=0}, "\shortmid"{marking}, from=2-1, to=2-2]
  \arrow[""{name=3,anchor=center,inner sep=0}, "\shortmid"{marking}, from=2-2, to=2-3]
  \arrow[""{name=4,anchor=center,inner sep=0}, "\shortmid"{marking}, from=1-1, to=1-2]
  \arrow[""{name=5,anchor=center,inner sep=0}, "\shortmid"{marking}, from=1-2, to=1-3]
  \arrow[from=1-2, to=2-2] \arrow[from=1-3, to=2-3]
  \arrow["\displaystyle\alpha"{description}, draw=none, from=4, to=2]
  \arrow["\displaystyle\beta"{description}, draw=none, from=5, to=3]
  \end{tikzcd}
  \ee
\end{itemize}

\noindent
The composition of vertical $1$-morphisms, which we denote by `$\circ$', is strictly 
unital and associative, while the composition of horizontal $1$-morphisms, to be 
denoted by `$\cdot$', is weakly unital and associative.
We denote the weak identity horizontal $1$-morphism for an object $a$ by $U_a$,
and the identity vertical $1$-morphism for $a$ by $1_a$.
 \\[2pt]
Similarly, the vertical composition of 2-morphisms is strictly unital and associative,
while their horizontal composition is unital and associative only up to coherent 
isomorphisms. Moreover the two types of compositions are compatible, as expressed by
a \emph{middle-four interchange law} akin to the one for bicategories.
\end{defn}

A $2$-morphism for which the two vertical $1$-morphisms are identities is called
\emph{globular}.
Note that there is no composition of a vertical and a horizontal 1-morphism. Thus in
particular the squares used to depict 2-morphisms must not be interpreted as 
commutative diagrams.\,%
 \footnote{~For any category $\ca$ there is a double category $\square\ca$ whose
 vertical and horizontal 1-morphisms are both just the morphisms in $\ca$ and
 whose 2-morphisms are given by commutative squares in $\ca$. However,
 the composition of a vertical morphism with a horizontal morphism,
 though possible owing to the contents of $\square\ca$, is not
 intrinsic to the framework of double categories.}.

The framework of internal categories also provides the notions of a
\emph{$($pseudo$)$ double functor} between double categories and of a \emph{vertical 
transformation} (or \emph{double transformation}) between double functors.
For the full specification of the notion of a double functor 
we refer to Definition A.2.8 in \cite{cours}. In a nutshell, we have:

\begin{defn} \label{def:doublefunctor}
Let $\da$ and $\db$ be double categories. A \emph{double functor}
$F\colon\da\Rarr~\db$ maps objects, vertical 1-morphisms and horizontal 1-morphisms
in $\da$ to the corresponding items in $\db$ in a manner compatible with their
compositions. Further, it maps
a 2-morphism
in $\da$ to a 2-morphism in $\db$ according to
  \be
  \begin{tikzcd}[column sep=2.7em]
  a & b & Fa & Fb 
  \\
  {a'} & {b'} & {Fa'} & {Fb'}
  \arrow["f"', from=1-1, to=2-1]
  \arrow[""{name=0,anchor=center,inner sep=0},"Q"', "\shortmid"{marking}, from=2-1, to=2-2]
  \arrow[""{name=1,anchor=center,inner sep=0},"P", "\shortmid"{marking}, from=1-1, to=1-2]
  \arrow[""{name=2,anchor=center,inner sep=0},"g", from=1-2, to=2-2]
  \arrow[""{name=3,anchor=center,inner sep=0},"FP", "\shortmid"{marking}, from=1-3, to=1-4]
  \arrow[""{name=4,anchor=center,inner sep=0},"Ff"', from=1-3, to=2-3]
  \arrow[""{name=5,anchor=center,inner sep=0},"FQ"', "\shortmid"{marking}, from=2-3, to=2-4]
  \arrow["Fg", from=1-4, to=2-4]
  \arrow["\displaystyle\alpha"{description}, draw=none, from=1, to=0]
  \arrow["\!\!\!\xmapsto{\phantom{ww}}"{description}, draw=none, from=2, to=4]
  \arrow["\displaystyle F\alpha"{description}, draw=none, from=3, to=5]
  \end{tikzcd}
  \ee
in such a way that (along with the preservation of units) both the vertical
and the horizontal compositions are preserved, i.e.
  \be
  \begin{tikzcd}[column sep=3.7em]
  Fa & Fb & Fa & Fb \\ {Fa'} & {Fb'} & {Fa'} & {Fb'} 
  \\
  {Fa''} & {Fb''} & {Fa''} & {Fb''}
  \arrow[""{name=0,anchor=center,inner sep=0},"\shortmid"{marking}, from=1-1, to=1-2]
  \arrow[""{name=1,anchor=center,inner sep=0},"\shortmid"{marking}, from=1-3, to=1-4]
  \arrow[""{name=2,anchor=center,inner sep=0},"\shortmid"{marking}, from=2-1, to=2-2]
  \arrow[from=1-1, to=2-1]
  \arrow[from=1-2, to=2-2]
  \arrow[from=1-3, to=2-3]
  \arrow["\displaystyle ~=~"{description}, draw=none, from=2-2, to=2-3]
  \arrow[from=1-4, to=2-4]
  \arrow[from=2-4, to=3-4]
  \arrow[""{name=3,anchor=center,inner sep=0},"\shortmid"{marking}, from=3-3, to=3-4]
  \arrow[from=2-3, to=3-3]
  \arrow[from=2-2, to=3-2]
  \arrow[""{name=4,anchor=center,inner sep=0},"\shortmid"{marking}, from=3-1, to=3-2]
  \arrow[from=2-1, to=3-1]
  \arrow["\displaystyle F\alpha"{description}, draw=none, from=0, to=2]
  \arrow["\displaystyle{F(\alpha'{\circ}\alpha)}"{description},draw=none,from=1,to=3]
  \arrow["\displaystyle{F\alpha'}"{description}, draw=none, from=2, to=4]
  \end{tikzcd}
  \label{eq:Fver}
  \ee
and
  \be
  \begin{tikzcd}
  Fa & Fb & Fc & Fa & Fb & Fc
  \\
  {Fa'} & {Fb'} & {Fc'} & {Fa'} & {Fb'} & {Fc'}
  \arrow[""{name=0, anchor=center, inner sep=0}, "\shortmid"{marking}, from=1-1, to=1-2]
  \arrow["\shortmid"{marking}, from=1-4, to=1-5]
  \arrow[""{name=1,anchor=center,inner sep=0}, "\shortmid"{marking}, from=2-1, to=2-2]
  \arrow[from=1-1, to=2-1]
  \arrow[from=1-2, to=2-2]
  \arrow[""{name=2,anchor=center,inner sep=0}, from=1-4, to=2-4]
  \arrow["\shortmid"{marking}, from=2-4, to=2-5]
  \arrow[""{name=3,anchor=center,inner sep=0}, "\shortmid"{marking}, from=1-2, to=1-3]
  \arrow[""{name=4,anchor=center,inner sep=0}, "\shortmid"{marking}, from=2-2, to=2-3]
  \arrow[""{name=5,anchor=center,inner sep=0}, from=1-3, to=2-3]
  \arrow["\shortmid"{marking}, from=1-5, to=1-6]
  \arrow[from=1-6, to=2-6]
  \arrow["\shortmid"{marking}, from=2-5, to=2-6]
  \arrow["\displaystyle{F(\alpha\,{\cdot}\,\beta)}"{description},draw=none,from=1-5,to=2-5]
  \arrow["\displaystyle F\alpha"{description}, draw=none, from=0, to=1]
  \arrow["\displaystyle F\beta"{description}, draw=none, from=3, to=4]
  \arrow["\displaystyle ~=~"{description}, draw=none, from=5, to=2]
  \end{tikzcd}
  \label{eq:Fhor}
  \ee
for all composable pairs of 2-morphisms. 
\end{defn}

\begin{rem}
As before, in Definition \ref{def:doublefunctor} we tacitly invoke the
strictification theorem \Cite{Thm.\,7.5}{grPar} for double categories and double
functors. In full generality, the equality \eqref{eq:Fhor}
as well as the preservation of the weak identity horizontal 1-morphisms are
instead valid only up to insertions of appropriate globular comparison 
constraints.  If there are such constraints $U_{Fa} \Rarr~ F(U_a)$ for every 
weak identity horizontal morphism $U_a$ and 
$F(P) \,{\cdot}\, F(Q) \Rarr~ F(P \,{\cdot}\, Q)$ for every composable pair $P,Q$ of 
horizontal 1-cells, then we deal with a \emph{lax} double functor, while if there are
constraints in the opposite direction, then we have an \emph{oplax} double functor.
For a \emph{pseudo} double functor the constraints are isomorphisms, and for a
\emph{strong} double functor they are globular isomorphisms.
\end{rem}
 
Vertical transformations between double functors are described as follows
\Cite{Def.\,A.2.9}{cours} (again we invoke the strictification
of double categories as well as of double functors):

\begin{defn}
\label{def:doubletrans}
Let $F,G\colon\da\Rarr~\db$ be two double functors. A \emph{vertical transformation}
(or \emph{double transformation}) $\theta\colon F\,{\xRightarrow{\phantom{xx}}}\, G$
consists of two pieces of data: a family
  \be
  \{\theta_{a}\colon Fa\Rarr~ Ga\}_{a\in\da}^{}
  \label{eq:family-theta}
  \ee
of vertical 1-morphisms in $\db$, and a family 
  \be
  \begin{tikzcd}
  Fa & Fb \\ Ga & Gb
  \arrow[""{name=0,anchor=center,inner sep=0},"FP", "\shortmid"{marking}, from=1-1, to=1-2]
  \arrow[""{name=1,anchor=center,inner sep=0},"GP"', "\shortmid"{marking}, from=2-1, to=2-2]
  \arrow["{\theta_a}"', from=1-1, to=2-1]
  \arrow["{\theta_b}", from=1-2, to=2-2]
  \arrow["\displaystyle{\theta_P}"{description}, draw=none, from=0, to=1] 
  \end{tikzcd}
  \label{eq:family-2mora}
  \ee
of 2-morphisms in $\db$ that is parametrized by the horizontal 1-morphisms 
$P\colon a\ptO b$ in $\da$.
These data are required to satisfy the following conditions:
 \Itemize
 \item
\emph{Horizontal functoriality}: 
  \be
  \begin{tikzcd}
  Fa & Fb & Fc & Fa && Fc \\ Ga & Gb & Gc & Ga && Gc
  \arrow[""{name=0,anchor=center,inner sep=0},"FP", "\shortmid"{marking}, from=1-1, to=1-2]
  \arrow[""{name=1,anchor=center,inner sep=0},"GP"', "\shortmid"{marking}, from=2-1, to=2-2]
  \arrow["{\theta_a}"', from=1-1, to=2-1]
  \arrow["{\theta_b}", from=1-2, to=2-2]
  \arrow[""{name=2,anchor=center,inner sep=0},"GQ"', "\shortmid"{marking}, from=2-2, to=2-3]
  \arrow[""{name=3,anchor=center,inner sep=0},"FQ", "\shortmid"{marking}, from=1-2, to=1-3]
  \arrow[""{name=4,anchor=center,inner sep=0}, "{\theta_c}", from=1-3, to=2-3]
  \arrow[""{name=5,anchor=center,inner sep=0}, "{\theta_a}"', from=1-4, to=2-4]
  \arrow[""{name=6,anchor=center,inner sep=0}, "{F(P\cdot Q)}", "\shortmid"{marking}, from=1-4, to=1-6]
  \arrow["{\theta_c}", from=1-6, to=2-6]
  \arrow[""{name=7,anchor=center,inner sep=0},"{G(P\cdot Q)}"', "\shortmid"{marking}, from=2-4, to=2-6]
  \arrow["\displaystyle{\theta_P}"{description}, draw=none, from=0, to=1]
  \arrow["\displaystyle{\theta_Q}"{description}, draw=none, from=3, to=2]
  \arrow["\displaystyle{\theta_{P\cdot Q}}"{description}, draw=none, from=6, to=7]
  \arrow["\displaystyle ~=~"{description}, draw=none, from=4, to=5]
  \end{tikzcd}
  \ee
 \item
\emph{Vertical naturality}: 
  \be
  \begin{tikzcd}
  Fa & Fb & Fa & Fb \\ {Fa'} & {Fb'} & Ga & Gb \\ {Ga'} & {Gb'} & {Ga'} & {Gb'}
  \arrow[""{name=0,anchor=center,inner sep=0},"{FP'}"', "\shortmid"{marking}, from=2-1, to=2-2]
  \arrow[""{name=1,anchor=center,inner sep=0},"{GP'}"', "\shortmid"{marking}, from=3-1, to=3-2]
  \arrow["{\theta_{a'}}"', from=2-1, to=3-1]
  \arrow["{\theta_{b'}}", from=2-2, to=3-2]
  \arrow[""{name=2,anchor=center,inner sep=0},"FP", "\shortmid"{marking}, from=1-3, to=1-4]
  \arrow[""{name=3,anchor=center,inner sep=0},"FP", "\shortmid"{marking}, from=1-1, to=1-2]
  \arrow["Fg", from=1-2, to=2-2]
  \arrow["Ff"', from=1-1, to=2-1]
  \arrow[""{name=4,anchor=center,inner sep=0},"{GP'}"', "\shortmid"{marking}, from=3-3, to=3-4]
  \arrow["{\theta_a}"', from=1-3, to=2-3]
  \arrow["{\theta_b}", from=1-4, to=2-4]
  \arrow["Gf"', from=2-3, to=3-3]
  \arrow["Gg", from=2-4, to=3-4]
  \arrow[""{name=5, anchor=center, inner sep=0}, "GP"', "\shortmid"{marking}, from=2-3, to=2-4]
  \arrow["\displaystyle ~~=~~"{description}, draw=none, from=2-2, to=2-3]
  \arrow["\displaystyle{\theta_{P'}}"{description, pos=0.6}, draw=none, from=0, to=1]
  \arrow["\displaystyle{\theta_P}"{description}, draw=none, from=2, to=5]
  \arrow["\displaystyle F\alpha"{description}, draw=none, from=3, to=0]
  \arrow["\displaystyle G\alpha"{description, pos=0.6}, draw=none, from=5, to=4]
  \end{tikzcd}
  \label{eq:vertnat}
  \ee
for any 2-morphism $\alpha$.
 \item
\emph{Horizontal unitality}: 
  \be
  \begin{tikzcd}
  Fa & Fa & Fa & Fa \\ Ga & Ga & Ga & Ga
  \arrow[""{name=0, anchor=center, inner sep=0}, "~{F(U_a)}", "\shortmid"{marking}, from=1-1, to=1-2]
  \arrow[""{name=1, anchor=center, inner sep=0}, "{\theta_a}", from=1-2, to=2-2]
  \arrow["{\theta_a}"', from=1-1, to=2-1]
  \arrow[""{name=2, anchor=center, inner sep=0}, "\,{G(U_a)}"', "\shortmid"{marking}, from=2-1, to=2-2]
  \arrow[""{name=3, anchor=center, inner sep=0}, "{~~U_{\!Fa}}", "\shortmid"{marking}, from=1-3, to=1-4]
  \arrow["{\theta_a}", from=1-4, to=2-4]
  \arrow[""{name=4, anchor=center, inner sep=0}, "{\theta_a}"', from=1-3, to=2-3]
  \arrow[""{name=5, anchor=center, inner sep=0}, "{U_{Ga}}"', "\shortmid"{marking}, from=2-3, to=2-4]
  \arrow["\displaystyle ~~=~~"{description}, draw=none, from=1, to=4]
  \arrow["\displaystyle{\theta_{U_a}^{\phantom x}}"{description}, draw=none, from=0, to=2]
  \arrow["\displaystyle{U_{\theta_a}}"{description}, draw=none, from=3, to=5]
  \end{tikzcd}
  \ee
\end{itemize}
\end{defn}

To a double category $\da$ there are naturally associated two ordinary categories
$\da_0$ and $\da_1$: 

\begin{defn} \label{def:A0,A1}
Let $\da$ be a double category. The category $\da_0$ is the category with objects 
being the objects of $\da$ and morphisms being the vertical 1-morphisms of $\da$. 
The category $\da_1$ has as objects the horizontal 1-morphisms of $\da$, and as 
morphisms the globular 2-morphisms.
\end{defn} 

\begin{rem} \label{rem:theta1theta0}
Thinking about a double category as a category internal to $\ccat$, $\da_0$ is the
object category and $\da_1$ the morphism category.
 \\
It follows directly from the definition that a double functor $F\colon\da\Rarr~\db$
gives in particular rise to two ordinary functors $F_0\colon\da_0\Rarr~\db_0$ and
$F_1\colon\da_1\Rarr~\db_1$ between the categories $\da_0$ and $\da_1$.
 \\
The vertical naturality requirement \eqref{eq:vertnat} expresses the fact that
a vertical transformation $\theta_1 \colon F\,{\xRightarrow{\phantom{xx}}}\, G$ 
between double functors $F,G\colon\da\Rarr~\db$ furnishes in particular an ordinary 
natural transformation $\theta\colon F_1\,{\xRightarrow{\phantom{xx}}}\, G_1$ between
functors from $\da_1$ to $\db_1$. Moreover, by taking all horizontal morphisms in
\eqref{eq:vertnat} to be identities, $\theta$ also gives rise to an ordinary natural
transformation $\theta_0 \colon F_0\,{\xRightarrow{\phantom{xx}}}\, G_0$ 
between the corresponding functors from $\da_0$ to $\db_0$.
\end{rem}

%%%%%%%%%%%%%%%%%%%%%%%%%%%%%%%%%%%%%%%%%%%%%%%%%%%%%%%%%%%%%%%%%%%%%%%%
 
\subsection{Monoidal double categories} \label{sec:dbl-1.2}
 
We consider double categories together with double functors and vertical 
transformations, as a symmetric monoidal 2-category $\dbl$ by using the Cartesian
product. A symmetric monoidal double category is a symmetric pseudomonoid in
$\dbl$. Slightly more explicitly, we have (for a fully unpacked description see
Definitions A.2.12 and A.2.13 of \cite{cours}):
A \emph{pseudomonoid} internal to a monoidal 2-category $(\cb,\otimes,\tu)$
is an object $m$ in $\cb$ together with morphisms $m \oti m \Rarr~ m$ and 
$\tu \Rarr~ m$ such that the usual associativity and unit properties hold up to
specified 2-isomorphisms that satisfy appropriate equalities \Cite{Sect.\,3}{daSt}.
A \emph{braided pseudomonoid} in $(\cb,\otimes,\tu)$ is a pseudomonoid $m$
equipped with an isomorphism between $\otimes$ and the opposite tensor product
that obeys the usual hexagon identities of a braided monoidal category.
 % Definition A.2.11 in \cite{cours}

\begin{defn} \label{def:mondblecat}
A \emph{monoidal double category} is a double category $\da$ endowed with double 
functors $\otimes\colon \da\Times\da \Rarr~ \da$ and $I\colon {*} \Rarr~ \da$,
with ${*}$ the terminal double category, and with invertible vertical transformations
that are analogous to the associator and unitors of a monoidal category and obey
the same pentagon axiom and triangle identities as those.
\end{defn}

Note that in a vertical transformation we use vertical 1-morphisms. Accordingly, the
structural 1-morphisms in a monoidal double category are \emph{vertical} morphisms.

\begin{rem} \label{rem:A0,A1-mon}
If the double category $\da$ is monoidal, then the two categories $\da_0$ and $\da_1$
introduced in Definition \ref{def:A0,A1} are monoidal categories.
\end{rem}

We denote by $\tau$ the double functor $\da\Times\da \Rarr~ \da\Times\da$ that flips 
the order of the members of any pair of objects and of morphisms in $\da$ (i.e., of
pairs of objects and of morphisms in the categories $\da_0$ and $\da_1$).

\begin{defn}
A \emph{braided monoidal double category} is a monoidal double category $\da$ 
equipped with a \emph{braiding}, that is, with an invertible vertical transformation
$\beta\colon {\otimes} \,{\xRightarrow{\phantom{xx}}}\, {\otimes} \cir \tau$
that satisfies (on the nose) the same hexagon identities as the braiding in a monoidal
category.
 % see Definition A.2.13 in \cite{cours}.
 \\
A \emph{symmetric monoidal double category} is a braided one for which $\beta$
squares to the identity vertical transformation.
\end{defn}

The following two instances of symmetric monoidal double categories
are directly relevant to us.

\begin{example} \label{exa:dbord}
There is a symmetric monoidal double category $\dbord$ with the following data:
  \Itemize
  \item 
objects: compact oriented one-manifolds with possibly non-empty boundary;
  \item 
vertical 1-morphisms: orientation preserving smooth embeddings;
  \item 
horizontal 1-morphisms: oriented open-closed bordisms, with composition
given by sewing;
  \item 2-morphisms: isotopy classes of orientation preserving embeddings
that restrict to the embeddings of the parametrizing one-manifolds.
For example, there is a 2-morphism 
  \be
  \begin{tikzcd}
  {\mathbb{I}} & {\mathbb{I}} \\ {\mathbb{S}^1} & {(\mathbb{S}^1)^{\sqcup 2}}
  \arrow["f"', from=1-1, to=2-1]
  \arrow["g", from=1-2, to=2-2]
  \arrow[""{name=0,anchor=center,inner sep=0}, "\varSigma", "\shortmid"{marking}, from=1-1, to=1-2]
  \arrow[""{name=1,anchor=center,inner sep=0}, "{\varSigma'}"', "\shortmid"{marking}, from=2-1, to=2-2]
  \arrow["\displaystyle\xi"{description}, draw=none, from=0, to=1] \end{tikzcd}
  \ee
that looks like
  \be
  \raisebox{-0.9em}{\scalebox{0.7}{%
{\tikzstyle{every picture}=[tikzfig]
  {\begin{tikzpicture}
	\begin{pgfonlayer}{nodelayer}
		\node [style=none] (132) at (-2.5, 0) {};
		\node [style=none] (133) at (4, 2.5) {};
		\node [style=none] (134) at (-2.725, 1) {};
		\node [style=none] (135) at (-1.725, 0.825) {};
		\node [style=none] (136) at (-0.35, 1.425) {};
		\node [style=none] (137) at (1.025, 0.825) {};
		\node [style=none] (139) at (3.75, 3.55) {};
		\node [style=none] (141) at (1.75, 1.75) {};
		\node [style=none] (142) at (1.025, 0.825) {};
		\node [style=none] (143) at (-0.35, -0.325) {};
		\node [style=none] (144) at (0, -1) {};
		\node [style=none] (145) at (1.75, 1.75) {};
		\node [style=none] (170) at (-2.5, 0) {};
		\node [style=none] (171) at (4, 2.5) {};
		\node [style=none] (172) at (-2.725, 1) {};
		\node [style=none] (173) at (-1.725, 0.825) {};
		\node [style=none] (174) at (-0.35, 1.425) {};
		\node [style=none] (176) at (3.75, 3.55) {};
		\node [style=none] (178) at (1.025, 0.825) {};
		\node [style=none] (179) at (-0.35, -0.325) {};
		\node [style=none] (180) at (0, -1) {};
		\node [style=none] (181) at (1.75, 1.75) {};
	\end{pgfonlayer}
	\begin{pgfonlayer}{edgelayer}
		\draw [style=fi-pi] (144.center)
			 to [in=-105, out=0] (145.center)
			 to (142.center)
			 to [in=0, out=-75] (143.center)
			 to [in=-105, out=180] (135.center)
			 to [in=-180, out=75, looseness=0.75] (136.center)
			 to [in=105, out=0, looseness=0.75] (137.center)
			 to (141.center)
			 to [in=-180, out=75] (133.center)
			 to [bend right=15, looseness=0.75] (139.center)
			 to [in=15, out=-180, looseness=1.25] (134.center)
			 to [bend left=15, looseness=0.75] (132.center)
			 to [in=180, out=-15] cycle;
		\draw [style=vt] (172.center)
			 to [bend left=15, looseness=0.75] (170.center)
			 to [in=180, out=-15] (180.center)
			 to [in=-105, out=0] (181.center)
			 to [in=-180, out=75] (171.center)
			 to [bend right=15, looseness=0.75] (176.center)
			 to [in=15, out=-180, looseness=1.25] cycle;
		\draw [style=vt] (179.center)
			 to [in=-105, out=180] (173.center)
			 to [in=-180, out=75, looseness=0.75] (174.center)
			 to [in=105, out=0, looseness=0.75] (178.center)
			 to [in=0, out=-75] cycle;
	\end{pgfonlayer}
\end{tikzpicture}}
  {%
  }}%
}}
  \xhookrightarrow{~~\,\xi~~~}~ \scalebox{0.7}{%
{\tikzstyle{every picture}=[tikzfig]
  {\begin{tikzpicture}
	\begin{pgfonlayer}{nodelayer}
		\node [style=none] (146) at (-4, 0) {};
		\node [style=none] (149) at (-3.25, -1.5) {};
		\node [style=none] (154) at (-1.075, 0.375) {};
		\node [style=none] (155) at (3.25, 4) {};
		\node [style=none] (156) at (4, 2.5) {};
		\node [style=none] (157) at (3.25, 1) {};
		\node [style=none] (158) at (3.25, -1) {};
		\node [style=none] (159) at (4, -2.5) {};
		\node [style=none] (160) at (3.25, -4) {};
		\node [style=none] (165) at (-0.35, 0.1) {};
		\node [style=none] (166) at (0.375, 0.375) {};
		\node [style=none] (167) at (-1.075, 0.375) {};
		\node [style=none] (168) at (-4, 0) {};
		\node [style=none] (169) at (-3.25, 1.5) {};
		\node [style=none] (111) at (0.375, 0.375) {};
		\node [style=none] (112) at (-1.075, 0.375) {};
		\node [style=none] (161) at (-4, 0) {};
		\node [style=none] (162) at (-2.5, 0) {};
		\node [style=none] (163) at (-3.25, 1.5) {};
		\node [style=none] (164) at (-3.25, -1.5) {};
		\node [style=none] (182) at (-2.5, 0) {};
		\node [style=none] (183) at (4, 2.5) {};
		\node [style=none] (184) at (-2.725, 1) {};
		\node [style=none] (185) at (-1.725, 0.825) {};
		\node [style=none] (186) at (-0.35, 1.425) {};
		\node [style=none] (187) at (1.025, 0.825) {};
		\node [style=none] (188) at (3.75, 3.55) {};
		\node [style=none] (189) at (1.75, 1.75) {};
		\node [style=none] (190) at (1.025, 0.825) {};
		\node [style=none] (191) at (-0.35, -0.325) {};
		\node [style=none] (192) at (0, -1) {};
		\node [style=none] (193) at (1.75, 1.75) {};
		\node [style=none] (108) at (0.65, 0.85) {};
		\node [style=none] (109) at (-1.35, 0.85) {};
		\node [style=none] (110) at (-0.35, 0.1) {};
		\node [style=none] (194) at (-2.5, 0) {};
		\node [style=none] (195) at (4, 2.5) {};
		\node [style=none] (196) at (-2.725, 1) {};
		\node [style=none] (197) at (-1.725, 0.825) {};
		\node [style=none] (198) at (-0.35, 1.425) {};
		\node [style=none] (199) at (3.75, 3.55) {};
		\node [style=none] (200) at (1.025, 0.825) {};
		\node [style=none] (201) at (-0.35, -0.325) {};
		\node [style=none] (202) at (0, -1) {};
		\node [style=none] (203) at (1.75, 1.75) {};
		\node [style=none] (100) at (-4, 0) {};
		\node [style=none] (101) at (-2.5, 0) {};
		\node [style=none] (103) at (-3.25, 1.5) {};
		\node [style=none] (105) at (-3.25, -1.5) {};
		\node [style=none] (115) at (3.25, 4) {};
		\node [style=none] (117) at (4, 2.5) {};
		\node [style=none] (118) at (3.25, 1) {};
		\node [style=none] (119) at (3.25, -1) {};
		\node [style=none] (121) at (4, -2.5) {};
		\node [style=none] (122) at (3.25, -4) {};
	\end{pgfonlayer}
	\begin{pgfonlayer}{edgelayer}
		\draw [style=fi-bl] (169.center)
			 to [in=180, out=0] (155.center)
			 to [in=90, out=0, looseness=0.75] (156.center)
			 to [in=0, out=-90, looseness=0.75] (157.center)
			 to [bend right=90, looseness=2.75] (158.center)
			 to [in=90, out=0, looseness=0.75] (159.center)
			 to [in=0, out=-90, looseness=0.75] (160.center)
			 to [in=0, out=180] (149.center)
			 to [in=-90, out=180, looseness=0.75] (146.center)
			 to (167.center)
			 to [in=180, out=-45, looseness=0.75] (165.center)
			 to [in=-135, out=0, looseness=0.75] (166.center)
			 to [bend right=75, looseness=0.75] (154.center)
			 to (168.center)
			 to [in=-180, out=90, looseness=0.75] cycle;
		\draw [style=op-lsh] (162.center)
			 to [in=0, out=90, looseness=0.75] (163.center)
			 to [in=90, out=-180, looseness=0.75] (161.center)
			 to [in=180, out=-90, looseness=0.75] (164.center)
			 to [in=-90, out=0, looseness=0.75] cycle;
		\draw [style=op-pi] (192.center)
			 to [in=-105, out=0] (193.center)
			 to (190.center)
			 to [in=0, out=-75] (191.center)
			 to [in=-105, out=180] (185.center)
			 to [in=-180, out=75, looseness=0.75] (186.center)
			 to [in=105, out=0, looseness=0.75] (187.center)
			 to (189.center)
			 to [in=-180, out=75] (183.center)
			 to [bend right=15, looseness=0.75] (188.center)
			 to [in=15, out=-180, looseness=1.25] (184.center)
			 to [bend left=15, looseness=0.75] (182.center)
			 to [in=180, out=-15] cycle;
		\draw [style=vt] (108.center)
			 to [in=0, out=-105] (110.center)
			 to [in=-75, out=180] (109.center);
		\draw [style=vt, bend right=75, looseness=0.75] (111.center) to (112.center);
		\draw [style=vt-pi] (196.center)
			 to [bend left=15, looseness=0.75] (194.center)
			 to [in=180, out=-15] (202.center)
			 to [in=-105, out=0] (203.center)
			 to [in=-180, out=75] (195.center)
			 to [bend right=15, looseness=0.75] (199.center)
			 to [in=15, out=-180, looseness=1.25] cycle;
		\draw [style=vt-pi] (201.center)
			 to [in=-105, out=180] (197.center)
			 to [in=-180, out=75, looseness=0.75] (198.center)
			 to [in=105, out=0, looseness=0.75] (200.center)
			 to [in=0, out=-75] cycle;
		\draw [style=vt] (103.center)
			 to [in=180, out=0] (115.center)
			 to [in=90, out=0, looseness=0.75] (117.center)
			 to [in=0, out=-90, looseness=0.75] (118.center)
			 to [bend right=90, looseness=2.75] (119.center)
			 to [in=90, out=0, looseness=0.75] (121.center)
			 to [in=0, out=-90, looseness=0.75] (122.center)
			 to [in=0, out=180] (105.center);
		\draw [style=vt] (101.center)
			 to [in=0, out=90, looseness=0.75] (103.center)
			 to [in=90, out=-180, looseness=0.75] (100.center)
			 to [in=180, out=-90, looseness=0.75] (105.center)
			 to [in=-90, out=0, looseness=0.75] cycle;
	\end{pgfonlayer}
\end{tikzpicture}}
  {%
  }}%
}
  \ee
 \item 
monoidal product: disjoint union.
\end{itemize}
\end{example}

\begin{rem}
The possibility to work with a double category to capture features of bordisms 
(specifically, to circumvent the use of collars, which strongly relies on invoking 
the axiom of choice) was also noticed, albeit not pursued, in  Section 4.2 of
\cite{schom}.
 % Morton cite not sensible 
\end{rem}

\begin{rem}
In a sense, the double category $\dbord$ encodes two types of \emph{locality}
properties: the locality via cutting and pasting, which is a feature of 
functorial quantum field theories, and the one via embeddings, which is crucial
in operator algebraic approaches to quantum field theories.
In the operator algebraic setting, besides the inclusions of local algebras coming 
from inclusions of space-time regions, also bimodules over those algebras appear.
It has recently been proposed \cite{komal} that the compatibility conditions 
between these two types of structures are captured by a double functor from a 
double category of space-times to a double category of von Neumann algebras.
\end{rem}

\begin{example} \label{exa:dprof}
For any field \ko\ there is a symmetric 
monoidal double category $\dprof$ with the following defining data:
  \Itemize
  \item 
objects: essentially small \ko-linear categories;
  \item 
vertical 1-morphisms: \ko-linear functors;
  \item 
horizontal 1-morphisms: \ko-linear profunctors\,%
 \footnote{~Recall that in our convention, a profunctor $P\colon A\ptO B$ is a 
 functor $A^{\op}\otimes B\to\vct$,}
with composition given by coends;
  \item
2-morphisms: natural transformations, with functors inserted in the
target profunctor; for instance, the 2-morphism
  \be
  \begin{tikzcd}
  A & B \\ {A'} & {B'}
  \arrow["F"', from=1-1, to=2-1]
  \arrow["G", from=1-2, to=2-2]
  \arrow[""{name=0,anchor=center,inner sep=0},"P", "\shortmid"{marking},from=1-1,to=1-2]
  \arrow[""{name=1,anchor=center,inner sep=0},"Q"',"\shortmid"{marking},from=2-1,to=2-2]
  \arrow["\displaystyle\varphi"{description}, draw=none, from=0, to=1]
  \end{tikzcd}
  \ee
is given by a natural transformation 
  \be
  \varphi\Colon P(-,\sim)\xRightarrow{\phantom{xx}} Q(F(-),G(\sim)).
  \ee
  \item 
monoidal product: the naive tensor product for enriched categories.
\end{itemize}
\end{example}

\begin{defn} \label{def:monoidalverttrafo}
A \emph{monoidal vertical transformation} between monoidal double functors
$F\colon \da\Rarr~\db$ and $G\colon \da\Rarr~\db$ is a vertical transformation 
$\theta\colon F\,{\xRightarrow{\phantom{-}}}\, G$ for which the natural 
transformations $\theta_0\colon F_0\,{\xRightarrow{\phantom{...}}}\, G_0$ and 
$\theta_1\colon F_1\,{\xRightarrow{\phantom{...}}}\, G_1$ 
described in Remark \ref{rem:theta1theta0} are monoidal.
\end{defn}
 % see \Cite{Def.\,A.2.15}{cours}

There is also the notion of a symmetric monoidal double functor. 
This is a morphism of commutative pseudomonoids in $\dbl$. For a complete
definition we refer to Definition A.2.14 of \cite{cours}; 
 % and 2.14 of \cite{wesSh}
a condensed version is:

\begin{defn} \label{def:monoidaldoublefunctor}
A \emph{monoidal double functor} between monoidal double categories $\da$ and $\db$ 
is a double functor $F\colon\da\Rarr~\db$ together with 
a vertical 1-isomorphism $\tu_{\db} \Rarr~ F(\tu_{\da})$ and
with vertical 1-isomorphisms $F(a) \oti F(b) \Rarr~ F(a\oti b)$ for each pair 
of objects $a,b$ in $\da$ that are natural with respect to horizontal 1-morphisms
 % in \cite{cours} this is just called a natural isomorphism
as well as with an invertible 2-cell
$F(P) \oti F(Q) \,{\xRightarrow{\phantom{w}}}\, F(P\oti Q)$
for each pair of horizontal 1-mor\-phisms
$P,Q$ in $\da$, and with isomorphisms involving the respective monoidal units, that 
satisfy various compatibility conditions which extend those of monoidal bicategories.
\end{defn}

\begin{rem} \label{rem:F0,F1-mon}
For a monoidal double functor $F$, both $F_0$ and $F_1$ are monoidal functors.
\end{rem} 

\begin{defn} 
A \emph{braided monoidal double functor} is a monoidal double functor $F$ for which
the functors $F_0$ and $F_1$ are braided monoidal.
A \emph{symmetric monoidal double functor} is a monoidal double functor for which
$F_0$ and $F_1$ are symmetric monoidal.
\end{defn}

%%%%%%%%%%%%%%%%%%%%%%%%%%%%%%%%%%%%%%%%%%%%%%%%%%%%%%%%%%%%%%%%%%%%%%%%

\subsection{Companions, conjoints, and graphical calculus} \label{sec:dbl-1.3}

There is a natural notion of horizontal bicategory of a double category, see
Definition \ref{def:HA,VA} below.
In \cite{wesSh} sufficient conditions on symmetric monoidal double categories,
symmetric mo\-no\-i\-dal double functors and monoidal vertical transformations were
identified under which the (symmetric) monoidal structures can be lifted to the
horizontal bicategories, as well as to the functors and transformations between them. 
Doing so is not trivial, because by taking horizontal bicategories one discards
all the non-globular 2-cells, among which there are the 2-cells mentioned after 
Definition \ref{def:mondblecat} that mediate the symmetric monoidal structures. 

First we need the notions of companion and conjoint of a vertical 1-morphism in a
double category. These can be regarded as the double categorical analogues of 
isomorphisms and adjunctions between 1-morphisms in a 2-category. (More 
specifically, for any 2-category $\ca$, the companion pairs in the double category
$\mathbb{Q}\ca$ of \emph{quintets} are precisely the isomorphisms between
1-morphisms of $\ca$, and the conjoint pairs in $\mathbb{Q}\ca$ are precisely
the adjoint pairs in $\ca$.)

In diagrams for 2-morphisms we denote identity vertical 1-morphisms by an equality 
sign, and weak identity horizontal 1-morphisms by the symbol
``\hspace*{-1.1em}$\begin{tikzcd}[column sep=1.1em]~ 
\ar["\shortmid"{marking},Rightarrow,no head]{r}{} &~ \end{tikzcd}$\hspace*{-1.1em}''.

\begin{defn}
Let $f\colon a\to b$ be a vertical 1-morphism in a double category $\dd$. 
 \\[3pt]
A \emph{companion} of $f$ is a horizontal 1-morphism $\widehat{f}\colon a\ptO b$
together with 2-morphisms 
  \be
  \begin{tikzcd}
  a & b & & a & a
  \\
  b & b & & a & b
  \arrow[""{name=0, anchor=center, inner sep=0}, "{\widehat{f}}~", "\shortmid"{marking}, from=1-1, to=1-2]
  \arrow["f"', from=1-1, to=2-1]
  \arrow[""{name=1, anchor=center, inner sep=0}, "\shortmid"{marking}, Rightarrow, no head, from=2-1, to=2-2]
  \arrow[Rightarrow, no head, from=1-2, to=2-2]
  \arrow[Rightarrow, no head, from=1-4, to=2-4]
  \arrow[""{name=2, anchor=center, inner sep=0}, "{~\widehat{f}}"', "\shortmid"{marking}, from=2-4, to=2-5]
  \arrow[""{name=3, anchor=center, inner sep=0}, "\shortmid"{marking}, Rightarrow, no head, from=1-4, to=1-5]
  \arrow["f", from=1-5, to=2-5]
  \arrow["\displaystyle\varepsilon"{description}, draw=none, from=0, to=1]
  \arrow["\displaystyle\eta"{description}, draw=none, from=3, to=2]
  \arrow["{\displaystyle\text{~and~}}"{description}, draw=none, from=1, to=3]
  \end{tikzcd} 
  \ee
(called \emph{counit} and \emph{unit}) such that 
  \be
  \begin{tikzcd}
  a & a \\ a & b \\ b & b
  \arrow[Rightarrow, no head, from=1-1, to=2-1]
  \arrow["f"', from=2-1, to=3-1]
  \arrow[Rightarrow, no head, from=2-2, to=3-2]
  \arrow[""{name=0, anchor=center, inner sep=0}, "{\widehat{f}}"{description}, "\shortmid"{marking}, from=2-1, to=2-2]
  \arrow[""{name=1, anchor=center, inner sep=0}, "\shortmid"{marking}, Rightarrow, no head, from=3-1, to=3-2]
  \arrow[""{name=2, anchor=center, inner sep=0}, "\shortmid"{marking}, Rightarrow, no head, from=1-1, to=1-2]
  \arrow[""{name=3, anchor=center, inner sep=0}, "f", from=1-2, to=2-2]
  \arrow["\displaystyle\eta"{description}, draw=none, from=2, to=0]
  \arrow["\displaystyle\varepsilon"{description}, draw=none, from=0, to=1]
  \end{tikzcd}
  \raisebox{-1.8pt}{ ~~=~~ }
  \begin{tikzcd}[row sep=5.0em]
  a & a \\ b & b
  \arrow["f", from=1-2, to=2-2]
  \arrow[""{name=4, anchor=center, inner sep=0}, "f"', from=1-1, to=2-1]
  \arrow["\shortmid"{marking}, Rightarrow, no head, from=1-1, to=1-2]
  \arrow["\shortmid"{marking}, Rightarrow, no head, from=2-1, to=2-2]
  \end{tikzcd}
  ~~~~~\mbox{and}~~~~~
  \begin{tikzcd}[row sep=2.4em]
  a & a & b & a & b \\ a & b & b & a & b
  \arrow[""{name=5, anchor=center, inner sep=0}, "\shortmid"{marking}, Rightarrow, no head, from=1-1, to=1-2]
  \arrow[""{name=2, anchor=center, inner sep=0}, "f"{description}, from=1-2, to=2-2]
  \arrow[""{name=0, anchor=center, inner sep=0}, Rightarrow, no head, from=1-1, to=2-1]
  \arrow[""{name=1, anchor=center, inner sep=0}, "{~~\widehat{f}}"', "\shortmid"{marking}, from=2-1, to=2-2]
  \arrow[""{name=7, anchor=center, inner sep=0}, "{\widehat{f}~}", "\shortmid"{marking}, from=1-2, to=1-3]
  \arrow[""{name=8, anchor=center, inner sep=0}, Rightarrow, no head, from=1-3, to=2-3] 
  \arrow[""{name=9, anchor=center, inner sep=0}, "\shortmid"{marking}, Rightarrow, no head, from=2-2, to=2-3] 
  \arrow["{~~\widehat{f}}"', "\shortmid"{marking}, from=2-4, to=2-5]
  \arrow["{\widehat{f}~}", "\shortmid"{marking}, from=1-4, to=1-5]
  \arrow[Rightarrow, no head, from=1-5, to=2-5]
  \arrow[""{name=10, anchor=center, inner sep=0}, Rightarrow, no head, from=1-4, to=2-4]
  \arrow["\displaystyle\eta"{description}, draw=none, from=0, to=2]
  \arrow["\displaystyle\varepsilon"{description}, draw=none, from=7, to=9]
  \arrow["\displaystyle ~=~"{description}, draw=none, from=8, to=10]
  \end{tikzcd}
  ~~~~~
  \ee
where the unlabeled 2-cells are units of 1-cells.
 \\[3pt]
A \emph{conjoint} of $f$, denoted by $\check{f}\colon b\ptO a$,
is a companion of $f$ in the horizontal opposite $\dd^{\mathrm{h}\text{-}\op}$
of the double category $\dd$.
 \\[3pt]
A double category in which every vertical 1-morphism has both a companion and a 
conjoint is called a \emph{framed bicategory} \cite{shul4} or, equivalently,
a \emph{proarrow equipment} \cite{wood}. 
\end{defn}

Much like their bicategorical counterparts, a companion or conjoint of a vertical
1-mor\-phism in a double category, if it exists, is unique up to a unique
globular 2-isomorphism.

\begin{example}
The bordism double category $\dbord$ from Example \ref{exa:dbord} is a framed
bicategory: the companion and conjoint of an embedding are given by mapping 
cylinders regarded as bordisms.
\end{example}

\begin{example}
The profunctor double category $\dprof$ is a framed bicategory  -- it is in fact a 
prototypical one. The companion of a linear functor $F\colon A\Rarr~ B$ in $\dprof$ 
is the profunctor $F_{*}\Coloneqq \Hom_B(F{\sim},\backsim)\colon A \ptO B$, while the
conjoint of $F$ is the profunctor $F^{*}\,{\coloneqq}\,
\Hom_B(\sim,F{\backsim})\colon 
        $\linebreak[0]$
	B \ptO A$.
\end{example}

Double categories admit a graphical string calculus that works well with companions 
and conjoints. The following is a brief summary of this calculus; for the details 
see \cite{myerD}.\,%
 \footnote{~Beware, however, that our convention for the direction of 1-cells differs 
 from the one in \cite{myerD}: instead of transposing the Poincar\'e dual of a
 pasting diagram, we allow a horizontal 1-morphism to be drawn vertically, keeping
 in mind that it separates the 2-dimensional regions horizontally.}
Consider a 2-morphism
  \be
  \begin{tikzcd}
  a & b \\ c & d
  \arrow[""{name=0, anchor=center, inner sep=0}, "\,P", "\shortmid"{marking}, from=1-1, to=1-2]
  \arrow[""{name=1, anchor=center, inner sep=0}, "~\,Q"', "\shortmid"{marking}, from=2-1, to=2-2]
  \arrow["f"', from=1-1, to=2-1]
  \arrow["g", from=1-2, to=2-2]
  \arrow["\displaystyle\alpha"{description}, draw=none, from=0, to=1]
  \end{tikzcd} 
  \label{eq:pasti}
  \ee
in a double category $\dd$. The string diagram presentation
of $\alpha$ is the Poincar\'e dual of its pasting diagram \eqref{eq:pasti}: 
  \be
  \scalebox{1.2}{%
{\tikzstyle{every picture}=[tikzfig]
  {\begin{tikzpicture}
	\begin{pgfonlayer}{nodelayer}
		\node [style=none] (28) at (-1.5, 1.5) {};
		\node [style=none] (29) at (-1.5, -1.5) {};
		\node [style=none] (35) at (1.5, 1.5) {};
		\node [style=none] (36) at (1.5, -1.5) {};
		\node [style=nc] (37) at (0, 0) {};
		\node [style=none] (38) at (0, 1.5) {};
		\node [style=none] (39) at (-1.5, 0) {};
		\node [style=none] (40) at (1.5, 0) {};
		\node [style=none] (41) at (0, -1.5) {};
		\node [style=none] (42) at (-1.5, 1.5) {};
		\node [style=none] (43) at (0, 0) {};
		\node [style=none] (44) at (0, 1.5) {};
		\node [style=none] (45) at (-1.5, 0) {};
		\node [style=none] (46) at (0, 1.5) {};
		\node [style=none] (47) at (1.5, 0) {};
		\node [style=none] (48) at (1.5, 1.5) {};
		\node [style=none] (49) at (0, 0) {};
		\node [style=none] (50) at (0, 0) {};
		\node [style=none] (51) at (1.5, -1.5) {};
		\node [style=none] (52) at (1.5, 0) {};
		\node [style=none] (53) at (0, -1.5) {};
		\node [style=none] (54) at (-1.5, 0) {};
		\node [style=none] (55) at (0, -1.5) {};
		\node [style=none] (56) at (0, 0) {};
		\node [style=none] (57) at (-1.5, -1.5) {};
		\node [style=none] (58) at (0, 0) {$\alpha$};
		\node [style=none] (59) at (-2, 0) {$f$};
		\node [style=none] (60) at (0, 2) {$P$};
		\node [style=none] (61) at (2, 0) {$g$};
		\node [style=none] (62) at (0, -2) {$Q$};
	\end{pgfonlayer}
	\begin{pgfonlayer}{edgelayer}
		\draw [style=fi-pi] (44.center)
			 to (43.center)
			 to (45.center)
			 to (42.center)
			 to cycle;
		\draw [style=fi-bl] (48.center)
			 to (47.center)
			 to (49.center)
			 to (46.center)
			 to cycle;
		\draw [style=fi-pu] (52.center)
			 to (51.center)
			 to [in=360, out=180] (53.center)
			 to (50.center)
			 to cycle;
		\draw [style=fi-gr] (56.center)
			 to (55.center)
			 to (57.center)
			 to (54.center)
			 to cycle;
		\draw [style=vt-bl-di] (38.center) to (37);
		\draw [style=vt-bl-di] (37) to (41.center);
		\draw [style=vt-bl] (37) to (39.center);
		\draw [style=vt-bl] (40.center) to (37);
	\end{pgfonlayer}
\end{tikzpicture}}
  {%
  }}%
}
  \label{eq:nonpasti}
  \ee
Vertical and horizontal compositions are performed by concatenating the string 
diagrams vertically and horizontally, respectively. We omit the unit 1- or
2-morphisms in the string diagrams, analogously as we do with bicategories. We
depict the counit $\varepsilon$ and unit $\eta$ of a companion pair $(f,\widehat{f})$ by
  \be
{\tikzstyle{every picture}=[tikzfig]
  {\begin{tikzpicture}
	\begin{pgfonlayer}{nodelayer}
		\node [style=none] (63) at (-1.5, 1.5) {};
		\node [style=none] (65) at (-1.5, 0) {};
		\node [style=none] (66) at (0, 1.5) {};
		\node [style=none] (67) at (0, 0.5) {};
		\node [style=none] (68) at (-0.5, 0) {};
		\node [style=none] (69) at (-1.5, -1.5) {};
		\node [style=none] (70) at (1.5, 1.5) {};
		\node [style=none] (71) at (1.5, -1.5) {};
		\node [style=none] (73) at (-1.5, 0) {};
		\node [style=none] (74) at (1.5, 0) {};
		\node [style=none] (75) at (0, 1.5) {};
		\node [style=none] (76) at (0, 0.5) {};
		\node [style=none] (77) at (-0.5, 0) {};
		\node [style=none] (28) at (-1.5, 1.5) {};
		\node [style=none] (29) at (-1.5, -1.5) {};
		\node [style=none] (35) at (1.5, 1.5) {};
		\node [style=none] (36) at (1.5, -1.5) {};
		\node [style=none] (38) at (0, 1.5) {};
		\node [style=none] (39) at (-1.5, 0) {};
		\node [style=none] (40) at (1.5, 0) {};
		\node [style=none] (46) at (0, 1.5) {};
		\node [style=none] (59) at (-2, 0) {$f$};
		\node [style=none] (60) at (0, 2) {$\widehat{f}$};
		\node [style=none] (61) at (0, 0.5) {};
		\node [style=none] (62) at (-0.5, 0) {};
	\end{pgfonlayer}
	\begin{pgfonlayer}{edgelayer}
		\draw [style=fi-pi] (67.center)
			 to (66.center)
			 to (63.center)
			 to (65.center)
			 to (68.center)
			 to [bend right=45, looseness=1.25] cycle;
		\draw [style=fi-bl] (77.center)
			 to (73.center)
			 to (69.center)
			 to (71.center)
			 to (74.center)
			 to (70.center)
			 to (75.center)
			 to (76.center)
			 to [bend left=45, looseness=1.25] cycle;
		\draw [style=vt-bl] (46.center) to (61.center);
		\draw [style=vt-bl-di] (62.center) to (39.center);
		\draw [style=vt-bl, bend left=45, looseness=1.25] (61.center) to (62.center);
	\end{pgfonlayer}
\end{tikzpicture}}
  {%
  }}%
\qquad\text{and}\qquad%
{\tikzstyle{every picture}=[tikzfig]
  {\begin{tikzpicture}
	\begin{pgfonlayer}{nodelayer}
		\node [style=none] (63) at (1.5, -1.5) {};
		\node [style=none] (65) at (1.5, 0) {};
		\node [style=none] (66) at (0, -1.5) {};
		\node [style=none] (67) at (0, -0.5) {};
		\node [style=none] (68) at (0.5, 0) {};
		\node [style=none] (69) at (1.5, 1.5) {};
		\node [style=none] (70) at (-1.5, -1.5) {};
		\node [style=none] (71) at (-1.5, 1.5) {};
		\node [style=none] (73) at (1.5, 0) {};
		\node [style=none] (74) at (-1.5, 0) {};
		\node [style=none] (75) at (0, -1.5) {};
		\node [style=none] (76) at (0, -0.5) {};
		\node [style=none] (77) at (0.5, 0) {};
		\node [style=none] (28) at (1.5, -1.5) {};
		\node [style=none] (29) at (1.5, 1.5) {};
		\node [style=none] (35) at (-1.5, -1.5) {};
		\node [style=none] (36) at (-1.5, 1.5) {};
		\node [style=none] (38) at (0, -1.5) {};
		\node [style=none] (39) at (1.5, 0) {};
		\node [style=none] (40) at (-1.5, 0) {};
		\node [style=none] (46) at (0, -1.5) {};
		\node [style=none] (59) at (2, 0) {$f$};
		\node [style=none] (60) at (0, -2) {$\widehat{f}$};
		\node [style=none] (61) at (0, -0.5) {};
		\node [style=none] (62) at (0.5, 0) {};
	\end{pgfonlayer}
	\begin{pgfonlayer}{edgelayer}
		\draw [style=fi-bl] (67.center)
			 to (66.center)
			 to (63.center)
			 to (65.center)
			 to (68.center)
			 to [bend right=45, looseness=1.25] cycle;
		\draw [style=fi-pi] (77.center)
			 to (73.center)
			 to (69.center)
			 to (71.center)
			 to (74.center)
			 to (70.center)
			 to (75.center)
			 to (76.center)
			 to [bend left=45, looseness=1.25] cycle;
		\draw [style=vt-bl-di] (61.center) to (46.center);
		\draw [style=vt-bl] (62.center) to (39.center);
		\draw [style=vt-bl, bend left=45, looseness=1.25] (61.center) to (62.center);
	\end{pgfonlayer}
\end{tikzpicture}}
  {%
  }}%

  \label{eq:pic:companion1}
  \ee
Then the \emph{yanking equations} they satisfy can be expressed as 
  \be
{\tikzstyle{every picture}=[tikzfig]
  {\begin{tikzpicture}
	\begin{pgfonlayer}{nodelayer}
		\node [style=none] (63) at (1.5, 0) {};
		\node [style=none] (65) at (1.5, 0.75) {};
		\node [style=none] (66) at (0, 0) {};
		\node [style=none] (67) at (0, 0.25) {};
		\node [style=none] (68) at (0.5, 0.75) {};
		\node [style=none] (69) at (1.5, 1.5) {};
		\node [style=none] (70) at (-1.5, 0) {};
		\node [style=none] (71) at (-1.5, 1.5) {};
		\node [style=none] (73) at (1.5, 0.75) {};
		\node [style=none] (74) at (-1.5, 0.75) {};
		\node [style=none] (75) at (0, 0) {};
		\node [style=none] (76) at (0, 0.25) {};
		\node [style=none] (77) at (0.5, 0.75) {};
		\node [style=none] (28) at (1.5, 0) {};
		\node [style=none] (29) at (1.5, 1.5) {};
		\node [style=none] (35) at (-1.5, 0) {};
		\node [style=none] (36) at (-1.5, 1.5) {};
		\node [style=none] (38) at (0, 0) {};
		\node [style=none] (39) at (1.5, 0.75) {};
		\node [style=none] (40) at (-1.5, 0.75) {};
		\node [style=none] (46) at (0, 0) {};
		\node [style=none] (59) at (-2, -0.75) {$f$};
		\node [style=none] (61) at (0, 0.25) {};
		\node [style=none] (62) at (0.5, 0.75) {};
		\node [style=none] (78) at (-1.5, 0) {};
		\node [style=none] (79) at (-1.5, -0.75) {};
		\node [style=none] (80) at (0, 0) {};
		\node [style=none] (81) at (0, -0.25) {};
		\node [style=none] (82) at (-0.5, -0.75) {};
		\node [style=none] (83) at (-1.5, -1.5) {};
		\node [style=none] (84) at (1.5, 0) {};
		\node [style=none] (85) at (1.5, -1.5) {};
		\node [style=none] (86) at (-1.5, -0.75) {};
		\node [style=none] (87) at (1.5, -0.75) {};
		\node [style=none] (88) at (0, 0) {};
		\node [style=none] (89) at (0, -0.25) {};
		\node [style=none] (90) at (-0.5, -0.75) {};
		\node [style=none] (91) at (-1.5, 0) {};
		\node [style=none] (92) at (-1.5, -1.5) {};
		\node [style=none] (93) at (1.5, 0) {};
		\node [style=none] (94) at (1.5, -1.5) {};
		\node [style=none] (96) at (-1.5, -0.75) {};
		\node [style=none] (97) at (1.5, -0.75) {};
		\node [style=none] (99) at (0, -0.25) {};
		\node [style=none] (100) at (-0.5, -0.75) {};
	\end{pgfonlayer}
	\begin{pgfonlayer}{edgelayer}
		\draw [style=fi-bl] (67.center)
			 to (66.center)
			 to (63.center)
			 to (65.center)
			 to (68.center)
			 to [bend right=45, looseness=1.25] cycle;
		\draw [style=fi-pi] (77.center)
			 to (73.center)
			 to (69.center)
			 to (71.center)
			 to (74.center)
			 to (70.center)
			 to (75.center)
			 to (76.center)
			 to [bend left=45, looseness=1.25] cycle;
		\draw [style=vt-bl] (62.center) to (39.center);
		\draw [style=vt-bl, bend left=45, looseness=1.25] (61.center) to (62.center);
		\draw [style=fi-pi] (81.center)
			 to (80.center)
			 to (78.center)
			 to (79.center)
			 to (82.center)
			 to [bend right=45, looseness=1.25] cycle;
		\draw [style=fi-bl] (90.center)
			 to (86.center)
			 to (83.center)
			 to (85.center)
			 to (87.center)
			 to (84.center)
			 to (88.center)
			 to (89.center)
			 to [bend left=45, looseness=1.25] cycle;
		\draw [style=vt-bl-di] (100.center) to (96.center);
		\draw [style=vt-bl, bend left=45, looseness=1.25] (99.center) to (100.center);
		\draw [style=vt-bl] (61.center) to (99.center);
	\end{pgfonlayer}
\end{tikzpicture}}
  {%
  }}%
\quad=\quad%
{\tikzstyle{every picture}=[tikzfig]
  {\begin{tikzpicture}
	\begin{pgfonlayer}{nodelayer}
		\node [style=none] (28) at (-1.5, 1.5) {};
		\node [style=none] (35) at (1.5, 1.5) {};
		\node [style=none] (60) at (-1.5, 0) {};
		\node [style=none] (61) at (1.5, 0) {};
		\node [style=none] (62) at (-1.5, 0) {};
		\node [style=none] (63) at (1.5, 0) {};
		\node [style=none] (64) at (-1.5, -1.5) {};
		\node [style=none] (65) at (1.5, -1.5) {};
		\node [style=none] (66) at (2, 0) {$f$};
	\end{pgfonlayer}
	\begin{pgfonlayer}{edgelayer}
		\draw [style=fi-pi] (35.center)
			 to (61.center)
			 to (60.center)
			 to (28.center)
			 to cycle;
		\draw [style=fi-bl] (63.center)
			 to (65.center)
			 to (64.center)
			 to (62.center)
			 to cycle;
		\draw [style=vt-bl-di] (63.center) to (62.center);
	\end{pgfonlayer}
\end{tikzpicture}}
  {%
  }}%

  \hspace*{2.2em} \text{and} \hspace*{2.6em}
{\tikzstyle{every picture}=[tikzfig]
  {\begin{tikzpicture}
	\begin{pgfonlayer}{nodelayer}
		\node [style=none] (63) at (0, 1.5) {};
		\node [style=none] (65) at (0.75, 1.5) {};
		\node [style=none] (66) at (0, 0) {};
		\node [style=none] (67) at (0.25, 0) {};
		\node [style=none] (68) at (0.75, 0.5) {};
		\node [style=none] (69) at (1.5, 1.5) {};
		\node [style=none] (70) at (0, -1.5) {};
		\node [style=none] (71) at (1.5, -1.5) {};
		\node [style=none] (73) at (0.75, 1.5) {};
		\node [style=none] (74) at (0.75, -1.5) {};
		\node [style=none] (75) at (0, 0) {};
		\node [style=none] (76) at (0.25, 0) {};
		\node [style=none] (77) at (0.75, 0.5) {};
		\node [style=none] (28) at (0, 1.5) {};
		\node [style=none] (29) at (1.5, 1.5) {};
		\node [style=none] (35) at (0, -1.5) {};
		\node [style=none] (36) at (1.5, -1.5) {};
		\node [style=none] (38) at (0, 0) {};
		\node [style=none] (39) at (0.75, 1.5) {};
		\node [style=none] (40) at (0.75, -1.5) {};
		\node [style=none] (46) at (0, 0) {};
		\node [style=none] (59) at (-0.75, -2) {$\widehat{f}$};
		\node [style=none] (61) at (0.25, 0) {};
		\node [style=none] (62) at (0.75, 0.5) {};
		\node [style=none] (78) at (0, -1.5) {};
		\node [style=none] (79) at (-0.75, -1.5) {};
		\node [style=none] (80) at (0, 0) {};
		\node [style=none] (81) at (-0.25, 0) {};
		\node [style=none] (82) at (-0.75, -0.5) {};
		\node [style=none] (83) at (-1.5, -1.5) {};
		\node [style=none] (84) at (0, 1.5) {};
		\node [style=none] (85) at (-1.5, 1.5) {};
		\node [style=none] (86) at (-0.75, -1.5) {};
		\node [style=none] (87) at (-0.75, 1.5) {};
		\node [style=none] (88) at (0, 0) {};
		\node [style=none] (89) at (-0.25, 0) {};
		\node [style=none] (90) at (-0.75, -0.5) {};
		\node [style=none] (91) at (0, -1.5) {};
		\node [style=none] (92) at (-1.5, -1.5) {};
		\node [style=none] (93) at (0, 1.5) {};
		\node [style=none] (94) at (-1.5, 1.5) {};
		\node [style=none] (96) at (-0.75, -1.5) {};
		\node [style=none] (97) at (-0.75, 1.5) {};
		\node [style=none] (99) at (-0.25, 0) {};
		\node [style=none] (100) at (-0.75, -0.5) {};
	\end{pgfonlayer}
	\begin{pgfonlayer}{edgelayer}
		\draw [style=fi-pi] (67.center)
			 to (66.center)
			 to (63.center)
			 to (65.center)
			 to (68.center)
			 to [bend left=45, looseness=1.25] cycle;
		\draw [style=fi-bl] (77.center)
			 to (73.center)
			 to (69.center)
			 to (71.center)
			 to (74.center)
			 to (70.center)
			 to (75.center)
			 to (76.center)
			 to [bend right=45, looseness=1.25] cycle;
		\draw [style=vt-bl] (62.center) to (39.center);
		\draw [style=vt-bl, bend right=45, looseness=1.25] (61.center) to (62.center);
		\draw [style=fi-bl] (81.center)
			 to (80.center)
			 to (78.center)
			 to (79.center)
			 to (82.center)
			 to [bend left=45, looseness=1.25] cycle;
		\draw [style=fi-pi] (90.center)
			 to (86.center)
			 to (83.center)
			 to (85.center)
			 to (87.center)
			 to (84.center)
			 to (88.center)
			 to (89.center)
			 to [bend right=45, looseness=1.25] cycle;
		\draw [style=vt-bl-di] (100.center) to (96.center);
		\draw [style=vt-bl, bend right=45, looseness=1.25] (99.center) to (100.center);
		\draw [style=vt-bl] (61.center) to (99.center);
	\end{pgfonlayer}
\end{tikzpicture}}
  {%
  }}%
\quad=\quad%
{\tikzstyle{every picture}=[tikzfig]
  {\begin{tikzpicture}
	\begin{pgfonlayer}{nodelayer}
		\node [style=none] (28) at (-1.5, -1.5) {};
		\node [style=none] (35) at (-1.5, 1.5) {};
		\node [style=none] (59) at (0, -2) {$\widehat{f}$};
		\node [style=none] (60) at (0, -1.5) {};
		\node [style=none] (61) at (0, 1.5) {};
		\node [style=none] (62) at (0, -1.5) {};
		\node [style=none] (63) at (0, 1.5) {};
		\node [style=none] (64) at (1.5, -1.5) {};
		\node [style=none] (65) at (1.5, 1.5) {};
	\end{pgfonlayer}
	\begin{pgfonlayer}{edgelayer}
		\draw [style=fi-pi] (35.center)
			 to (61.center)
			 to (60.center)
			 to (28.center)
			 to cycle;
		\draw [style=fi-bl] (63.center)
			 to (65.center)
			 to (64.center)
			 to (62.center)
			 to cycle;
		\draw [style=vt-bl-di] (63.center) to (62.center);
	\end{pgfonlayer}
\end{tikzpicture}}
  {%
  }}%
 \hspace*{0.4em}
  \label{eq:pic:companion2+3}
  \ee
The corresponding diagrams for a conjoint pair are obtained by reflecting those in
\eqref{eq:pic:companion1} and \eqref{eq:pic:companion2+3}
horizontally and reversing the arrows. 

\begin{defn} \label{def:HA,VA}
The \emph{horizontal bicategory} $\ch(\da)$ of a double category $\da$ is the 
bicategory with the same class of objects as $\da$, with 1-morphisms the horizontal 
1-morphisms of $\da$, and with 2-morphisms the globular 2-cells of $\da$.
The \emph{vertical $2$-category} $\cv(\da)$ of $\da$ is the 2-category with the same
objects as $\da$, with 1-morphisms the vertical 1-morphisms of $\da$, and with
2-morphisms the 2-cells of $\da$ with trivial horizontal boundaries.
\end{defn}

\begin{rem}
In \cite{wesSh}, horizontal and vertical 1-morphisms of a double category $\da$
are called \emph{loose} and \emph{tight} 1-cells, respectively, and the horizontal
bicategory $\ch(\da)$ of $\da$ is called its \emph{loose bicategory}.
\end{rem}

Note that $\cv(\da)$ is indeed a $2$-category, rather than a generic bicategory,
because by definition a double category is vertically strict. Examples of horizontal
bicategories of our interest are: $\ch(\dbord)$ is the familiar bicategory $\bbord$ 
of extended bordisms, while $\ch(\dprof)$ is the bicategory $\bprof$. The vertical 
2-category $\cv(\dprof)$ is precisely the 2-category $\ccat_{\ko}$ of linear
categories, linear functors and natural transformations.
Indeed, there is an isomorphism 
  \be
  \cv(\dprof) \iso \ccat_\ko
  \label{eq:cv=ccat}
  \ee
of 2-categories that is identity on both objects and 1-morphisms; it acts on 
2-morphisms via 
  \be
  \begin{aligned}
  & \nat_{\ko}(\Hom_A(\sim,\backsim),\Hom_B(F{\sim},G{\backsim}))
  \,\cong\! \int_{a,b}\! \Hom_\ko(\Hom_A(a,b),\Hom_B(Fa,Gb))
  \\
  & \qquad \cong\! \int_{b}\int_{a}\Hom_\ko(\Hom_A(a,b),\Hom_B(Fa,Gb))
  \,\cong\! \int_{b}\Hom_B(Fb,Gb)=\nat_{\ko}(F,G) \,.
  \end{aligned}
  \ee
A double functor $F\colon \da\Rarr~\db$ gives by restriction a functor
$\ch(\da) \Rarr~ \ch(\db)$
of bicategories, which by abuse of notation we denote with the same symbol $F$.

\begin{lem}
Let $(f,\widehat{f})$ be a companion pair in the double category $\dd$, and let
$f\,{\dashv}\, g$ be an adjoint pair in its vertical 2-category
$\cv(\dd)$. Then $\widehat{f}$ is a conjoint of $g$.
\end{lem}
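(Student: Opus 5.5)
We construct the conjoint structure on $\widehat f$ for $g\colon b\to a$ directly, by pasting the companion data of $(f,\widehat f)$ with the unit and counit of the adjunction $f\dashv g$. Here $\widehat f\colon a\ptO b$, and a conjoint of $g$ must likewise be a horizontal 1-morphism $a\ptO b$. Let $u\colon 1_a\Rightarrow g\circ f$ and $c\colon f\circ g\Rightarrow 1_b$ be the unit and counit in $\cv(\dd)$. These are 2-cells of $\dd$ whose horizontal boundaries are weak identities $U_a$ and $U_b$, and they satisfy the triangle identities under vertical and horizontal pasting.

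The conjoint data of $g$ (the companion data in $\dd^{\mathrm{h}\text{-}\op}$) consist of two cells. The first, $\varepsilon'$, has top boundary $\widehat f$, left boundary $1_a$, right boundary $g$ and bottom boundary $U_a$. The second, $\eta'$, has top boundary $U_b$, left boundary $g$, right boundary $1_b$ and bottom boundary $\widehat f$.

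\begin{itemize}
\item For $\varepsilon'$, we first apply the companion counit $\varepsilon$ (top $\widehat f$, sides $f$ and $1_b$, bottom $U_b$). We then whisker vertically below by the identity cell $U_g$ on the vertical arrow $g$. This produces a cell with sides $g\circ f$ and $g$. Finally we paste on the left side the unit $u$, which turns $1_a$ into $g\circ f$, so that the resulting left side is $1_a$. Concretely, this is the horizontal composite $u\cdot(U_g\circ\varepsilon)$, up to unitors.
\item For $\eta'$, we dually paste the identity cell $U_g$ on top of the companion unit $\eta$ (top $U_a$, sides $1_a$ and $f$, bottom $\widehat f$). This gives a cell with sides $g$ and $f\circ g$. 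We then correct the right side using the counit $c$, i.e.\ we take $(\eta\circ U_g)\cdot c$.
\end{itemize}

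It is cleanest to do all of this in the string calculus described above. There, $\varepsilon$ and $\eta$ are the bends of \eqref{eq:pic:companion1}, and $u,c$ are caps and cups of the vertical strings $f,g$. Each of $\varepsilon'$ and $\eta'$ is then a picture that bends a $g$-string into the $\widehat f$ string through an $f$-turn.

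We then verify the two yanking equations for a conjoint, i.e.\ the mirror images of \eqref{eq:pic:companion2+3}.

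\begin{itemize}
\item \textbf{First equation.} Vertically pasting $\eta'$ above $\varepsilon'$ leaves an internal $\widehat f$ segment flanked by $\eta$ and $\varepsilon$. The companion yanking law $\varepsilon\circ\eta=U_f$ removes it. What remains is the $g$-string with a $c$ and a $u$ cap on the $f$-string. The triangle identity $(g c)\circ(u g)=1_g$ reduces this to $U_g$.
\item \textbf{Second equation.} Horizontally pasting $\varepsilon'$ and $\eta'$ along the shared $g$ uses the other companion yanking law, $\eta\cdot\varepsilon=1_{\widehat f}$. The remaining cells form $u$ followed by $c$ across the $f,g$ strings, and the other triangle identity, $(c f)\circ(f u)=1_f$, collapses them to $1_{\widehat f}$.
\end{itemize}

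The main obstacle is bookkeeping rather than ideas. We must make sure that the pastings are legal, since vertical arrows can only be whiskered via the identity cells $U_g$ and the interchange law. We must also check that the composite diagrams really reduce in the order claimed: the companion yanking has to be applied before the triangle identity, which requires sliding $u$ or $c$ past $\varepsilon$ or $\eta$ by the middle-four interchange law. With strictification, as assumed throughout, unitors can be ignored, and in the string calculus each equation becomes a planar isotopy followed by one application of each rule. We will therefore present the proof pictorially, with the two equations displayed as string diagrams.
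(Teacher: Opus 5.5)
Your cells $\varepsilon'=u\cdot(U_g\circ\varepsilon)$ and $\eta'=(\eta\circ U_g)\cdot c$ are exactly the pastings the paper uses; its $\widetilde\eta,\widetilde\varepsilon$ are your $u,c$. The paper leaves the yanking equations as an exercise, so your approach is the same as the paper's. Your check of the first equation is also correct: first $\varepsilon\circ\eta=U_f$, then $(gc)\circ(ug)=1_g$.

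The reduction you sketch for the second equation, however, cannot be carried out in the order you state. By the interchange law, $\varepsilon'\cdot\eta'=u\cdot(\eta\circ U_g\circ\varepsilon)\cdot c$, so $\varepsilon$ sits \emph{above} $\eta$ in the middle column. The $f$-string entering $\varepsilon$ comes out of $u$, and the $f$-string leaving $\eta$ runs into $c$. Hence $\eta$ and $\varepsilon$ are not joined by any string, and no use of interchange changes this connectivity. So $\eta\cdot\varepsilon$ is not available as a sub-diagram until the $f$--$g$--$f$ zigzag has been removed.

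The correct order is the reverse. Interchange gives $\eta\circ u=\eta\cdot(U_f\circ u)$ and $c\circ\varepsilon=(c\circ U_f)\cdot\varepsilon$. It also gives $u\cdot(\eta\circ U_g\circ\varepsilon)\cdot c=(\eta\circ u)\cdot(c\circ\varepsilon)$, since both sides equal $u\cdot(\eta\circ U_{g\circ f})\cdot(U_{f\circ g}\circ\varepsilon)\cdot c$. Hence
\[
\varepsilon'\cdot\eta'=\eta\cdot\bigl((U_f\circ u)\cdot(c\circ U_f)\bigr)\cdot\varepsilon=\eta\cdot U_f\cdot\varepsilon=\eta\cdot\varepsilon=1_{\widehat f} .
\]
Pictorially, slide $\eta$ leftwards underneath $u$ and $\varepsilon$ rightwards over $c$; this puts the zigzag in standard position. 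The triangle identity $(cf)\circ(fu)=1_f$ then yields $U_f$, not $1_{\widehat f}$ as you wrote. Only afterwards does the companion yanking $\eta\cdot\varepsilon=1_{\widehat f}$ finish the argument. So your remark that companion yanking must precede the triangle identity holds for the first equation but is backwards for the second.
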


\begin{proof}
The counit and unit of the conjoint pair are given by 
  \be
  \begin{tikzcd}
  a & a & b && b & b & b \\
  & b & b & {\text{and}} & a & a \\
  a & a & a && a & b & b
  \arrow[""{name=0, anchor=center, inner sep=0}, "{\widehat{f}~}", "\shortmid"{marking}, from=1-2, to=1-3]
  \arrow[Rightarrow, no head, from=1-3, to=2-3]
  \arrow["f"', from=1-2, to=2-2]
  \arrow[""{name=1, anchor=center, inner sep=0}, "\shortmid"{marking},Rightarrow, no head, from=2-2, to=2-3]
  \arrow["g"', from=2-2, to=3-2]
  \arrow["\shortmid"{marking}, Rightarrow, no head, from=3-2, to=3-3]
  \arrow["g", from=2-3, to=3-3]
  \arrow[""{name=2, anchor=center, inner sep=0}, "\shortmid"{marking},Rightarrow, no head, from=1-1, to=1-2]
  \arrow[Rightarrow, no head, from=1-1, to=3-1]
  \arrow[""{name=3, anchor=center, inner sep=0}, "\shortmid"{marking},Rightarrow, no head, from=3-1, to=3-2]
  \arrow["g"', from=1-5, to=2-5]
  \arrow[""{name=4, anchor=center, inner sep=0}, "\shortmid"{marking},Rightarrow, no head, from=2-5, to=2-6]
  \arrow["\shortmid"{marking}, Rightarrow, no head, from=1-5, to=1-6]
  \arrow["g", from=1-6, to=2-6]
  \arrow[Rightarrow, no head, from=2-5, to=3-5]
  \arrow[""{name=5, anchor=center, inner sep=0}, "{~\widehat{f}}"', "\shortmid"{marking}, from=3-5, to=3-6]
  \arrow["f", from=2-6, to=3-6]
  \arrow[""{name=6, anchor=center, inner sep=0}, "\shortmid"{marking},Rightarrow, no head, from=1-6, to=1-7]
  \arrow[Rightarrow, no head, from=1-7, to=3-7]
  \arrow[""{name=7, anchor=center, inner sep=0}, "\shortmid"{marking},Rightarrow, no head, from=3-6, to=3-7]
  \arrow["\displaystyle\varepsilon"{description}, draw=none, from=0, to=1]
  \arrow["\displaystyle\eta"{description}, draw=none, from=4, to=5]
  \arrow["\displaystyle{\widetilde{\eta}}"{description}, draw=none, from=2, to=3]
  \arrow["\displaystyle{\widetilde{\varepsilon}}"{description}, draw=none, from=6, to=7]
  \end{tikzcd}
  \label{eq:conjoint(co)unit}
  \ee
where $\varepsilon$ and $\eta$ are the counit and unit of $(f,\widehat{f})$ and
$\widetilde\varepsilon$ and $\widetilde\eta$ the counit and unit for the adjunction
$f\,{\dashv}\, g$ in $\cv(\dd)$, and where again the unlabeled 2-morphisms are unit 
2-morphisms.  The corresponding string diagrams look as follows:
  \be
  \scalebox{1.1}{%
{\tikzstyle{every picture}=[tikzfig]
  {\begin{tikzpicture}
	\begin{pgfonlayer}{nodelayer}
		\node [style=none] (86) at (1.5, 1.5) {};
		\node [style=none] (88) at (0, 0.5) {};
		\node [style=none] (89) at (0.5, 1) {};
		\node [style=none] (90) at (0.5, 1.5) {};
		\node [style=none] (91) at (-0.5, 0.5) {};
		\node [style=none] (92) at (-0.5, 0) {};
		\node [style=none] (93) at (1.5, 0) {};
		\node [style=none] (94) at (-1, 0.25) {};
		\node [style=none] (95) at (1.5, -1.5) {};
		\node [style=none] (97) at (-1.5, -1.5) {};
		\node [style=none] (98) at (-1.5, 1.5) {};
		\node [style=none] (99) at (0, 0.5) {};
		\node [style=none] (100) at (0.5, 1) {};
		\node [style=none] (101) at (0.5, 1.5) {};
		\node [style=none] (102) at (-0.5, 0.5) {};
		\node [style=none] (103) at (-0.5, 0) {};
		\node [style=none] (104) at (1.5, 0) {};
		\node [style=none] (105) at (-1, 0.25) {};
		\node [style=none] (78) at (0, 0.5) {};
		\node [style=none] (79) at (0.5, 1) {};
		\node [style=none] (80) at (0.5, 1.5) {};
		\node [style=none] (81) at (-0.5, 0.5) {};
		\node [style=none] (82) at (-0.5, 0) {};
		\node [style=none] (83) at (1.5, 0) {};
		\node [style=none] (84) at (-1, 0.25) {};
		\node [style=bdot] (85) at (-1, 0.25) {};
		\node [style=none] (106) at (0.5, 2) {$\widehat{f}$};
		\node [style=none] (107) at (2, 0) {$g$};
	\end{pgfonlayer}
	\begin{pgfonlayer}{edgelayer}
		\draw [style=fi-bl] (93.center)
			 to (86.center)
			 to (90.center)
			 to (89.center)
			 to [bend left=45, looseness=1.25] (88.center)
			 to (91.center)
			 to [in=90, out=-180, looseness=1.25] (94.center)
			 to [in=-180, out=-90, looseness=1.25] (92.center)
			 to cycle;
		\draw [style=fi-pi] (95.center)
			 to (104.center)
			 to (103.center)
			 to [in=-90, out=-180, looseness=1.25] (105.center)
			 to [in=-180, out=90, looseness=1.25] (102.center)
			 to (99.center)
			 to [bend right=45, looseness=1.25] (100.center)
			 to (101.center)
			 to (98.center)
			 to (97.center)
			 to cycle;
		\draw [style=vt-bl, bend right=45, looseness=1.25] (78.center) to (79.center);
		\draw [style=vt-bl] (80.center) to (79.center);
		\draw [style=vt-bl-di] (82.center) to (83.center);
		\draw [style=vt-bl] (81.center) to (78.center);
		\draw [style=vt-bl, in=90, out=-180, looseness=1.25] (81.center) to (84.center);
		\draw [style=vt-bl, in=-90, out=-180, looseness=1.25] (82.center) to (84.center);
	\end{pgfonlayer}
\end{tikzpicture}}
  {%
  }}%
}\qquad\text{and}\qquad\scalebox{1.1}{%
{\tikzstyle{every picture}=[tikzfig]
  {\begin{tikzpicture}
	\begin{pgfonlayer}{nodelayer}
		\node [style=none] (86) at (-1.5, -1.5) {};
		\node [style=none] (88) at (0, -0.5) {};
		\node [style=none] (89) at (-0.5, -1) {};
		\node [style=none] (90) at (-0.5, -1.5) {};
		\node [style=none] (91) at (0.5, -0.5) {};
		\node [style=none] (92) at (0.5, 0) {};
		\node [style=none] (93) at (-1.5, 0) {};
		\node [style=none] (94) at (1, -0.25) {};
		\node [style=none] (95) at (-1.5, 1.5) {};
		\node [style=none] (97) at (1.5, 1.5) {};
		\node [style=none] (98) at (1.5, -1.5) {};
		\node [style=none] (99) at (0, -0.5) {};
		\node [style=none] (100) at (-0.5, -1) {};
		\node [style=none] (101) at (-0.5, -1.5) {};
		\node [style=none] (102) at (0.5, -0.5) {};
		\node [style=none] (103) at (0.5, 0) {};
		\node [style=none] (104) at (-1.5, 0) {};
		\node [style=none] (105) at (1, -0.25) {};
		\node [style=none] (78) at (0, -0.5) {};
		\node [style=none] (79) at (-0.5, -1) {};
		\node [style=none] (80) at (-0.5, -1.5) {};
		\node [style=none] (81) at (0.5, -0.5) {};
		\node [style=none] (82) at (0.5, 0) {};
		\node [style=none] (83) at (-1.5, 0) {};
		\node [style=none] (84) at (1, -0.25) {};
		\node [style=bdot] (85) at (1, -0.25) {};
		\node [style=none] (106) at (-0.5, -2) {$\widehat{f}$};
		\node [style=none] (107) at (-2, 0) {$g$};
	\end{pgfonlayer}
	\begin{pgfonlayer}{edgelayer}
		\draw [style=fi-pi] (93.center)
			 to (86.center)
			 to (90.center)
			 to (89.center)
			 to [bend left=45, looseness=1.25] (88.center)
			 to (91.center)
			 to [in=-90, out=0, looseness=1.25] (94.center)
			 to [in=0, out=90, looseness=1.25] (92.center)
			 to cycle;
		\draw [style=fi-bl] (95.center)
			 to (104.center)
			 to (103.center)
			 to [in=90, out=0, looseness=1.25] (105.center)
			 to [in=0, out=-90, looseness=1.25] (102.center)
			 to (99.center)
			 to [bend right=45, looseness=1.25] (100.center)
			 to (101.center)
			 to (98.center)
			 to (97.center)
			 to cycle;
		\draw [style=vt-bl, bend right=45, looseness=1.25] (78.center) to (79.center);
		\draw [style=vt-bl-di] (79.center) to (80.center);
		\draw [style=vt-bl] (82.center) to (83.center);
		\draw [style=vt-bl] (81.center) to (78.center);
		\draw [style=vt-bl, in=-90, out=0, looseness=1.25] (81.center) to (84.center);
		\draw [style=vt-bl, in=90, out=0, looseness=1.25] (82.center) to (84.center);
	\end{pgfonlayer}
\end{tikzpicture}}
  {%
  }}%
}
  \label{eq:pic:conjoint(co)unit}
  \ee
It is an entertaining exercise to use the graphical calculus to check that these
satisfy the yanking equations.
\end{proof}

%%%%%%%%%%%%%%%%%%%%%%%%%%%%%%%%%%%%%%%%%%%%%%%%%%%%%%%%%%%%%%%%%%%%%%%%

\section{A double categorical perspective on string nets} \label{sec:dbl-2}

We are now ready to apply the notions developed in the previous section to
the description of CFT correlators. In particular, we are going to obtain a
conceptual and unified understanding of field functors and universal correlators.

\subsection{String-net modular functors as symmetric monoidal double functors}
\label{sec:SN=sblefun}

Recall that for a pointed pivotal bicategory $(\cb,*_{\cb})$, such as
$(\cfrc,\tu)$ and $(\cbc,*)$ for a pivotal tensor category $\cc$, there is an
open-closed modular functor $\SNb\colon\bbord\Rarr~\bprof$ \eqref{eq:def:SNb}.
$\SNb$ is by definition a symmetric monoidal pseudofunctor. Moreover, the assignment
of cylinder categories $\snb(\ell)$ to one-manifolds (as described after 
\eqref{eq:SNb=(snb)}) is functorial under embeddings. As a consequence, the
pseudofunctor $\SNb$ extends to a double functor between the double categories 
$\dbord$ and $\dprof$. Its action on mapping class group elements
is extended to an action on isotopy classes of embeddings. In fact we have:

\begin{thm} \label{thm:dfunsn}
Let $(\cb,*_{\cb})$ be a pointed pivotal bicategory.
There is a symmetric monoidal double functor 
  \be
  \dSNb\Colon\dbord\rarr~\dprof
  \ee
that extends the open-closed modular functor \eqref{eq:def:SNb}.
\end{thm}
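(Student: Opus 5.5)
The construction of $\dSNb$ proceeds layer by layer, reusing the symmetric monoidal pseudofunctor $\SNb\colon\bbord\to\bprof$ from \eqref{eq:def:SNb} for everything horizontal.

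\emph{Objects and horizontal 1-morphisms.} On objects we set $\dSNb(\ell)=\SNb(\ell)$. On horizontal 1-morphisms we set $\dSNb(\surf)=\SNb(\surf)$. The horizontal compositors and unitors are those of $\SNb$: gluing string nets along a common boundary gives the coend comparison $\SNb(\surf)\cdot\SNb(\surf')\cong\SNb(\surf\cup_{\ell'}\surf')$. For these we invoke \Cite{Thm.\,3.36}{fusY2}, so that horizontal composition is respected up to globular isomorphism, as required after strictification \Cite{Thm.\,7.5}{grPar}.

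\emph{Vertical 1-morphisms.} For an orientation preserving embedding $f\colon\ell\to\ell'$ we first define a functor $\snb(f)\colon\snb(\ell)\to\snb(\ell')$ on cylinder categories.
\begin{itemize}
\item On objects, push a $\cb$-decoration forward along $f$. On $\ell'\setminus f(\ell)$ we use the empty graph, coloured by the object already adjacent; where an arc of $\ell'$ hits $\partial\ell'$ it must carry $\disto$. This is possible because boundary intervals of $\ell$ are labelled by $\disto$, and arcs of $\ell'\setminus f(\ell)$ either meet the image of a boundary point of $\ell$ or form full circles, which may carry any colour.
\item On morphisms, push string nets forward along $f\times\id_{\Di}$.
\end{itemize}
Functoriality and linearity follow because pushforward commutes with stacking and respects the local relations. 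We then extend to $\SNb(f)$ by the universal property of Cauchy completion. Vertical composition is strict, since pushforward along $g\circ f$ equals pushforward along $f$ followed by $g$ on the nose.

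\emph{2-morphisms.} A 2-morphism $\xi$ in $\dbord$, with boundary embeddings $f,g$, is an isotopy class of embeddings $\surf\hookrightarrow\surf'$ restricting to $f$ and $g$. It is sent to the natural transformation $\SNb(\surf;a,b)\Rightarrow\SNb(\surf';\SNb(f)a,\SNb(g)b)$ given by pushing string nets forward along $\xi$. Two points need care here.
\begin{itemize}
\item \emph{Decorating the complement.} We extend the coloured 2-cells across $\surf'\setminus\xi(\surf)$ with no graph, which must be compatible with the colourings on $\partial\surf'$ fixed by $\SNb(f)a$ and $\SNb(g)b$.
\item \emph{Isotopy invariance.} This holds because isotopic pushforwards are related by isotopies of string nets, exactly as for the mapping class group action.
\end{itemize}
Naturality in $(a,b)$ is immediate from functoriality of pushforward. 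The action on diffeomorphisms recovers $\SNb$, which justifies the word ``extends''. Vertical compatibility \eqref{eq:Fver} is strict functoriality of pushforward. Horizontal compatibility \eqref{eq:Fhor} requires that gluing commutes with pushforward along glued embeddings, compatibly with the coend compositors.

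\emph{Symmetric monoidal structure.} The vertical isomorphisms $\SNb(\ell)\otimes\SNb(\ell')\simeq\SNb(\ell\sqcup\ell')$ are the canonical equivalences already used for $\SNb$; they are natural in embeddings. The invertible 2-cells on disjoint unions of bordisms come from \Cite{Thm.\,3.36}{fusY2}. The coherence axioms of Definition \ref{def:monoidaldoublefunctor} reduce to those of $\SNb$ together with the evident strict compatibility of pushforward with disjoint union and the flip. Hence $F_0$ and $F_1$ are symmetric monoidal.

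\emph{Main obstacle.} I expect the hardest step to be the well-definedness of the complement colouring, both for vertical 1-morphisms and for 2-morphisms. When a component of $\surf'\setminus\xi(\surf)$, or of $\ell'\setminus f(\ell)$, touches several differently coloured regions, an empty graph does not suffice. There I would first try restricting to embeddings whose complement regions inherit a unique colour. Failing that, I would insert identity 1-morphisms, and then check that the result is independent of choices up to the string-net relations.
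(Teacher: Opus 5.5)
Your construction is the one the paper intends; its proof is essentially a two-sentence summary of your outline: pushforward along embeddings for vertical 1- and 2-morphisms, and the disjoint-union isomorphisms for the monoidal structure. As written, however, you leave open exactly the step you call the main obstacle, and neither fallback would work. Restricting to embeddings with uniquely coloured complements gives a double functor only on a sub-double category of $\dbord$, which is not the theorem. ``Inserting identity 1-morphisms'' between 2-cells coloured by different objects $X\neq Y$ of $\cb$ makes no sense, since there is no identity 1-morphism $X\to Y$. Any chosen 1-morphism instead would be non-canonical and would break strict functoriality.

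The missing observation is that the pointing removes the obstacle entirely. Colour \emph{all} of $\ell'\setminus f(\ell)$ by $\disto$, including components of $\ell'$ that $f(\ell)$ misses. On uncovered circles this is a choice, but ``any colour'' is not good enough: a circle coloured $X\neq\disto$ creates exactly the two-dimensional conflict you fear. In dimension one no conflict can arise, by your own argument, because every arc of $\ell'\setminus f(\ell)$ abuts only $\disto$-coloured intervals.

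Now let $\xi\colon\surf\hookrightarrow\surf'$ restrict to $f\colon\ell_0\to\ell_0'$ and $g\colon\ell_1\to\ell_1'$. Since $\xi(\ell_0)\subset\ell_0'$ and $\xi(\ell_1)\subset\ell_1'$ lie in $\partial\surf'$, the frontier of $\xi(\surf)$ in $\surf'$ is contained in the image of the free boundary of $\surf$. The 2-cells of any admissible string net along the free boundary carry $\disto$; this is what the pointing is for, and it is the same condition that forces $\disto$ at endpoints of intervals. The rest of the boundary of a component of $\surf'\setminus\xi(\surf)$ lies in the free boundary of $\surf'$ or in $\ell_0'\setminus f(\ell_0)$ and $\ell_1'\setminus g(\ell_1)$, all coloured $\disto$. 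Hence every such component can be coloured $\disto$ with empty graph, with no choices involved. Strict vertical functoriality and compatibility with gluing are then immediate, because complements of composite embeddings are unions of complements and adjacent $\disto$-cells simply merge.

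A separate point, which the paper's short proof also passes over, concerns the monoidal structure. After Cauchy completion, the comparison functors $\SNb(\ell)\otimes\SNb(\ell')\to\SNb(\ell\sqcup\ell')$ and the unit comparison into $\SNb(\emptyset)\simeq\mathrm{vect}_\ko$ are fully faithful but not isomorphisms, because a naive tensor product of Cauchy complete categories is not Cauchy complete. So they are not literally the vertical 1-isomorphisms of Definition \ref{def:monoidaldoublefunctor}. They are isomorphisms on the uncompleted cylinder categories $\snb$, and equivalences only in $\bprof$.
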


\begin{proof}
The extension is performed by exploiting the functoriality of string-net spaces
with respect to oriented embeddings. The symmetric monoidal structure 
is induced by the canonical isomorphisms $\SNb(\varSigma \,{\sqcup}\, \varSigma')
\,{\simeq}\, \SNb(\varSigma) \oti \SNb(\varSigma')$.
\end{proof}

%%%%%%%%%%%%%%%%%%%%%%%%%%%%%%%%%%%%%%%%%%%%%%%%%%%%%%%%%%%%%%%%%%%%%%%%

\subsection{Field maps and universal correlators as monoidal vertical transformations}
\label{sec:field+Ucor=verttrans}

Next we observe that the 2-morphisms \eqref{eq:ucorc=2morph} fit together to form a
vertical transformation. Indeed, the notion of vertical transformation matches
precisely the concepts of field maps and universal correlators that we introduced
in Section \ref{sec:fresh}: 

\begin{lem} \label{lem:ducorc=verttrans}
The following prescriptions specify a vertical transformation
  \be
  \ducorc\Colon \dSNfrc \xRightarrow{\phantom{xx}} \dSNc 
  \label{eq:dcftc}
  \ee
between string-net double functors.
First, the family \eqref{eq:family-theta} of vertical 1-morphisms is defined
to be given by the field maps \eqref{eq:def(ell)} according to
  \be
  \{\df_{\!\ell}^{}\colon\SNfrc(\ell)\Rarr~\SNc(\ell)\}_{\ell\in\dbord}^{} \,.
  \ee
Second, the family \eqref{eq:family-2mora} of 2-morphisms is defined to 
consist of the 2-morphisms 
  \be
  \begin{tikzcd}[column sep=4.4em, row sep=2.2em]
  \SNfrc(\ell) & \SNfrc(\ell') \\ \SN_{\cbc}(\ell) & \SN_{\cbc}(\ell')
  \arrow[""{name=0, anchor=center, inner sep=0}, "\SNfrc(\surf)~", shorten <=-3pt, "\shortmid"{marking}, from=1-1, to=1-2]
  \arrow[""{name=1, anchor=center, inner sep=0}, "~\SN_{\cbc}(\surf)~"', shorten <=-3pt, "\shortmid"{marking}, from=2-1, to=2-2]
  \arrow["\df_{\!\ell}^{}"', from=1-1, to=2-1, xshift=-8pt]
  \arrow["\df_{\!\ell'}^{}", from=1-2, to=2-2, xshift=-10pt]
  \arrow["\displaystyle\ucorc(\surf)"{description}, draw=none, from=0, to=1] 
  \end{tikzcd} 
  \label{eq:ucorc=2morph-0}
  \ee
which are given by the universal correlators \eqref{eq:ucorc(surf)}.
That is, the components of the natural transformation
  \be
  \ucorc(\surf) \Colon\SNfrc(\surf;-,\sim)
  \xRightarrow{\phantom{xx}} \SNc(\surf,\df_{\!\ell}^{}-,\df_{\!\ell'}^{}{\sim})
  \ee
are the universal correlators.
\end{lem}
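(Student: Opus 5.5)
The plan is to verify the three axioms of Definition \ref{def:doubletrans} directly, working at the level of representative string diagrams. By the universal property of Cauchy completion it suffices to work with cylinder categories $\snfrc(\ell)$ and graph representatives. The data are already in place. The vertical 1-morphisms $\df_{\!\ell}$ are the field functors \eqref{eq:def(ell)}. The squares \eqref{eq:ucorc=2morph-0} are legitimate 2-morphisms of $\dprof$ in the sense of Example \ref{exa:dprof}, because $\ucorc(\surf)$ is natural in $(a,a')$. This naturality I would check first: precomposing a string net on $\surf$ with a morphism $f$ of $\SNfrc(\ell)$ (sewing a cylinder) and then applying the prescription (apply $u$, $u$-conjugate vertices, insert full Frobenius graphs) gives the same result as first applying the prescription and then sewing $\df_{\!\ell}(f)$. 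This holds because the prescription is local, and full Frobenius graphs on adjacent 2-cells fuse into a full Frobenius graph on the union by $\Delta$-separability \Cite{Ex.\ 2.29}{fusY2}.

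\emph{Horizontal functoriality} means $\ucorc(\surf\cdot\surf') = \ucorc(\surf)\cdot\ucorc(\surf')$. Horizontal composition in $\dprof$ is a coend over $\SNfrc(\ell')$, and $\SNfrc(\surf\cdot\surf')$ is the coend of $\SNfrc(\surf)$ and $\SNfrc(\surf')$ via gluing. On a class $[\grph\sqcup_{\ell'}\grph']$ the composite 2-cell acts by applying the prescription separately on $\surf$ and $\surf'$. This produces two Frobenius networks meeting along $\ell'$ with the idempotent $\df_{\!\ell'}$ inserted. By the same fusion of full Frobenius graphs, this equals the prescription applied to the glued graph. Compatibility with the associators and the coend identifications is automatic, since all maps are induced by gluing.

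\emph{Horizontal unitality} concerns the identity cylinder $U_\ell=\ell\Times\Di$, where $\ucorc(\ell\Times\Di)$ is just the action of $\df_{\!\ell}$ on morphisms. So $\theta_{U_\ell}=U_{\df_\ell}$ is the definition of $\df_{\!\ell}$ on morphisms.

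\emph{Vertical naturality} \eqref{eq:vertnat} is where the double structure enters. For a 2-morphism $\xi$ of $\dbord$ given by an embedding $\surf\hookrightarrow\surf'$ restricting to $f\colon\ell\to\ell_1$ and $g$, the functors $\dSNb(\xi)$ push string nets forward along the embedding and extend by the boundary data on the complement. I expect this step to be the main obstacle: $\dSNfrc(\xi)$ transports a $\cfrc$-net and then $\ucorc(\surf')$ fills the complement $\surf'\setminus\xi(\surf)$ with Frobenius graphs, while the other path fills only inside $\surf$ and pushes forward. To reconcile the two, I would first arrange (up to isotopy) that the complement carries only 2-cells labelled as dictated by the boundary idempotents. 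Then I would use that the prescription applied to an idempotent-padded region reproduces the pushforward of the image idempotent, again by fusion of Frobenius graphs and invariance of $\SNc(\surf';\df a,\df a')$ under sewing with boundary idempotents. Isotopy invariance makes both sides depend only on the class of $\xi$.
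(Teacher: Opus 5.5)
Your proposal is correct and follows the paper's own, much terser, argument. Both verify the three axioms of Definition~\ref{def:doubletrans}: horizontal functoriality reduces to compatibility of $\ucorc$ with sewing, vertical naturality to compatibility with embeddings, and horizontal unitality to the fact that $\df_{\!\ell}$ acts on morphisms by the universal correlator on the cylinder $\ell\Times\Di$.

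The step you single out as the main obstacle is milder than you expect. Pushing forward along $\xi$ colours the complement of $\xi(\surf)$ by the distinguished object, i.e.\ the trivial Frobenius algebra $\tu$, on which the prescription inserts no Frobenius lines. So the two composites in \eqref{eq:vertnat} agree essentially on the nose.
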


\begin{proof}
That this prescription indeed furnishes a vertical transformation from
$\dSNfrc$ to $\dSNc$ is seen as
follows. The horizontal functoriality translates to the compatibility of $\ucorc$ 
with sewing, while the vertical naturality corresponds to the compatibility with
embeddings, which implies invariance under the actions of the mapping class
groups of \worldsheet s when accounting for the graphical calculus of defects.
Finally, the horizontal unity follows from the fact that the actions of field 
functors on morphisms are given by universal correlators on cylinders. 
\end{proof}

Moreover,
the vertical transformation $\ducorc$ \eqref{eq:dcftc} is monoidal in the sense of
Definition \ref{def:monoidalverttrafo}. To verify this is straightforward, using the
fact that both the field functors and the string-net modular functors are monoidal. 
Accordingly we can upgrade Lemma \ref{lem:ducorc=verttrans} to:

\begin{thm} \label{thm:vtransUCor}
Let $\cc$ be a \ko-linear pivotal tensor category. There is a canonical monoidal 
vertical transformation 
  \be
  \ducorc \Colon \dSNfrc \xRightarrow{\phantom{xx}} \dSNc \,.
  \label{eq:def:ducorc}
  \ee
The components of $\ducorc$ at objects are given by the field functors 
$\{\df_{\!\ell}\}_{\ell\in\dbord}$, and its components at horizontal 1-morphisms 
are given by the universal correlators $\ucorc(\surf)$.
\end{thm}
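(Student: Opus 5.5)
The statement is the combination of Lemma \ref{lem:ducorc=verttrans} with a monoidality check, so I will take the vertical transformation $\ducorc$ constructed there as given and verify only the condition of Definition \ref{def:monoidalverttrafo}. Concretely, I need that the two ordinary natural transformations $(\ducorc)_0\colon(\dSNfrc)_0\Rightarrow(\dSNc)_0$ and $(\ducorc)_1\colon(\dSNfrc)_1\Rightarrow(\dSNc)_1$ of Remark \ref{rem:theta1theta0} are monoidal. Here I regard $(\dSNfrc)_i$ and $(\dSNc)_i$ as monoidal functors via Remark \ref{rem:F0,F1-mon}, with the monoidal structures supplied by Theorem \ref{thm:dfunsn}. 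Canonicity comes from the construction: $\df_{\!\ell}$ and $\ucorc(\surf)$ are defined solely through the forgetful functor $u$ of \eqref{eq:ufr}, $u$-conjugation, and full Frobenius graphs, with no further choices.

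\emph{Step 1: objects.} For $\ell,\ell'$ objects of $\dbord$, I will show that the square
\be
\df_{\!\ell\sqcup\ell'}\circ \mu^{\cfrc}_{\ell,\ell'} \,=\, \mu^{\cc}_{\ell,\ell'}\circ(\df_{\!\ell}\oti\df_{\!\ell'})
\ee
commutes, where $\mu$ denotes the canonical equivalence $\SN(\ell)\oti\SN(\ell')\simeq\SN(\ell\sqcup\ell')$. It suffices to check this on the cylinder categories and then extend via the universal property of Cauchy completion. On a pair of decorations, $\mu$ is simply juxtaposition on the disjoint union. The functor $\mathrm f_{\ell\sqcup\ell'}$ acts componentwise: full Frobenius graphs and $u$-conjugation are inserted separately on each connected component of $\ell\Times\Di$, since these are local prescriptions. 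So the square commutes on the nose. The unit condition is immediate, because $\SN(\emptyset)\simeq\mathrm{Vect}_\ko$ on both sides and $\df_\emptyset=\id$.

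\emph{Step 2: horizontal 1-morphisms.} For bordisms $\surf,\surf'$, I will check that $\ucorc(\surf\sqcup\surf')$, composed with the invertible 2-cell $\SNfrc(\surf)\oti\SNfrc(\surf')\Rightarrow\SNfrc(\surf\sqcup\surf')$, equals $\ucorc(\surf)\oti\ucorc(\surf')$ followed by the corresponding 2-cell for $\SNc$, with the vertical boundaries inserted by Step 1. On representatives this says that a string net on $\surf\sqcup\surf'$ which is a juxtaposition $\grph\sqcup\grph'$ is sent to $\ucorc(\grph)\sqcup\ucorc(\grph')$. This holds because the prescription of Section \ref{sec:fresh} (apply $u$ to edges, $u$-conjugation to vertices, insert full Frobenius graphs in each 2-cell) is again local. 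The only subtlety is that the full Frobenius graph is defined only up to the relations in $\SNc$. Here I would invoke the independence of the choice of full Frobenius graph from \cite{fusY} (which underlies well-definedness of \eqref{eq:ucorc-surf-aa'}): this lets me pick the graph on $\surf\sqcup\surf'$ as a disjoint union of graphs on $\surf$ and $\surf'$.

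\emph{Step 3: main obstacle.} The remaining difficulty is bookkeeping rather than new mathematics. The monoidal structure on the target involves Cauchy completions and coends in $\dprof$, so the identification $\SN(\surf\sqcup\surf';a,a')\cong\SN(\surf;\dots)\oti\SN(\surf';\dots)$ must be checked to hold for idempotent-decorated boundary data, not just for plain decorations. I would handle this by observing that the idempotents underlying $\df_{\!\ell\sqcup\ell'}(a\sqcup b)$ are precisely the disjoint unions from Step 1. Both sides of the required equality then agree after sewing with these idempotents, using horizontal functoriality of $\ducorc$ (Lemma \ref{lem:ducorc=verttrans}). The symmetry is compatible for the same locality reason, though the statement asks only for monoidality.
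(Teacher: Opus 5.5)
Your proposal is correct and follows the paper's route: the theorem is Lemma \ref{lem:ducorc=verttrans} upgraded by a monoidality check in the sense of Definition \ref{def:monoidalverttrafo}, which the paper calls straightforward because the field functors and the string-net double functors are monoidal. Your Steps 1--3 make that check explicit through the locality of the $u$-conjugation and full-Frobenius-graph prescription. One small slip: $\SN(\emptyset)$ is the Cauchy completion of the one-object unit category, i.e.\ finite-dimensional vector spaces, not all of $\vct$.
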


Theorem \ref{thm:vtransUCor} is in fact a consequence of the functoriality
of string-net constructions with respect to rigid separable Frobenius functors
(as defined in \Cite{Def.\,2.28}{fusY2}):

\begin{thm} \label{thm:vtransRSF}
Let $\cb$ and $\cb'$ be pointed pivotal bicategories and $F\colon\cb\Rarr~\cb'$ be a
rigid separable Frobenius functor. There is a canonical vertical transformation 
  \be
  F_{*}\Colon \dSNb \xRightarrow{\phantom{xx}} \mathbb{S}\mathrm{N}_{\cb'} \,.
  \label{eq:dsnf}
  \ee
\end{thm}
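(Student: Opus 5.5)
We build $F_*$ exactly as $\ducorc$ was built from the forgetful functor $u$, now for an arbitrary rigid separable Frobenius functor $F\colon\cb\Rarr~\cb'$. Throughout we use that $F$ is a 2-functor with a lax structure $\mu$ and a colax structure $\Delta$, subject to the special-Frobenius identities of \Cite{Def.\,2.28}{fusY2}; in particular $\mu\circ\Delta=\id$. We also use that $F$ strictly preserves adjoints, hence is compatible with the strict pivotal structures.

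\emph{Components at objects.} For a one-manifold $\ell$ we first define a functor $\mathrm{f}^F_\ell\colon\snb(\ell)\Rarr~\SN_{\cb'}(\ell)$. It sends a $\cb$-decoration to the idempotent on $\ell\Times\Di$ obtained by three steps. First, thicken the decoration to an annulus, respectively a strip. Second, apply $F$ to all labels. Third, fill every 2-cell with a network of $F(1_x)$-lines whose vertices are built from $\mu$ and $\Delta$; this is the analogue of a full Frobenius graph, since $F(1_x)$ is a $\Delta$-separable Frobenius algebra in $\cb'(Fx,Fx)$. Idempotency follows from $\mu\circ\Delta=\id$ and the Frobenius relations, exactly as for \eqref{eq:pic:fl}.

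For boundary intervals we need compatibility with the pointings. We require the distinguished object to be preserved, $F(\disto_\cb)=\disto_{\cb'}$, possibly up to the unit constraint. We then set $F_{*,\ell}$ to be the unique extension of $\mathrm{f}^F_\ell$ to the Cauchy completion $\SNb(\ell)$.

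\emph{Components at bordisms.} For a bordism $\surf\colon\ell\PtO\ell'$ we define a linear map
$\SNb(\surf;a,a')\to\SN_{\cb'}(\surf;F_{*,\ell}a,F_{*,\ell'}a')$. On a representative graph it applies $F$ to edge labels, applies $F$-conjugation (using $\mu$, $\Delta$) to vertex labels, and inserts the $F(1_x)$-networks in each 2-cell.

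The key step, and the main obstacle, is \emph{well-definedness}: the map must respect the local relations of the graphical calculus of $\cb$. Concretely, any two $\cb$-string diagrams on a disk that evaluate equally must have images that also evaluate equally after the networks are inserted. My plan is to reduce this to the statement that $F$-conjugation is compatible with vertical and horizontal composition of 2-morphisms and with the pivotal duality morphisms, with all discrepancies absorbed into the networks. This is the content of \Cite{Ex.\,2.29}{fusY2} for $u$. For general $F$ I would check it generator by generator: for composition I would use the Frobenius relations and $\mu\circ\Delta=\id$; for duals I would use that $F$ strictly preserves adjoints; and I would show that the network is independent of the chosen graph, via the associativity and coassociativity moves of full Frobenius graphs. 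Naturality in $(a,a')$ then follows because morphisms of the cylinder categories are themselves string nets, and concatenation commutes with the construction.

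\emph{Axioms of a vertical transformation.} It remains to verify the three axioms of Definition~\ref{def:doubletrans}, following the proof of Lemma~\ref{lem:ducorc=verttrans}.

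\emph{Horizontal functoriality.} This is compatibility with sewing. The coend defining composition in $\dprof$ is represented by gluing string nets along $\ell'$ and summing over boundary data. Gluing two network-filled surfaces yields a network-filled surface, and the local moves above identify it with the image of the glued net.

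\emph{Vertical naturality.} Embeddings act by pushing string nets forward and filling new regions with the trivial decoration. Since $F$ preserves identities up to the unit constraint, whose networks collapse, pushforward commutes with the construction.

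\emph{Horizontal unitality.} On cylinders the component at $\ell\Times\Di$ is by definition $F_{*,\ell}$ applied to morphisms.

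For $F=u$ this construction reproduces Theorem~\ref{thm:vtransUCor}.
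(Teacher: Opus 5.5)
Your proposal follows the paper's route. The paper obtains the components and their well-definedness by citing \cite[Thm.\,4.10]{fusY2}, which your construction via $F$-conjugation and $F(1_x)$-networks reconstructs, and then checks the three axioms exactly as in Lemma~\ref{lem:ducorc=verttrans}, via sewing, embeddings and cylinders. One point needs tightening, in the vertical-naturality step: the newly added regions are all $\disto$-colored, so the collapse of networks there should be justified by the pointedness of $F$ (that $F$ sends $\disto$ to $\disto$ and $F(1_{\disto})$ is the trivial algebra, as with $u(\tu)=\tu$); for a general object $x$ the unit constraint $1_{Fx}\to F(1_x)$ is only the unit of the algebra $F(1_x)$ and need not be invertible.
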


\begin{proof}
The corresponding natural transformation between bicategorical string-net
constructions is established in Theorem 4.10 of \cite{fusY2}. It extends to a
vertical transformation between double functors in the same way as obtained above
for the special case that $\cb \eq \dSNfrc$ and $\cb' \eq \dSNc$.
\end{proof}

%%%%%%%%%%%%%%%%%%%%%%%%%%%%%%%%%%%%%%%%%%%%%%%%%%%%%%%%%%%%%%%%%%%%%%%%

\subsection{Universal correlators as a monoidal equivalence between modular functors}

Under suitable conditions involving the existence of
companions and conjoints one can associate symmetric monoidal bicategories to framed
bicategories, symmetric monoidal pseudofunctors to symmetric monoidal double functors
between framed bicategories, and a monoidal pseudonatural equivalence to a monoidal
vertical equivalence between the latter.  Our exposition is based on \cite{wesSh};
the guiding idea is that one can make use of companions to transport structure
morphisms on a double category to structure morphisms on its horizontal bicategory.
The additional conditions are needed because the horizontal bicategory only knows 
about globular morphisms and we have to keep information about non-trivial vertical 
1-morphisms.

A vertical transformation $\alpha\colon F\,{\xRightarrow{\phantom{...}}}\, G$ between
(strong) double functors furnishes \Cite{Thm.\,4.6}{wesSh} an oplax natural
transformation $\widehat{\alpha}\colon F\,{\xRightarrow{\phantom{...}}}\, G$ between 
the underlying pseudofunctors (for the horizontal bicategories), provided
that every vertical component $\alpha_{a}\colon Fa\Rarr~ Ga$ has a companion,
which then becomes the 1-cell component $\widehat{\alpha}_{a}\colon Fa\ptO Ga$
at the same object. The 2-cell components 
  \be
  \begin{tikzcd}
  Fa & Fb & Fa & Fa & Fb & Ga \\
  Ga & Gb & Fa & Ga & Gb & Ga
  \arrow["Ff", "\shortmid"{marking}, from=1-1, to=1-2]
  \arrow[""{name=0, anchor=center, inner sep=0}, "{\widehat{\alpha}_b}", "\shortmid"{marking}, from=1-2, to=2-2]
  \arrow["{\widehat{\alpha}_a}"', "\shortmid"{marking}, from=1-1, to=2-1]
  \arrow["Gg~"', "\shortmid"{marking}, from=2-1, to=2-2]
  \arrow["{\widehat{\alpha}_f\!\!\!}"', shift left=1, shorten <=4pt, shorten >=4pt, Rightarrow, from=1-2, to=2-1]
  \arrow[""{name=1, anchor=center, inner sep=0}, Rightarrow, no head, from=1-3, to=2-3]
  \arrow[""{name=2, anchor=center, inner sep=0}, "\shortmid"{marking},Rightarrow, no head, from=1-3, to=1-4]
  \arrow["{\alpha_a}"', from=1-4, to=2-4]
  \arrow[""{name=3, anchor=center, inner sep=0}, "Ff", "\shortmid"{marking}, from=1-4, to=1-5] 	\arrow["{\alpha_b}", from=1-5, to=2-5]
  \arrow[""{name=4, anchor=center, inner sep=0}, "{\widehat{\alpha}_b}", "\shortmid"{marking}, from=1-5, to=1-6]
  \arrow[Rightarrow, no head, from=1-6, to=2-6]
  \arrow[""{name=5, anchor=center, inner sep=0}, "Gf~"', "\shortmid"{marking}, from=2-4, to=2-5]
  \arrow[""{name=6, anchor=center, inner sep=0}, "\shortmid"{marking},Rightarrow, no head, from=2-5, to=2-6]
  \arrow[""{name=7, anchor=center, inner sep=0}, "{\widehat{\alpha}_a~}"', "\shortmid"{marking}, from=2-3, to=2-4]
  \arrow["\displaystyle\eta~"{description}, draw=none, from=2, to=7]
  \arrow["~\displaystyle\varepsilon"{description}, draw=none, from=4, to=6]
  \arrow["\displaystyle ~~{=}~"{description}, draw=none, from=0, to=1]
  \arrow["{\displaystyle\alpha_f}"{description}, draw=none, from=3, to=5]
  \end{tikzcd}
  \label{eq:fold2globular}
  \ee
are obtained from `folding' $\alpha_{f}$ into a globular 2-morphism.

In the present subsection we show that, as a consequence of further results from
\cite{wesSh}, the assignment $\ducorc \,{\xmapsto{\phantom{xx}}}\, \widehat{\ducorc}$
of the bicategorical natural transformation $\widehat{\ducorc}$ to the
vertical transformation $\ducorc$ is compatible with the symmetric monoidal
structures. We show further that the resulting monoidal pseudonatural 
transformation is an equivalence of bicategorical modular functors.

There is a somewhat restricted notion of invertibility for 2-morphisms in a double
category $\dd$: a 2-cell $\alpha$ is (strictly) invertible if it is an isomorphism 
in the category $\dd_1$ of horizontal 1-morphisms and 2-cells. The invertibility of 
the 2-cell components of the universal correlator $\ducorc\colon\dSNfrc\RarR~\dSN$  
is instead to be understood in the sense of following definition, which uses the
observation that the vertical 2-category $\cv(\da)$ of a double category $\da$ is
indeed a (strict) 2-category. Up to transposition, this coincides with the notion of
invertibility introduced in \Cite{Sect.\,2}{grPar} and in \Cite{Sect.\,2}{mosV}).

\begin{defn} \label{def:weakly-inf}
Let $\begin{tikzcd}[column sep=0.8em, row sep=0.6em]
  a & b \\ c & d
  \arrow[""{name=0, anchor=center, inner sep=0}, "P\,", shorten <=-3pt, shorten >=-3pt, "\shortmid"{marking}, from=1-1, to=1-2]
  \arrow[""{name=1, anchor=center, inner sep=0}, "~~Q"', shorten <=-3pt, shorten >=-3pt, "\shortmid"{marking}, from=2-1, to=2-2]
  \arrow["f"', shorten <=-2pt, shorten >=-2pt, from=1-1, to=2-1]
  \arrow["g",  shorten <=-2pt, shorten >=-2pt, from=1-2, to=2-2]
  \arrow["\displaystyle\alpha"{description}, draw=none, from=0, to=1]
  \end{tikzcd}$ 
be a 2-morphism in a double category $\dd$.
 \\
$\alpha$ is said to be \emph{weakly invertible} iff the vertical 1-morphisms 
$f$ and $g$ are both adjoint 
equivalences in the vertical 2-category $\cv(\dd)$ and there exists a 2-morphism
  \be
  \begin{tikzcd}
  c & d \\ a & b
  \arrow["{f^{-1}}"', from=1-1, to=2-1]
  \arrow[""{name=0, anchor=center, inner sep=0}, "Q~", "\shortmid"{marking}, from=1-1, to=1-2]
  \arrow["{g^{-1}}", from=1-2, to=2-2]
  \arrow[""{name=1, anchor=center, inner sep=0}, "~~P"', "\shortmid"{marking}, from=2-1, to=2-2]
  \arrow["{\displaystyle\alpha^{-1}}"{description}, draw=none, from=0, to=1] 
  \end{tikzcd} 
  \ee
where $f^{-1}$ and $g^{-1}$ are any choice of quasi-inverses of $f$ and $g$ 
(rather than inverses on the nose) in $\cv(\dd)$, such that the equality
  \be
  \begin{tikzcd}
  a & a & b & b & a & b \\ & c & d \\ a & a & b & b & a & b
  \arrow["f^{-1}\!",swap,from=2-2, to=3-2]
  \arrow[""{name=0, anchor=center, inner sep=0}, "\shortmid"{marking}, from=2-2, to=2-3]
  \arrow["g^{-1}",from=2-3, to=3-3]
  \arrow[""{name=1, anchor=center, inner sep=0}, "~~P"', "\shortmid"{marking}, from=3-2, to=3-3]
  \arrow["f",swap,from=1-2, to=2-2]
  \arrow["g",from=1-3, to=2-3]
  \arrow[""{name=2, anchor=center, inner sep=0}, "P~", "\shortmid"{marking}, from=1-2, to=1-3]
  \arrow[""{name=3, anchor=center, inner sep=0}, "\shortmid"{marking}, Rightarrow, no head, from=1-1, to=1-2]
  \arrow[Rightarrow, no head, from=1-1, to=3-1]
  \arrow[""{name=4, anchor=center, inner sep=0}, "\shortmid"{marking}, Rightarrow, no head, from=3-1, to=3-2]
  \arrow[""{name=5, anchor=center, inner sep=0}, "\shortmid"{marking}, Rightarrow, no head, from=1-3, to=1-4]
  \arrow[""{name=6, anchor=center, inner sep=0}, Rightarrow, no head, from=1-4, to=3-4]
  \arrow[""{name=7, anchor=center, inner sep=0}, "\shortmid"{marking}, Rightarrow, no head, from=3-3, to=3-4]
  \arrow[""{name=8, anchor=center, inner sep=0}, Rightarrow, no head, from=1-5, to=3-5]
  \arrow["P~", "\shortmid"{marking}, from=1-5, to=1-6]
  \arrow[Rightarrow, no head, from=1-6, to=3-6]
  \arrow["~~P"', "\shortmid"{marking}, from=3-5, to=3-6]
  \arrow["{\displaystyle\varepsilon^{-1}_f}"{description}, draw=none, from=3, to=4]
  \arrow["\displaystyle\varepsilon^{}_g"{description}, draw=none, from=5, to=7]
  \arrow["\displaystyle ~=~"{description}, draw=none, from=6, to=8]
  \arrow["{\displaystyle\alpha^{-1}}"{description}, draw=none, from=0, to=1]
  \arrow["\displaystyle\alpha"{description}, draw=none, from=2, to=0]
  \end{tikzcd} 
  \label{eq:weakinv}
  \ee
holds, where $\varepsilon_f^{-1}$ is any choice of unit for the adjunction 
$f \Dashv f^{-1}$ in the 2-category $\cv(\dd)$ and $\varepsilon_g$ is any choice of
counit for the adjunction $g^{-1}\Dashv g$ in $\cv(\dd)$.
\end{defn}

 \bigskip

The string diagram description of the equality \eqref{eq:weakinv} looks as follows:
    \be
    \scalebox{0.9}{%
{\tikzstyle{every picture}=[tikzfig]
  {\begin{tikzpicture}
	\begin{pgfonlayer}{nodelayer}
		\node [style=none] (309) at (0, 0.75) {};
		\node [style=none] (310) at (0, -0.75) {};
		\node [style=none] (311) at (0, 3) {};
		\node [style=none] (312) at (0, -3) {};
		\node [style=none] (313) at (0.5, -0.75) {};
		\node [style=none] (314) at (1.75, 0) {};
		\node [style=none] (315) at (0.5, 0.75) {};
		\node [style=none] (316) at (3, -3) {};
		\node [style=none] (317) at (3, 3) {};
		\node [style=none] (261) at (0, 0.75) {};
		\node [style=none] (262) at (0, -0.75) {};
		\node [style=none] (263) at (0, 3) {};
		\node [style=none] (264) at (0, -3) {};
		\node [style=none] (265) at (-0.5, -0.75) {};
		\node [style=none] (266) at (-1.75, 0) {};
		\node [style=none] (267) at (-0.5, 0.75) {};
		\node [style=none] (268) at (-3, -3) {};
		\node [style=none] (269) at (-3, 3) {};
		\node [style=none] (299) at (0, 0.75) {};
		\node [style=none] (300) at (0, -0.75) {};
		\node [style=none] (301) at (-0.5, -0.75) {};
		\node [style=none] (302) at (-1.75, 0) {};
		\node [style=none] (303) at (-0.5, 0.75) {};
		\node [style=none] (304) at (0, 0.75) {};
		\node [style=none] (305) at (0, -0.75) {};
		\node [style=none] (306) at (0.5, -0.75) {};
		\node [style=none] (307) at (1.75, 0) {};
		\node [style=none] (308) at (0.5, 0.75) {};
		\node [style=none] (101) at (0, 0.75) {};
		\node [style=none] (102) at (0, -0.75) {};
		\node [style=none] (111) at (0, 3) {};
		\node [style=none] (112) at (0, -3) {};
		\node [style=none] (157) at (0, 3.5) {\scriptsize$P$};
		\node [style=none] (158) at (0, -3.5) {\scriptsize$P$};
		\node [style=none] (174) at (-0.5, -0.75) {};
		\node [style=none] (176) at (-1.75, 0) {};
		\node [style=none] (177) at (-0.5, 0.75) {};
		\node [style=none] (247) at (0.5, -0.75) {};
		\node [style=none] (248) at (1.75, 0) {};
		\node [style=none] (249) at (0.5, 0.75) {};
		\node [style=none] (318) at (-1.45, 0.95) {\scriptsize$f$};
		\node [style=none] (319) at (1.7, 0.8) {\scriptsize$g$};
		\node [style=none] (320) at (-1.5, -1.1) {\scriptsize$f^{-1}$};
		\node [style=none] (321) at (1.95, -0.8) {\scriptsize$g^{-1}$};
		\node [style=dot] (322) at (0, 0.75) {};
		\node [style=bdot] (323) at (0, -0.75) {};
		\node [style=none] (324) at (0.35, 1.06) {\scriptsize$\alpha$};
		\node [style=none] (325) at (0.61, -1.16) {\scriptsize$\alpha^{\!-1}$};
	\end{pgfonlayer}
	\begin{pgfonlayer}{edgelayer}
		\draw [style=fi-bl] (314.center)
			 to [in=0, out=-90] (313.center)
			 to (310.center)
			 to (312.center)
			 to (316.center)
			 to (317.center)
			 to (311.center)
			 to (309.center)
			 to (315.center)
			 to [in=90, out=0] cycle;
		\draw [style=fi-pi] (266.center)
			 to [in=180, out=-90] (265.center)
			 to (262.center)
			 to (264.center)
			 to (268.center)
			 to (269.center)
			 to (263.center)
			 to (261.center)
			 to (267.center)
			 to [in=90, out=180] cycle;
		\draw [style=fi-gr] (302.center)
			 to [in=180, out=-90] (301.center)
			 to (300.center)
			 to (299.center)
			 to (303.center)
			 to [in=90, out=180] cycle;
		\draw [style=fi-pu] (307.center)
			 to [in=0, out=-90] (306.center)
			 to (305.center)
			 to (304.center)
			 to (308.center)
			 to [in=90, out=0] cycle;
		\draw [style=vt-bl] (111.center) to (101.center);
		\draw [style=vt-bl] (101.center) to (102.center);
		\draw [style=vt-bl] (102.center) to (112.center);
		\draw [style=vt-bl] (101.center)
			 to (177.center)
			 to [in=90, out=180] (176.center)
			 to [in=180, out=-90] (174.center)
			 to (102.center);
		\draw [style=vt-bl] (102.center)
			 to (247.center)
			 to [in=-90, out=0] (248.center)
			 to [in=0, out=90] (249.center)
			 to (101.center);
	\end{pgfonlayer}
\end{tikzpicture}}
  {%
  }}%
} \quad = \quad
    \scalebox{0.9}{%
{\tikzstyle{every picture}=[tikzfig]
  {\begin{tikzpicture}
	\begin{pgfonlayer}{nodelayer}
		\node [style=none] (310) at (0, -0.75) {};
		\node [style=none] (311) at (0, 3) {};
		\node [style=none] (312) at (0, -3) {};
		\node [style=none] (316) at (3, -3) {};
		\node [style=none] (317) at (3, 3) {};
		\node [style=none] (263) at (0, 3) {};
		\node [style=none] (264) at (0, -3) {};
		\node [style=none] (268) at (-3, -3) {};
		\node [style=none] (269) at (-3, 3) {};
		\node [style=none] (101) at (0, 0.75) {};
		\node [style=none] (102) at (0, -0.75) {};
		\node [style=none] (111) at (0, 3) {};
		\node [style=none] (157) at (0, 3.5) {\scriptsize$P$};
		\node [style=none] (158) at (0, -3.5) {\scriptsize$P$};
	\end{pgfonlayer}
	\begin{pgfonlayer}{edgelayer}
		\draw [style=fi-bl] (310.center)
			 to (312.center)
			 to (316.center)
			 to (317.center)
			 to (311.center)
			 to cycle;
		\draw [style=fi-pi] (264.center)
			 to (268.center)
			 to (269.center)
			 to (263.center)
			 to cycle;
		\draw [style=vt-bl] (111.center) to (101.center);
		\draw [style=vt-bl] (101.center) to (102.center);
		\draw [style=vt-bl] (102.center) to (112.center);
	\end{pgfonlayer}
\end{tikzpicture}}
  {%
  }}%
} 
    \label{eq:pic:verticalequiv}
    \ee

The notion of weak invertibility allows us to formulate the following statement, 
which is a direct consequence of Theorem \ref{thm:Feq} and Theorem \ref{thm:UcorIso},

\begin{cor} \label{cor:UCor-weaklyinv}
Let $\cc$ be a pivotal \ko-linear tensor category. For every open-closed
bordism $\surf\colon\ell\ptO\ell'$, the natural transformation 
  \be
  \ucorc(\surf)\Colon\SNfrc(\surf;\sim,\backsim)
  \Longrightarrow\SNc(\surf;\df_{\!\ell}^{}{\sim},\df_{\!\ell'}^{}{\backsim})
  \ee
is a weakly invertible 2-morphism 
  \be
  \begin{tikzcd}[column sep=4.4em, row sep=2.2em]
  \SNfrc(\ell) & \SNfrc(\ell') \\ \SN_{\cbc}(\ell) & \SN_{\cbc}(\ell')
  \arrow[""{name=0, anchor=center, inner sep=0}, "\SNfrc(\surf)~", shorten <=-3pt, "\shortmid"{marking}, from=1-1, to=1-2]
  \arrow[""{name=1, anchor=center, inner sep=0}, "~\SN_{\cbc}(\surf)~"', shorten <=-3pt, "\shortmid"{marking}, from=2-1, to=2-2]
  \arrow["\df_{\!\ell}^{}"', from=1-1, to=2-1, xshift=-8pt]
  \arrow["\df_{\!\ell'}^{}", from=1-2, to=2-2, xshift=-10pt]
  \arrow["\displaystyle\ucorc(\surf)"{description}, draw=none, from=0, to=1] 
  \end{tikzcd} 
  \label{eq:ucorc=2morph}
  \ee
in the double category $\dprof$.
\end{cor}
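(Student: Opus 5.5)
We verify the conditions of Definition \ref{def:weakly-inf} for $\alpha=\ucorc(\surf)$, with $f=\df_{\!\ell}$, $g=\df_{\!\ell'}$. The first condition comes directly from Theorem \ref{thm:Feq}. For every compact oriented one-manifold $\ell$ the field functor $\df_{\!\ell}$ is part of an adjoint equivalence $\df_{\!\ell}\Dashv\imath_\ell$. The unit of this equivalence is the identity, by \eqref{eq:F.imath=1}, and the counit is the isomorphism $\varepsilon_\ell$ of \eqref{eq:counit}. Under the isomorphism $\cv(\dprof)\cong\ccat_\ko$ of \eqref{eq:cv=ccat}, this is an adjoint equivalence in the vertical 2-category of $\dprof$. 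So we take $f^{-1}=\imath_\ell$ and $g^{-1}=\imath_{\ell'}$. The unit of $f\Dashv f^{-1}$ and the counit of $g^{-1}\Dashv g$ required in \eqref{eq:weakinv} are then built from $\varepsilon_\ell^{\pm1}$ and identities; after using the triangle identities we may take the relevant data to be $\varepsilon_\ell^{-1}$ and $\varepsilon_{\ell'}$.

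Next we construct the candidate inverse 2-morphism
$\alpha^{-1}\colon \SNc(\surf;-,\sim)\Rightarrow\SNfrc(\surf;\imath_\ell-,\imath_{\ell'}\sim)$. Its component at $(b,b')\in\SNc(\ell)^{\op}\otimes\SNc(\ell')$ is the map $\imath_\surf$, which reinterprets a $\cc$-colored string net as a $\cfrc$-colored one with all 2-cells labeled by $\tu$. This is well defined because $\df_{\!\ell}\imath_\ell=\id$, so the boundary data match. Naturality in $(b,b')$ holds because $\imath_\surf$ commutes with gluing of cylinders: it is induced by the inclusion $\cbc\hookrightarrow\cfrc$ of pointed pivotal bicategories, and this inclusion is compatible with sewing.

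The remaining step is the equation \eqref{eq:weakinv}. Unwinding the pasting in $\dprof$, the left-hand side at $(a,a')\in\SNfrc(\ell)^{\op}\otimes\SNfrc(\ell')$ is the composite
\[
\grph\longmapsto(\varepsilon_\ell)_a^{-1}\cdot\imath_\surf(\ucorc(\surf)_{a,a'}(\grph))\cdot(\varepsilon_{\ell'})_{a'} .
\]
Here the action of the functors inserted into the target profunctor is precomposition and postcomposition with cylinder morphisms. This composite is exactly $\Phi_{a,a'}\circ\ucorc(\surf)_{a,a'}$ from the proof of Theorem \ref{thm:UcorIso}, and that proof shows it is the identity. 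So \eqref{eq:weakinv} holds componentwise.

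The main obstacle is bookkeeping rather than content. We must check that the pasting in \eqref{eq:weakinv}, with $\varepsilon_f^{-1}$ and $\varepsilon_g$ read through \eqref{eq:cv=ccat} as natural transformations, really produces $\Phi\circ\ucorc$. In particular we must get the variance right: the incoming side uses $\varepsilon_\ell^{-1}$ and the outgoing side uses $\varepsilon_{\ell'}$. We must also check that the choice of unit and counit data does not matter, which holds because adjoint equivalence data are unique up to unique compatible isomorphism. As a consistency check, Theorem \ref{thm:UcorIso} also gives $\ucorc\circ\Phi=\id$, so $\alpha^{-1}$ is a genuine two-sided weak inverse.
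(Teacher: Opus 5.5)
Your proposal is correct and is the same argument as the paper's, which simply calls this corollary a direct consequence of Theorems~\ref{thm:Feq} and~\ref{thm:UcorIso}; you unpack that claim by taking $f^{-1}=\imath_\ell$, $g^{-1}=\imath_{\ell'}$ and $\alpha^{-1}$ with components $\imath_\surf$, and by observing that the pasting \eqref{eq:weakinv} is exactly $\Phi_{a,a'}\circ\ucorc(\surf)_{a,a'}$, which is the identity by \eqref{eq:PhiLI}. (One minor slip: an adjunction whose unit is the identity and whose counit is $\varepsilon_\ell\colon\imath_\ell\df_{\!\ell}\Rightarrow\id$ is $\imath_\ell\Dashv\df_{\!\ell}$, not $\df_{\!\ell}\Dashv\imath_\ell$; this is harmless for an adjoint equivalence, and your final choice of $\varepsilon_\ell^{-1}$ and $\varepsilon_{\ell'}$ in \eqref{eq:weakinv} is the correct one.)
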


Now recall the equivalence \eqref{eq:cv=ccat} of 2-categories 
between $\cv(\dprof)$ and $\ccat_\ko$. 
This isomorphism guarantees that the folding from a double category with companions 
to its horizontal bicategory is compatible with natural tranformations.
 
Given two double categories $\da$ and $\db$, we denote by $\dbl(\da,\db)$ the 
2-category of double {(pseu\-do-)}functors, vertical transformations
and modifications.\,%
 \footnote{~We refer to \S1.6 of \cite{grPar} for the general notion of modifications,
 which are 2-cells in a \emph{double} category of double functors that has
 $\dbl(\da,\db)$ as its vertical 2-category.}
The following is evident:

\begin{lem} \label{lem:equiv-cs-weaklyinv}
Let $F,G\colon\da \,{\rightrightarrows}\, \db$ be double functors. A vertical
transformation $\alpha\colon F \,{\Rightarrow}\, G$ is an equivalence in 
$\dbl(\da,\db)$ if and only if each of its component 2-cells is weakly invertible
in the sense of Definition \ref{def:weakly-inf}.
\end{lem}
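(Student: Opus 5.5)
The plan is to unwind what an equivalence in the 2-category $\dbl(\da,\db)$ is, and to compare it componentwise with Definition \ref{def:weakly-inf}. An equivalence consists of a vertical transformation $\beta\colon G\Rightarrow F$ together with invertible modifications $\beta\circ\alpha\cong 1_F$ and $\alpha\circ\beta\cong 1_G$. A modification between vertical transformations with the same source and target double functors has components that are 2-cells in $\cv(\db)$, one for each object $a$, and these satisfy a compatibility with the 2-cell components at each horizontal 1-morphism $P$. The proof therefore splits into two directions.

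\emph{``Only if''.} Assume $(\alpha,\beta)$ is an equivalence. Evaluating the invertible modifications at an object $a$ gives invertible 2-cells $\beta_a\circ\alpha_a\cong 1_{Fa}$ and $\alpha_a\circ\beta_a\cong 1_{Ga}$ in $\cv(\db)$. Hence each $\alpha_a$ is an equivalence in $\cv(\db)$, and it can be promoted to an adjoint equivalence by the standard adjustment of one of the 2-cells. For a horizontal 1-morphism $P\colon a\ptO b$, take $\alpha_P^{-1}\coloneqq\beta_P$ and $f^{-1}\coloneqq\beta_a$, $g^{-1}\coloneqq\beta_b$. The vertical composite $\beta_P\circ\alpha_P$ is the $P$-component of $\beta\circ\alpha$. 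The modification axiom for $\beta\circ\alpha\cong 1_F$, evaluated at $P$, then says exactly that pasting this composite with the modification components at $a$ and $b$ on the left and right gives the identity 2-cell of $FP$. That is precisely equation \eqref{eq:weakinv}, once the modification components are identified with the chosen unit $\varepsilon_f^{-1}$ and counit $\varepsilon_g$. The freedom in Definition \ref{def:weakly-inf} to choose any unit and counit absorbs the adjustment made when passing to an adjoint equivalence.

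\emph{``If''.} Assume every component 2-cell is weakly invertible. Choose, for each object $a$, an adjoint equivalence $\alpha_a\dashv\alpha_a^{-1}$ in $\cv(\db)$. For each $P\colon a\ptO b$, let $\beta_P\coloneqq\alpha_P^{-1}$ be the inverse 2-cell built with respect to these fixed choices. This is legitimate because weak inverses relative to different quasi-inverses correspond to each other by pasting with the comparison 2-cells. One then checks that $\beta_a\coloneqq\alpha_a^{-1}$ together with the $\beta_P$ form a vertical transformation $G\Rightarrow F$. For horizontal functoriality and unitality, paste the corresponding identities for $\alpha$ with $\alpha^{-1}$ on either side, and use \eqref{eq:weakinv} and its mirror image to cancel. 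Vertical naturality is handled the same way, with the 2-cells $F\gamma$ and $G\gamma$ in place of horizontal composition. The unit and counit of the adjoint equivalences then assemble into the invertible modifications. Their modification axioms are again \eqref{eq:weakinv} and its counterpart for $\alpha\circ\alpha^{-1}$.

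\emph{Main obstacle.} The only nontrivial point is that Definition \ref{def:weakly-inf} states only the one-sided equation \eqref{eq:weakinv}, whereas an equivalence of transformations also requires the reverse composite $\alpha\circ\beta$ to be isomorphic to the identity. I would derive the reverse identity from \eqref{eq:weakinv} by the triangle identities of the adjoint equivalences, arguing as one does for mates in string diagrams such as \eqref{eq:pic:verticalequiv}. Concretely, the reverse composite is sandwiched between pastings with $\varepsilon_f$ and $\varepsilon_g^{-1}$. Applying \eqref{eq:weakinv} inside it and yanking shows that it is idempotent and invertible, hence the identity. This step, rather than the bookkeeping, is where I would expect to spend care.
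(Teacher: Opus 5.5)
Your ``only if'' direction is correct: take $\varepsilon_f^{-1}$ to be the inverse of the modification component at $a$, and $\varepsilon_g$ the component at $b$. The ``if'' direction has two genuine gaps, and neither can be closed for the statement as literally formulated. The paper only calls the lemma evident, so it gives no guidance here.

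\emph{The one-sided equation does not imply its mirror.} The step you flag as the main obstacle fails: \eqref{eq:weakinv} does not imply its mirror image. Sandwiching $\alpha\circ\alpha^{-1}$ between the appropriate invertible (co)unit 2-cells does produce an idempotent; your triangle-identity computation is right. But nothing makes that idempotent invertible. Take $f=g=\id$ in $\dprof$, let $\alpha\colon P\Rightarrow P\oplus R$ be the inclusion of a summand with $R\neq0$, and let $\alpha^{-1}$ be the projection. Then \eqref{eq:weakinv} holds, and the sandwiched composite is the projection onto $P$. Now define $F,G$ on the double category freely generated by one horizontal arrow, sending that arrow to $P$ and to $P\oplus R$ respectively. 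The resulting vertical transformation has identity vertical components and $\alpha$ on the generator. All its components are weakly invertible in the sense of Definition~\ref{def:weakly-inf}, yet it is not an equivalence. The reason is that an equivalence in $\dbl(\da,\db)$ with identity vertical components forces every $\alpha_P$ to be an invertible globular 2-cell. So the definition must be read with both equations, as is presumably intended by the paper's reference to Grandis--Par\'e. With that reading nothing needs to be derived. It is then the uniqueness of weak inverses, relative to fixed (co)unit choices, that makes your checks of horizontal functoriality and unitality for $\beta$ go through.

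\emph{Strict vertical naturality is not addressed.} Vertical transformations are \emph{strictly} natural with respect to vertical 1-morphisms (Remark~\ref{rem:theta1theta0}). The vertical naturality equation for $\beta$ only makes sense if $\beta_{a'}\circ Gf=Ff\circ\beta_a$ holds on the nose. Arbitrarily chosen quasi-inverses $\beta_a=\alpha_a^{-1}$ satisfy this only up to an invertible 2-cell. In addition, the (co)unit 2-cells must be natural in $a$ to form modifications. This genuinely fails in general, as the following example shows:
\begin{itemize}
\item Let $\da$ have one object, vertical morphisms the group $\mathbb{Z}/2$, and only identity horizontal morphisms and 2-cells.
\item Let $F$ send the object to the $\ko$-linearized walking isomorphism $\mathcal{E}$ (objects $x,y$), with $\mathbb{Z}/2$ acting by the swap.
\item Let $G$ send it to $\ko$ with trivial action, and let $\alpha$ be the collapse functor.
\end{itemize}
The only component 2-cell of $\alpha$ is weakly invertible, even in the two-sided sense. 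But a vertical transformation $G\Rightarrow F$ would require a swap-invariant functor $\ko\to\mathcal{E}$, i.e.\ a fixed object, and none exists. Hence the ``if'' direction needs an extra hypothesis or a weaker notion of equivalence. One option is to assume that quasi-inverses and (co)units can be chosen strictly natural in the objects; in the application one would check this directly for $\imath_\ell$ and $\varepsilon_\ell$. The other is to use a vertically pseudonatural notion of equivalence.
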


We call such a vertical transformation a \emph{vertical equivalence}. By Corollary 
\ref{cor:UCor-weaklyinv} the vertical transformation \eqref{eq:dcftc} given by field
functors $\df_{\!\ell}^{}$ and the universal correlators $\ucorc(\surf)$ is a
vertical equivalence. Note that the proof of Corollary \ref{cor:UCor-weaklyinv}
entails that while $\df_{\!\ell}^{}$ is an equivalence of categories,
it is not an isomorphism.

\begin{defn} \label{def:horstrongcomp}
A vertical transformation $\alpha$ is said to have \emph{horizontally strong
companions} iff every vertical component $\alpha_{a}$ has a companion
and the associated oplax natural transformation $\widehat{\alpha}$ 
given by \eqref{eq:fold2globular} is pseudonatural.
\end{defn}

\begin{prop} \label{prop:aboutstrongcomps}
Let $\alpha\colon F \,{\Rightarrow}\, G$ be a vertical equivalence between
double functors whose vertical components all have companions. Then
$\alpha$ has horizontally strong companions. Moreover, the associated
pseudonatural transformation $\widehat{\alpha}$ is a pseudonatural equivalence.
\end{prop}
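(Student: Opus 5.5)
We have to show two things: that the oplax natural transformation $\widehat{\alpha}$ of \eqref{eq:fold2globular} is pseudonatural, i.e.\ every folded 2-cell $\widehat{\alpha}_P$ is invertible in $\ch(\db)$, and that $\widehat{\alpha}$ is an equivalence. Both reduce to constructions in the double category $\db$ using companions, conjoints and the weak inverses provided by Lemma \ref{lem:equiv-cs-weaklyinv}.

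First, for each object $a$ the vertical component $\alpha_a$ is an adjoint equivalence in $\cv(\db)$, with quasi-inverse $\alpha_a^{-1}$ and adjunction $\alpha_a\Dashv\alpha_a^{-1}$. By the lemma at the end of Section \ref{sec:dbl-1.3}, the companion $\widehat{\alpha}_a$ is a conjoint of $\alpha_a^{-1}$. Since $\alpha_a^{-1}\Dashv\alpha_a$ as well, the same lemma applied to the companion of $\alpha_a^{-1}$ shows that $\widehat{\alpha^{-1}_a}$ is a conjoint of $\alpha_a$. Horizontally composing companions and conjoints and using the invertible unit and counit of the adjoint equivalence, we get globular isomorphisms $\widehat{\alpha}_a\cdot\widehat{\alpha^{-1}_a}\cong U_{Fa}$ and $\widehat{\alpha^{-1}_a}\cdot\widehat{\alpha}_a\cong U_{Ga}$. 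Concretely, $\widehat{\alpha}_a\cdot\widehat{\alpha_a^{-1}}$ is the companion of $\alpha_a^{-1}\circ\alpha_a$, companions are functorial in $\cv(\db)$, and $\alpha_a^{-1}\circ\alpha_a\cong 1$. Hence each $\widehat{\alpha}_a$ is an equivalence in $\ch(\db)$ with inverse $\widehat{\alpha^{-1}_a}$.

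Second, for a horizontal $P\colon a\ptO b$ we construct an inverse of $\widehat{\alpha}_P\colon FP\cdot\widehat{\alpha}_b\Rightarrow\widehat{\alpha}_a\cdot GP$. We fold the weak inverse $\alpha_P^{-1}$ of Definition \ref{def:weakly-inf} using the companions of $\alpha^{-1}_a,\alpha^{-1}_b$. This gives a globular $GP\cdot\widehat{\alpha^{-1}_b}\Rightarrow\widehat{\alpha^{-1}_a}\cdot FP$, which we whisker on both sides with $\widehat{\alpha}_a$, $\widehat{\alpha}_b$ and the equivalences of the first step to obtain a candidate $\widehat{\alpha}_a\cdot GP\Rightarrow FP\cdot\widehat{\alpha}_b$. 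We then verify that both composites are identities in the string calculus of \cite{myerD}. Companion units and counits cancel by the yanking equations \eqref{eq:pic:companion2+3}, and the remaining configuration is exactly the left-hand side of \eqref{eq:pic:verticalequiv} (respectively its transpose), which collapses to the identity on $P$ by weak invertibility. For the other composite, we use the symmetric version of \eqref{eq:weakinv}, which follows since $\alpha^{-1}$ is itself a vertical transformation inverse to $\alpha$ in $\dbl(\da,\db)$. This step is the main obstacle: we must keep track of the extra adjunction (co)units $\varepsilon_f^{\pm1}$ and check that they match the isomorphisms $\widehat{\alpha}_a\cdot\widehat{\alpha^{-1}_a}\cong U$ chosen in the first step, which requires choosing these data compatibly as an adjoint equivalence. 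If the matching is unclear, we will first treat strict inverses and then transport along the invertible globular 2-cells comparing the chosen quasi-inverses.

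Finally, pseudonaturality with invertible 1-cell components makes $\widehat{\alpha}$ a pseudonatural equivalence, because a pseudonatural transformation whose components are equivalences is an equivalence in the bicategory of pseudofunctors. Alternatively, $\widehat{\alpha^{-1}}$, built from the inverse vertical transformation, is an explicit pseudo-inverse. Functoriality of $\alpha\mapsto\widehat{\alpha}$ on vertical composition, \Cite{Thm.\,4.6}{wesSh}, gives $\widehat{\alpha^{-1}}\circ\widehat{\alpha}\cong\widehat{\alpha^{-1}\circ\alpha}\cong\id$.
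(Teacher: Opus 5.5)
\textbf{Gap: you use companions that the hypotheses do not provide.} Your inverse of $\widehat{\alpha}_P$ depends on companions of the quasi-inverses $\alpha_a^{-1},\alpha_b^{-1}$. You need them both to fold $\alpha_P^{-1}$ and to obtain $\widehat{\alpha}_a\cdot\widehat{\alpha_a^{-1}}\cong U_{Fa}$. The statement only provides companions of the components $\alpha_a$ themselves. A companion of $\alpha_a$ does not yield one of $\alpha_a^{-1}$; by the lemma at the end of Section~\ref{sec:dbl-1.3} and its dual, the latter amounts to a conjoint of $\alpha_a$. So, as written, you prove horizontal strength only under an extra hypothesis.

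\textbf{The fix is already in your first step.} There you note that $\widehat{\alpha}_a$ is a conjoint of $\alpha_a^{-1}$. The paper uses this directly: the inverse of $\widehat{\alpha}_P$ is the horizontal pasting of three pieces:
\begin{itemize}
\item the conjoint counit $\varepsilon'$ of $(\alpha_a^{-1},\widehat{\alpha}_a)$;
\item the 2-cell $\alpha_P^{-1}$;
\item the conjoint unit $\eta'$ of $(\alpha_b^{-1},\widehat{\alpha}_b)$.
\end{itemize}
Expand $\varepsilon'$ and $\eta'$ via \eqref{eq:pic:conjoint(co)unit}. The companion (co)units of $\alpha_a$ and $\alpha_b$ then cancel by \eqref{eq:pic:companion2+3}; on the right this also uses a triangle identity for $\alpha_b\Dashv\alpha_b^{-1}$. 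What remains is the left-hand side of \eqref{eq:pic:verticalequiv}, with $\varepsilon_f^{-1}=\widetilde{\eta}_a$ and $\varepsilon_g=\widetilde{\eta}_b^{-1}$.

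This construction needs only companions of the $\alpha_a$. It also removes your matching problem, because the only adjunction data that occur are those of $\alpha_a\Dashv\alpha_a^{-1}$. Take them to be the components of an adjoint equivalence $\alpha\Dashv\alpha^{-1}$ in $\dbl(\da,\db)$. Then \eqref{eq:weakinv} and its mirror image (needed for the other composite) hold with exactly these data. Where your detour through $\widehat{\alpha_a^{-1}}$ is available, it produces the mate of the same 2-cell.

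\textbf{For the final assertion, an assumption like yours cannot be dropped.} The paper's one-line justification (``follows from the definition of weak invertibility'') tacitly uses it. Here is a counterexample to the bare statement.
\begin{itemize}
\item Let $\ca$ be the category freely generated by an isomorphism $f\colon a\to b$.
\item Let $\dd\subset\square\ca$ keep all morphisms of $\ca$ as vertical 1-morphisms, but only $1_a,1_b,f$ as horizontal ones.
\item Consider the two double functors from the terminal double category that pick out $a$ and $b$. The vertical transformation between them with component $f$ is invertible, hence a vertical equivalence.
\item Its component $f$ has the companion $f$, and $\widehat{\alpha}$ is pseudonatural.
\item But $\ch(\dd)$ has no 1-morphism $b\ptO a$, so $\widehat{\alpha}$ is not an equivalence.
\end{itemize}
In $\dprof$ every functor has a companion, so your first step does apply there and exhibits $\widehat{\alpha_a^{-1}}$ as an inverse of $\widehat{\alpha}_a$. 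For the use in Theorem~\ref{thm:eqvmf}, that part of your argument is sound.
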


\begin{proof}
The last statement follows from the definition of weak invertibility. 
 \\
The inverse of a 2-cell component 
  \be
  \begin{tikzcd}
  Fa & Fb \\ Ga & Gb
  \arrow["Ff", "\shortmid"{marking}, from=1-1, to=1-2]
  \arrow["{\widehat{\alpha}_b}", "\shortmid"{marking}, from=1-2, to=2-2] 
  \arrow["{\widehat{\alpha}_a}"', "\shortmid"{marking}, from=1-1, to=2-1]
  \arrow["Gg"', "\shortmid"{marking}, from=2-1, to=2-2]
  \arrow["{\displaystyle\widehat{\alpha}_f\!\!}"',
        shift left=1, shorten <=4pt, shorten >=4pt, Rightarrow, from=1-2, to=2-1] 
  \end{tikzcd} 
  \ee
(see \eqref{eq:fold2globular}) of $\widehat\alpha$ is given by
  \be
  \begin{tikzcd} 
  Fa & Ga & Gb & Gb \\ Fa & Fa & Fb & Gb
  \arrow[Rightarrow, no head, from=1-4, to=2-4]
  \arrow[""{name=0, anchor=center, inner sep=0}, "Gf", "\shortmid"{marking}, from=1-2, to=1-3]
  \arrow["{\alpha_b^{-1}}", from=1-3, to=2-3]
  \arrow[""{name=1, anchor=center, inner sep=0}, "\shortmid"{marking}, Rightarrow, no head, from=1-3, to=1-4]
  \arrow["{\alpha_a^{-1}}\!"', from=1-2, to=2-2]
  \arrow[""{name=2, anchor=center, inner sep=0}, "Ff"', "\shortmid"{marking}, from=2-2, to=2-3]
  \arrow[""{name=3, anchor=center, inner sep=0}, "\shortmid"{marking}, Rightarrow, no head, from=2-1, to=2-2]
  \arrow[Rightarrow, no head, from=1-1, to=2-1]
  \arrow[""{name=4, anchor=center, inner sep=0}, "{\widehat{\alpha}_a}", "\shortmid"{marking}, from=1-1, to=1-2]
  \arrow[""{name=5, anchor=center, inner sep=0}, "{\widehat{\alpha}_b\;}"', "\shortmid"{marking}, from=2-3, to=2-4]
  \arrow["{\displaystyle\varepsilon'}"{description}, shift right=2, draw=none, from=4, to=3]
  \arrow["{\displaystyle\alpha_f^{-1}}"{description}, draw=none, from=0, to=2]
  \arrow["{\displaystyle\eta'}"{description}, shift left=2, draw=none, from=1, to=5]
  \end{tikzcd} \hspace*{1.2em}
  \ee
As a demonstration we use the graphical calculus to show one of the 
partial-invertibility properties. We have
  \be
{\tikzstyle{every picture}=[tikzfig]
  {\begin{tikzpicture}
	\begin{pgfonlayer}{nodelayer}
		\node [style=none] (113) at (-3, -3) {};
		\node [style=none] (114) at (-3, 3) {};
		\node [style=none] (115) at (-1, 1.5) {};
		\node [style=none] (116) at (-1.5, 1) {};
		\node [style=none] (119) at (-1, -1.5) {};
		\node [style=none] (120) at (-1.5, -1) {};
		\node [style=none] (153) at (0, 1.5) {};
		\node [style=none] (154) at (0, 3) {};
		\node [style=none] (155) at (0, -1.5) {};
		\node [style=none] (156) at (0, -3) {};
		\node [style=none] (127) at (-1, 1.5) {};
		\node [style=none] (128) at (-1.5, 1) {};
		\node [style=none] (129) at (0, 1.5) {};
		\node [style=none] (130) at (0, -1.5) {};
		\node [style=none] (131) at (-1, -1.5) {};
		\node [style=none] (132) at (-1.5, -1) {};
		\node [style=none] (133) at (0, 1.5) {};
		\node [style=none] (134) at (1, 1.5) {};
		\node [style=none] (135) at (1.5, 2) {};
		\node [style=none] (136) at (1.5, 3) {};
		\node [style=none] (137) at (0, 3) {};
		\node [style=none] (138) at (0, -1.5) {};
		\node [style=none] (139) at (1, -1.5) {};
		\node [style=none] (140) at (1.5, -2) {};
		\node [style=none] (141) at (1.5, -3) {};
		\node [style=none] (142) at (0, -3) {};
		\node [style=none] (143) at (3, 3) {};
		\node [style=none] (144) at (3, -3) {};
		\node [style=none] (145) at (0, 1.5) {};
		\node [style=none] (146) at (0, -1.5) {};
		\node [style=none] (147) at (1, -1.5) {};
		\node [style=none] (148) at (1.5, -2) {};
		\node [style=none] (149) at (1, 1.5) {};
		\node [style=none] (150) at (1.5, 2) {};
		\node [style=none] (151) at (1.5, -3) {};
		\node [style=none] (152) at (1.5, 3) {};
		\node [style=none] (99) at (-1, 1.5) {};
		\node [style=none] (100) at (-1.5, 1) {};
		\node [style=dot] (101) at (0, 1.5) {};
		\node [style=bdot] (102) at (0, -1.5) {};
		\node [style=none] (103) at (-1, -1.5) {};
		\node [style=none] (104) at (-1.5, -1) {};
		\node [style=none] (105) at (1, -1.5) {};
		\node [style=none] (106) at (1.5, -2) {};
		\node [style=none] (107) at (1, 1.5) {};
		\node [style=none] (108) at (1.5, 2) {};
		\node [style=none] (109) at (1.5, -3) {};
		\node [style=none] (110) at (1.5, 3) {};
		\node [style=none] (111) at (0, 3) {};
		\node [style=none] (112) at (0, -3) {};
		\node [style=none] (157) at (0, 3.5) {$Ff$};
		\node [style=none] (158) at (0, -3.5) {$Ff$};
		\node [style=none] (159) at (1.5, 3.5) {$\widehat{\alpha}_b$};
		\node [style=none] (160) at (1.5, -3.5) {$\widehat{\alpha}_b$};
	\end{pgfonlayer}
	\begin{pgfonlayer}{edgelayer}
		\draw [style=fi-pi] (119.center)
			 to [bend left=45, looseness=1.25] (120.center)
			 to (116.center)
			 to [bend left=45, looseness=1.25] (115.center)
			 to (153.center)
			 to (154.center)
			 to (114.center)
			 to (113.center)
			 to (156.center)
			 to (155.center)
			 to cycle;
		\draw [style=fi-gr] (130.center)
			 to (131.center)
			 to [bend left=45, looseness=1.25] (132.center)
			 to (128.center)
			 to [bend left=45, looseness=1.25] (127.center)
			 to (129.center)
			 to cycle;
		\draw [style=fi-bl] (136.center)
			 to (137.center)
			 to (133.center)
			 to (134.center)
			 to [bend right=45, looseness=1.25] (135.center)
			 to cycle;
		\draw [style=fi-bl] (141.center)
			 to (142.center)
			 to (138.center)
			 to (139.center)
			 to [bend left=45, looseness=1.25] (140.center)
			 to cycle;
		\draw [style=fi-pu] (145.center)
			 to (149.center)
			 to [bend right=45, looseness=1.25] (150.center)
			 to (152.center)
			 to (143.center)
			 to (144.center)
			 to (151.center)
			 to (148.center)
			 to [bend right=45, looseness=1.25] (147.center)
			 to (146.center)
			 to cycle;
		\draw [style=vt-bl, bend right=45, looseness=1.25] (99.center) to (100.center);
		\draw [style=vt-bl, bend left=45, looseness=1.25] (103.center) to (104.center);
		\draw [style=vt-bl, bend left=45, looseness=1.25] (105.center) to (106.center);
		\draw [style=vt-bl, bend right=45, looseness=1.25] (107.center) to (108.center);
		\draw [style=vt-bl] (101) to (99.center);
		\draw [style=vt-bl] (101) to (107.center);
		\draw [style=vt-bl] (102) to (103.center);
		\draw [style=vt-bl] (102) to (105.center);
		\draw [style=vt-bl] (106.center) to (109.center);
		\draw [style=vt-bl] (100.center) to (104.center);
		\draw [style=vt-bl] (110.center) to (108.center);
		\draw [style=vt-bl] (111.center) to (101);
		\draw [style=vt-bl] (101) to (102);
		\draw [style=vt-bl] (102) to (112.center);
	\end{pgfonlayer}
\end{tikzpicture}}
  {%
  }}%
~~=~~%
{\tikzstyle{every picture}=[tikzfig]
  {\begin{tikzpicture}
	\begin{pgfonlayer}{nodelayer}
		\node [style=none] (235) at (0, -1.5) {};
		\node [style=none] (236) at (1, -1.5) {};
		\node [style=none] (237) at (1.5, -3) {};
		\node [style=none] (238) at (0, -3) {};
		\node [style=none] (239) at (2, -1.5) {};
		\node [style=none] (240) at (2.5, -1.75) {};
		\node [style=none] (241) at (2, -2) {};
		\node [style=none] (242) at (1.5, -2.5) {};
		\node [style=none] (184) at (-1, 1.5) {};
		\node [style=none] (185) at (-1.5, 1) {};
		\node [style=none] (186) at (0, 1.5) {};
		\node [style=none] (187) at (0, -1.5) {};
		\node [style=none] (188) at (-1, -1.5) {};
		\node [style=none] (189) at (0, 3) {};
		\node [style=none] (190) at (0, -3) {};
		\node [style=none] (192) at (-3, 3) {};
		\node [style=none] (193) at (-3, -3) {};
		\node [style=none] (194) at (-2, -1.5) {};
		\node [style=none] (195) at (-2.5, -1.25) {};
		\node [style=none] (196) at (-2, -1) {};
		\node [style=none] (197) at (-1.5, -0.5) {};
		\node [style=none] (198) at (-1, 1.5) {};
		\node [style=none] (199) at (-1.5, 1) {};
		\node [style=none] (200) at (0, 1.5) {};
		\node [style=none] (201) at (0, -1.5) {};
		\node [style=none] (202) at (-1, -1.5) {};
		\node [style=none] (203) at (-2, -1.5) {};
		\node [style=none] (204) at (-2.5, -1.25) {};
		\node [style=none] (205) at (-2, -1) {};
		\node [style=none] (206) at (-1.5, -0.5) {};
		\node [style=none] (207) at (0, 1.5) {};
		\node [style=none] (208) at (1, 1.5) {};
		\node [style=none] (209) at (1.5, 2) {};
		\node [style=none] (210) at (1.5, 3) {};
		\node [style=none] (211) at (0, 3) {};
		\node [style=none] (217) at (0, 1.5) {};
		\node [style=none] (218) at (0, -1.5) {};
		\node [style=none] (219) at (1, -1.5) {};
		\node [style=none] (220) at (1, 1.5) {};
		\node [style=none] (221) at (1.5, 2) {};
		\node [style=none] (222) at (1.5, -3) {};
		\node [style=none] (223) at (1.5, 3) {};
		\node [style=none] (226) at (3, 3) {};
		\node [style=none] (227) at (3, -3) {};
		\node [style=none] (228) at (2, -1.5) {};
		\node [style=none] (229) at (2.5, -1.75) {};
		\node [style=none] (230) at (2, -2) {};
		\node [style=none] (231) at (1.5, -2.5) {};
		\node [style=none] (99) at (-1, 1.5) {};
		\node [style=none] (100) at (-1.5, 1) {};
		\node [style=dot] (101) at (0, 1.5) {};
		\node [style=bdot] (102) at (0, -1.5) {};
		\node [style=none] (103) at (-1, -1.5) {};
		\node [style=none] (105) at (1, -1.5) {};
		\node [style=none] (107) at (1, 1.5) {};
		\node [style=none] (108) at (1.5, 2) {};
		\node [style=none] (109) at (1.5, -3) {};
		\node [style=none] (110) at (1.5, 3) {};
		\node [style=none] (111) at (0, 3) {};
		\node [style=none] (112) at (0, -3) {};
		\node [style=none] (157) at (0, 3.5) {$Ff$};
		\node [style=none] (158) at (0, -3.5) {$Ff$};
		\node [style=none] (159) at (1.5, 3.5) {$\widehat{\alpha}_b$};
		\node [style=none] (160) at (1.5, -3.5) {$\widehat{\alpha}_b$};
		\node [style=none] (174) at (-2, -1.5) {};
		\node [style=none] (176) at (-2.5, -1.25) {};
		\node [style=none] (177) at (-2, -1) {};
		\node [style=none] (178) at (-1.5, -0.5) {};
		\node [style=none] (180) at (2, -1.5) {};
		\node [style=none] (181) at (2.5, -1.75) {};
		\node [style=none] (182) at (2, -2) {};
		\node [style=none] (183) at (1.5, -2.5) {};
	\end{pgfonlayer}
	\begin{pgfonlayer}{edgelayer}
		\draw [style=fi-bl] (237.center)
			 to (238.center)
			 to (235.center)
			 to (236.center)
			 to (239.center)
			 to [in=90, out=0, looseness=1.25] (240.center)
			 to [in=0, out=-90, looseness=1.25] (241.center)
			 to [bend right=45, looseness=1.25] (242.center)
			 to cycle;
		\draw [style=fi-pi] (197.center)
			 to [bend left=45, looseness=1.25] (196.center)
			 to [in=90, out=180, looseness=1.25] (195.center)
			 to [in=180, out=-90, looseness=1.25] (194.center)
			 to (188.center)
			 to (187.center)
			 to (190.center)
			 to (193.center)
			 to (192.center)
			 to (189.center)
			 to (186.center)
			 to (184.center)
			 to [bend right=45, looseness=1.25] (185.center)
			 to cycle;
		\draw [style=fi-gr] (201.center)
			 to (200.center)
			 to (198.center)
			 to [bend right=45, looseness=1.25] (199.center)
			 to (206.center)
			 to [bend left=45, looseness=1.25] (205.center)
			 to [in=90, out=180, looseness=1.25] (204.center)
			 to [in=180, out=-90, looseness=1.25] (203.center)
			 to (202.center)
			 to cycle;
		\draw [style=fi-bl] (211.center)
			 to (207.center)
			 to (208.center)
			 to [bend right=45, looseness=1.25] (209.center)
			 to (210.center)
			 to cycle;
		\draw [style=fi-pu] (227.center)
			 to (222.center)
			 to (231.center)
			 to [bend left=45, looseness=1.25] (230.center)
			 to [in=-90, out=0, looseness=1.25] (229.center)
			 to [in=0, out=90, looseness=1.25] (228.center)
			 to (219.center)
			 to (218.center)
			 to (217.center)
			 to (220.center)
			 to [bend right=45, looseness=1.25] (221.center)
			 to (223.center)
			 to (226.center)
			 to cycle;
		\draw [style=vt-bl, bend right=45, looseness=1.25] (99.center) to (100.center);
		\draw [style=vt-bl, bend right=45, looseness=1.25] (107.center) to (108.center);
		\draw [style=vt-bl] (101) to (99.center);
		\draw [style=vt-bl] (101) to (107.center);
		\draw [style=vt-bl] (102) to (103.center);
		\draw [style=vt-bl] (102) to (105.center);
		\draw [style=vt-bl] (110.center) to (108.center);
		\draw [style=vt-bl] (111.center) to (101);
		\draw [style=vt-bl] (101) to (102);
		\draw [style=vt-bl] (102) to (112.center);
		\draw [style=vt-bl, in=-90, out=180, looseness=1.25] (174.center) to (176.center);
		\draw [style=vt-bl, bend right=45, looseness=1.25] (177.center) to (178.center);
		\draw [style=vt-bl] (174.center) to (103.center);
		\draw [style=vt-bl] (100.center) to (178.center);
		\draw [style=vt-bl, in=90, out=180, looseness=1.25] (177.center) to (176.center);
		\draw [style=vt-bl, in=90, out=0, looseness=1.25] (180.center) to (181.center);
		\draw [style=vt-bl, bend right=45, looseness=1.25] (182.center) to (183.center);
		\draw [style=vt-bl, in=-90, out=0, looseness=1.25] (182.center) to (181.center);
		\draw [style=vt-bl] (105.center) to (180.center);
		\draw [style=vt-bl] (183.center) to (109.center);
	\end{pgfonlayer}
\end{tikzpicture}}
  {%
  }}%
~~=~~%
{\tikzstyle{every picture}=[tikzfig]
  {\begin{tikzpicture}
	\begin{pgfonlayer}{nodelayer}
		\node [style=none] (261) at (-1.25, 0.25) {};
		\node [style=none] (262) at (-1.25, -0.25) {};
		\node [style=none] (263) at (-1.25, 3) {};
		\node [style=none] (264) at (-1.25, -3) {};
		\node [style=none] (265) at (-1.75, -0.25) {};
		\node [style=none] (266) at (-2.25, 0) {};
		\node [style=none] (267) at (-1.75, 0.25) {};
		\node [style=none] (268) at (-3, -3) {};
		\node [style=none] (269) at (-3, 3) {};
		\node [style=none] (270) at (-1.25, 0.25) {};
		\node [style=none] (271) at (-1.25, -0.25) {};
		\node [style=none] (272) at (1.25, 0.75) {};
		\node [style=none] (273) at (1, -3) {};
		\node [style=none] (274) at (1.25, 3) {};
		\node [style=none] (275) at (-1.25, 3) {};
		\node [style=none] (276) at (-1.25, -3) {};
		\node [style=none] (277) at (1.5, -0.25) {};
		\node [style=none] (278) at (2, -0.5) {};
		\node [style=none] (279) at (1.5, -0.75) {};
		\node [style=none] (280) at (1, -1.25) {};
		\node [style=none] (281) at (-0.75, -0.25) {};
		\node [style=none] (282) at (-0.25, 0) {};
		\node [style=none] (283) at (-0.75, 0.25) {};
		\node [style=none] (284) at (0.75, -0.25) {};
		\node [style=none] (285) at (0.25, 0) {};
		\node [style=none] (286) at (0.75, 0.25) {};
		\node [style=none] (287) at (1.25, 0.75) {};
		\node [style=none] (288) at (1, -3) {};
		\node [style=none] (289) at (1.25, 3) {};
		\node [style=none] (290) at (1.5, -0.25) {};
		\node [style=none] (291) at (2, -0.5) {};
		\node [style=none] (292) at (1.5, -0.75) {};
		\node [style=none] (293) at (1, -1.25) {};
		\node [style=none] (294) at (3, 3) {};
		\node [style=none] (295) at (3, -3) {};
		\node [style=none] (296) at (0.75, -0.25) {};
		\node [style=none] (297) at (0.25, 0) {};
		\node [style=none] (298) at (0.75, 0.25) {};
		\node [style=none] (299) at (-1.25, 0.25) {};
		\node [style=none] (300) at (-1.25, -0.25) {};
		\node [style=none] (301) at (-1.75, -0.25) {};
		\node [style=none] (302) at (-2.25, 0) {};
		\node [style=none] (303) at (-1.75, 0.25) {};
		\node [style=none] (304) at (-1.25, 0.25) {};
		\node [style=none] (305) at (-1.25, -0.25) {};
		\node [style=none] (306) at (-0.75, -0.25) {};
		\node [style=none] (307) at (-0.25, 0) {};
		\node [style=none] (308) at (-0.75, 0.25) {};
		\node [style=dot] (101) at (-1.25, 0.25) {};
		\node [style=bdot] (102) at (-1.25, -0.25) {};
		\node [style=none] (108) at (1.25, 0.75) {};
		\node [style=none] (109) at (1, -3) {};
		\node [style=none] (110) at (1.25, 3) {};
		\node [style=none] (111) at (-1.25, 3) {};
		\node [style=none] (112) at (-1.25, -3) {};
		\node [style=none] (157) at (-1.25, 3.5) {$Ff$};
		\node [style=none] (158) at (-1.25, -3.5) {$Ff$};
		\node [style=none] (159) at (1.25, 3.5) {$\widehat{\alpha}_b$};
		\node [style=none] (160) at (1, -3.5) {$\widehat{\alpha}_b$};
		\node [style=none] (174) at (-1.75, -0.25) {};
		\node [style=none] (176) at (-2.25, 0) {};
		\node [style=none] (177) at (-1.75, 0.25) {};
		\node [style=none] (180) at (1.5, -0.25) {};
		\node [style=none] (181) at (2, -0.5) {};
		\node [style=none] (182) at (1.5, -0.75) {};
		\node [style=none] (183) at (1, -1.25) {};
		\node [style=none] (247) at (-0.75, -0.25) {};
		\node [style=none] (248) at (-0.25, 0) {};
		\node [style=none] (249) at (-0.75, 0.25) {};
		\node [style=none] (250) at (0.75, -0.25) {};
		\node [style=none] (251) at (0.25, 0) {};
		\node [style=none] (252) at (0.75, 0.25) {};
	\end{pgfonlayer}
	\begin{pgfonlayer}{edgelayer}
		\draw [style=fi-pi] (266.center)
			 to [in=180, out=-90, looseness=1.25] (265.center)
			 to (262.center)
			 to (264.center)
			 to (268.center)
			 to (269.center)
			 to (263.center)
			 to (261.center)
			 to (267.center)
			 to [in=90, out=180, looseness=1.25] cycle;
		\draw [style=fi-bl] (286.center)
			 to [bend right=45, looseness=1.25] (272.center)
			 to (274.center)
			 to (275.center)
			 to (270.center)
			 to (283.center)
			 to [in=90, out=0, looseness=1.25] (282.center)
			 to [in=0, out=-90, looseness=1.25] (281.center)
			 to (271.center)
			 to (276.center)
			 to (273.center)
			 to (280.center)
			 to [bend left=45, looseness=1.25] (279.center)
			 to [in=-90, out=0, looseness=1.25] (278.center)
			 to [in=0, out=90, looseness=1.25] (277.center)
			 to (284.center)
			 to [in=-90, out=180, looseness=1.25] (285.center)
			 to [in=180, out=90, looseness=1.25] cycle;
		\draw [style=fi-pu] (294.center)
			 to (289.center)
			 to (287.center)
			 to [bend left=45, looseness=1.25] (298.center)
			 to [in=90, out=180, looseness=1.25] (297.center)
			 to [in=180, out=-90, looseness=1.25] (296.center)
			 to (290.center)
			 to [in=90, out=0, looseness=1.25] (291.center)
			 to [in=0, out=-90, looseness=1.25] (292.center)
			 to [bend right=45, looseness=1.25] (293.center)
			 to (288.center)
			 to (295.center)
			 to cycle;
		\draw [style=fi-gr] (302.center)
			 to [in=180, out=-90, looseness=1.25] (301.center)
			 to (300.center)
			 to (299.center)
			 to (303.center)
			 to [in=90, out=180, looseness=1.25] cycle;
		\draw [style=fi-pu] (307.center)
			 to [in=0, out=-90, looseness=1.25] (306.center)
			 to (305.center)
			 to (304.center)
			 to (308.center)
			 to [in=90, out=0, looseness=1.25] cycle;
		\draw [style=vt-bl] (110.center) to (108.center);
		\draw [style=vt-bl] (111.center) to (101);
		\draw [style=vt-bl] (101) to (102);
		\draw [style=vt-bl] (102) to (112.center);
		\draw [style=vt-bl, in=-90, out=180, looseness=1.25] (174.center) to (176.center);
		\draw [style=vt-bl, in=90, out=180, looseness=1.25] (177.center) to (176.center);
		\draw [style=vt-bl, in=90, out=0, looseness=1.25] (180.center) to (181.center);
		\draw [style=vt-bl, bend right=45, looseness=1.25] (182.center) to (183.center);
		\draw [style=vt-bl, in=-90, out=0, looseness=1.25] (182.center) to (181.center);
		\draw [style=vt-bl] (183.center) to (109.center);
		\draw [style=vt-bl] (177.center) to (101);
		\draw [style=vt-bl] (174.center) to (102);
		\draw [style=vt-bl, in=-90, out=0, looseness=1.25] (247.center) to (248.center);
		\draw [style=vt-bl, in=90, out=0, looseness=1.25] (249.center) to (248.center);
		\draw [style=vt-bl, in=-90, out=180, looseness=1.25] (250.center) to (251.center);
		\draw [style=vt-bl, in=90, out=180, looseness=1.25] (252.center) to (251.center);
		\draw [style=vt-bl] (101) to (249.center);
		\draw [style=vt-bl] (102) to (247.center);
		\draw [style=vt-bl, bend right=45, looseness=1.25] (252.center) to (108.center);
		\draw [style=vt-bl] (250.center) to (180.center);
	\end{pgfonlayer}
\end{tikzpicture}}
  {%
  }}%
~~=~~%
{\tikzstyle{every picture}=[tikzfig]
  {\begin{tikzpicture}
	\begin{pgfonlayer}{nodelayer}
		\node [style=none] (305) at (-3, 3) {};
		\node [style=none] (306) at (-3, -3) {};
		\node [style=none] (307) at (-1, 3) {};
		\node [style=none] (308) at (-1, -3) {};
		\node [style=none] (309) at (-1, 3) {};
		\node [style=none] (310) at (-1, -3) {};
		\node [style=none] (311) at (1, 3) {};
		\node [style=none] (312) at (1, -3) {};
		\node [style=none] (313) at (1, 3) {};
		\node [style=none] (314) at (1, -3) {};
		\node [style=none] (315) at (3, 3) {};
		\node [style=none] (316) at (3, -3) {};
		\node [style=none] (317) at (-1, 3.5) {$Ff$};
		\node [style=none] (318) at (1, 3.5) {$\widehat{\alpha}_b$};
		\node [style=none] (319) at (-1, -3.5) {$Ff$};
		\node [style=none] (320) at (1, -3.5) {$\widehat{\alpha}_b$};
	\end{pgfonlayer}
	\begin{pgfonlayer}{edgelayer}
		\draw [style=fi-pi] (305.center)
			 to (307.center)
			 to (308.center)
			 to (306.center)
			 to cycle;
		\draw [style=fi-bl] (309.center)
			 to (311.center)
			 to (312.center)
			 to (310.center)
			 to cycle;
		\draw [style=fi-pu] (313.center)
			 to (315.center)
			 to (316.center)
			 to (314.center)
			 to cycle;
		\draw [style=vt-bl] (309.center) to (310.center);
		\draw [style=vt-bl] (313.center) to (314.center);
	\end{pgfonlayer}
\end{tikzpicture}}
  {%
  }}%
 \text{~}
  \ee
Here the first equality uses \eqref{eq:pic:conjoint(co)unit}, while the second and
third equalities use the snake relations \eqref{eq:pic:companion2+3} for companions
and the premise that $\alpha$ is a vertical equivalence, whose component 2-cells
satisfy \eqref{eq:pic:verticalequiv}.  Alternatively, this result can be obtained by
realizing that $\widehat{\alpha}$ is an internal adjoint equivalence in the bicategory
$\cb\mathrm{icat}_{\mathrm{oplax}}(\ch(\da),\ch(\db))$ of pseudofunctors, oplax
natural transformations and modifications, and is therefore by doctrinal adjunction
\cite{kelly7} automatically pseudonatural.\,%
 \footnote{~Doctrinal adjunctions for a 2-monad $T$ are in fact \emph{conjunctions}
 in the double category $T\text{-}\da\mathrm{lg}$ of algebras, whose
 vertical 1-morphisms are oplax $T$-morphisms and horizontal 1-morphisms
 are lax $T$-morphisms, see \Cite{Example\,5.4}{shul6}.}
\end{proof}

Invoking also Theorem 5.12 of \cite{wesSh}, we then end up with the following result:

\begin{thm} \label{thm:double2bi}
Let $F,G\colon\da \,{\rightrightarrows}\, \db$ be symmetric monoidal double functors
and $\alpha\colon F \,{\Rightarrow}\, G$ be a monoidal vertical equivalence. If both
$\da$ and $\db$ are framed bicategories and every 2-cell component of $\alpha$ 
is weakly invertible, then we have:
 \\[3pt]
(1) $\ch(\da)$ and $\ch(\db)$ are symmetric monoidal bicategories.
 \\[3pt]
(2) $F$ and $G$ lift to symmetric monoidal pseudofunctors 
$\ch(\da) \,{\rightrightarrows}\, \ch(\db)$.
 \\[3pt]
(3) $\widehat{\alpha}$ is a monoidal pseudonatural equivalence.
\end{thm}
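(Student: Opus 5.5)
The plan is to assemble Theorem \ref{thm:double2bi} from the results of \cite{wesSh} together with Proposition \ref{prop:aboutstrongcomps}, rather than to build anything new. Parts (1) and (2) follow from Shulman's lifting theorem as refined in \cite{wesSh}. Since $\da$ and $\db$ are framed bicategories, every vertical structure isomorphism of the symmetric monoidal double category (associator, unitors, braiding) has a companion. Folding these via the companion units and counits, exactly as in \eqref{eq:fold2globular}, turns the vertical structure transformations into pseudonatural equivalences of the horizontal bicategories. The coherence axioms, which hold on the nose in the double category, then give the coherence modifications of a symmetric monoidal bicategory; this is (1). For (2), apply the same folding to the monoidal structure isomorphisms $F(a)\oti F(b)\Rarr~ F(a\oti b)$ and $\tu\Rarr~ F(\tu)$ of $F$ and $G$. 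Their companions give the 1-cell parts of the monoidal structure on the lifted pseudofunctors, and the invertible 2-cells $F(P)\oti F(Q)\Rightarrow F(P\oti Q)$ give the naturality 2-cells. The axioms are transported along the folding.

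For (3), I would first use that $\db$ is framed: every component $\alpha_a$ has a companion $\widehat{\alpha}_a$. Proposition \ref{prop:aboutstrongcomps} then says that $\alpha$ has horizontally strong companions and that $\widehat\alpha$ is a pseudonatural equivalence. Here the weak invertibility of the 2-cell components enters, both directly and through Lemma \ref{lem:equiv-cs-weaklyinv}. It remains to show that $\widehat\alpha$ is \emph{monoidal}, i.e.\ to supply the monoidality modifications and check their axioms. I would invoke Theorem 5.12 of \cite{wesSh}: a monoidal vertical transformation with horizontally strong companions between symmetric monoidal double functors on framed double categories lifts to a monoidal pseudonatural transformation. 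Concretely, the modification comparing $\widehat\alpha_{a\oti b}\cdot(\text{structure of }G)$ with $(\text{structure of }F)\cdot(\widehat\alpha_a\oti\widehat\alpha_b)$ arises as follows. Monoidality of $\theta_0$ gives a commuting square of vertical 1-morphisms. Companions are functorial up to unique globular isomorphism, that is, $\widehat{g\circ f}\cong\widehat f\cdot\widehat g$ canonically. This turns the commuting square into a globular isomorphism.

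The main obstacle is the bookkeeping in this last step. One has to check that the companion-induced comparison isomorphisms satisfy the monoidal pseudonatural transformation axioms, and in particular that they interact correctly with the folded 2-cells $\widehat\alpha_P$ from \eqref{eq:fold2globular}. Monoidality of $\theta_1$ is what makes these commute. I would argue in the string-diagram calculus of Section \ref{sec:dbl-1.3}, reducing every identity to yanking equations \eqref{eq:pic:companion2+3} together with the monoidality equations of $\theta_0$ and $\theta_1$. Since this is exactly the content of \cite[Thm.\,5.12]{wesSh}, the proof would cite it and only verify its hypotheses. That $\widehat\alpha$ is an equivalence, and not merely a transformation, is already given by Proposition \ref{prop:aboutstrongcomps}. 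A monoidal pseudonatural transformation whose underlying transformation is an equivalence is then a monoidal equivalence, because its inverse inherits a monoidal structure by doctrinal adjunction \cite{kelly7}.
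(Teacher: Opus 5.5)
Your proposal is correct and takes essentially the paper's route. Since $\db$ is framed, Proposition~\ref{prop:aboutstrongcomps} shows that $\alpha$ has horizontally strong companions and that $\widehat{\alpha}$ is a pseudonatural equivalence, and Theorem~5.12 of \cite{wesSh} then gives (1), (2) and the monoidality of $\widehat{\alpha}$; the paper simply cites these two results, and your account of the folding and companion bookkeeping just spells out what that citation covers.
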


We are now in a position to present the main result of this section, 
which follows as a Corollary of Theorem \ref{thm:double2bi}

\begin{thm} \label{thm:eqvmf}
For any \ko-linear pivotal tensor category $\cc$, the monoidal vertical
transformation $\ducorc\colon\dSNfrc\,{\xRightarrow{\phantom{\,\simeq\,}}}\,\dSNc$
is an equivalence and lifts to a monoidal pseudonatural equivalence
  \be
  \widehat{\ducorc}\Colon\SNfrc \xRightarrow{~\simeq\,} \SNc \,,
  \label{eq:eqvmf}
  \ee
i.e.\ to an equivalence between modular functors.
\end{thm}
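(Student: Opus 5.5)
The plan is to obtain Theorem \ref{thm:eqvmf} by verifying, one by one, the hypotheses of Theorem \ref{thm:double2bi} for $F=\dSNfrc$, $G=\dSNc$, $\da=\dbord$, $\db=\dprof$ and $\alpha=\ducorc$, and then reading off conclusion (3).

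The first step collects the structural input. By Theorem \ref{thm:dfunsn}, both $\dSNfrc$ and $\dSNc$ are symmetric monoidal double functors $\dbord\to\dprof$. By Theorem \ref{thm:vtransUCor}, $\ducorc$ is a monoidal vertical transformation between them. Both $\dbord$ and $\dprof$ are framed bicategories, by the two examples following the definition of companions and conjoints: mapping cylinders provide companions and conjoints of embeddings, and $F_*$, $F^*$ provide them for linear functors. In particular, every vertical component $\df_{\!\ell}$ of $\ducorc$ has a companion $(\df_{\!\ell})_*=\Hom_{\SNc(\ell)}(\df_{\!\ell}{\sim},\backsim)$.

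The second step shows that $\ducorc$ is a vertical equivalence, i.e.\ that each component 2-cell is weakly invertible in the sense of Definition \ref{def:weakly-inf}. This is Corollary \ref{cor:UCor-weaklyinv}, which rests on two inputs. Theorem \ref{thm:Feq} shows that each $\df_{\!\ell}$ is an adjoint equivalence in $\cv(\dprof)\cong\ccat_\ko$, with quasi-inverse $\imath_\ell$ and counit $\varepsilon_\ell$. Theorem \ref{thm:UcorIso} provides the inverse 2-cell $\ucorc(\surf)^{-1}$, built from $\Phi_{a,a'}=(\varepsilon_\ell)_a^{-1}\cdot\imath_\surf(-)\cdot(\varepsilon_{\ell'})_{a'}$. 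Lemma \ref{lem:equiv-cs-weaklyinv} then says that $\ducorc$ is an equivalence in $\dbl(\dbord,\dprof)$, which proves the first assertion of the theorem.

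The third step applies Proposition \ref{prop:aboutstrongcomps} and Theorem \ref{thm:double2bi}. Proposition \ref{prop:aboutstrongcomps} shows that $\ducorc$ has horizontally strong companions and that the folded transformation $\widehat{\ducorc}$ of \eqref{eq:fold2globular} is a pseudonatural equivalence. Theorem \ref{thm:double2bi} then lifts this to a monoidal pseudonatural equivalence between the lifts of the two double functors to $\ch(\dbord)=\bbord\rightrightarrows\ch(\dprof)=\bprof$. It remains to identify these lifts with the modular functors $\SNfrc$ and $\SNc$ of \eqref{eq:def:SNb}; this holds because, by Theorem \ref{thm:dfunsn}, restricting $\dSNb$ to horizontal morphisms and globular cells gives back $\SNb$.

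I expect the main obstacle in the second step to be making precise the compatibility \eqref{eq:weakinv} between $\ucorc(\surf)^{-1}$ and the chosen unit and counit. The inverse 2-cell must be natural in the boundary data and must take values in $\SNfrc(\surf;\imath_\ell-,\imath_{\ell'}{\sim})$, twisted by $\varepsilon$, rather than in $\SNfrc(\surf;-,\sim)$. To handle this, I would take $f^{-1}=\imath_\ell$ and the unit/counit from Theorem \ref{thm:Feq}. Identity \eqref{eq:weakinv} then reduces to the left-inverse statement \eqref{eq:PhiLI}, and naturality follows from the fact that $\Phi$ is defined by composing with natural isomorphisms and a map $\imath_\surf$ that commutes with sewing.
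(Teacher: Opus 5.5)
Your proposal is correct and follows the paper's own proof. Like the paper, you use Corollary \ref{cor:UCor-weaklyinv} (which rests on Theorems \ref{thm:Feq} and \ref{thm:UcorIso}) together with Lemma \ref{lem:equiv-cs-weaklyinv} to get a monoidal vertical equivalence, and then Proposition \ref{prop:aboutstrongcomps} and Theorem \ref{thm:double2bi} to lift it to a monoidal pseudonatural equivalence; your handling of \eqref{eq:weakinv} is also right, since the inverse 2-cell is $\imath_\surf$ itself (with $f^{-1}=\imath_\ell$, $g^{-1}=\imath_{\ell'}$, unit $\varepsilon_\ell^{-1}$ and counit $\varepsilon_{\ell'}$), so \eqref{eq:weakinv} is exactly \eqref{eq:PhiLI}, while $\Phi_{a,a'}$ is its $\varepsilon$-conjugate rather than the 2-cell itself.
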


\begin{proof}
In view of Lemma \ref{lem:equiv-cs-weaklyinv}, Corollary \ref{cor:UCor-weaklyinv}
shows that the universal correlator $\ducorc$ satisfies the condition imposed in 
Proposition \ref{prop:aboutstrongcomps}. $\ducorc$
is therefore a monoidal vertical equivalence. Thus by Theorem \ref{thm:double2bi} it 
lifts to a monoidal pseudonatural equivalence.
\end{proof}

\begin{rem}
An (anomaly free) open-closed modular functor is equivalently a 
\emph{modular algebra} over a modular operad of open-closed 
surfaces in the bicategory $\bprof$ (see \Cite{Def.\,7.16}{muWo2} for the closed case,
for which the target bicategory differs slightly from ours). To ensure that the 
(a priori) oplax natural transformation \eqref{eq:eqvmf} is monoidal, we invoked
Theorem 5.12 of \cite{wesSh}, which crucially relies on the condition that the 
vertical transformation $\ducorc$ has horizontally strong companions in the sense of 
Definition \ref{lem:equiv-cs-weaklyinv}. In order for this argument to work, by
definition the oplax natural transformation $\widehat{\ducorc}$ needs to be
pseudonatural. On the other hand, a monoidal pseudonatural transformation between
modular functors corresponds to a morphism of the corresponding modular algebras,
and according to Proposition 2.18 of \cite{muWo2} such a morphism is necessarily an 
equivalence. This furnishes a sanity check on the validity of Theorem \ref{thm:eqvmf}.
\end{rem}

   \newpage
%%%%%%%%%%%%%%%%%%%%%%%%%%%%%%%%%%%%%%%%%%%%%%%%%%%%%%%%%%%%%%%%%%%%%%%%

\bibliographystyle{amsalpha}
%% \phantomsection\addcontentsline{toc}{section}{\refname}\bibliography{Library}

 \newcommand\wb{\,\linebreak[0]} \def\wB {$\,$\wb}

 \newcommand\Bi[2]    {\bibitem[#2]{#1}}
 \newcommand\BOOK[4]  {{\em #1\/} ({#2}, {#3} {#4})}
 \newcommand\inBO[9]  {{\em #9}, in:\ {\em #1}, {#2}\ ({#3}, {#4} {#5}), p.\ {#6--#7} {\tt [#8]}}
 \newcommand\J[7]     {{\em #7}, {#1} {#2} ({#3}) {#4--#5} {{\tt [#6]}}}
 \newcommand\JO[6]    {{\em #6}, {#1} {#2} ({#3}) {#4--#5} }
 \newcommand\Mast[2]  {{\em #2\/}, Master thesis (#1)}
 \newcommand\PhD[2]   {{\em #2\/}, Ph.D.\ thesis (#1)}
 \newcommand\Prep[2]  {{\em #2}, preprint {\tt #1}}
 \newcommand\uPrep[2] {{\em #2}, unpublished preprint {\tt #1}}
 
   \def\adma  {Adv.\wb Math.}
   \def\apcs  {Applied\wB Cate\-go\-rical\wB Struc\-tures}
   \def\bumi  {Boll.\wB Unione\wB Mat.\wb Ital.}
   \def\coma  {Con\-temp.\wb Math.}
   \def\comp  {Com\-mun.\wb Math.\wb Phys.}
   \def\ctgc  {Cah.\wb Topol.\wb G\'eom.\wb Diff\'er.\wB Ca\-t\'e\=goriques}
   \def\jpaa  {J.\wB Pure\wB Appl.\wb Alg.}
   \def\kyjm  {Ky\-o\-to J.\ Math.}
   \def\nyjm  {New\wB York\wB J.\wb Math.}
   \def\nupb  {Nucl.\wb Phys.\ B}
   \def\phrb  {Phys.\wb Rev.\ B}
   \def\qjmo  {Quart.\wb J.\wb Math.\wB Oxford}
   \def\quto  {Quantum Topology}
   \def\rtac  {Reprints in Theo\-ry\wB and\wB Appl.\wb Cat.}
   \def\slnm  {Sprin\-ger\wB Lecture\wB Notes\wB in\wB Mathematics}
   \def\taac  {Theory\wB and\wB Appl.\wb Cat.}

\end{document}